\documentclass[11pt,a4paper]{article}
\usepackage{epstopdf}
\usepackage{enumerate}
\usepackage{graphicx}           
\graphicspath{ {Images/} }     
\usepackage{amsmath}               
\usepackage{amsfonts}              
\usepackage{amsthm}               
\usepackage{color}
\usepackage{empheq}

\usepackage{amssymb,amsbsy,amsmath,amsfonts,amssymb,amscd,amsthm, mathrsfs}
\usepackage{dsfont}
\usepackage{hyperref}

\newcommand{\supp}{\operatorname{supp}}

\newcommand{\NN}{\mathbb{N}}
\newcommand{\RR}{\mathbb{R}}

\newcommand{\dd}{\,\mathrm{d}}

\newtheorem{theorem}{Theorem}
\newtheorem{proposition}[theorem]{Proposition}
\newtheorem{lemma}[theorem]{Lemma}
\newtheorem{corollary}[theorem]{Corollary}

\theoremstyle{definition}
\newtheorem{definition}[theorem]{Definition}
\newtheorem{remark}[theorem]{Remark}
\newtheorem{cutoff}[theorem]{Cutoff}

\newcommand{\beqn}{\begin{equation}}
\newcommand{\eeqn}{\end{equation}}
\newcommand{\bear}{\begin{eqnarray}}
\newcommand{\eear}{\end{eqnarray}}
\newcommand{\bean}{\begin{eqnarray*}}
\newcommand{\eean}{\end{eqnarray*}}

\begin{document}
\begin{center}
\vspace*{2cm}
\LARGE{On a system of equations for the \\normal fluid-condensate interaction\\ in a Bose gas}
\end{center}
\medskip
\begin{center}
Enrique Cort\'es\footnotemark[1], Miguel Escobedo\footnotemark[2]
\end{center}
\medskip
\begin{abstract}
The existence of global solutions for a  system of differential equations is proved, and some of their properties are described. The system involves a kinetic equation for quantum particles. It is a simplified version of a mathematical description of a weakly interacting dilute gas of bosons in the presence of a condensate near the critical temperature.
\end{abstract}

\footnotetext[1]{BCAM - Basque Center for Applied Mathematics. Alameda de Mazarredo 14,
E--48009 Bilbao, Spain.
E-mail~: {\tt e.cortes.coral@gmail.com}
}
\footnotetext[2]{Departamento de Matem\'aticas, Universidad del
Pa{\'\i}s Vasco, Apartado 644, E--48080 Bilbao, Spain.
E-mail~: {\tt mtpesmam@lg.ehu.es}}

\section{Introduction}

\setcounter{equation}{0}
\setcounter{theorem}{0}

We consider the existence and properties of radially symmetric weak solutions to the following system of differential equations:

\begin{empheq}[left=\empheqlbrace]{align}
&\frac {\partial F} {\partial t}(t, p)=n(t) I_3(F(t))(p)\qquad t>0,\; p\in \RR^3, \label{PA}\\
&n'(t)=-n(t)\int _{ \RR^3 } I_3(F(t))(p)dp \qquad t>0,\label{PB}
\end{empheq}
where
\begin{align}
&I_3(F(t))(p)=\!\!\iint _{(\RR^3)^2}\!\!\big[R(p, p_1, p_2)\!-\!R(p_1, p, p_2)\!-\!R(p_2, p_1, p) \big]dp_1dp_2, \label{E1BCD} \\
&R(p, p_1, p_2)\,=\left[\delta (|p|^2-|p_1|^2-|p_2|^2)  \delta (p-p_1-p_2)\right]\,\times \nonumber \\
&\hskip 2.5cm \times\left[ F_1F_2(1+F)-(1+F_1)(1+F_2)F \right],  \label{S1EA4BC}
\end{align}
and we denote $F=F(t,p)$ and $F_{\ell}=F(t,p_{\ell})$ for $\ell=1, 2$.

The system (\ref{PA}), (\ref{PB}) is motivated by the mathematical description of a weakly interacting dilute gas of bosons. Given such a gas at equilibrium, if its temperature is below the so-called critical temperature $T_c$, a macroscopic density of bosons, called a condensate, appears at the lowest quantum state (cf.\cite{lieb}). A description of the system of particles out of equilibrium at zero temperature has also been rigorously obtained  (\cite{ESY}). The system (\ref{PA}), (\ref{PB})  is more directly related to a gas out of equilibrium and at non zero temperature. The equations that, in the physic's literature, describe a gas in such a situation have not been the object of a mathematical proof; they have rather been deduced on the basis of physical arguments  (cf. \cite{Zoller}, \cite{GNZB, ZNG}, \cite{Stoof2} for example). 
We are particularly interested in the kinetic description of the interaction between the condensate and the particles in the dilute gas, when most of the particles are still in the gas, and so when the system is at a temperature close to  $T_c$.  

\subsection{The Nordheim equation}

The kinetic equation consistently used to describe the evolution of the  distribution function for a spatially homogeneous, weakly interacting dilute gas of bosons of momentum $p_1$ is
\begin{align}
&\frac{\partial F}{\partial t}(t, p_1)=I_4(F(t))(p_1)\qquad  t>0,\; p_1\in \RR^3, \label{S1E0a}
\end{align}
where
\begin{align}
&I_4(F(t))(p_1)=\iiint_{\left(\mathbb{R}^{3}\right)  ^{3}}q(F)  d\nu (p_2, p_3, p_4),  \label{S1Eb}\\
&q(F)= F_{3}F_{4}( 1+ F_{1})(1+F_{2}) -F_{1}F_{2}(1+F_{3})(1+F_{4}), \label{S1Eq}\\
&d\nu (p_1, p_2, p_3)=2a^2 \pi ^{-3} \delta\left(  p_{1}+p_{2}-p_{3}-p_{4}\right)\times \nonumber \\
&\hskip 2.5cm \delta \left(  E(p_1)+ E(p_2)- E(p_3)- E(p_4)\right) dp_{2}dp_{3}dp_{4}.
\end{align}
sometimes called Nordheim equation (\cite{Nordheim1}), (cf. for example  \cite{Zoller}, \cite{GNZB}, \cite{Stoof2}). We are assuming that the particles have mass $m=1/2$ and $E(p)$ denotes  the energy of a particle of momentum $p$.  The constant $a$ is the scattering length that parametrizes the Fermi pseudopotential  of scattering. In the absence of condensate, the energy of the particles is taken to be $E(p)=|p|^2$.  

For a condensed Bose gas, it is necessary to include  the collisions involving the condensate. A kinetic equation is derived in \cite{Eckern} and \cite{Kirkpatrick} describing such processes.  More recently, \cite{ZNG}  extended the treatment to a trapped Bose gas by including Hartree-Fock
corrections to the energy of the excitations, and  have derived coupled kinetic equations for the
distribution functions of the normal and superfluid components. Later on the results where generalized  to low temperatures in \cite{IG}  using  the Bogoliubov-Popov approximation to describe the energy particle. The system is as follows
\begin{empheq}[left=\empheqlbrace]{align}
&\frac {\partial F} {\partial t}(t, p)=I_4(F(t))(p)+ 32 a^2 n(t) \widetilde I_3(F(t))(p)\quad t>0,\; p\in \RR^3,  \label{S1EA1}\\
& n'(t)=-n(t)\int  _{ \RR^3 } \widetilde I_3(F(t))(p)dp\qquad t>0. \label{S1EA1B}
\end{empheq}
(cf. \cite{Eckern}, \cite{GNZB}, \cite{Kirkpatrick} for a deduction based on physic's arguments). The term $I_4(F)$ is exactly as in (\ref{S1Eb}) and   the constant $32 a^2$ comes from the approximation of  the transition probability:
$|\mathcal M(p, p_1, p_2)|^2\approx 32 a^2 n(t)$. The integral collision $\widetilde I_3$ is given by an expression similar to (\ref{E1BCD}), (\ref{S1EA4BC})
 but where the corresponding terms $\widetilde R(p, p_1, p_2)$ are as follows,
 \begin{align}
&\widetilde R(p, p_1, p_2)\,=\left[\delta (E(p)-E(p_1)-E(p_2))  \delta (p-p_1-p_2)\right]\,\times \nonumber \\
&\hskip 1cm \times\left[ F(p_1)F(p_2)(1+F(p))-(1+F(p_1))(1+F(p_2))F(p) \right].   \label{S1EBog}
\end{align}
In presence of a condensate, the energy $E(t, p)$ of the particles at time $t$ is now taken as $E(t, p)=\sqrt{|p|^4+16a\,n(t)|p|^2}$,  where $n(t)$ is the condensate density (\cite{Chiara}, \cite{GNZB}). 
Once equation (\ref{S1EA1}) has been obtained, the equation (\ref{S1EA1B}) is just what is needed in order to ensure that the total number of particles $n(t)+\int  _{ \RR^3 }F(t, p)dp$ in the system is constant in time.  

We are particularly interested in a situation where most of the particles are in the gas, and the condensate density $n$ is very small. The energy of the particles is then usually approximated as $E(t, p)\approx |p|^2+4 a\pi n(t)$ (cf.\cite{GNZB}).
In all what follows we need  the strongest simplification  $E(t, p)\approx |p|^2$ to have the collision integral $I_3$ in (\ref{E1BCD}).
 
Moreover, in the problem (\ref{PA}), (\ref{PB})  only the term that in the equation  (\ref{S1EA1})  describes the interactions involving one particle of the condensate has been kept. The term $I_4$, the same as in equation (\ref{S1E0a}), that only considers interactions between particles in the gas, has been dropped. The term $I_4$ has been studied with detail to prove the existence of solutions to the Nordheim equation (\ref{S1E0a}) and describe some of their properties. The problem (\ref{PA}), (\ref{PB}) only takes into account the collision processes involving a particle of the condensate. 

Since we are only concerned with radial solutions $(F,n)$ of (\ref{PA}), (\ref{PB}), a very natural independent variable is  $x=|p|^2$. But  this introduces a jacobian and then, the most suitable quantity is not always $f(x)=F(p)$ but may be sometimes  $\sqrt x f(x)$.

\subsection{The term $I_4$ and the Nordheim equation}

The local existence of bounded solutions for Nordheim equation (\ref{S1E0a}) was proved in \cite{BE}. Global existence of bounded solutions has been proved in \cite{Lu5} for bounded and suitably small initial data. The existence of radially symmetric weak solutions was first proved in \cite{Lu1} for all initial data $f_0$ in the space of nonnegative radially symmetric measures on $[0, \infty)$. 

For radially symmetric solutions $F(p)=f(x)$, $x=|p|^2$, the expression of the Nordheim equation simplifies because it is possible to perform the angular variables in the collision integral. After rescaling the time variable $t$ (in order to absorb some constants), the Nordheim equation reads:

\begin{align}
&\frac{\partial f}{\partial t}(t, x_1)= J_4(f(t))(x_1),\qquad t>0,\;x_1\geq0, \label{S1E1}
\end{align}
where
\begin{align}
&J_4(f)(x_1)=\iint _{[0,\infty)^2}\hskip -0.4cm  \frac{w(x_1, x_2, x_3)}{\sqrt{x_1}} q(f)(x_1,x_2,x_3)dx_{2}dx_{3}, \label{S1E2}\\ 
&q(f)=(1+  f_{1})(1+f_{2})  f_{3}f_{4}-( 1+ f_{3})(1+f_{4})  f_{1}f_{2},  \label{S1E3}\\
&w(x_1,x_2,x_3)=\min\left\{\sqrt{x_{1}},\sqrt{x_{2}},\sqrt{x_{3}},\sqrt{x_{4}}\right\},\,\,x_{4}=(x_{1}+x_{2}-x_{3})_+. \label{E2'}
\end{align}

The factor $\frac{w}{\sqrt{x_1}}$ in the collision integral comes from the angular integration of the Dirac's delta of the energies $|p _{ \ell }|^2$.

If we denote $\mathscr M _+([0, \infty))$ the space of positive and finite Radon measures on $[0, \infty)$, and define for all $\alpha \in \RR$
\begin{align}
\label{S1E17}
&\mathscr{M}^{\alpha} _+([0, \infty))=\left\{G\in  \mathscr{M}_+([0, \infty)):\; M_{\alpha}(G)<\infty\right\},\\
&M_{\alpha}(G)=\int_{[0,\infty)}x^{\alpha}G(x)dx\qquad\text{(moment of order $\alpha$)},\label{S1E17'}
\end{align}
the definition of weak solution introduced in \cite{Lu1} is the following.

\begin{definition}[Weak radial solutions of (\ref{S1E0a})]
\label{S1D0}
Let $G$ be a  map from $[0, \infty)$ into $\mathscr M_+^1([0, \infty))$ and consider $f$ defined as $\sqrt xf(t)=G(t)$. We say that $f$ is a weak radial solution of (\ref{S1E0a}) if $G$ satisfies:
\begin{align}
&\forall t>0:\,\,\,G(t)\in \mathscr M_+^1([0, \infty)), \label{S1ED1}\\
&\forall  T>0:\,\,\,\sup _{ 0\le t<T } \int  _{ [0, \infty) }(1+x)G(t, x)dx<\infty, \label{S1ED2} \\
& \forall \,\varphi \in C^{1, 1}_b([0, \infty)): \,\, \int  _{ [0, \infty) } \varphi (x)G(t , x) dx\in C^1([0, \infty)),  \label{S1ED3} 
\\
&\frac {d} {dt} \int_{\left[  0,\infty\right)}\varphi(t,x)G(t,x)dx =\mathcal{Q}_4(\varphi,G(t)), \label{S1E4}\\
&\mathcal Q_4(\varphi,G)=\iiint_{\left[  0,\infty\right)^3} \frac{G_{1}G_{2}G_{3}
}{\sqrt{x_{1}x_{2}x_{3}}}w\Delta\varphi \;dx_1dx_2dx_3+ \nonumber \\
&\hskip 2cm +\frac{1}{2}\iiint_{\left[  0,\infty\right)^3} \frac{G_{1}G_{2}}{\sqrt{x_{1}x_{2}}}w \Delta\varphi \;dx_1dx_2dx_3  \label{S1E5}\\
&\Delta\varphi (x_1, x_2, x_3)=\varphi (x_4)+\varphi (x_3)-\varphi (x_2)-\varphi (x_1),\label{S2E1}\\
&w(x_1,x_2,x_3)=\min\{\sqrt{x_1}, \sqrt{x_2}, \sqrt{x_3}, \sqrt{x_4}\},\,\,\,x_4=(x_1+x_2-x_3)_+. \label{S1E6'}
\end{align}
\end{definition}

For all initial data $f_0$ such that $G_0=\sqrt x f_0\in \mathscr M_+^1([0, \infty))$, the existence of a weak solution was proved in \cite{Lu1}.
The moments of order zero and one of $G$ where shown to be constant in time. 
It was shown in \cite{Lu3} that  a definition equivalent to Definition \ref{S1D0} would be to impose $\varphi (0)=0$ to the test functions in Definition \ref{S1D0} and impose the conservation of mass on $G(t)$ for all $t>0$. Further properties of the solutions, such as the gain of moments, asymptotic behavior,   where obtained in a series of articles  \cite{Lu1,Lu2, Lu3, Lu4}

It is proved in Proposition \ref{S1P0} below that if the measure $G$ is written as $G(t)=n(t)\delta _0+g(t)$, where
$n(t)=G(t,\{0\})$, then for all $\varphi \in C^{1,1}_b([0, \infty))$ the term $\mathcal Q_4(\varphi,G)$ may be decomposed as follows:
\begin{align}
\mathcal Q_4(\varphi,G(t))=\mathscr Q _4(\varphi,g(t))+n(t)\mathscr{Q}_3(\varphi,g(t)),\label{S1E13B}
\end{align}
where
\begin{align}
&\mathscr Q_4(\varphi,g)=\iiint_{(0,\infty)^3}\frac{g_{1}g_{2}g_{3}
}{\sqrt{x_{1}x_{2}x_{3}}}w \Delta\varphi \;dx_1dx_2dx_3 \nonumber \\
&\hskip 2cm +\frac{1}{2}\iiint_{(0,\infty)^3}\frac{g_{1}g_{2}}{\sqrt{x_{1}x_{2}}}w \Delta\varphi \;dx_1dx_2dx_3, \label{S1E15}\\
&\mathscr Q _3(\varphi,g)=\mathscr{Q}_3^{(2)}(\varphi,g)-\mathscr{Q}_3^{(1)}(\varphi,g),\label{S1E1Q3}\\
&\mathscr{Q}_3^{(2)}(\varphi,g)=\iint_{(0, \infty)^2} \frac {\Lambda(\varphi)(x, y)} {\sqrt{x y}}g(x)g(y)dxdy,\label{S1E1Q32}\\
&\mathscr{Q}_3^{(1)}(\varphi,g)=\int_{ (0, \infty)}\frac {\mathcal{L}_0(\varphi)(x)} {\sqrt x}g(x)dx, \label{S1E1Q31}\\
&\Lambda(\varphi)(x, y)=\varphi (x+y)+\varphi (|x-y|)-2\varphi (\max\{x, y\}), \label{S1E154}\\
&\mathcal {L}_0(\varphi )(x)=x\big(\varphi (0)+\varphi (x)\big)-2\int _0^x \varphi (y)dy. \label{S1E155}
\end{align}

It was also proved in \cite{Lu1} that as $t\to \infty$, the measure $G$ converges in the weak sense of measures  to one of the measures:
\bear
\label{BED}
G _{ \beta, \mu, C} =\frac {\sqrt x} {e^{\beta x-\mu  }-1}+C\delta _0,\,\,\,\beta >0,\,\,\mu  \le 0, \,\,\,C\ge 0
\eear
where the constants $C$ and $\mu  $ are such that $C\mu  =0$.  

When $C=0$ and $\mu \le 0$, the function  $F_{ \beta , \mu , 0  }(p)=|p|^{-1} G _{ \beta , \mu , 0  }(|p|^2)$ is an equilibrium of the Nordheim equation (\ref{S1E0a}) because
$q(F _{ \beta , \mu , 0 })d\nu \equiv 0$. When $C>0$ and $\mu =0$, then $F _{ \beta , 0, C  }(p)=|p|^{-1} G _{ \beta , 0, C  }(|p|^2)$ is an equilibria of (\ref{S1EA1}) because
$q(f _{ \beta , 0 , 0 }) \equiv 0$ and  $R(p, p', p'')\equiv 0$ for all $(p, p', p'')\in (\RR^3)^3$ for  $f _{ \beta , 0 , 0} $, where  $R(p, p', p'')$ is defined in  (\ref{S1EA4BC}).  It was proved in \cite{Lu1} that $F_{ \beta , \mu, C  }$ is a weak solution of the Nordheim equation  (\ref{S1E1}) if and only if $\mu C=0$. 

On the other hand, it was proved in \cite{EV1} that, given any  $N>0$, $E>0$ there exists initial data $f_{0}\in L^{\infty}\left( 
\RR_+;\left( 1+x\right) ^{\gamma}\right) $ with $\gamma >3$, satisfying
$$
\int_{\mathbb{R}^{+}}f_{0}(x)\sqrt{x }dx=N,\qquad\int_{\mathbb{R}^{+}}f_{0}(x)\sqrt{x^{3}}dx=E,
$$
and such that there exists a global weak solution $f$  and  positive times  $0<T_{\ast}<T^*$ such that:
\bear
\label{S1EC1}
\sup_{0< t\leq T_{\ast}}\left\Vert f\left( t,\cdot\right) \right\Vert
_{L^{\infty}\left( \mathbb{R}^{+}\right) }<\infty,\,\,\,\,\,\sup _{T_*<t\le T^* }\int_{\left\{ 0\right\} }\sqrt{x}f\left( t,x\right)
dx>0.
\eear
Property (\ref{S1EC1}) shows that the  solution $G=\sqrt x f$ of  (\ref{S1ED1})--(\ref{S1E6'}) is a bounded function on the time interval $[0, T^*)$ and a Dirac mass is formed at the origin at some time $T_0$ between $T_*$ and $T^*$. After that time $T_0$, the solution $G$ is such that $G(t, \{0\})>0$. 

In the simplified description of the physical system of particles that we are using, where only the radial density  $G$ of particles of momentum $p$  is considered, the description of the physical Bose-Einstein condensate can just be given by a  Dirac measure at the origin. 

Notwithstanding the similarity of these two phenomena, the extent to which the first one is a truthful mathematical description of the second is not clear.  Nevertheless, we refer to the term $n(t)\delta _0$  that appears in finite time in some of the weak solutions of the Nordheim equation as ``condensate'', with some abuse of language.

\subsection{The term $I_3$ in radial variables.}

The results briefly presented in the previous sub Section describe some of the properties of the weak solutions to the Nordheim equation in terms of the measure $G$.  In particular, the weak convergence of $G$ to the measures defined in (\ref{BED}) shows what is the limit of  $G(t, \{0\})$ as $t\to \infty$.   
To understand better  the dynamics of $G(t, \{0\})\delta _0$ and its interaction with $G(t)-G(t, \{0\})\delta _0$ it seems suitable to  write  $G(t)=G(t, \{0\})\delta _0+g(t)$ and consider the system (\ref{S1EA1}), (\ref{S1EA1B}).  

For radially symmetric functions $F(p)=f(x)$, $x=|p^2|$, the  system (\ref{PA}), (\ref{PB}) reads, after a suitable time rescaling to absorb some constants:
\begin{empheq}[left=\empheqlbrace]{align}
&\frac {\partial f} {\partial t}(t, x)=\frac {n(t)} {\sqrt x} J_3(f(t))(x)\qquad t>0,\;x>0, \label{PR}\\
&n'(t)=-n(t)\int _0^\infty J_3(f(t))(x)dx\qquad t>0, \label{PR19}
\end{empheq}  
where
\begin{align}
&J_3(f)(x)=\int_0^x \Big(f(x-y)f(y)-f(x)\big[1+f(x-y)+f(y)\big]\Big)dy +\nonumber\\
&\hskip 1cm +2\int_x^\infty \Big(f(y)\big[1+f(y-x)+f(x)\big]-f(y-x) f(x)\Big) dy.\label{PR2}
\end{align}
(cf. \cite{ST1} and \cite{Svis} for the isotropic system that also contains the term $J_4(f)$, that comes from $I_4$ in  (\ref{S1EA1})).
Notice that
\begin{align}
&\int _0^\infty J_3(f(t))(x)dx\nonumber\\
&=\int _0^\infty\int _0^\infty \Big(f(t, x)f(t, y)- f(t, x+y)\big[1+f(t, x)+f(t, y)\big]\Big)dxdy \label{S1E9}
\end{align}
whenever the integral in the right hand side is finite, for example, if
$f\in L^1\big(\RR_+, (1+x)dx\big)$. In that case we also have,
\begin{equation}
\int _0^\infty J_3(f(t))(x)dx=M _1(f(t)).
\end{equation}

The factor $x^{-1/2}$ in the right hand side  of (\ref{PR}) comes from the angular integration of the Dirac's measure of energies of $I_3$, just as the 
$\frac{w}{\sqrt{x_1}}$ term of (\ref{S1E2}) in $I_4$.  But since $\frac{w}{\sqrt{x_1}}$ is a bounded function,
it appears that the operator $I_3$ is more singular than $I_4$ for small values of $x$. 

If we denote  $F(t, p)=f(t, |p|^2)=|p|^{-1}g(t, |p|^2)$ and $x=|p^2|$, from the original motivation of the Nordheim equation  it is very natural to expect
$$
\int  _{ \RR^3 }F(t, p)dp= 2\pi\int _0^\infty f(t, x)\sqrt x dx= 2\pi\int_0^\infty g(t, x)dx<\infty, 
$$
(that corresponds to the  number of particles in the normal fluid), and
$$
\int  _{ \RR^3 }F(t, p)|p|^2dp= 2\pi\int _0^\infty f(t, x)x^{3/2}dx=2\pi\int_0^\infty g(t, x)xdx<\infty,
$$
(corresponding to the total energy in the system).  But there is no particular reason to expect 
$$
\int  _{ \RR^3 }F(t, p)\frac {dp} {|p|}= 2\pi\int _0^\infty f(t, x)dx=2\pi\int_0^\infty g(t, x)\frac {dx} {\sqrt x}<\infty.
$$
Without that last condition, the convergence of the integrals in the term $I_3(F(t))$ (cf. (\ref{E1BCD}), (\ref{S1EA4BC})), or in (\ref{PR}), (\ref{PR2}), is delicate.
That difficulty is usually avoided using a suitable weak formulation. 

If we suppose that  $f=x^{-1/2}g\in L^1\big(\RR_+, (1+x)dx\big)$, and multiply the equation (\ref{PR}) by $\sqrt x\, \varphi $, we obtain by Fubini's Theorem, 
\begin{equation}
\label{g11}
\frac {d} {dt}\int_{ [0, \infty) } \varphi (x) g(t, x)dx=n(t)\widetilde{\mathscr Q}_3(\varphi,g(t))\quad\forall  \varphi \in C^1_b([0, \infty)),
\end{equation}
where
\begin{align}
&\widetilde{\mathscr Q}_3(\varphi,g)=\mathscr{Q}_3^{(2)}(\varphi,g)-\widetilde{\mathscr Q}_3^{(1)}(\varphi,g),\label{S1EB2}\\
&\widetilde{\mathscr{Q}}_3^{(1)}(\varphi,g)=\int_{ (0, \infty)} \frac {\mathcal{L}(\varphi )(x)} {\sqrt x}g(x)dx, \label{S1E20R}\\
&\mathcal{L}(\varphi)(x)=x\varphi (x)-2\int _0^x \varphi (y)dy. \label{S1E21R}
\end{align}
Notice that, by (\ref{S1E1Q3}),
\begin{align}
\mathscr{Q}_3(\varphi,g)=\widetilde{\mathscr{Q}}_3(\varphi,g)-\varphi(0)M_{1/2}(g).\label{S1EB1}
\end{align}

A natural weak formulation for $G=n(t)\delta _0+g$ is then obtained by adding (\ref{PR19}) to (\ref{g11}). 
We  then define a weak radially symmetric solution of the Problem (\ref{PA}), (\ref{PB}) as follows.
\begin{definition}[Weak radial solution of (\ref{PA}), (\ref{PB})]
\label{S1D1}
Consider a map $G:[0, T) \to \mathscr M_+^1([0, \infty))$  for some $T\in (0, \infty]$, that we decompose as follows:
$$
\forall t\in [0, T):\,\,\,\,\,G(t)=n(t)\delta _0+g(t), \,\,\,\, \hbox{where}\,\,\,\, n(t)=G(t, \{0\});
$$
and  define  $ F(t, p)=|p|^{-1}g(t, |p|^2)$ for all $t>0$ and $p\in \RR^3$. We say that $(F, n)$ is a weak radial solution of (\ref{PA}), (\ref{PB}) on $(0, T)$ if:

\begin{align}
&\forall T'\in (0, T]:\qquad\sup _{ 0\le t<T' } \int  _{ [0, \infty) }(1+x)G(t, x)dx<\infty, \label{S1ED2S} \\
& \forall \varphi \in C^{1}_b([0, \infty)): \quad t\mapsto \int  _{ [0, \infty) } \varphi (x)G(t, x) dx \in W^{1,\infty } _{ loc }([0, T)),  \label{S1ED3S}
\end{align}
and for a.e. $t\in (0, T)$
\begin{equation}
\frac {d} {dt}\int_{ [0, \infty) } \varphi (x) G(t, x)dx=n(t) \mathscr{Q}_3(\varphi,g(t))\quad\forall  \varphi \in C^1_b([0, \infty)), \label{S1E16}\\
\end{equation}
where $\mathscr Q _3(\varphi,g)$ is defined by (\ref{S1E1Q3})-(\ref{S1E1Q31}).
\end{definition}

We  show in Proposition \ref{S1P0} that the Definition \ref{S1D1} substantially  coincides with the Definition \ref{S1D0} of radial weak solution of (\ref{S1E0a}) when the term $\mathscr Q_4(\varphi,g)$ in (\ref{S1E13}) is dropped (cf. Remark \ref{SXR1}). As a  consequence, the measures $f _{ \beta , 0, C  }(p)$
defined above are weak radial solutions of (\ref{PA}), (\ref{PB}) (cf. Proposition \ref{equilibria}).

\subsection{Main results}
The existence of weak radial solutions for the Cauchy problem associated with the system (\ref{PA}), (\ref{PB}) is given in the following Theorem.

\begin{theorem}[Existence result]
\label{S1T1}
Suppose that $G_0\in \mathscr{M}^1_+([0,\infty))$ satisfies $G_0(\{0\})>0$, and define $F_0(p)=|p|^{-1}g_0(|p|^2)$, where $g_0=G_0-G_0(\{0\})\delta _0$. Then, there exists  a weak radial solution $(F, n)$ of  (\ref{PA}), (\ref{PB}) on $(0, \infty)$ such that $F(t, p)=|p|^{-1}g(t, |p|^2)$, where $G=n\delta _0+g$  satisfies:
\bear
\label{S1T1E0}
G\in C\big([0,\infty),\mathscr{M}_+^1([0,\infty))\big),\quad G(0)=G_0
\eear 
and:
\begin{enumerate}[(i)]
\item$G$ conserves the total number of particles $N$ and energy $E$:
\begin{align}
&M_0(G(t))=M_0(G_0)=N\qquad\forall t\geq 0,\label{S1E210}\\
&M_1(G(t))=M_1(G_0)=E\qquad\forall t\geq 0.\label{S1E220}
\end{align}

\item For all $\alpha \geq 3$,  if $M_\alpha (G_0)<\infty$, then $G\in C \big((0, \infty), \mathscr{M}_+^{\alpha}([0,\infty))\big)$ and 
\begin{flalign}
\label{S1E23}
&M_{\alpha}(G(t))\le \left(M_{\alpha}(G_0)^{\frac{2}{\alpha-1}}
+\alpha2^{\alpha-1}E^{\frac{\alpha+1}{\alpha-1}}\tau(t)\right)^{\frac{\alpha-1}{2}}\quad\forall t>0,&\\
&\text{where}\quad\tau (t)=\int _0^t G(s, \{0\})ds.& \label{E657tau}
\end{flalign}

\item For all $\alpha \geq3$,
\begin{equation}
\label{S5Ealphahh}
\quad M_{\alpha}(G(t))\leq C(\alpha,E)\left(\frac{1}{1-e^{-\gamma(\alpha,E)\tau(t)}}\right)^{2(\alpha-1)}\quad\forall t >0,
\end{equation}
where $\tau(t)$ is given by (\ref{E657tau}), and the constants $C(\alpha,E)$ and $\gamma (\alpha,E)$ are defined in Theorem \ref{S5T5R}.

\item If $\alpha \in (1, 3]$ and
\begin{align}
E> C(\alpha)N^{5/3},\label{PRO112}
\end{align}
\begin{flalign}
&\text{where}\qquad
\label{PRO115}
C(\alpha)=
\begin{cases}
\Big(\frac{(2^{\alpha}-2)(\alpha+1)}{(\alpha-1)}\Big)^{\frac{2}{3}}&\text{if}\quad\alpha\in(1,2],\\
\big(\alpha(\alpha+1)\big)^{\frac{2}{3}}&\text{if}\quad\alpha\in(2,3],
\end{cases}
&
\end{flalign}
then $M_{\alpha}(G(t))$ is a decreasing function on $(0,\infty)$.
\end{enumerate}
\end{theorem}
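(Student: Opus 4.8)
The plan is to test the weak formulation (\ref{S1E16}) against the unbounded weight $\varphi(x)=x^{\alpha}$. Since $\alpha>1$, the Dirac mass $n(t)\delta_{0}$ carries no moment of order $\alpha$, so $M_{\alpha}(G(t))=M_{\alpha}(g(t))$; and by (\ref{PR19}) one has $n(t)=n(0)\exp\!\big(-\!\int_{0}^{t}M_{1}(g(s))\dd s\big)>0$ for all $t$, because $n(0)=G_{0}(\{0\})>0$. Hence it is enough to show that $\mathscr{Q}_{3}(x^{\alpha},g(t))\le 0$ for a.e. $t>0$. To make sense of the unbounded test function I would approximate $x^{\alpha}$ from below by functions $\varphi_{R}\in C^{1}_{b}([0,\infty))$ with $\varphi_{R}(0)=0$, apply (\ref{S1E16}) to $\varphi_{R}$, and let $R\to\infty$: the left-hand side converges to $\tfrac{d}{dt}M_{\alpha}(g(t))$ by monotone convergence, and on the right-hand side the moment bounds and continuity from parts (i)--(iii) furnish the majorant needed to pass to the limit in $\mathscr{Q}_{3}$ (this step needs $M_{\alpha+1/2}(g(t))<\infty$, which for $\alpha\ge 5/2$ follows from (ii) and otherwise is part of the hypotheses).

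The gain term is explicit: since $\varphi(0)=0$, (\ref{S1E155}) gives $\mathcal{L}_{0}(x^{\alpha})(x)=x^{\alpha+1}-\tfrac{2}{\alpha+1}x^{\alpha+1}=\tfrac{\alpha-1}{\alpha+1}x^{\alpha+1}$, so by (\ref{S1E1Q31}),
\[
\mathscr{Q}_{3}^{(1)}(x^{\alpha},g)=\frac{\alpha-1}{\alpha+1}\,M_{\alpha+1/2}(g),
\]
and the claim reduces to the a priori estimate $\mathscr{Q}_{3}^{(2)}(x^{\alpha},g)\le\frac{\alpha-1}{\alpha+1}M_{\alpha+1/2}(g)$ for $g=g(t)$.

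For the loss term I would bound $\Lambda(x^{\alpha})$ pointwise. Setting $M=\max\{x,y\}\ge m=\min\{x,y\}$ and $s=m/M\in(0,1]$, we have $\Lambda(x^{\alpha})(x,y)=M^{\alpha}\psi(s)$ with $\psi(s)=(1+s)^{\alpha}+(1-s)^{\alpha}-2$. As $\psi(0)=0$ and $\psi$ is convex on $[0,1]$ (since $\alpha>1$), $\psi(s)\le\psi(1)s=(2^{\alpha}-2)s$; and for $\alpha\in(2,3]$ all Taylor coefficients of $\psi$ beyond the quadratic one are $\le0$, so also $\psi(s)\le\alpha(\alpha-1)s^{2}$. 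Hence
\[
\Lambda(x^{\alpha})(x,y)\le
\begin{cases}
(2^{\alpha}-2)\,M^{\alpha-1}m,&\alpha\in(1,2],\\
\alpha(\alpha-1)\,M^{\alpha-2}m^{2},&\alpha\in(2,3].
\end{cases}
\]
Using the elementary facts $M^{\alpha-1}m\le\frac{xy(x^{\alpha-1}+y^{\alpha-1})}{x+y}$ (valid for $\alpha\le2$) and $M^{\alpha-2}m^{2}\le\frac{xy(x^{\alpha-2}y+xy^{\alpha-2})}{x+y}$ (valid for $\alpha\le4$), followed by $\frac{\sqrt{xy}}{x+y}\le\frac12$, one turns (\ref{S1E1Q32}) into
\[
\mathscr{Q}_{3}^{(2)}(x^{\alpha},g)\le
\begin{cases}
(2^{\alpha}-2)\,M_{0}(g)\,M_{\alpha-1}(g),&\alpha\in(1,2],\\
\alpha(\alpha-1)\,M_{1}(g)\,M_{\alpha-2}(g),&\alpha\in(2,3];
\end{cases}
\]
here the factor $\tfrac12$ exactly absorbs the $2$ coming from the symmetry of the integrand, which is why the sharp constant $C(\alpha)$ of (\ref{PRO115}) appears rather than a larger one.

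To finish I would use the conservation laws of part (i), $M_{0}(g(t))\le N$ and $M_{1}(g(t))=E$, together with the log-convexity (H\"older inequality) of $\beta\mapsto M_{\beta}(g)$. Interpolating $M_{\alpha-1}(g)$ (resp. $M_{\alpha-2}(g)$) between $M_{0}(g)$ and $M_{\alpha+1/2}(g)$, and likewise writing $E=M_{1}(g)\le N^{\frac{2\alpha-1}{2\alpha+1}}M_{\alpha+1/2}(g)^{\frac{2}{2\alpha+1}}$, i.e. $M_{\alpha+1/2}(g)\ge N^{-\frac{2\alpha-1}{2}}E^{\frac{2\alpha+1}{2}}$, and substituting these into the bound for $\mathscr{Q}_{3}^{(2)}$, the required inequality $\mathscr{Q}_{3}^{(2)}(x^{\alpha},g)\le\frac{\alpha-1}{\alpha+1}M_{\alpha+1/2}(g)$ simplifies (all $N$-powers collapsing to $N^{5/2}$, all $E$-powers to $E^{3/2}$) to $E^{3/2}\ge C(\alpha)^{3/2}N^{5/2}$, which is precisely the hypothesis $E>C(\alpha)N^{5/3}$. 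The strict inequality then yields $\mathscr{Q}_{3}(x^{\alpha},g(t))<0$ for a.e. $t$, hence $\tfrac{d}{dt}M_{\alpha}(G(t))<0$ a.e., so $M_{\alpha}(G(t))$ is (strictly) decreasing on $(0,\infty)$. The only genuinely delicate point is the chain of pointwise and algebraic estimates producing the bound on $\mathscr{Q}_{3}^{(2)}$ with the precise constant; the truncation argument and the moment interpolation are routine once parts (i)--(iii) are in hand.
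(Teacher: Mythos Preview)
Your proposal addresses only part~(iv), taking the existence and parts~(i)--(iii) as given. This matches the paper's own ``Proof of Theorem~\ref{S1T1}'', which likewise cites earlier results for existence and (i)--(iii) and then argues (iv) directly; so the scope is appropriate. Your overall strategy for (iv) --- compute $\mathscr{Q}_3^{(1)}(x^{\alpha},g)$ exactly, bound $\Lambda(x^{\alpha})$ pointwise via $\psi(s)=(1+s)^{\alpha}+(1-s)^{\alpha}-2$, then close with H\"older --- is the same as the paper's, and your route does produce the stated constant $C(\alpha)$.

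Two inaccuracies, not fatal but worth correcting. First, the formula $n(t)=n(0)\exp\bigl(-\int_0^t M_1(g)\bigr)$ is wrong for weak solutions: the equation for $n$ carries a nontrivial source term (cf.\ Theorem~\ref{THn01}), so this identity does not hold. What you actually need is only $n(t)>0$, and that is true (Proposition~\ref{origin G}). Second, $M_{\alpha+1/2}(g(t))<\infty$ is not ``part of the hypotheses'' for $\alpha<5/2$; it follows for every $t>0$ from part~(iii) by interpolating between $M_1$ and $M_3$.

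Where your argument genuinely differs from the paper is in the intermediate estimate on $\mathscr{Q}_3^{(2)}$. You push $\Lambda(x^{\alpha})$ through the inequalities $M^{\alpha-1}m\le \frac{xy(x^{\alpha-1}+y^{\alpha-1})}{x+y}$ (resp.\ $M^{\alpha-2}m^2\le \frac{xy(x^{\alpha-2}y+xy^{\alpha-2})}{x+y}$) together with $\frac{\sqrt{xy}}{x+y}\le\frac12$, obtaining $\mathscr{Q}_3^{(2)}\le(2^{\alpha}-2)M_0 M_{\alpha-1}$ (resp.\ $\alpha(\alpha-1)M_1 M_{\alpha-2}$), and then interpolate $M_{\alpha-1}$ (resp.\ $M_{\alpha-2}$) between $M_0$ and $M_{\alpha+1/2}$. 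The paper instead observes directly that, for $0<y\le x$,
\[
\frac{\Lambda(x^{\alpha})(x,y)}{\sqrt{xy}}\ \le\ C_1\,(xy)^{\frac{\alpha-1}{2}},\qquad
C_1=\begin{cases}2^{\alpha}-2,&\alpha\in(1,2],\\ \alpha(\alpha-1),&\alpha\in[2,3],\end{cases}
\]
which yields the symmetric bound $\mathscr{Q}_3^{(2)}\le C_1\,M_{(\alpha-1)/2}^2$. This is cleaner: $M_{(\alpha-1)/2}$ sits between $M_0$ and $M_1$, both conserved, so H\"older gives $\mathscr{Q}_3^{(2)}\le C_1 N^{3-\alpha}E^{\alpha-1}$, a fixed constant independent of $g(t)$; the comparison with $\frac{\alpha-1}{\alpha+1}M_{\alpha+1/2}\ge \frac{\alpha-1}{\alpha+1}E^{(2\alpha+1)/2}N^{(1-2\alpha)/2}$ is then immediate and collapses to $E>C(\alpha)N^{5/3}$. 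Your route reaches the same endpoint, but requires interpolating with $M_{\alpha+1/2}$ itself and checking that the worst case occurs at its lower bound --- correct, but one more step than necessary.
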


The next result is a property satisfied by all  the weak radial solutions of  (\ref{PA}), (\ref{PB}).
\begin{theorem}
\label{S1Treg}
Let $G_0$ be  as in Theorem \ref{S1T1}, and $G$ a weak radial solution of  (\ref{PA}), (\ref{PB}).
Then for all $T>0$, $R>0$ and $\alpha\in\left(-\frac{1}{2},\infty\right)$,
\begin{align}
&\int_0^TG(t,\{0\})\int_{(0,R]}x^{\alpha}G(t,x) dxdt\leq \nonumber \\
&\,\,\, \le \frac{2R^{\frac{1}{2}+\alpha}}{1-\left(\frac{2}{3}\right)^{\frac{1}{2}+\alpha}}\left(\int_0^T\!\! G(t,\{0\})dt\right)^{\frac{1}{2}}\left(\frac{\sqrt{E}}{2}\int_0^T\!\!G(t,\{0\})dt+\sqrt{N}\right).\label{MNEG2}
\end{align}
\end{theorem}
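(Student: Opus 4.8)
The plan is to test the weak formulation (\ref{S1E16}) against the bounded Lipschitz cut-offs $\varphi_\rho(x)=\min\{x,\rho\}$ along the geometric sequence of scales $\rho_k=(2/3)^kR$, $k\ge0$, derive a differential inequality at each scale, integrate it in time, and sum the resulting geometric series. Since $\mathscr{Q}_3(\varphi,g)$ depends on $\varphi$ only through its pointwise values and primitives (see (\ref{S1E1Q3})--(\ref{S1E155})), the identity (\ref{S1E16}) extends, by uniform approximation, from $C^1_b$ to bounded Lipschitz test functions: the required dominated convergence rests only on the uniform bound (\ref{S1ED2S}) and the elementary estimates $|\Lambda(\varphi)(x,y)|\le 2L\min\{x,y\}\le 2L\sqrt{xy}$ and $|\mathcal{L}_0(\varphi)(x)|\le 4\|\varphi\|_\infty\,x$, valid whenever $\varphi$ is Lipschitz with constant $L$. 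Thus $\varphi_\rho$ is admissible and, writing $n(t)=G(t,\{0\})$, the function $\Phi_\rho(t):=\int_{[0,\infty)}\varphi_\rho\,G(t)=\int_{(0,\infty)}\varphi_\rho\,g(t)$ is locally absolutely continuous with $\Phi_\rho'(t)=n(t)\,\mathscr{Q}_3(\varphi_\rho,g(t))$ for a.e.\ $t$.

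All ingredients are explicit. Since $\varphi_\rho(0)=0$, one gets $\mathcal{L}_0(\varphi_\rho)(x)=\mathcal{L}(\varphi_\rho)(x)=-\rho(x-\rho)_+$, hence $\mathscr{Q}_3^{(1)}(\varphi_\rho,g)=-A_\rho$ with $A_\rho(t):=\int_{(\rho,\infty)}\frac{\rho(x-\rho)}{\sqrt x}\,g(t,x)\,dx\ge 0$; and, $\varphi_\rho$ being concave, $\Lambda(\varphi_\rho)\le 0$ everywhere, so $\mathscr{Q}_3^{(2)}(\varphi_\rho,g)\le 0$. The quantitative heart of the argument is that for $x,y\in(\tfrac23\rho,\rho]$ with $x\ge y$ one has $\max\{x,y\}=x\le\rho$, $|x-y|<\tfrac\rho3\le\rho$ and $x+y>\tfrac43\rho>\rho$, so that $\Lambda(\varphi_\rho)(x,y)=\rho+(x-y)-2x=\rho-x-y<-\tfrac\rho3$; this is what fixes the ratio $2/3$. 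Using moreover $xy\le\rho^2$ and the hypothesis $\alpha>-\tfrac12$ to write $\tfrac1{\sqrt{xy}}=(xy)^\alpha(xy)^{-\alpha-1/2}\ge x^\alpha y^\alpha\rho^{-2\alpha-1}$, we obtain, with $\mu_\rho(t):=\int_{(2\rho/3,\rho]}x^\alpha g(t,x)\,dx$,
\[
-\mathscr{Q}_3^{(2)}(\varphi_\rho,g(t))\ \ge\ \frac{\rho}{3}\iint_{(2\rho/3,\rho]^2}\frac{g(t,x)g(t,y)}{\sqrt{xy}}\,dx\,dy\ \ge\ \frac{1}{3\rho^{2\alpha}}\,\mu_\rho(t)^2 .
\]

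Plugging this into $\Phi_\rho'(t)=n(t)\big(\mathscr{Q}_3^{(2)}(\varphi_\rho,g(t))+A_\rho(t)\big)$ gives $\frac{n(t)}{3\rho^{2\alpha}}\mu_\rho(t)^2\le-\Phi_\rho'(t)+n(t)A_\rho(t)$ a.e.; integrating on $[0,T]$ and discarding $\Phi_\rho(T)\ge 0$ yields $\frac1{3\rho^{2\alpha}}\int_0^T n\,\mu_\rho^2\,dt\le\Phi_\rho(0)+\int_0^T n\,A_\rho\,dt$. Here $\Phi_\rho(0)\le\rho M_0(g_0)\le\rho N$, and since $\tfrac{x-\rho}{\sqrt x}<\sqrt x$ for $x>\rho$, $A_\rho(t)\le\rho M_{1/2}(g(t))\le\rho\sqrt{M_0(g(t))M_1(g(t))}\le\rho\sqrt{NE}$, by Cauchy--Schwarz and the conservation of $N$ and $E$ (for the latter, apply (\ref{S1E16}) to $\varphi\equiv1$ and to $\varphi(x)=x$ via its truncations $\min\{x,K\}$, $K\to\infty$; cf.\ (\ref{S1E210})--(\ref{S1E220})). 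With $\tau:=\tau(T)=\int_0^T n(t)\,dt$, and using $N+\sqrt{NE}\,\tau\le(\sqrt N+\tfrac{\sqrt E}{2}\tau)^2$, this gives $\int_0^T n\,\mu_\rho^2\,dt\le 3\rho^{2\alpha+1}(\sqrt N+\tfrac{\sqrt E}{2}\tau)^2$; Cauchy--Schwarz in $t$ then yields $\int_0^T n\,\mu_\rho\,dt\le\tau^{1/2}\big(\int_0^T n\,\mu_\rho^2\,dt\big)^{1/2}\le\sqrt3\,\rho^{\alpha+1/2}\,\tau^{1/2}\big(\sqrt N+\tfrac{\sqrt E}{2}\tau\big)$. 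Finally, the intervals $(\tfrac23\rho_k,\rho_k]=(\rho_{k+1},\rho_k]$ are disjoint with union $(0,R]$, so $\sum_{k\ge0}\mu_{\rho_k}(t)=\int_{(0,R]}x^\alpha g(t,x)\,dx=\int_{(0,R]}x^\alpha G(t,x)\,dx$; summing over $k$ and using $\sum_{k\ge0}\rho_k^{\alpha+1/2}=R^{\alpha+1/2}\big(1-(2/3)^{\alpha+1/2}\big)^{-1}$ (convergent because $\alpha+\tfrac12>0$) and $\sqrt3<2$ gives (\ref{MNEG2}).

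The main obstacle is the justification in the first paragraph — that $\varphi_\rho$ (and $\varphi(x)=x$) may legitimately be used in (\ref{S1E16}) and that $t\mapsto\Phi_\rho(t)$ is absolutely continuous with the stated derivative. This is a dominated-convergence argument that works precisely because $\mathscr{Q}_3$ involves $\varphi$ only pointwise and because $\tfrac{|\Lambda(\varphi)(x,y)|}{\sqrt{xy}}$ stays bounded near the origin for Lipschitz $\varphi$; some care is needed there since, in contrast, $\tfrac1{\sqrt{xy}}$ itself is not $g\otimes g$-integrable in general. Everything after that is bookkeeping; the one genuinely substantive idea is the choice of $\varphi_\rho$ together with the observation that $-\Lambda(\varphi_\rho)\ge\rho/3$ exactly on the dyadic block $(\tfrac23\rho,\rho]^2$.
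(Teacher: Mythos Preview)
Your proof is correct and takes a genuinely different route from the paper's. The paper proves Theorem~\ref{S1Treg} by pulling back to the auxiliary function $h$ via the time change (\ref{CHANGE}) and then invoking Proposition~\ref{S5P2}, whose proof uses the \emph{convex} test functions $\varphi(x)=(1-x/r)_+^2$: convexity makes $\mathscr{Q}_3^{(2)}(\varphi,h)\ge 0$, and one keeps only the contribution from the block $(r,\tfrac32 r]^2$ where $\Lambda(\varphi)(x,y)=\varphi(|x-y|)\ge\tfrac14$; the resulting bound is then expressed through $M_0(h(\tau_2))$ and the mass inequality (\ref{MMI}) for $h$. You instead stay with $G$ and use the \emph{concave} test functions $\varphi_\rho(x)=\min\{x,\rho\}$: concavity makes $\mathscr{Q}_3^{(2)}(\varphi_\rho,g)\le 0$, and on $(\tfrac23\rho,\rho]^2$ one has $-\Lambda(\varphi_\rho)>\rho/3$, which is the exact dual of the paper's observation. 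The price of concavity is that the linear term $\mathscr{Q}_3^{(1)}(\varphi_\rho,g)=-A_\rho$ no longer has a helpful sign and must be estimated by $\rho\sqrt{NE}$; this you do via conservation of $N$ and $E$, landing on the same right-hand side up to $\sqrt{3}<2$.

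Both proofs share the geometric decomposition with ratio $2/3$ and the final Cauchy--Schwarz in $t$. What your approach buys is self-containment: it uses only the weak formulation (\ref{S1E16}) and the conservation laws, without invoking the construction of $h$ or the change of variables --- which is in fact more in keeping with the theorem's claim to hold for \emph{any} weak radial solution. The paper's route is shorter once Proposition~\ref{S5P2} is in hand, and exploits the super-solution structure of $h$ (Definition~\ref{SUPER}) to discard the linear term for free. Your justification for extending (\ref{S1E16}) to bounded Lipschitz $\varphi$ is the right one: pass to the integrated-in-time form, approximate $\varphi_\rho$ by $C^1_b$ functions with uniformly bounded $W^{1,\infty}$ norm, and use dominated convergence with the bounds $|\Lambda(\varphi)|/\sqrt{xy}\le 2L$ and $|\mathcal{L}_0(\varphi)(x)|/\sqrt{x}\le 4\|\varphi\|_\infty\sqrt{x}$ against $M_0(g)^2$ and $M_{1/2}(g)$.
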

The only
possible algebraic behavior for such a measure $G$ near the origin is then $x^{-1/2}$.
\begin{remark}
\label{S1R1}
The functions $F _{ \beta , 0, C }$ defined above are weak radial solutions of (\ref{PA}), (\ref{PB}) for all $\beta >0$ and $C\ge 0$ (cf. Proposition \ref{equilibria}). Since
\bean
\int  _{ (0, \infty) } x^{\alpha}  G _{ \beta , 0, C}\;dx<\infty\quad\Longleftrightarrow\quad\alpha >-1/2,
\eean
the estimate (\ref{MNEG2}) can not hold for all radial weak solutions if $\alpha\leq -1/2$.
\end{remark}

In the next two results we describe the evolution of the measure at the origin $n(t)=G(t,\{0\})$ by taking the limit $\varepsilon\to 0$ in the weak formulation 
(\ref{S1E16}) for test functions $\varphi_{\varepsilon}$ as follows:
\begin{remark}
\label{TEST}
Given $\varphi\in C^1_b([0,\infty))$ nonnegative, convex, with $\varphi(0)=1$ and 
$\lim_{x\to\infty}\sqrt{x}\varphi(x)=0$, denote $\varphi_{\varepsilon}(x)=\varphi(x/\varepsilon)$ for $\varepsilon>0$. Notice that for any 
$G\in\mathscr{M}_+([0,\infty))$,
\begin{align}
\label{TEST2}
G(\{0\})=\lim_{\varepsilon\to 0}\int_{[0,\infty)}\varphi_{\varepsilon}(x)dG(x).
\end{align}
The standard example is $\varphi_{\varepsilon}(x)=(1-x/\varepsilon)^2_+$.
\end{remark}

\begin{theorem}
\label{THn01}
Let $G$ be the solution of (\ref{S1E16}) obtained in Theorem \ref{S1T1}, with initial data $G_0\in \mathscr{M}^1_+([0,\infty))$ such that $N=M_0(G_0)>0$, $E=M_1(G_0)>0$ and $G_0(\{0\})>0$. Denote $G(t)=n(t)\delta _0+g(t)$, with
$n(t)=G(t, \{0\})$. Then $n$ is right continuous and a.e. differentiable on $[0,\infty)$. Moreover, there exists a positive measure $\mu$ on $[0,\infty)$ whose cumulative distribution function is given by
\begin{align}
\label{ZE01}
\mu((0,t])=\lim_{\varepsilon\to 0}\int_{0}^{t}n(s)\mathscr{Q}_3^{(2)}(\varphi_{\varepsilon},g(s))ds
\end{align}
for any $\varphi_{\varepsilon}$ as in Remark \ref{TEST},
and  such that:
\begin{align}
\label{ZE02}
n(t)-n(0)+\int_{0}^{t}n(s)M_{1/2}(g(s))ds=\mu((0,t])\qquad\forall t>0.
\end{align}
\end{theorem}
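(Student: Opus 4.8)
The plan is to isolate the term $\mathscr{Q}_3^{(2)}(\varphi_\varepsilon, g)$ in the weak formulation (\ref{S1E16}), show it has a sign, and then deduce convergence of its time integral to a measure via a monotonicity argument. First I would take a test function $\varphi_\varepsilon$ as in Remark \ref{TEST} and write out (\ref{S1E16}): using (\ref{S1EB1}) and (\ref{S1E1Q3}), for a.e. $t$,
\begin{align*}
\frac{d}{dt}\int_{[0,\infty)}\varphi_\varepsilon(x)G(t,x)dx
=n(t)\Big(\mathscr{Q}_3^{(2)}(\varphi_\varepsilon,g(t))-\widetilde{\mathscr{Q}}_3^{(1)}(\varphi_\varepsilon,g(t))-\varphi_\varepsilon(0)M_{1/2}(g(t))\Big),
\end{align*}
and $\varphi_\varepsilon(0)=\varphi(0)=1$. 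The key observation is that $\mathscr{Q}_3^{(2)}(\varphi,g)\ge 0$ when $\varphi$ is nonnegative and convex: indeed $\Lambda(\varphi)(x,y)=\varphi(x+y)+\varphi(|x-y|)-2\varphi(\max\{x,y\})$ and, since $\{x+y,|x-y|\}$ average to $\max\{x,y\}$ (as $\tfrac{(x+y)+|x-y|}{2}=\max\{x,y\}$), convexity of $\varphi$ gives $\Lambda(\varphi)\ge0$; hence (\ref{S1E1Q32}) is an integral of nonnegative terms. Similarly I would check that $\widetilde{\mathscr{Q}}_3^{(1)}(\varphi,g)\ge 0$: from (\ref{S1E21R}), $\mathcal{L}(\varphi)(x)=x\varphi(x)-2\int_0^x\varphi(y)dy$, and since $\lim_{x\to\infty}\sqrt{x}\varphi(x)=0$ with $\varphi$ convex and nonnegative one shows $\mathcal{L}(\varphi)(x)\le 0$ for all $x>0$ (write $\mathcal{L}(\varphi)(x)=-\int_x^\infty$ of something nonnegative, or integrate by parts using the decay), so $-\widetilde{\mathscr{Q}}_3^{(1)}\ge 0$ as well; I would track the precise sign carefully since this point feeds directly into the monotone-convergence step.

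Next I would integrate in time on $[0,t]$. The left side is $\int\varphi_\varepsilon\,dG(t)-\int\varphi_\varepsilon\,dG(0)$, which by (\ref{TEST2}) converges to $n(t)-n(0)$ as $\varepsilon\to 0$ for every fixed $t$ (using $G\in C([0,\infty),\mathscr{M}_+^1)$ from Theorem \ref{S1T1} to handle the endpoint values cleanly). On the right we have
\begin{align*}
n(t)-\int\varphi_\varepsilon dg(t)+\ldots = \int_0^t n(s)\mathscr{Q}_3^{(2)}(\varphi_\varepsilon,g(s))ds-\int_0^t n(s)\widetilde{\mathscr{Q}}_3^{(1)}(\varphi_\varepsilon,g(s))ds-\int_0^t n(s)M_{1/2}(g(s))ds.
\end{align*}
For the standard choice $\varphi_\varepsilon(x)=(1-x/\varepsilon)^2_+$ one checks $M_{1/2}(g(s))<\infty$ for a.e. $s$ (by (\ref{S1ED2S}), $M_{1/2}(g)\le M_0(G)+M_1(G)$ is bounded) so $\int_0^t n(s)M_{1/2}(g(s))ds$ is a fixed finite quantity independent of $\varepsilon$, and I would argue that as $\varepsilon\to0$ the term $\int_0^t n(s)\widetilde{\mathscr{Q}}_3^{(1)}(\varphi_\varepsilon,g(s))ds\to 0$: from (\ref{S1E20R})–(\ref{S1E21R}), $\mathcal{L}(\varphi_\varepsilon)(x)/\sqrt{x}$ is supported in $x\le\varepsilon$ and bounded by $C\sqrt{x}$, so the integrand is dominated by $C\sqrt{x}\mathbf{1}_{\{x\le\varepsilon\}}g(s,x)$, which tends to $0$ by dominated convergence using the uniform moment bound; one then integrates in $s$ and uses $\int_0^t n(s)ds<\infty$.

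Having sent $\varepsilon\to0$ in every term except the first, the identity forces $\lim_{\varepsilon\to0}\int_0^t n(s)\mathscr{Q}_3^{(2)}(\varphi_\varepsilon,g(s))ds$ to exist and equal $n(t)-n(0)+\int_0^t n(s)M_{1/2}(g(s))ds$; call this $\mu((0,t])$, which gives (\ref{ZE02}) and the formula (\ref{ZE01}). To see $\mu$ is (the cumulative distribution of) a genuine positive measure: the map $t\mapsto\int_0^t n(s)\mathscr{Q}_3^{(2)}(\varphi_\varepsilon,g(s))ds$ is nondecreasing for each $\varepsilon$ (integrand $\ge0$ by the convexity argument), hence its pointwise limit $t\mapsto\mu((0,t])$ is nondecreasing; it is also finite (bounded above by $n(t)+\int_0^t n(s)M_{1/2}(g(s))ds<\infty$) and, being a monotone limit of continuous functions that one can show are uniformly bounded on compacts, it defines a Stieltjes measure on $[0,\infty)$. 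The right-continuity and a.e.\ differentiability of $n$ then follow: $n(t)=n(0)-\int_0^t n(s)M_{1/2}(g(s))ds+\mu((0,t])$ exhibits $n$ as a sum of an absolutely continuous function and a right-continuous nondecreasing function, both of which are right continuous and differentiable a.e. The main obstacle I anticipate is making the passage to the limit $\varepsilon\to 0$ fully rigorous uniformly in $t$ on compact intervals — in particular justifying the interchange of $\lim_{\varepsilon\to0}$ with $\int_0^t\,ds$ in the $\mathscr{Q}_3^{(2)}$ term without yet knowing the limit exists; the clean way around this is to run the argument on the other three terms (where convergence is unconditional), get existence of the $\mathscr{Q}_3^{(2)}$ limit for free from the identity, and only afterward identify its monotonicity in $t$.
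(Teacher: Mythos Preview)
Your approach is correct in spirit and genuinely different from the paper's. The paper never works directly with $G$; it exploits the time change $\tau=\int_0^t n(s)\,ds$ and the auxiliary solution $h$ of (\ref{S1E16ha}). It first shows (Lemma~\ref{S6L1'}) that $m(\tau)=h(\tau,\{0\})$ is nondecreasing and right continuous, hence carries a Lebesgue--Stieltjes measure $\lambda$; then identifies $\lambda((0,\tau])=\lim_\varepsilon\int_0^\tau\mathscr{Q}_3^{(2)}(\varphi_\varepsilon,h(\sigma))\,d\sigma$ (Proposition~\ref{Stieltjes1}); and finally defines $\mu$ as the push-forward $\xi_{\#}\lambda$ through the time change. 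Right continuity and a.e.\ differentiability of $n$ are obtained separately in Proposition~\ref{origin G}. Your direct route avoids the change of variables and the $h$-machinery entirely, which is more elementary; the paper's detour has the advantage that monotonicity and right continuity are cleaner for $m$ (no $n(s)$ weight in the integrand), and it reuses structure already built for the existence proof.

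Two repairs are needed in your argument. First, the claim that $\mathcal{L}(\varphi_\varepsilon)(x)/\sqrt{x}$ is supported in $\{x\le\varepsilon\}$ is false: for $x$ beyond the support of $\varphi_\varepsilon$ one has $\mathcal{L}(\varphi_\varepsilon)(x)=-2\int_0^\infty\varphi_\varepsilon(y)\,dy=-2\varepsilon\int_0^\infty\varphi\neq 0$. The correct uniform bound (Lemma~\ref{lemma regularity}\,(iii)) is $|\mathcal{L}(\varphi_\varepsilon)(x)|/\sqrt{x}\le 3\sqrt{x}$; since $\mathcal{L}(\varphi_\varepsilon)(x)\to 0$ pointwise and $M_{1/2}(g(s))<\infty$, dominated convergence still yields $\widetilde{\mathscr Q}_3^{(1)}(\varphi_\varepsilon,g(s))\to 0$ and then the time-integral limit. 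Second, your derivation of right continuity of $n$ is circular as written: you use that $t\mapsto\mu((0,t])$ is right continuous because it is the CDF of a measure, but that is exactly what must be checked---a pointwise limit of continuous nondecreasing functions need not be right continuous. One clean fix: from $G\in C\big([0,\infty),\mathscr{M}_+\big)$ and the fact that $\{0\}$ is closed, Portmanteau gives $\limsup_{s\to t}n(s)\le n(t)$; combined with $F(t^+)\ge F(t)$ (monotonicity of your limit $F$) and the identity $n=n(0)-A+F$ with $A$ continuous, you obtain $n(t^+)=n(t)$, hence also $F(t^+)=F(t)$, and then $F$ really is the distribution function of a positive measure.
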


\begin{theorem}
\label{MU1}
Let $G$ and $\mu$ be as in Theorem \ref{THn01}. Then
\begin{align}
\label{ZE00}
0<\mu((0,t]))<\infty\qquad\forall t>0.
\end{align}
\end{theorem}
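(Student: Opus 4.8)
The plan is to establish the two bounds in (\ref{ZE00}) separately, using the representation (\ref{ZE02}) together with the already-proven properties of $G$ from Theorems \ref{S1T1} and \ref{S1Treg}. For the upper bound $\mu((0,t])<\infty$: from (\ref{ZE02}) we have $\mu((0,t])=n(t)-n(0)+\int_0^t n(s)M_{1/2}(g(s))ds$, so it suffices to show that $\int_0^t n(s)M_{1/2}(g(s))ds<\infty$ for every $t>0$ (the term $n(t)-n(0)$ is trivially bounded since $0\le n(t)\le N$). But $M_{1/2}(g(s))=\int_{(0,\infty)}\sqrt{x}\,g(s,x)dx$, and splitting the integral at $x=R=1$ gives $M_{1/2}(g(s))\le \int_{(0,1]}\sqrt{x}\,g(s,x)dx+\int_{(1,\infty)}\sqrt{x}\,g(s,x)dx\le \int_{(0,1]}x^{1/2}g(s,x)dx+M_1(g(s))$. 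Integrating against $n(s)=G(s,\{0\})$ over $[0,t]$, the second piece contributes at most $E\int_0^t n(s)ds\le ENt<\infty$, while the first piece is controlled by Theorem \ref{S1Treg} applied with $\alpha=1/2>-1/2$ and $R=1$, which gives exactly a finite bound in terms of $N$, $E$ and $\int_0^t n(s)ds$. Hence $\mu((0,t])<\infty$.

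For the strict positivity $\mu((0,t])>0$, I would argue by contradiction: suppose $\mu((0,t_0])=0$ for some $t_0>0$; since $\mu$ is a positive measure this forces $\mu((0,s])=0$ for all $s\in(0,t_0]$, and then (\ref{ZE02}) reads $n(s)=n(0)-\int_0^s n(r)M_{1/2}(g(r))dr$ for $s\le t_0$. In particular $n$ is absolutely continuous on $[0,t_0]$ with $n'(s)=-n(s)M_{1/2}(g(s))\le 0$. Since $n(0)=G_0(\{0\})>0$ and $n$ solves a linear ODE with nonnegative coefficient, $n(s)=n(0)\exp\!\big(-\int_0^s M_{1/2}(g(r))dr\big)>0$ on $[0,t_0]$ provided $\int_0^{t_0}M_{1/2}(g(r))dr<\infty$, which we just showed. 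Now I return to the weak formulation (\ref{S1E16}) with a general test function $\varphi\in C^1_b([0,\infty))$, $\varphi\ge 0$: using $\mathscr Q_3(\varphi,g)=\mathscr Q_3^{(2)}(\varphi,g)-\mathscr Q_3^{(1)}(\varphi,g)$ and the fact that, by (\ref{ZE01}), $\int_0^{t_0}n(s)\mathscr Q_3^{(2)}(\varphi_\varepsilon,g(s))ds\to \mu((0,t_0])=0$ along the family $\varphi_\varepsilon$, while $\mathscr Q_3^{(1)}(\varphi_\varepsilon,g)\to M_{1/2}(g)\,$ in the limit (since $\mathcal L_0(\varphi_\varepsilon)(x)/\sqrt{x}\to \sqrt{x}$ pointwise by (\ref{S1E155}) and dominated convergence applies thanks to the moment bound), we recover (\ref{ZE02}) consistently but now want a contradiction from a lower bound on $\mathscr Q_3^{(2)}$.

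The main obstacle, and the heart of the argument, is producing a quantitative lower bound showing $\mu((0,t_0])>0$: i.e. that the ``gain'' term $\mathscr Q_3^{(2)}$ integrated in time cannot vanish. The key observation is that $\mathscr Q_3^{(2)}(\varphi_\varepsilon,g)=\iint_{(0,\infty)^2}\frac{\Lambda(\varphi_\varepsilon)(x,y)}{\sqrt{xy}}g(x)g(y)dxdy$ with $\Lambda(\varphi_\varepsilon)(x,y)=\varphi_\varepsilon(x+y)+\varphi_\varepsilon(|x-y|)-2\varphi_\varepsilon(\max\{x,y\})$, and for the standard choice $\varphi_\varepsilon(x)=(1-x/\varepsilon)_+^2$ one has $\Lambda(\varphi_\varepsilon)(x,y)\ge 0$ with $\Lambda(\varphi_\varepsilon)(x,x)=\varphi_\varepsilon(2x)+\varphi_\varepsilon(0)-2\varphi_\varepsilon(x)=1-2(1-x/\varepsilon)_+^2+(1-2x/\varepsilon)_+^2$, which is bounded below by a positive constant on a region $x\asymp\varepsilon$. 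Since $G_0$, and hence $g(s)$ for $s$ near $0$ by the continuity (\ref{S1T1E0}), carries positive mass $N-n(s)>0$ away from the origin (note $n(s)<N$ because $E=M_1(G_0)>0$ forces $g_0\not\equiv 0$), for $\varepsilon$ chosen adapted to where $g$ has mass we get $\mathscr Q_3^{(2)}(\varphi_\varepsilon,g(s))$ bounded below by a positive quantity on a time set of positive measure in $[0,t_0]$; multiplying by $n(s)>0$ and integrating contradicts $\mu((0,t_0])=0$. Executing this cleanly requires care in choosing $\varepsilon$ uniformly over a short initial time interval and in quantifying ``mass bounded away from $0$'', which is where the real work lies; the regularity estimate of Theorem \ref{S1Treg} is what guarantees $g$ does not instantaneously collapse its entire mass onto the origin, so that such an $\varepsilon$ and time interval exist.
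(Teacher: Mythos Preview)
Your upper bound argument is correct (and in fact simpler than you made it: since $\sqrt{x}\le 1$ on $(0,1]$ you get $\int_{(0,1]}\sqrt{x}\,g(s,x)dx\le M_0(g(s))\le N$ directly, without invoking Theorem~\ref{S1Treg}). The paper obtains finiteness differently but just as cheaply, via the change of variables: $\mu((0,t])=\lambda((0,\tau])=m(\tau)-m(0)$ with $m(\tau)=h(\tau,\{0\})\le M_0(h(\tau))<\infty$.

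Your lower bound argument, however, has a genuine gap. The quantity $\mu((0,t_0])$ is defined in (\ref{ZE01}) as the \emph{limit} $\lim_{\varepsilon\to 0}\int_0^{t_0}n(s)\mathscr{Q}_3^{(2)}(\varphi_\varepsilon,g(s))\,ds$. What you establish is that, for some \emph{fixed} $\varepsilon$ adapted to where $g$ carries mass (say near $x_0>0$), the integrand $\mathscr{Q}_3^{(2)}(\varphi_\varepsilon,g(s))$ is bounded below. But positivity for one value of $\varepsilon$ says nothing about the limit as $\varepsilon\to 0$; there is no monotonicity of $\Lambda(\varphi_\varepsilon)$ in $\varepsilon$ to exploit. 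In fact $\Lambda(\varphi_\varepsilon)(x,y)\to\mathds{1}_{\{x=y>0\}}$ pointwise, so the limit probes the behaviour of $g$ on the diagonal and near the origin---information that mass of $g$ at a fixed distance $x_0$ does not supply. Your closing remark that Theorem~\ref{S1Treg} ``guarantees $g$ does not instantaneously collapse its entire mass onto the origin'' points in the wrong direction: Theorem~\ref{S1Treg} is an \emph{upper} bound on mass near $0$, whereas what is needed for positivity is a \emph{lower} bound on concentration near $0$.

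The paper's route is quite different and substantially deeper. Via the change of variables $\tau=\int_0^t n(s)ds$, one has $\mu((0,t])=\lambda((0,\tau])=m(\tau)-m(0)$ with $m(\tau)=h(\tau,\{0\})$, so the claim reduces to showing $m$ is \emph{strictly} increasing (Theorem~\ref{S1T4h}). That strict monotonicity is the content of all of Section~\ref{SectionC}: an iterative cascade argument (Propositions~\ref{S4P47} and~\ref{S5LB}, culminating in Proposition~\ref{S5PLB}) produces the lower bound $\int_{[0,r]}h(\tau,x)dx\ge C r^\alpha$ for every $\alpha\in(0,1)$ and all small $r$, which is then played against the upper bound of Proposition~\ref{S5P2} (the $h$-side version of Theorem~\ref{S1Treg}) to force $h(\tau,\{0\})>0$. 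The ``real work'' you allude to in your last paragraph is precisely this machinery; it is not a refinement of your sketch but a different mechanism altogether, showing that collisions drive mass all the way to $x=0$ at every positive time.
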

The measure $\mu$ in (\ref{ZE01}) depends on the atomic part of  $g$, and on the behaviour of $g$ at the origin (it seems to be actually related with its moment of order $-1/2$ c.f. Proposition \ref{LM-1/2} and Remark \ref{HH}). This measure $\mu $ appears as a source term in the equation (\ref{ZE02}) for $n$.
Given  the function $n$, the equation (\ref{PR}) satisfied by $g$ on $(0, \infty)$ has also a natural weak formulation by itself. In terms of $g(t)$, where $g(t)=G(t)-G(t, \{0\})\delta _0$ and $\sqrt x f(t, x)=g(t, x)$ it reads 

\bear
\label{2E765}
\frac{d}{dt}\int_{[0,\infty)} \!\!\!\!\!\varphi(x)g(t,x)dx=n(t)\mathscr{Q}_3(\varphi,g(t)),\;\forall \varphi\in C_b^1([0, \infty)),\,\varphi (0)=0.
\eear

In the next result we describe the relation between a weak solution $(F,n)$ of (\ref{PA}), (\ref{PB}), where $F(t, p)=|p|^{-1}g(t, |p|^2) $, $G(t)=n(t)\delta _0+g(t)$, $n(t)=G(t, \{0\})$, and a pair $(g, n)$ where $g$ is a weak radial solution of the equation (\ref{PA}) and $n$ satisfies (\ref{PB}). 
\begin{theorem}
\label{EQUIV}
Suppose that $G\in C\big([0,\infty),\mathscr{M}_+([0,\infty))\big)$ is such that $G(0)=G_0\in \mathscr{M}^1_+([0,\infty))$ with $G_0(\{0\})>0$, and denote  $G(t)=n(t)\delta _0+g(t)$ with $n(t)=G(t, \{0\})$.\\
(i) If $(F, n)$  is a weak radial solution of (\ref{PA}), (\ref{PB}) and $F(t, p)=|p|^{-1}g(t, |p|^2)$, then $n$ is given by (\ref{ZE02}), (\ref{ZE01}),
and $g$ satisfies (\ref{2E765}) for a.e. $t>0$.\\
(ii) On the other hand, if  $g$ satisfies (\ref{S1ED2S}), (\ref{S1ED3S}) and (\ref{2E765}) for some nonnegative bounded function $n$,  
then the limit in (\ref{ZE01}) exists. If $n$ also satisfies
\begin{equation}
\label{ene1}
n(t)=n(0)+\lim_{\varepsilon\to 0}\int_0^t n(s)\mathscr{Q}_3^{(2)}(\varphi_{\varepsilon},g(s))ds-\int_0^t n(s)M_{1/2}(g(s))ds
\end{equation}
and $F(t, p)=|p|^{-1}g(t, |p|^2)$, then $(F, n)$ is a weak radial solution of (\ref{PA}), (\ref{PB}).
\end{theorem}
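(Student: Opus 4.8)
I would treat the two implications separately; the first is essentially an unpacking of Definition~\ref{S1D1}. Assume $(F,n)$ is a weak radial solution, so $G=n\delta_0+g$ satisfies (\ref{S1ED2S}), (\ref{S1ED3S}), (\ref{S1E16}). If $\varphi\in C^1_b([0,\infty))$ with $\varphi(0)=0$ then $\int_{[0,\infty)}\varphi\,dG(t)=\int_{[0,\infty)}\varphi(x)g(t,x)\,dx$, so (\ref{S1E16}) restricted to this class of test functions is precisely (\ref{2E765}), which settles the statement about $g$. To obtain (\ref{ZE02})--(\ref{ZE01}) I would rerun the argument of Theorem~\ref{THn01} in this more general setting, which uses only the properties in Definition~\ref{S1D1} together with conservation of $M_0(G)$ (a consequence of (\ref{S1E16}) with $\varphi\equiv1$, since $\mathscr Q_3(1,g)=0$ by (\ref{S1E1Q3}), (\ref{S1E154}), (\ref{S1E155})): insert $\varphi_\varepsilon$ as in Remark~\ref{TEST} into (\ref{S1E16}), integrate in $t$, and let $\varepsilon\to0$ using (\ref{TEST2}) and the decomposition (\ref{S1EB1})--(\ref{S1EB2}). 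The computation carrying the argument is the scaling identity $\mathcal L(\varphi_\varepsilon)(x)=\varepsilon\,\mathcal L(\varphi)(x/\varepsilon)$ read off from (\ref{S1E21R}): it gives $\bigl|\mathcal L(\varphi_\varepsilon)(x)/\sqrt x\bigr|\le C_\varphi\sqrt\varepsilon$ uniformly in $x>0$ (using $\sup_{z\ge0}\sqrt z\,\varphi(z)<\infty$ and $\int_0^z\varphi=o(\sqrt z)$), hence $\widetilde{\mathscr Q}_3^{(1)}(\varphi_\varepsilon,g(s))=O(\sqrt\varepsilon)$ uniformly on $[0,t]$ since $\sup_{[0,t]}M_0(g(s))<\infty$ by (\ref{S1ED2S}); this term disappears in the limit and one is left with (\ref{ZE02}).

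\textbf{Part (ii), existence of the limit.} Here only (\ref{2E765}) is available, and $\varphi_\varepsilon(0)=1$ makes $\varphi_\varepsilon$ inadmissible as a test function. The idea I would use is to test (\ref{2E765}) against the \emph{differences} $\varphi_\varepsilon-\varphi_{\varepsilon'}$, which vanish at the origin; using linearity of $\mathscr Q_3^{(2)}$, $\widetilde{\mathscr Q}_3^{(1)}$ and the fact (from (\ref{S1EB1})--(\ref{S1EB2})) that $\mathscr Q_3(\psi,g)=\mathscr Q_3^{(2)}(\psi,g)-\widetilde{\mathscr Q}_3^{(1)}(\psi,g)$ whenever $\psi(0)=0$, integration in time yields
\begin{equation*}
\int_0^t n(s)\bigl(\mathscr Q_3^{(2)}(\varphi_\varepsilon,g(s))-\mathscr Q_3^{(2)}(\varphi_{\varepsilon'},g(s))\bigr)ds=\int_{[0,\infty)}(\varphi_\varepsilon-\varphi_{\varepsilon'})(x)\bigl(g(t,x)-g_0(x)\bigr)dx+\int_0^t n(s)\bigl(\widetilde{\mathscr Q}_3^{(1)}(\varphi_\varepsilon,g(s))-\widetilde{\mathscr Q}_3^{(1)}(\varphi_{\varepsilon'},g(s))\bigr)ds.
\end{equation*}
Since $0\le\varphi_\varepsilon\le1$ (because such a $\varphi$ is nonincreasing) and $g(t)$ carries no atom at $0$, dominated convergence gives $\int\varphi_\varepsilon(x)g(t,x)\,dx\to0$ (and the same for $g_0$), while the $\widetilde{\mathscr Q}_3^{(1)}$ terms are $O(\sqrt\varepsilon)$ uniformly as above; so the right-hand side tends to $0$, $\varepsilon\mapsto\int_0^t n(s)\mathscr Q_3^{(2)}(\varphi_\varepsilon,g(s))\,ds$ is Cauchy, and the limit exists and is finite. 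Running the computation with two admissible profiles shows it is independent of the choice, and since $\varphi$ is convex, $\Lambda(\varphi_\varepsilon)\ge0$ (midpoint inequality, $\max\{x,y\}=\tfrac12(x+y)+\tfrac12|x-y|$), so each approximating integral is nonnegative and nondecreasing in $t$ and the limit is the distribution function of a positive measure $\mu$ as in (\ref{ZE01}).

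\textbf{Part (ii), conclusion.} Given also (\ref{ene1}), I would first prove conservation of mass: testing (\ref{2E765}) against $1-\varphi_\varepsilon$ (which vanishes at $0$), integrating and letting $\varepsilon\to0$ exactly as above gives $M_0(g(t))-M_0(g_0)=-\mu((0,t])+\int_0^t n(s)M_{1/2}(g(s))\,ds$; adding (\ref{ene1}) yields $M_0(G(t))=n(t)+M_0(g(t))=M_0(G_0)$ for every $t$. With this in hand, the full formulation (\ref{S1E16}) for arbitrary $\varphi\in C^1_b([0,\infty))$ follows from the splitting $\varphi=\varphi(0)+(\varphi-\varphi(0))$: one has $\int_{[0,\infty)}\varphi\,dG(t)=\varphi(0)M_0(G_0)+\int_{[0,\infty)}(\varphi-\varphi(0))(x)g(t,x)\,dx$, the first term being constant; differentiating the second via (\ref{2E765}) (legitimate since $\varphi-\varphi(0)$ vanishes at $0$) and using $\mathscr Q_3(1,g)=0$ gives $\tfrac{d}{dt}\int_{[0,\infty)}\varphi\,dG(t)=n(t)\mathscr Q_3(\varphi,g(t))$ a.e. Finally (\ref{S1ED2S}) for $G$ is immediate from the corresponding bound for $g$ and boundedness of $n$, and (\ref{S1ED3S}) for $G$ follows from (\ref{S1ED3S}) for $g$ together with $n=M_0(G_0)-M_0(g(\cdot))\in W^{1,\infty}_{loc}$; hence $(F,n)$ is a weak radial solution.

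\textbf{Main obstacle.} The hard point is the middle step of (ii): recovering the evolution of the atom $n=G(\cdot,\{0\})$ from the equation for $g$ alone, precisely the information the admissible test functions cannot see. This forces the non-compactly-supported, non-vanishing profiles $\varphi_\varepsilon$, and the argument rests on two quantitative facts — the uniform $O(\sqrt\varepsilon)$ control of $\widetilde{\mathscr Q}_3^{(1)}(\varphi_\varepsilon,g)$, coming from the scaling of $\mathcal L$ and the decay constraints $\sqrt z\,\varphi(z)\to0$, $\int_0^z\varphi=o(\sqrt z)$ built into Remark~\ref{TEST}, and the cancellation $(\varphi_\varepsilon-\varphi_{\varepsilon'})(0)=0$ that makes the differences admissible. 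The coupling condition (\ref{ene1}) then enters exactly where needed: it converts the identity for $\mu$ into conservation of $M_0(G)$, and that conservation is what promotes (\ref{2E765}) back to (\ref{S1E16}). Some routine care is also required for measurability and local integrability in $s$ of $s\mapsto n(s)\mathscr Q_3^{(2)}(\varphi_\varepsilon,g(s))$ and for the dominated-convergence steps, all controlled uniformly on compact time intervals by (\ref{S1ED2S}) and boundedness of $n$.
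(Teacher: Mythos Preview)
Your argument is correct and close to the paper's, with one organizational difference worth noting in part~(ii). For the existence of the limit, the paper does not go through a Cauchy argument on $\varphi_\varepsilon-\varphi_{\varepsilon'}$; it tests (\ref{2E765}) directly against $1-\psi_\varepsilon$ (your later step) and reads off both existence and the explicit value of the limit in one stroke, cf.~(\ref{Bht})--(\ref{Bhtw}). Your Cauchy step is valid but redundant, since you use the $1-\varphi_\varepsilon$ test anyway.

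For the passage from (\ref{2E765}) to (\ref{S1E16}), your route is actually a bit cleaner than the paper's. The paper writes $\varphi=(\varphi-\psi_\varepsilon)+\psi_\varepsilon$, applies (\ref{2E765}) to $\varphi-\psi_\varepsilon$, and passes to the limit using (\ref{ene1}) to control the $\psi_\varepsilon$ contribution; mass conservation is recovered only afterwards, from (\ref{S1E16}) with $\varphi=1$. You instead extract mass conservation first from (\ref{ene1}) and (\ref{Bhtw}), and then the constant splitting $\varphi=\varphi(0)+(\varphi-\varphi(0))$ reduces (\ref{S1E16}) to (\ref{2E765}) without any further $\varepsilon$-limit. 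Both approaches are equivalent, and your $O(\sqrt\varepsilon)$ control on $\widetilde{\mathscr Q}_3^{(1)}(\varphi_\varepsilon,g)$ via the scaling $\mathcal L(\varphi_\varepsilon)(x)=\varepsilon\,\mathcal L(\varphi)(x/\varepsilon)$ is a sharper form of the qualitative convergence (\ref{limQ31b}) the paper quotes.
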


If in the Definition \ref{S1D1} only test functions satisfying $\varphi (0)=0$ are taken, it becomes necessary to introduce some other condition to the system. Otherwise the system would be reduced to  find $g$ satisfying (\ref{S1ED3S})--(\ref{S1E16})  for a given function $n(t)$ and  for test functions such that  $\varphi (0)=0$. If we impose just the conservation of mass, we prove below (Corollary \ref{S1C31}) that we recover a solution that satisfies the Definition \ref{S1D1}.

\begin{corollary}
\label{S1C31}
If $g$ satisfies (\ref{S1ED2S}), (\ref{S1ED3S}) and (\ref{2E765}) for some nonnegative bounded function $n=n(t)$ such that 
\begin{equation}
\label{MBC}
n(t)+\int  _{ (0, \infty) }g(t, x)dx= constant
\end{equation}
and $F(t, p)=|p|^{-1}g(t, |p|^2)$, then $(F, n)$ is a weak radial solution of (\ref{PA}), (\ref{PB}).
\end{corollary}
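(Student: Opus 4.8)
The plan is to reduce everything to Theorem \ref{EQUIV}(ii). By hypothesis $g$ satisfies (\ref{S1ED2S}), (\ref{S1ED3S}) and (\ref{2E765}) and $n$ is nonnegative and bounded, so Theorem \ref{EQUIV}(ii) already guarantees that the limit in (\ref{ZE01}) exists; it then suffices to show that the conservation law (\ref{MBC}) forces $n$ to satisfy (\ref{ene1}), after which Theorem \ref{EQUIV}(ii) gives the conclusion. The device is to test (\ref{2E765}) against $\psi_\varepsilon:=1-\varphi_\varepsilon$, where $\varphi_\varepsilon$ is an arbitrary family as in Remark \ref{TEST}; this is admissible because $\psi_\varepsilon\in C^1_b([0,\infty))$, $0\le\psi_\varepsilon\le1$, and $\psi_\varepsilon(0)=1-\varphi_\varepsilon(0)=0$.

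Inserting $\varphi=\psi_\varepsilon$ in (\ref{2E765}) and integrating on $[0,t]$ (legitimate since, by (\ref{S1ED3S}), $s\mapsto\int_{[0,\infty)}\psi_\varepsilon\,g(s)\,dx$ is absolutely continuous on $[0,t]$) gives
\begin{equation*}
\int_{[0,\infty)}\psi_\varepsilon\,g(t)\,dx-\int_{[0,\infty)}\psi_\varepsilon\,g_0\,dx=\int_0^t n(s)\,\mathscr{Q}_3(\psi_\varepsilon,g(s))\,ds,
\end{equation*}
where $g_0=g(0)$. Because $\psi_\varepsilon(0)=0$, formulas (\ref{S1EB1}), (\ref{S1EB2}) give $\mathscr{Q}_3(\psi_\varepsilon,g)=\mathscr{Q}_3^{(2)}(\psi_\varepsilon,g)-\widetilde{\mathscr{Q}}_3^{(1)}(\psi_\varepsilon,g)$; and since $\Lambda$ is linear in its first argument with $\Lambda(c)\equiv0$ for every constant $c$, one has $\Lambda(\psi_\varepsilon)=-\Lambda(\varphi_\varepsilon)$, hence $\mathscr{Q}_3^{(2)}(\psi_\varepsilon,g)=-\mathscr{Q}_3^{(2)}(\varphi_\varepsilon,g)$. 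Therefore the identity above becomes
\begin{equation*}
\int_{[0,\infty)}\psi_\varepsilon\,g(t)\,dx-\int_{[0,\infty)}\psi_\varepsilon\,g_0\,dx=-\int_0^t n(s)\,\mathscr{Q}_3^{(2)}(\varphi_\varepsilon,g(s))\,ds-\int_0^t n(s)\,\widetilde{\mathscr{Q}}_3^{(1)}(\psi_\varepsilon,g(s))\,ds.
\end{equation*}

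I would then let $\varepsilon\to0$. Since $g(t)$ and $g_0$ carry no mass at the origin, (\ref{TEST2}) gives $\int\varphi_\varepsilon\,g(t)\,dx\to0$ and $\int\varphi_\varepsilon\,g_0\,dx\to0$, so the left-hand side tends to $M_0(g(t))-M_0(g_0)$. The first term on the right tends to $\lim_{\varepsilon\to0}\int_0^t n(s)\mathscr{Q}_3^{(2)}(\varphi_\varepsilon,g(s))\,ds$, which exists by Theorem \ref{EQUIV}(ii). For the last term, note that $\mathcal L(\psi_\varepsilon)(x)=x\psi_\varepsilon(x)-2\int_0^x\psi_\varepsilon(y)\,dy\to-x$ for each fixed $x>0$, with $|\mathcal L(\psi_\varepsilon)(x)|\le3x$; dominated convergence in $x$ (dominating function $3\sqrt x\,g(s,x)$, integrable since $M_{1/2}(g(s))<\infty$ by (\ref{S1ED2S})) gives $\widetilde{\mathscr{Q}}_3^{(1)}(\psi_\varepsilon,g(s))\to-M_{1/2}(g(s))$ for a.e. $s$, and a further dominated convergence in $s$ (dominating function $3\|n\|_\infty M_{1/2}(g(s))\in L^1(0,t)$, again by (\ref{S1ED2S})) gives $\int_0^t n(s)\widetilde{\mathscr{Q}}_3^{(1)}(\psi_\varepsilon,g(s))\,ds\to-\int_0^t n(s)M_{1/2}(g(s))\,ds$. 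Collecting the three limits,
\begin{equation*}
M_0(g(t))-M_0(g_0)=-\lim_{\varepsilon\to0}\int_0^t n(s)\mathscr{Q}_3^{(2)}(\varphi_\varepsilon,g(s))\,ds+\int_0^t n(s)M_{1/2}(g(s))\,ds.
\end{equation*}
Finally, (\ref{MBC}) evaluated at $0$ and $t$ gives $M_0(g_0)-M_0(g(t))=n(t)-n(0)$, so the last display is exactly (\ref{ene1}), and Theorem \ref{EQUIV}(ii) shows that $(F,n)$ is a weak radial solution of (\ref{PA}), (\ref{PB}).

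The only genuinely delicate point is the interchange of the $\varepsilon\to0$ limit with the time integral; the rest is bookkeeping. The reason for splitting $\mathscr{Q}_3(\psi_\varepsilon,g)$ into the $\mathscr{Q}_3^{(2)}$ and $\widetilde{\mathscr{Q}}_3^{(1)}$ pieces before taking the limit is that $\Lambda(\psi_\varepsilon)\to0$ pointwise whereas $\iint\Lambda(\varphi_\varepsilon)\,g\,g/\sqrt{xy}$ need not, so the cancellation in $\mathscr{Q}_3^{(2)}$ must be tracked through Theorem \ref{EQUIV}(ii) rather than by passing to the limit directly; the $\widetilde{\mathscr{Q}}_3^{(1)}$ piece, by contrast, is harmless because $\mathcal L(\psi_\varepsilon)(x)/\sqrt x$ stays dominated by $3\sqrt x$.
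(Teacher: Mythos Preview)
Your proof is correct and follows essentially the same route as the paper: test (\ref{2E765}) against $1-\varphi_\varepsilon$, split $\mathscr Q_3$ into its quadratic and linear parts, pass to the limit (the paper packages your dominated-convergence steps into Lemma \ref{convergence lemma} and the identity (\ref{Bhtw}) from the proof of Theorem \ref{EQUIV}(ii)), and then invoke (\ref{MBC}) to recover (\ref{ene1}). The only cosmetic difference is that the paper swaps the names of $\varphi_\varepsilon$ and $\psi_\varepsilon$ relative to your convention.
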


In our last  result we show that, under some sufficient conditions, the condensate density $n(t)$ tends to zero as $t\to\infty$, fast enough to be  integrable.
\begin{theorem} 
\label{S1T5}
Suppose that  $G_0\in \mathscr M_+^1([0, \infty))$  satisfies $G_0(\{0\})>0$ and let $(F,n)$ be the weak radial solution of (\ref{PA}), (\ref{PB}) obtained in Theorem \ref{S1T1}. Let us call $N=M_0(G_0)$ and $E=M_1(G_0)$. If condition (\ref{PRO112}), (\ref{PRO115}) hold for some $\alpha \in (1, 3]$,
then, for all $t_0>0$,
\begin{equation}
\label{S1ET5B}
\int _{ t_0 }^{\infty} n(t)dt\leq M_{\alpha}(G(t_0)) C(N,E,\alpha)
\end{equation}
for some explicit constant $C(N,E,\alpha)$ given in (\ref{PRO116}), and
\begin{equation}
\label{S1ET5C}
\lim_{t\to \infty }n(t)=0.
\end{equation}
\end{theorem}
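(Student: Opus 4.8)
The plan is to extract from the hypothesis (\ref{PRO112})--(\ref{PRO115}) a quantitative dissipation inequality for the moment $M_\alpha(G(t))$ and then integrate it. Taking the (formal) test function $\varphi(x)=x^\alpha$ in the weak formulation (\ref{S1E16}) is legitimate after the usual truncation argument, because $M_\alpha(G(t))$ and $M_{\alpha+1/2}(G(t))$ are finite and locally bounded on $(0,\infty)$ by Theorem \ref{S1T1}(ii)--(iii) (recall $\tau(t)>0$ for $t>0$) together with interpolation against $M_1(G_0)=E$; since $M_\alpha(G(t))=M_\alpha(g(t))$ for $\alpha>0$, this gives, for a.e.\ $t>0$,
\[
\frac{d}{dt}M_\alpha(G(t))=n(t)\,\mathscr Q_3(x^\alpha,g(t)),\qquad \mathscr Q_3(x^\alpha,g)=\mathscr Q_3^{(2)}(x^\alpha,g)-\mathscr Q_3^{(1)}(x^\alpha,g).
\]
A direct computation gives $\mathcal L_0(x^\alpha)(x)=\frac{\alpha-1}{\alpha+1}x^{\alpha+1}$, hence by (\ref{S1E1Q31}) $\mathscr Q_3^{(1)}(x^\alpha,g)=\frac{\alpha-1}{\alpha+1}M_{\alpha+1/2}(g)$, and Jensen's inequality applied to the probability measure $g/M_0(g)$ (using $M_1(g)=E$ and $M_0(g)=N-n\le N$) yields the $t$‑independent lower bound $M_{\alpha+1/2}(g(t))\ge E^{\alpha+1/2}N^{1/2-\alpha}$. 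For the gain term, exactly as in the proof of Theorem \ref{S1T1}(iv), one uses a convexity estimate on $\Lambda(x^\alpha)(x,y)=(x+y)^\alpha+|x-y|^\alpha-2\max\{x,y\}^\alpha$: namely $\Lambda(x^\alpha)(x,y)\le K(\alpha)\min\{x,y\}^\alpha$ for $\alpha\in(1,2]$ with $K(\alpha)=2^\alpha-2$ (and the analogous bound with $K(\alpha)=\alpha(\alpha-1)$ for $\alpha\in(2,3]$), together with $\min\{x,y\}^\alpha/\sqrt{xy}\le(xy)^{(\alpha-1)/2}$ and the interpolation $M_{(\alpha-1)/2}(g)\le N^{(3-\alpha)/2}E^{(\alpha-1)/2}$, to obtain
\[
\mathscr Q_3^{(2)}(x^\alpha,g(t))\le K(\alpha)\,M_{(\alpha-1)/2}(g(t))^2\le K(\alpha)\,N^{3-\alpha}E^{\alpha-1}.
\]
With this choice of $K(\alpha)$, condition (\ref{PRO112})--(\ref{PRO115}) is precisely the statement that $K(\alpha)N^{3-\alpha}E^{\alpha-1}<\frac{\alpha-1}{\alpha+1}E^{\alpha+1/2}N^{1/2-\alpha}$, so that $c(N,E,\alpha):=\frac{\alpha-1}{\alpha+1}E^{\alpha+1/2}N^{1/2-\alpha}-K(\alpha)N^{3-\alpha}E^{\alpha-1}>0$ and $\mathscr Q_3(x^\alpha,g(t))\le-c(N,E,\alpha)$ for a.e.\ $t>0$; we set $C(N,E,\alpha):=c(N,E,\alpha)^{-1}$, which is the constant of (\ref{PRO116}).

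Now fix $t_0>0$. On every compact subinterval of $(0,\infty)$ the a.e.\ derivative $n(t)\mathscr Q_3(x^\alpha,g(t))$ is bounded (as $n\le N$ and $M_{\alpha+1/2}(g(t))$ is locally bounded), so $t\mapsto M_\alpha(G(t))$ is absolutely continuous there, and for $t>t_0$
\[
M_\alpha(G(t))-M_\alpha(G(t_0))=\int_{t_0}^t\frac{d}{ds}M_\alpha(G(s))\,ds\le-c(N,E,\alpha)\int_{t_0}^t n(s)\,ds .
\]
Since $M_\alpha(G(t))\ge0$ and $M_\alpha(G(t_0))<\infty$ (Theorem \ref{S1T1}(iii) and interpolation), letting $t\to\infty$ gives $\int_{t_0}^\infty n(s)\,ds\le M_\alpha(G(t_0))\,C(N,E,\alpha)$, which is (\ref{S1ET5B}).

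For (\ref{S1ET5C}), note first that $n(t)=G(t,\{0\})\le M_0(G(t))=N$, so $\int_0^{t_0}n\le Nt_0<\infty$, and therefore $\int_0^\infty n(t)\,dt<\infty$. By Theorems \ref{THn01} and \ref{MU1}, for all $t>0$,
\[
n(t)=n(0)-\int_0^t n(s)M_{1/2}(g(s))\,ds+\mu((0,t]) .
\]
The map $t\mapsto\int_0^t n(s)M_{1/2}(g(s))\,ds$ is nondecreasing and bounded, because $M_{1/2}(g(s))\le\sqrt{M_0(g(s))M_1(g(s))}\le\sqrt{NE}$ and $\int_0^\infty n<\infty$; and $t\mapsto\mu((0,t])$ is nondecreasing and, by (\ref{ZE02}), bounded above by $N+\sqrt{NE}\int_0^\infty n<\infty$. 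Both therefore have finite limits as $t\to\infty$, hence $n(t)$ converges to some $L\ge0$; since $n\ge0$ is integrable on $[0,\infty)$, necessarily $L=0$, which is (\ref{S1ET5C}).

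The main obstacle is the strict dissipation estimate $\mathscr Q_3(x^\alpha,g(t))\le-c(N,E,\alpha)$ with $c(N,E,\alpha)>0$: the sign estimate $\mathscr Q_3(x^\alpha,g)\le0$ already underlies Theorem \ref{S1T1}(iv), and the point here is that the strict inequality in (\ref{PRO112}) produces a gap that is uniform in $t$ — this works precisely because both the bound on $\mathscr Q_3^{(2)}(x^\alpha,g(t))$ and the lower bound on $\mathscr Q_3^{(1)}(x^\alpha,g(t))$ are controlled solely by the conserved quantities $N$ and $E$, not by time‑dependent higher moments. The remaining steps — justifying the unbounded test function $x^\alpha$ in (\ref{S1E16}) and integrating an a.e.\ identity for an absolutely continuous (indeed nonincreasing) function — are routine.
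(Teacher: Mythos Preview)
For (\ref{S1ET5B}) your argument coincides with the paper's: both integrate the differential inequality (\ref{CT98})--(\ref{CT99}) established in the proof of Theorem~\ref{S1T1}(iv). One slip: the intermediate claim $\Lambda(x^\alpha)(x,y)\le\alpha(\alpha-1)\min\{x,y\}^\alpha$ for $\alpha\in(2,3]$ is false near $y/x\to0$, since there $\Lambda\sim\alpha(\alpha-1)\max\{x,y\}^{\alpha-2}\min\{x,y\}^2$, which dominates $\alpha(\alpha-1)\min\{x,y\}^\alpha$. The paper uses $\Lambda\le\alpha(\alpha-1)x^{\alpha-2}y^2$ (for $y\le x$) instead, which still yields $\Lambda/\sqrt{xy}\le\alpha(\alpha-1)(xy)^{(\alpha-1)/2}$ and hence the same bound on $\mathscr{Q}_3^{(2)}$; since you explicitly defer to that proof, this is only a cosmetic misstatement.

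For (\ref{S1ET5C}) you take a genuinely different route. The paper uses the one-sided lower bound (\ref{CMI}), i.e.\ $n(t)\ge e^{-\sqrt{NE}(t-s)}n(s)$ for $t\ge s$, and argues by contradiction: if $n(s_k)\ge\rho>0$ along a well-separated sequence $s_k\to\infty$, then $n\ge\rho/2$ on intervals of fixed length near each $s_k$, forcing $\int_0^\infty n=\infty$. You instead invoke the structural identity (\ref{ZE02}) from Theorem~\ref{THn01} to write $n(t)$ as the difference of two bounded nondecreasing functions (the absorption integral and $\mu((0,t])$), hence convergent, and then use $\int_0^\infty n<\infty$ to force the limit to be zero. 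Both arguments are correct. Yours is shorter and exploits a result already stated in the paper; the paper's is more self-contained (it needs neither $\mu$ nor Theorem~\ref{THn01}) and yields a mild quantitative control on how fast $n$ can decay between peaks.
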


\begin{remark}
\label{S1R2}
The quantity $E/N^{5/3}$ has a very precise  interpretation in physical terms.  Suppose that $T$ is the temperature of  a system of  particles at equilibrium with total number of particles $N$ and total energy $E$. And denote $T_c$ the critical temperature, that is the temperature at which the ground state of the system becomes macroscopically occupied. Then:
\bean
\frac {E} {N^{5/3}} = b\, \frac {T} {T_c},\qquad\hbox{where}\qquad
b=\frac {3}{(2\pi )^{\frac {1} {3}}}\frac  {\zeta (5/2)}{\zeta (3/2)^{5/3}}.
\eean
and  condition  (\ref{PRO112}) implies
$$
\frac {T} {T_c}=\frac {1} {b}\,\frac {E} {N^{5/3}}> \frac {C(\alpha )} {b}.
$$
The function $C(\alpha )/b$ is continuous and strictly increasing on $[1, 3]$ and its limit as $\alpha \to 1^+$ is $\log(16)^{2/3}/b\approx 4.48403$. Condition (\ref{PRO112}) means that, when at equilibrium, the  system of particles would be at a temperature  clearly above the critical temperature. Anyway,  the solution $F$ of the problem (\ref{PA}), (\ref{PB}) may be far from any real distribution of particles of the original system of particles.
\end{remark}

\subsection{Some arguments of the proofs.}
It is very natural to make the following change of variables in  problem (\ref{S1E16}). Given $G(t)=n(t)\delta _0+g(t)$, where $n(t)=G(t, \{0\})$, we define
\bear
\label{S1E45}
&&H(\tau)=G(t),\qquad\text{where}\qquad\tau =\int _0^t n(s) ds. \label{S1E45b}
\eear
In terms of $H$, (\ref{S1E16}) reads
\begin{equation}
\frac {d} {d\tau }\int_{ [0, \infty) } \varphi (x)H(\tau , x)dx= \mathscr Q _3(\varphi,H(\tau))\quad\forall \varphi \in C^1_b([0, \infty)). \label{S1E16Ha}
\end{equation}
To obtain a measure $H$ that satisfies (\ref{S1E16Ha}), we first find $h$ satisfying
\begin{align}
\frac {d} {d\tau }\int_{ [0, \infty) } \varphi (x)h(\tau , x)dx= \widetilde{\mathscr Q}_3(\varphi,h(\tau))
\quad\forall \varphi \in C^1_b([0, \infty)),\label{S1E16ha}
\end{align}
where $\widetilde{\mathscr Q}_3$ is given in (\ref{S1EB2})--(\ref{S1E21R}).
Then we define $H$ as
\bear
\label{S1EdecompH}
H(\tau)=h(\tau)-\left(\int_0^{\tau}M_{1/2}(h(\sigma))d\sigma\right)\delta_0.
\eear
By (\ref{S1EB1}), the measure  $H$ will satisfy (\ref{S1E16Ha}).

As it will be seen in Section \ref{existenceH}, all the arguments are much simpler and clear in the equation for $H$ than in the equation for $G$. In particular, the measure $\lambda$, that corresponds to the measure $\mu$ of Theorem \ref{THn01}, appears as the Lebesgue-Stieltjes measure associated to $m(\tau )=h(\tau , \{0\})$.

The proofs of Theorem \ref{S1T1} and Theorem \ref{S1Treg} make great use of the change of variables (\ref{S1E45b}).
Several of our arguments will need the measure $h(\tau )$ to satisfy only one inequality in (\ref{S1E16ha}). This requires the following:
\begin{definition}
\label{SUPER}
A  function $h:[0,\infty)\to\mathscr{M}_+([0,\infty))$ is said to be a super solution of (\ref{S1E16ha}) if
\begin{empheq}[left=\empheqlbrace]{align}
&\forall  \varphi \in C^1_b([0, \infty))\;\hbox{nonnegative, convex and decreasing} :\nonumber\\
&\frac {d} {d\tau }\int_{ [0, \infty) } \varphi (x)h(\tau,x)dx\ge \mathscr{Q}_3^{(2)}(\varphi,h(\tau))\qquad a.e.\;\tau>0.\label{S1E16haB}
\end{empheq}
\end{definition}
The operator $\mathscr Q_3^{(2)}$ is considered in \cite{KIER} and \cite{AV1}, where a problem similar to (\ref{S1E16ha}) is studied, with
$\widetilde{\mathscr{Q}}_3$  replaced by $\mathscr Q_3^{(2)}$ and for which, the property of instantaneous condensation is proved. We extend this result to the solutions $h$ of the problem (\ref{S1E16ha}) with the whole $\widetilde{\mathscr{Q}}_3$, using similar arguments (monotonicity,  convexity of  test functions) and taking care of the linear term.
 
Theorem \ref{S1T1} is deduced from the corresponding existence result of $h$, that is proved using very classical arguments: regularization of the problem,  fixed point, a priori estimates and passage to the limit. Then, the delicate point is to invert the change of variables (\ref{S1E45b}) in order to obtain a global in time nonnegative solution $G$.

The Plan of the article is the following. In Section \ref{model} we prove Proposition \ref{S1P0}. Section \ref{existenceH} is devoted to the proof of the existence of the measure $H$. In Section \ref{SectionC} we obtain several properties of $h(\tau , \{0\})$. In Section \ref{sectionG} we prove Theorem \ref{S1T1} (existence for the measure $G$) and Theorem \ref{S1Treg}. The contents of Section \ref{SectionK} are the
proofs of Theorem \ref{THn01}, Theorem \ref{MU1}, Theorem \ref{EQUIV} and Corollary \ref{S1C31}. Finally in Section \ref{SectionD} we prove Theorem \ref{S1T5}. Several technical results are presented in an Appendix.

\section{On weak formulations.}
\label{model}

\setcounter{equation}{0}
\setcounter{theorem}{0}

We  deduce first a  detailed  expression of  the weak formulation of (\ref{S1E1}) for a radial measure $G$.
\begin{proposition}
\label{S1P0}
Let $G$ satisfy (\ref{S1ED1})--(\ref{S1E6'}) for some $T>0$, and write 
$G(t)=n(t)\delta_0+g(t)$, where $n(t)=G(t,\{0\})$. Then, for all $\varphi \in C^{1,1}_b([0, \infty))$ and for all $t\in (0, T)$:
\begin{align}
&\frac {d} {dt}\int _{ [0, \infty) } \varphi (x)G(t, x)dx=\mathscr{Q}_4(\varphi,g(t))+n(t)\mathscr{Q}_3(\varphi,g(t)),\label{S1E13}
\end{align}
where $\mathscr Q_4(\varphi,g)$ and $\mathscr Q _3(\varphi,g)$ are defined in (\ref{S1E15})--(\ref{S1E155}).
\end{proposition}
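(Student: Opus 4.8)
The plan is to start from the weak formulation $\frac{d}{dt}\int \varphi(x)G(t,x)\,dx = \mathcal{Q}_4(\varphi, G(t))$ given in (\ref{S1E4})--(\ref{S1E5}), and to substitute the decomposition $G(t) = n(t)\delta_0 + g(t)$ into the explicit triple integrals defining $\mathcal{Q}_4$. The key algebraic observation is that $\mathcal{Q}_4(\varphi, G)$ is a sum of a cubic term (in $G$) and a quadratic term (in $G$), each weighted by $w\,\Delta\varphi/\sqrt{x_1 x_2 x_3}$ or $w\,\Delta\varphi/\sqrt{x_1 x_2}$. Expanding $G = n\delta_0 + g$ produces, in the cubic term, one purely cubic piece in $g$, three ``mixed'' pieces with one factor $n\delta_0$, three with two factors $n\delta_0$, and one with three; similarly for the quadratic term. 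So the main work is to identify and collect all of these pieces.

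First I would treat the terms involving three or two Dirac masses. A factor $\delta_0$ in, say, the $x_3$-variable forces $x_3 = 0$, hence $x_4 = (x_1+x_2)_+= x_1+x_2$, and $w(x_1,x_2,0) = \min\{\sqrt{x_1},\sqrt{x_2},0,\sqrt{x_1+x_2}\} = 0$; the same happens if any single variable among $x_1,x_2,x_3$ is set to zero by a Dirac mass (one checks $w$ vanishes whenever any of its arguments is $0$). Thus every term containing at least two factors of $\delta_0$ — and in fact the three purely mixed cubic terms that put a $\delta_0$ in any one slot — must be examined against this vanishing of $w$, and the ones with two or three Dirac masses vanish identically because $w\equiv 0$ on the relevant set. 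Care is needed here because $\Delta\varphi$ need not vanish and the factor $1/\sqrt{x_1x_2x_3}$ blows up, so I would argue via the structure of the integrals in Definition~\ref{S1D0} (where the integrands are understood as measures in a weak sense), or equivalently recall from the derivation in \cite{Lu1,Lu3} that the kernel $w$ controls the singular weights; in practice one passes to the equivalent bounded-kernel form of $\mathcal{Q}_4$. This is the step I expect to be the main obstacle: making rigorous the claim that the higher-order Dirac contributions vanish, rather than being ill-defined, given the competition between the vanishing of $w$ and the singular Jacobian factors.

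Next I would compute the ``one $\delta_0$'' contributions. In the cubic term $\iiint \frac{G_1 G_2 G_3}{\sqrt{x_1x_2x_3}} w\,\Delta\varphi$, putting $n\delta_0$ in the third slot and $g$ in the first two gives, after integrating out $x_3 = 0$, an integral of the form $n\iint \frac{g_1 g_2}{\sqrt{x_1 x_2}}\,\lim_{x_3\to 0}\big(\frac{w}{\sqrt{x_3}}\Delta\varphi\big)\,dx_1 dx_2$; computing $w$ and $\Delta\varphi$ as $x_3\to 0$ (with $x_4 \to x_1+x_2$ after symmetrizing, or $x_4 = |x_1-x_2|$ depending on the ordering convention), and using the symmetry $x_2\leftrightarrow x_3$ to combine the two positions, one recovers precisely $n\,\mathscr{Q}_3^{(2)}(\varphi, g)$ with the kernel $\Lambda(\varphi)$ from (\ref{S1E154}). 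Putting $n\delta_0$ in a slot of the quadratic term $\frac12\iiint \frac{G_1 G_2}{\sqrt{x_1 x_2}} w\,\Delta\varphi$ similarly collapses one integration and produces the linear-in-$g$ term $-n\,\mathscr{Q}_3^{(1)}(\varphi, g)$ with the kernel $\mathcal{L}_0(\varphi)$ from (\ref{S1E155}); the minus sign and the precise form of $\mathcal{L}_0$ come out of carefully evaluating $\int \frac{w}{\sqrt{x_3}}\Delta\varphi\,dx_3$ or its analogue as one variable tends to zero. This is essentially the computation already announced in (\ref{S1E13B})--(\ref{S1E155}), so I would reference equation (\ref{S1E13B}) as the target identity and verify the matching of kernels term by term.

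Finally, the purely cubic-in-$g$ and purely quadratic-in-$g$ leftover pieces are exactly $\mathscr{Q}_4(\varphi, g)$ as defined in (\ref{S1E15}) (the integrals over $(0,\infty)^3$ rather than $[0,\infty)^3$, which is harmless since $g$ has no atom at $0$). Collecting everything: the three-Dirac and two-Dirac terms vanish, the one-Dirac terms sum to $n(t)\big(\mathscr{Q}_3^{(2)}(\varphi,g) - \mathscr{Q}_3^{(1)}(\varphi,g)\big) = n(t)\,\mathscr{Q}_3(\varphi, g)$ by (\ref{S1E1Q3}), and the no-Dirac terms give $\mathscr{Q}_4(\varphi, g)$, which yields (\ref{S1E13}). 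One should also check that all the integrals written are absolutely convergent under hypotheses (\ref{S1ED1})--(\ref{S1ED2}) — the moment bound $\sup_{t<T}\int (1+x) G(t,x)\,dx < \infty$ together with $\varphi\in C^{1,1}_b$ (so that $\Delta\varphi$, $\Lambda(\varphi)$, $\mathcal{L}_0(\varphi)$ satisfy the appropriate linear/quadratic growth bounds near the diagonal and near zero) is what makes each piece finite, and I would invoke the estimates on $\Delta\varphi$ and $w$ already used in \cite{Lu1} for this.
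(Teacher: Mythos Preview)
Your approach is essentially the same as the paper's: decompose $G=n\delta_0+g$ and track where the Dirac contributions land in $\mathcal Q_4$. The paper organizes this as a \emph{domain} decomposition $[0,\infty)^3=(0,\infty)^3\cup A\cup P$ (edges and faces) rather than expanding the product measure, but since $g$ has no atom at $0$ the two viewpoints coincide.

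The one point where the paper is sharper is exactly the obstacle you flag: the competition between $w\to 0$ and the singular weight $1/\sqrt{x_1x_2x_3}$. The paper resolves this cleanly by packaging the integrand as $\Phi_\varphi=W\Delta\varphi$, where $W$ is the explicit continuous extension of $w/\sqrt{x_1x_2x_3}$ to $[0,\infty)^3$ given in (\ref{S2E3}), and then proving (Lemma~\ref{S2L1}) that $\Phi_\varphi\in C([0,\infty)^3)$ for $\varphi\in C^{1,1}$. With this in hand, integrating $\Phi_\varphi$ against the product measure $dG_1dG_2dG_3$ is unambiguous, the edge set $A$ contributes zero because $\Phi_\varphi\equiv 0$ there, and the face contributions are read off by evaluating $\Phi_\varphi$ on $\{x_j=0\}$ rather than by a limiting argument. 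Your ``pass to the equivalent bounded-kernel form'' is precisely this step; the paper just makes it explicit and proves the needed continuity lemma. Once you have $\Phi_\varphi$ continuous, your bookkeeping of the one-$\delta_0$ terms into $\mathscr Q_3^{(2)}$ (from the cubic piece) and $-\mathscr Q_3^{(1)}$ (from the quadratic piece) matches the paper's computations (\ref{999A}) and (\ref{999B}) exactly.
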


\begin{remark}
\label{SXR1}
If the term $\mathscr Q_4(\varphi,g)$ in (\ref{S1E13}) is dropped, we recover the equation (\ref{S1E16})  that defines a radial weak solution of (\ref{PA}), (\ref{PB}).
\end{remark}

\begin{proof}
[\upshape\bfseries{Proof of Proposition \ref{S1P0}}]
We may rewrite $\mathcal{Q}_4(\varphi,G)$ in (\ref{S1E5}) as 
\begin{align*}
\mathcal{Q}_4(\varphi,G)&=\iiint_{[0,\infty)^3}\Phi_{\varphi}\;dG_1dG_2dG_3
+\frac{1}{2}\iiint_{[0,\infty)^3}\sqrt{x_3}\Phi_{\varphi}\;dG_1dG_2dx_3,
\end{align*}
where $\Phi_{\varphi}$ is as in Lemma \ref{S2L1}, and we have used notation $dG$ instead of $Gdx$. Then we decompose 
$[0,\infty)^3=(0,\infty)^3\cup A\cup P$, where, for $\{i,j,k\}=\{1,2,3\}$,
\begin{align*}
&A=\{(x_1,x_2,x_3)\in\partial[0,\infty)^3\;:\; x_i=x_j=0,\; x_k> 0\}\cup\{(0,0,0)\},\\
&P=\{(x_1,x_2,x_3)\in\partial[0,\infty)^3\;:\; x_i=0,\; (x_j,x_k)\in (0,\infty)^2\}.
\end{align*}
Let $\varphi\in C^{1.1}_b([0,\infty)$. By (\ref{S2E2}) in Lemma \ref{representation of Deltavarphi} and the definition (\ref{S2E3}) of $W$, it follows that $\Phi_{\varphi}\equiv 0$ on $A$. Hence, recalling the definition (\ref{S1E15}) of $\mathscr{Q}_4(\varphi,g)$ and the definition of $\Phi_{\varphi}$ in Lemma \ref{representation of Deltavarphi}, we have
\begin{align}
\label{999C}
\mathcal{Q}_4(\varphi,G)=\mathscr{Q}_4(\varphi,g)+\iiint_{P}\Phi_{\varphi}\;dG_1dG_2dG_3
+\frac{1}{2}\iiint_{P}\sqrt{x_3}\Phi_{\varphi}\;dG_1dG_2dx_3.
\end{align}
We now study the integral over $P$ for the cubic and the quadratic terms in (\ref{999C}).\\
(a) The cubic term.
Since $\Phi_{\varphi}$ is symmetric in the $x_1$, $x_2$ variables, and $\Phi_{\varphi}$ is uniformly continuous on $[0,\infty)^3$ by Lemma
\ref{S2L1}, then
\begin{align}
\iiint_{P}\Phi_{\varphi}\;dG_1dG_2dG_3=&2 \iiint_{\{x_2=0,\;x_1>0,\,x_3>0\}}\Phi_{\varphi}\;dG_1dG_2dG_3\nonumber\\
&+\iiint_{\{x_3=0,\;x_1>0,\,x_2>0\}}\Phi_{\varphi}\;dG_1 dG_2 dG_3\nonumber\\
=&2G(t,\{0\})\iint_{(0,\infty)^2}\Phi_{\varphi}(x_1,0,x_3)\;dG_1dG_3\nonumber\\
&+G(t,\{0\})\iint_{(0,\infty)^2}\Phi_{\varphi}(x_1,x_2,0)\;dG_1 dG_2.\label{mid step}
\end{align}
Using now the definition of $\Phi_{\varphi}$, we have
\begin{align}
\label{line 1}
&2\iint_{(0,\infty)^2}\Phi_{\varphi}(x_1,0,x_3)\;dG_1dG_3\\
&=2\iint_{\{x_1>x_3>0\}}\big[\varphi(x_1-x_3)+\varphi(x_3)-\varphi(0)-\varphi(x_1)\big] \frac{dG_1dG_3}{\sqrt{x_1x_3}}\nonumber\\
&=\iint\limits_{(0,\infty)^2}\big[\varphi(|x_1-x_3|)+\varphi(\min\{x_1,x_3\})-\varphi(0)-\varphi(\max\{x_1,x_3\})\big] \frac{dG_1dG_3}{\sqrt{x_1x_3}}.\nonumber
\end{align}
and
\begin{align}
\label{line 2}
&\iint_{(0,\infty)^2}\Phi_{\varphi}(x_1,x_2,0)\,dG_1 dG_2\\
&=\iint\limits_{(0,\infty)^2}\big[\varphi(x_1+x_2)+\varphi(0)-\varphi(\min\{x_1,x_2\})-\varphi(\max\{x_1,x_2\})\big]\frac{dG_1 dG_2}{\sqrt{x_1x_2}}.\nonumber
\end{align}
Notice in (\ref{line 1}) that $\varphi(|x_1-x_3|)+\varphi(\min\{x_1,x_3\})-\varphi(0)-\varphi(\max\{x_1,x_3\})=0$ on the diagonal $\{x_1=x_3>0\}$.
Then, using (\ref{line 1}) (changing the labels $x_3$ by $x_2$) and (\ref{line 2}) in (\ref{mid step}), and recalling the definition (\ref{S1E154}) of $\Lambda(\varphi)$, we obtain
\begin{align}
\label{999A}
\iiint_{P}&\Phi_\varphi\;dG_1dG_2dG_3=G(t,\{0\})\iint_{(0,\infty)^2}\frac{\Lambda(\varphi)(x_1,x_2)}{\sqrt{x_1x_2}}dG_1 dG_2.
\end{align}
(b) The quadratic term.
Again, by the symmetry of $\Phi_{\varphi}$ in $x_1$, $x_2$, and the continuity of $\Phi_{\varphi}$ on $[0,\infty)^3$, we obtain
\begin{align}
\label{999B}
\frac{1}{2}\iiint_{P}\sqrt{x_3}\,\Phi_{\varphi}\;dG_1dG_2d x_3
&=\iiint_{\{x_2=0,\;x_1>0,\,x_3>0\}}\sqrt{x_3}\,\Phi_{\varphi} \,dG_1dG_2d x_3\nonumber\\
&=G(t,\{0\})\iint_{(0,\infty)^2}\sqrt{x_3}\,\Phi_{\varphi}(x_1,0,x_3)\,dG_1d x_3\nonumber\\
&=G(t,\{0\})\iint_{\{x_1>x_3>0\}}\frac{\Delta\varphi(x_1,0,x_3)}{\sqrt{x_1}}dG_1d x_3\nonumber\\
&=-G(t,\{0\})\int_{(0,\infty)}\frac{\mathcal{L}_0(\varphi)(x_1)}{\sqrt{x_1}}dG_1,
\end{align}
where $\mathcal{L}_0(\varphi)$ is given in (\ref{S1E155}).
Using (\ref{999A}) and (\ref{999B}) in (\ref{999C}), the result follows.
\end{proof}

\begin{proposition}
\label{equilibria}
For all $C> 0$ and all $\beta >0$, the measure $f _{ \beta , 0, C }$ is a radial weak solutions of (\ref{PA}),(\ref{PB}).
\end{proposition}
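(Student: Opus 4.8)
The plan is to check that the constant-in-time pair given by $n(t)\equiv C$ and $G(t)\equiv G_{\beta,0,C}=g_\beta+C\delta_0$, where $g_\beta(x)=\sqrt x\,(e^{\beta x}-1)^{-1}$ and correspondingly $F(t,p)\equiv|p|^{-1}g_\beta(|p|^2)=(e^{\beta|p|^2}-1)^{-1}$, satisfies every clause of Definition \ref{S1D1}. Since $g_\beta(x)\sim(\beta\sqrt x)^{-1}$ near the origin and decays exponentially at infinity, $g_\beta\in L^1(\RR_+,(1+x)dx)$, so $G_{\beta,0,C}\in\mathscr{M}_+^1([0,\infty))$ with $G_{\beta,0,C}(\{0\})=C>0$. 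Because $t\mapsto G(t)$ is constant, (\ref{S1ED2S}) holds, and for every $\varphi\in C^1_b([0,\infty))$ the map $t\mapsto\int_{[0,\infty)}\varphi(x)G(t,x)dx$ is constant, hence lies in $W^{1,\infty}_{loc}([0,\infty))$ with vanishing derivative, which is (\ref{S1ED3S}). The left-hand side of (\ref{S1E16}) then vanishes, and since $n(t)\equiv C>0$, the entire statement reduces to proving
\[
\mathscr{Q}_3(\varphi,g_\beta)=0\qquad\text{for all }\varphi\in C^1_b([0,\infty)).
\]

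I would first establish this for $\varphi\in C^{1,1}_b([0,\infty))$ by combining Proposition \ref{S1P0} with the results of \cite{Lu1}. By \cite{Lu1}, the constant trajectories $G_{\beta,0,0}$ and $G_{\beta,0,C}$ are both weak radial solutions of the Nordheim equation in the sense of Definition \ref{S1D0} (the condition $\mu C=0$ holds in both cases), so being stationary they satisfy $\mathcal{Q}_4(\varphi,G_{\beta,0,0})=\mathcal{Q}_4(\varphi,G_{\beta,0,C})=0$ for all $\varphi\in C^{1,1}_b([0,\infty))$. These two measures share the nonatomic part $g_\beta$ and have atoms $0$ and $C$ respectively, so the decomposition established in the proof of Proposition \ref{S1P0} — which is in fact the purely algebraic identity $\mathcal{Q}_4(\varphi,G)=\mathscr{Q}_4(\varphi,g)+n\,\mathscr{Q}_3(\varphi,g)$ valid for every $G=n\delta_0+g\in\mathscr{M}_+^1([0,\infty))$ and $\varphi\in C^{1,1}_b$ — yields $\mathscr{Q}_4(\varphi,g_\beta)=0$ from the first equality and $\mathscr{Q}_4(\varphi,g_\beta)+C\,\mathscr{Q}_3(\varphi,g_\beta)=0$ from the second; subtracting and using $C>0$ gives $\mathscr{Q}_3(\varphi,g_\beta)=0$. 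An alternative is to prove this identity directly: symmetrizing the double integral defining $\mathscr{Q}_3^{(2)}$ and integrating the $\mathscr{Q}_3^{(1)}$ term reduces the claim to the detailed-balance fact, already noted in the Introduction, that the gain-loss bracket in (\ref{S1EA4BC}) vanishes identically on the set $\{x=x_1+x_2\}$ for the profile $f_\beta$; but the route via Proposition \ref{S1P0} avoids re-deriving the weak formulation for a profile $f_\beta\notin L^1(\RR_+,(1+x)dx)$.

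It then remains to pass from $C^{1,1}_b$ to $C^1_b$ by density. Given $\varphi\in C^1_b([0,\infty))$, I would pick $\varphi_k\in C^{1,1}_b([0,\infty))$ with $\varphi_k\to\varphi$ and $\varphi_k'\to\varphi'$ locally uniformly and $\sup_k(\|\varphi_k\|_\infty+\|\varphi_k'\|_\infty)<\infty$ (for instance by mollification). Using the elementary bounds $|\Lambda(\varphi)(x,y)|\le 2\|\varphi'\|_\infty\min\{x,y\}$ and $|\mathcal{L}_0(\varphi)(x)|\le 4x\|\varphi\|_\infty$, the integrands in (\ref{S1E1Q32}) and (\ref{S1E1Q31}) evaluated at $\varphi_k$ are dominated, uniformly in $k$, by $C\,g_\beta(x)g_\beta(y)\in L^1((0,\infty)^2)$ and by $C\sqrt x\,g_\beta(x)\in L^1(0,\infty)$ respectively; dominated convergence then gives $\mathscr{Q}_3(\varphi_k,g_\beta)\to\mathscr{Q}_3(\varphi,g_\beta)$, whence $\mathscr{Q}_3(\varphi,g_\beta)=0$ for every $\varphi\in C^1_b$. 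This completes the verification of Definition \ref{S1D1}.

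The only genuinely technical point I anticipate is ensuring convergence of the integrals defining $\mathscr{Q}_3(\varphi,g_\beta)$ together with the uniform domination used in the density step: this is the mild singularity $f_\beta(x)\sim(\beta x)^{-1}$ at the origin, and it is absorbed by the second-difference cancellation built into $\Lambda$ — which supplies the factor $\min\{x,y\}$ in the bound above — and the analogous cancellation in $\mathcal{L}_0$. Everything else is bookkeeping around Proposition \ref{S1P0}.
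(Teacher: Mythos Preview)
Your proof is correct and follows essentially the same route as the paper: apply the decomposition of Proposition~\ref{S1P0} to $G_{\beta,0,C}$, invoke \cite{Lu1} to get $\mathcal{Q}_4(\varphi,G_{\beta,0,C})=0$ and $\mathcal{Q}_4(\varphi,G_{\beta,0,0})=\mathscr{Q}_4(\varphi,g_\beta)=0$, and conclude $\mathscr{Q}_3(\varphi,g_\beta)=0$ from $C>0$. The one addition you make is the density step from $C^{1,1}_b$ to $C^1_b$: the paper's own proof stops at $\varphi\in C^{1,1}$, whereas Definition~\ref{S1D1} requires the identity for all $\varphi\in C^1_b$, so your dominated-convergence argument (with the bounds $|\Lambda(\varphi)|\le 2\|\varphi'\|_\infty\min\{x,y\}$ and $|\mathcal{L}_0(\varphi)|\le 4\|\varphi\|_\infty x$) fills a small gap the paper leaves implicit.
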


\begin{proof}
By Proposition \ref{S1P0},
\bean
\mathcal Q_4(\varphi,G _{ \beta , 0, C })=\mathscr{Q}_4(\varphi,G _{ \beta , 0, 0 })+C\mathscr{Q}_3(\varphi,G _{ \beta , 0, 0 }).
\eean
We already know by Theorem  5 of \cite{Lu1} that
$\mathcal{Q}_4(\varphi,G _{ \beta , 0, C })=0$ for all  $\varphi \in  C^{1,1}([0, \infty))$. 
Since $\mathscr{Q}_4(\varphi,G _{ \beta , 0, 0 })\equiv \mathcal{Q}_4(\varphi,G _{ \beta , 0, 0 })$, we deduce
$\mathscr{Q}_4(\varphi,G _{ \beta , 0, 0 })=0$ for all $\varphi \in  C^{1,1}([0, \infty))$. Then, since $C>0$,
$$\mathscr{Q}_3(\varphi,G _{ \beta , 0, 0 })=0\quad\forall \varphi \in  C^{1,1}([0, \infty)).$$
\end{proof}

\section{Existence of solutions $H$ to (\ref{S1E16Ha})}
\label{existenceH}
\setcounter{equation}{0}
\setcounter{theorem}{0}

The main result of this Section is the following,
\begin{theorem}
\label{S5T5R}
Let $h_0\in\mathscr{M}^1_+([0,\infty))$ with $N=M_0(h_0)>0$ and $E=M_1(h_0)>0$. Then, there exists $h\in C\big((0,\infty), \mathscr{M}_+^\alpha([0,\infty))\big)$ for any $\alpha\geq1$,  
that satisfies the following properties: for all $\varphi\in C^1_b([0,\infty)$
\begin{align}
\label{lip loc h}
(i)\quad&  \tau\mapsto\int_{[0,\infty)}\varphi(x)h(\tau,x)dx\in W^{1,\infty} _{loc}([0,\infty)),\\
\label{AUXW}  
(ii)\quad&\frac{d}{d \tau}\int_{[0,\infty)}\varphi(x)h(\tau,x)\,dx=\widetilde{\mathscr{Q}}_3(\varphi,h(\tau))\quad a.e.\,\tau>0,\\
(iii)\quad&  h(0)=h_0,\\
\label{MMI}
(iv)\quad& M_0(h(\tau))\le\bigg(\frac{\sqrt{E}}{2}\tau+\sqrt{N}\bigg)^2\quad\forall\tau \ge 0,\\
 \label{EE}
(v)\quad&M_1(h(\tau))=E\quad\forall\tau \ge 0,\\
(vi)\quad& \text{For all }\alpha \geq 3,\,\,\hbox{if}\,\,M_{\alpha}(h_0)<\infty,\,\,\hbox{then}  \nonumber\\
\label{MAh}
&\quad M_{\alpha}(h(\tau))
\leq \left(M_{\alpha}(h_0)^{\frac{2}{\alpha-1}}+\alpha2^{\alpha-1}E^{\frac{\alpha+1}{\alpha-1}}\tau\right)^{\frac{\alpha-1}{2}}\quad\forall\tau\geq 0,\\
\label{S5Ealpha }
(vii)\quad& M_{\alpha}(h(\tau))\leq C(\alpha,E)\left(\frac{1}{1-e^{-\gamma(\alpha,E)\tau}}\right)^{2(\alpha-1)}\,\,\forall \alpha\geq 3,\;\forall\tau>0,
\end{align}
where $C=C(\alpha,E)$ is the unique positive root of the algebraic equation
\begin{equation}
\label{algebraic equation}
2^{\alpha-2}(\alpha+1)E^{\frac{2\alpha+3}{2(\alpha-1)}}(1+C)=C^{\frac{2\alpha-1}{2(\alpha-1)}},
\end{equation}
and $\gamma=\gamma(\alpha,E)$:
\begin{equation}
\label{gamma}
\gamma=\frac{1}{2(\alpha+1)}\left(\frac{C}{E}\right)^{\frac{1}{2(\alpha-1)}}.
\end{equation}

\end{theorem}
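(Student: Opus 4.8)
The plan is to construct $h$ by a regularization-plus-fixed-point scheme, derive the moment estimates on the regularized solutions uniformly in the regularization parameter, and pass to the limit. First I would introduce a cutoff at infinity and a regularization near the origin: replace the kernel weights $x^{-1/2}$, $(xy)^{-1/2}$ in $\widetilde{\mathscr{Q}}_3^{(1)}$, $\mathscr{Q}_3^{(2)}$ by bounded truncated versions (e.g. $(x\vee\varepsilon)^{-1/2}$) and confine the measure to $[0,R]$, obtaining a bounded, Lipschitz vector field on the Banach space of finite signed measures with the total variation norm restricted to a ball in $\mathscr{M}_+^1$. For this truncated problem, existence and uniqueness of a continuous solution $h^{\varepsilon,R}(\tau)$ follows from the Banach fixed point theorem applied to the Duhamel/mild formulation, together with a standard argument showing positivity is preserved (the loss term is linear and bounded, so one can write $h^{\varepsilon,R}$ as a nonnegative series, or use that the gain terms are nonnegative). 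One must check that along the regularized flow the total mass cannot increase and the first moment is conserved: testing with $\varphi\equiv 1$ gives $\frac{d}{d\tau}M_0\le 0$ from the sign of $\Delta\varphi$-type identities underlying $\widetilde{\mathscr{Q}}_3$, and testing with $\varphi(x)=x$ (after a further cutoff of $\varphi$, then removing it using the uniform moment bound) gives $M_1$ conserved because $\mathcal{L}(\mathrm{id})(x)=x\cdot x-2\int_0^x y\,dy=0$ and $\Lambda(\mathrm{id})(x,y)=(x+y)+|x-y|-2\max\{x,y\}=0$.

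Next I would establish the a priori moment estimates (iv)--(vii) on the regularized solutions, uniformly in $\varepsilon,R$; these are the heart of the matter. For (iv), testing with $\varphi=1$ and using that $\widetilde{\mathscr{Q}}_3(1,h)=-M_{1/2}(h)\le 0$ gives monotonicity of $M_0$, but the quantitative bound $M_0(h(\tau))\le(\tfrac{\sqrt E}{2}\tau+\sqrt N)^2$ requires a differential inequality: one shows $\frac{d}{d\tau}M_0\ge -M_{1/2}(h)$ and estimates $M_{1/2}(h)\le M_0^{1/2}M_1^{1/2}=E^{1/2}M_0^{1/2}$ by Cauchy--Schwarz, so $y=M_0$ satisfies $y'\ge -E^{1/2}y^{1/2}$, i.e. $(\sqrt y)'\ge -\tfrac12 E^{1/2}$... wait, the sign: since $M_0$ is decreasing we need the correct direction — in fact $M_0$ is nonincreasing so trivially $M_0(\tau)\le N\le(\tfrac{\sqrt E}{2}\tau+\sqrt N)^2$; the stated bound is just the crude one and (iv) is immediate. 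For (v) we argued conservation above. For (vi) and (vii), the standard route is to test with $\varphi(x)=\varphi_\alpha(x)$ a smooth bounded approximation of $x^\alpha$, compute $\widetilde{\mathscr{Q}}_3(x^\alpha,h)$, and bound the positive contributions: using $\Lambda(x^\alpha)(x,y)\le (x+y)^\alpha\le 2^{\alpha-1}(x^\alpha+y^\alpha)$ and discarding the nonpositive $\mathcal{L}$-term, one gets $\frac{d}{d\tau}M_\alpha\le 2^{\alpha-1}\cdot 2\,M_{\alpha}(h)\,M_{1/2}(h)/(\text{something})$; more precisely, carefully pairing the $(xy)^{-1/2}$ weight with one factor of $x^\alpha$ and Cauchy--Schwarz/Hölder against $M_1=E$ yields $\frac{d}{d\tau}M_\alpha\le \alpha 2^{\alpha-1}E^{\frac{\alpha+1}{2(\alpha-1)}} M_\alpha^{1-\frac{1}{\alpha-1}}\cdot(\cdots)$, which is the Bernoulli-type ODE $y'\le c\,y^{\frac{\alpha-2}{\alpha-1}}$ whose integration gives exactly (\ref{MAh}). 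For (vii) one instead keeps more of the structure to obtain a gain term, producing $y'\le c_1 y^{\theta}-c_2 y^{\theta'}$ with exponents forcing the solution below the stated stationary-type bound; the constants $C(\alpha,E)$, $\gamma(\alpha,E)$ in (\ref{algebraic equation}), (\ref{gamma}) are read off as the equilibrium/decay rate of this scalar ODE.

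Finally I would pass to the limit $\varepsilon\to 0$, $R\to\infty$. The uniform bound (\ref{MMI}) on $M_0$ plus conservation of $M_1$ (plus, on any $[\delta,T]$, the gain of higher moments (\ref{MAh})) gives tightness and equi-boundedness, so by a diagonal/Arzelà--Ascoli argument in the weak-$*$ topology of measures (metrized on bounded sets) a subsequence of $h^{\varepsilon,R}$ converges to some $h\in C([0,\infty),\mathscr{M}_+^1)$ and, for $\tau>0$, in $\mathscr{M}_+^\alpha$ for every $\alpha\ge1$ by the moment bounds. The Lipschitz-in-time bound (\ref{lip loc h}) survives the limit because the right-hand side $\widetilde{\mathscr{Q}}_3(\varphi,h(\tau))$ is controlled by $\|\varphi\|_{C^1}$ times $M_{1/2}(h)+M_1(h)$ on any time interval bounded below (the truly singular piece, $M_{1/2}$, is integrable in $\tau$ by Theorem \ref{S1Treg}-type control, or directly dominated using $M_{1/2}\le M_0^{1/2}E^{1/2}$ with $M_0$ bounded). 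The main obstacle is the passage to the limit in the weak formulation (\ref{AUXW}): the term $\mathscr{Q}_3^{(2)}(\varphi,h)=\iint (xy)^{-1/2}\Lambda(\varphi)\,dh\,dh$ has a kernel singular at the origin in \emph{both} variables, and $\Lambda(\varphi)(x,y)=O(x y)$ near $0$ only if $\varphi\in C^{1,1}$, while we only assume $\varphi\in C^1_b$ — so near the origin $\Lambda(\varphi)(x,y)(xy)^{-1/2}$ is merely bounded by $C\min\{\sqrt x,\sqrt y\}$, which is integrable against $dh\,dh$ uniformly provided $M_{1/2}(h^{\varepsilon,R})$ stays bounded; establishing this equi-integrability near $x=0$, so that no mass of the double integral escapes into a spurious boundary term as $\varepsilon\to0$, is the delicate step, and it is exactly where the super-solution machinery of Definition \ref{SUPER} and the estimate of Theorem \ref{S1Treg} are invoked to control $\int_0^\tau M_{1/2}(h^{\varepsilon,R}(\sigma))\,d\sigma$ uniformly.
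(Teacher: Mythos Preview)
Your overall architecture---regularize, fixed point, uniform moment estimates, compactness, pass to the limit---matches the paper's. But there is a concrete sign error that undermines your treatment of (iv), and you have misidentified where the real difficulty in the limit passage lies.

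\textbf{The mass is increasing, not decreasing.} You assert that testing with $\varphi\equiv 1$ gives $\frac{d}{d\tau}M_0\le 0$, hence (iv) is trivial. This is wrong. For $\varphi=1$ one has $\Lambda(1)\equiv 0$, so $\mathscr{Q}_3^{(2)}(1,h)=0$, but $\mathcal{L}(1)(x)=x-2x=-x$, so $\widetilde{\mathscr{Q}}_3^{(1)}(1,h)=-M_{1/2}(h)$ and therefore
\[
\frac{d}{d\tau}M_0(h(\tau))=\widetilde{\mathscr{Q}}_3(1,h(\tau))=+M_{1/2}(h(\tau))\ge 0.
\]
The mass grows. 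Estimate (iv) is then a genuine differential inequality: by Cauchy--Schwarz $M_{1/2}\le\sqrt{M_0M_1}=\sqrt{E}\sqrt{M_0}$, so $\frac{d}{d\tau}\sqrt{M_0}\le\tfrac{\sqrt E}{2}$, which integrates to the stated bound. This growth of $M_0$ is exactly what forces the paper to track $(\tfrac{\sqrt E}{2}\tau+\sqrt N)^2$ throughout the equicontinuity and limit arguments.

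\textbf{The limit passage is not singular at the origin.} You flag $(xy)^{-1/2}\Lambda(\varphi)$ as the delicate point and invoke Theorem~\ref{S1Treg}-type control on $M_{1/2}$. In fact, for any Lipschitz $\varphi$ one has $|\Lambda(\varphi)(x,y)|\le 2\,\mathrm{Lip}(\varphi)\min\{x,y\}$, hence $|\Lambda(\varphi)(x,y)|/\sqrt{xy}\le 2\,\mathrm{Lip}(\varphi)$ everywhere (Lemma~\ref{lemma regularity}(i)); for $\varphi\in C^1$ this quotient is even continuous on $[0,\infty)^2$ and vanishes on the boundary. Likewise $|\mathcal{L}(\varphi)(x)|/\sqrt{x}\le 3\|\varphi\|_\infty\sqrt{x}$. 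So the integrands are bounded continuous functions and the passage to the limit in $\widetilde{\mathscr{Q}}_3(\varphi,h_n)$ is a standard weak-convergence-of-products argument (split $[0,\infty)^2$ into $[0,R]^2$ and its complement, use uniform convergence of the truncated kernels on compacts and the $M_1$ bound to control tails). No equi-integrability of $M_{1/2}$ is needed here, and Theorem~\ref{S1Treg} plays no role.

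\textbf{The two-step structure for (vii).} The paper does not obtain (vii) at the regularized level. It first proves (i)--(vi) for a limit $h$ (Theorem~\ref{Ex1T1}), then, working with approximating data $h_{0,n}\in C_c$ and the \emph{non-regularized} equation (\ref{AUXW}), extends the weak formulation to $\varphi(x)=x^\alpha$ and keeps the negative linear term $-\tfrac{\alpha-1}{\alpha+1}M_{\alpha+1/2}$. Combining $\Lambda(x^\alpha)(x,y)\le C_\alpha x^{\alpha-2}y^2$ with H\"older against $M_1=E$ yields
\[
\frac{d}{d\tau}M_\alpha\le c_1(1+M_\alpha)-c_2\,M_\alpha^{\frac{2\alpha-1}{2(\alpha-1)}},
\]
and an ODE comparison lemma (Lemma~6.3 in \cite{Bob}) gives the explicit bound with the constants in (\ref{algebraic equation})--(\ref{gamma}). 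Your sketch of (vii) is in the right spirit but misses that the loss term requires the untruncated $\widetilde{\mathscr{Q}}_3^{(1)}$, which is why the paper runs a second approximation after already having a solution of (\ref{AUXW}).
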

The proof of Theorem \ref{S5T5R} is in three steps. In the first, a regularized problem is solved (Theorem \ref{Ex1T2}). Then, using an approximation argument, a solution is obtained that satisfies (\ref{lip loc h})--(\ref{MAh}) but not yet (\ref{S5Ealpha }) (Theorem \ref{Ex1T1}). The Theorem  \ref{S5T5R} is proved with  a second approximation argument on the initial data.

As a Corollary, we obtain the measure $H$ (not necessarily positive). 

\begin{corollary}
\label{S5C52R}
Suppose that $h_0\in\mathscr{M}^1_+([0,\infty))$ with $N=M_0(h_0)>0$ and $E=M_1(h_0)>0$, consider $h$ given by Theorem \ref{S5T5R},
and define, for $\tau\geq 0$
\begin{align}
\label{DEFH}
H(\tau)=h(\tau)-\left(\int_0^\tau M_{1/2}(h(\sigma))d\sigma\right)\delta_0.
\end{align}
Then $H\in C\big([0,\infty),\mathscr{M}^1([0,\infty))\big)$ and for all $\tau\in[0,\infty)$ and $\varphi\in C^1_b([0,\infty))$:
\begin{align}
\label{lip loc H}
(i)\quad&\tau\mapsto\int_{[0,\infty)}\varphi(x)H(\tau,x)dx\in W^{1,\infty} _{loc}([0,\infty)),\\
(ii)\quad&
\label{AUXWH}
\frac{d}{d \tau}\int_{[0,\infty)}\varphi(x)H(\tau,x)\,dx=\mathscr{Q}_3(\varphi,H(\tau))\quad a.e.\,\tau>0,\\
\label{S5IDh}
(iii)\quad& H(0)=h_0,\\
(iv)\quad& \label{MMH}M_0(H(\tau))=N\quad\forall \tau \ge 0,\\
(v)\quad& \label{EEH} M_1(H(\tau))=E\quad\forall \tau \ge 0,\\
(vi)\quad& \forall \alpha \geq3, \,\,\hbox{if}\,\,  M_{\alpha}(h_0)<\infty\,\hbox{ then, for all}\,\tau >0, \nonumber\\
&\label{MAH} \hskip 1.5cm  M_{\alpha}(H(\tau))\leq \left(M_{\alpha}(h_0)^{\frac{2}{\alpha-1}}+\alpha2^{\alpha-1}E^{\frac{\alpha+1}{\alpha-1}}\tau\right)^{\frac{\alpha-1}{2}},\\
(vii)\quad&\label{S5EalphaR }
M_{\alpha}(H(\tau))\leq C(\alpha,E)\left(\frac{1}{1-e^{-\gamma(\alpha,E)\tau}}\right)^{2(\alpha-1)},\,\,\forall \alpha\geq 3,
\end{align}
where the constants $C(\alpha,E)$ and $\gamma (\alpha,E)$ are defined in Theorem \ref{S5T5R}.
\end{corollary}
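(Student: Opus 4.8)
The plan is to derive all seven assertions of the Corollary directly from Theorem \ref{S5T5R} and the identity (\ref{S1EB1}), by substituting the definition (\ref{DEFH}) of $H$. Write $m(\tau)=\int_0^\tau M_{1/2}(h(\sigma))d\sigma$, so that $H(\tau)=h(\tau)-m(\tau)\delta_0$. The \emph{key observation} is that the functionals $\widetilde{\mathscr{Q}}_3(\varphi,\cdot)$, $\mathscr{Q}_3(\varphi,\cdot)$ and $M_\alpha(\cdot)$ with $\alpha>0$ depend only on the restriction of the measure to $(0,\infty)$ — in (\ref{S1E1Q32})--(\ref{S1E1Q31}) the integration runs over $(0,\infty)^2$ and $(0,\infty)$, and the weight $x^\alpha$ vanishes at the origin — and this restriction is unchanged by subtracting $m(\tau)\delta_0$. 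Consequently $M_\alpha(H(\tau))=M_\alpha(h(\tau))$ for every $\alpha>0$, so items (v), (vi) and (vii) of the Corollary follow verbatim from (\ref{EE}), (\ref{MAh}) and item (vii) of Theorem \ref{S5T5R}; and item (iii) of the Corollary is immediate since $m(0)=0$ forces $H(0)=h_0$.

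Next I would verify that $m$ is well defined and locally Lipschitz on $[0,\infty)$. By Cauchy--Schwarz, $M_{1/2}(h(\sigma))\le M_0(h(\sigma))^{1/2}M_1(h(\sigma))^{1/2}$, and then (\ref{MMI}) and (\ref{EE}) give $M_{1/2}(h(\sigma))\le\bigl(\tfrac{\sqrt{E}}{2}\sigma+\sqrt{N}\bigr)\sqrt{E}$, which is locally bounded in $\sigma$; hence $m$ is locally Lipschitz, $m(0)=0$, and $m'(\tau)=M_{1/2}(h(\tau))$ for a.e. $\tau>0$. From the identity $\int_{[0,\infty)}\varphi(x)H(\tau,x)\,dx=\int_{[0,\infty)}\varphi(x)h(\tau,x)\,dx-m(\tau)\varphi(0)$ and the regularity of $m$, property (i) of the Corollary follows from the corresponding statement (i) in Theorem \ref{S5T5R}. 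Differentiating the same identity, using (ii) of Theorem \ref{S5T5R}, and replacing $h(\tau)$ by $H(\tau)$ inside $\widetilde{\mathscr{Q}}_3(\varphi,\cdot)$ and $M_{1/2}(\cdot)$ (legitimate by the key observation), one obtains, for a.e. $\tau>0$, $\frac{d}{d\tau}\int_{[0,\infty)}\varphi(x)H(\tau,x)\,dx=\widetilde{\mathscr{Q}}_3(\varphi,H(\tau))-\varphi(0)M_{1/2}(H(\tau))$, which equals $\mathscr{Q}_3(\varphi,H(\tau))$ by (\ref{S1EB1}); this is (ii) of the Corollary.

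It remains to prove (iv) and the continuity statement. Taking $\varphi\equiv1$ in (ii) of Theorem \ref{S5T5R} and using $\Lambda(1)\equiv0$ together with $\mathcal{L}(1)(x)=x-2x=-x$, one finds $\widetilde{\mathscr{Q}}_3(1,h(\tau))=M_{1/2}(h(\tau))$, so $\tau\mapsto M_0(h(\tau))$ is absolutely continuous with a.e. derivative $m'$; hence $M_0(h(\tau))=N+m(\tau)$, and therefore $M_0(H(\tau))=M_0(h(\tau))-m(\tau)=N$, which is (iv). For the continuity of $H$, one has $h\in C\bigl((0,\infty),\mathscr{M}^1_+([0,\infty))\bigr)$ by Theorem \ref{S5T5R}, while continuity of $h$ up to $\tau=0$ follows from property (i) of that theorem at the endpoint together with the conserved bound $M_1(h(\tau))\equiv E$ (weak-$*$ continuity plus a uniformly bounded first moment upgrades to convergence in $\mathscr{M}^1$); since $m$ is continuous with $m(0)=0$, the map $\tau\mapsto m(\tau)\delta_0$ is continuous into $\mathscr{M}^1([0,\infty))$, and hence $H=h-m\delta_0\in C\bigl([0,\infty),\mathscr{M}^1([0,\infty))\bigr)$. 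The only points that need more than bookkeeping are the interpolation bound making $M_{1/2}$ (hence $m$) finite and Lipschitz, and the continuity of $h$ at $\tau=0$; everything else is direct substitution into Theorem \ref{S5T5R}.
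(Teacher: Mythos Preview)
Your proof is correct and follows essentially the same route as the paper: bound $M_{1/2}(h)$ by H\"older/Cauchy--Schwarz using (\ref{MMI}) and (\ref{EE}) to get local Lipschitz regularity of $m$, then differentiate the identity $\int\varphi\,dH=\int\varphi\,dh-m(\tau)\varphi(0)$ and invoke (\ref{S1EB1}) together with the observation that $\mathscr{Q}_3(\varphi,\cdot)$ and $M_\alpha(\cdot)$ for $\alpha>0$ see only the restriction to $(0,\infty)$. The only noticeable variation is in (iv): the paper plugs $\varphi=1$ into the already-established equation (\ref{AUXWH}) for $H$ and uses $\Lambda(1)=\mathcal{L}_0(1)=0$, whereas you plug $\varphi=1$ into (\ref{AUXW}) for $h$, compute $\widetilde{\mathscr{Q}}_3(1,h)=M_{1/2}(h)$, and deduce $M_0(h(\tau))=N+m(\tau)$; both are equally short and valid. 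Your treatment of the continuity of $H$ at $\tau=0$ is in fact more explicit than the paper's, which leaves that point implicit.
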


\begin{remark}
Under the hypothesis that all the moments of the initial data $h_0$  are bounded it is easy to obtain the estimate (\ref{S5Ealpha }) using the weak formulation  (\ref{AUXW}). However, it is not so easy using the regularized weak formulation (\ref{REGWEAK}) below. For that reason, we first want to obtain a solution $h$ satisfying (\ref{AUXW}) with an initial data with bounded moments of all order.
\end{remark}

\subsection{A first result.}
\begin{theorem}
\label{Ex1T1}
For any $h_0\in\mathscr{M}^1_+([0,\infty))$ with $N=M_0(h_0)$ and $E=M_1(h_0)$, there exists
$h\in C\big([0,\infty), \mathscr{M}_+^1 ([0,\infty))\big)$ that satisfies (\ref{lip loc h})--(\ref{MAh}).
\end{theorem}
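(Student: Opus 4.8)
The plan is to realize $h$ as a weak-$*$ limit of the solutions $h_\varepsilon$ of the regularized problem provided by Theorem~\ref{Ex1T2}, following the standard scheme for kinetic equations: derive bounds on $h_\varepsilon$ that are uniform in the regularization parameter $\varepsilon$, establish equicontinuity in $\tau$, extract a convergent subsequence by Arzel\`a--Ascoli, and pass to the limit in the regularized weak formulation. Granting Theorem~\ref{Ex1T2}, the whole task is to obtain the uniform versions of (\ref{lip loc h})--(\ref{MAh}) and to check that each of them is stable under the limit.

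\textbf{Uniform a priori estimates.} Testing the regularized weak formulation with $\varphi\equiv 1$ gives $\frac{d}{d\tau}M_0(h_\varepsilon)=M_{1/2}(h_\varepsilon)$ (since $\Lambda(1)\equiv 0$, $\mathcal L(1)(x)=-x$); combined with $M_{1/2}\le M_0^{1/2}M_1^{1/2}$ and conservation of $M_1$ this is $\frac{d}{d\tau}\sqrt{M_0(h_\varepsilon)}\le\frac12\sqrt E$, i.e.\ (\ref{MMI}). Testing with smooth concave truncations $\varphi_R$ of $\varphi(x)=x$ ($\varphi_R=x$ on $[0,R]$, $\varphi_R\le x$, $\|\varphi_R'\|_\infty\le 1$, $\varphi_R\uparrow x$) gives (\ref{EE}) for $h_\varepsilon$: one has $\Lambda(\varphi_R)\to\Lambda(x)\equiv 0$ pointwise with $|\Lambda(\varphi_R)(x,y)|\le 2\min\{x,y\}\le 2\sqrt{xy}$, and one exploits the sign $\mathcal L(\varphi_R)(x)\le 0$ for all $x$ (so $-\widetilde{\mathscr Q}_3^{(1)}(\varphi_R,\cdot)\ge 0$), an observation reused below. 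For $\alpha\ge 3$ and $M_\alpha(h_0)<\infty$, testing with truncations of $x^\alpha$ and using the convexity estimate $0\le\Lambda(x^\alpha)(x,y)\le 2\alpha(\alpha-1)\,x^2(x+y)^{\alpha-2}$ (for $x\le y$), the identity $\widetilde{\mathscr Q}_3^{(1)}(x^\alpha,h)=\frac{\alpha-1}{\alpha+1}M_{\alpha+1/2}(h)\ge 0$, and the log-convex interpolation $M_{\alpha-2}(h)\le E^{2/(\alpha-1)}M_\alpha(h)^{(\alpha-3)/(\alpha-1)}$, one obtains a differential inequality $\frac{d}{d\tau}M_\alpha(h_\varepsilon)\le c(\alpha)E^{(\alpha+1)/(\alpha-1)}M_\alpha(h_\varepsilon)^{(\alpha-3)/(\alpha-1)}$, whose integration is exactly (\ref{MAh}) once the constant is tracked (retaining the negative linear term is what yields the sharp $\alpha 2^{\alpha-1}$). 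These kernel inequalities near the origin are the most computational ingredient; I would isolate them in lemmas. Finally, the uniform $\mathscr M_+^1$ bound and the weak formulation give $\big|\frac{d}{d\tau}\int\varphi\,h_\varepsilon(\tau)\big|\le C(\varphi)\,(M_0(h_\varepsilon(\tau))+M_1(h_\varepsilon(\tau)))$ for every $\varphi\in C^1_b$, so $\tau\mapsto\int\varphi\,h_\varepsilon(\tau)$ is locally Lipschitz uniformly in $\varepsilon$, which gives (\ref{lip loc h}) in the limit.

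\textbf{Compactness and passage to the limit.} For each $\tau$ the family $\{h_\varepsilon(\tau)\}$ is bounded in $\mathscr M_+^1$ and tight at infinity ($h_\varepsilon(\tau)(\{x>R\})\le E/R$), hence weak-$*$ relatively compact; combined with the uniform time-equicontinuity and a diagonal argument over a countable dense subset of $C^1_b([0,\infty))$ and of $[0,\infty)$, this yields a subsequence with $h_\varepsilon\to h$ in $C_{loc}\big([0,\infty),(\mathscr M_+([0,\infty)),\text{weak-}*)\big)$, with $h(0)=h_0$, $h(\tau)\in\mathscr M_+^1$, and (\ref{MMI}), (\ref{MAh}), $M_1(h(\tau))\le E$ all inherited by lower semicontinuity of moments. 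To pass to the limit in the collision term for fixed $\varphi\in C^1_b$: the linear weight $\mathcal L(\varphi)(x)/\sqrt x$ extends continuously to $[0,\infty)$ with $O(\sqrt x)$ growth, and since $\int_{\{x>R\}}\sqrt x\,h_\varepsilon(\tau)\le E/\sqrt R$ uniformly, $\widetilde{\mathscr Q}_3^{(1)}(\varphi,h_\varepsilon(\tau))\to\widetilde{\mathscr Q}_3^{(1)}(\varphi,h(\tau))$; for the quadratic term the key point is that, despite the apparent singularity, $(x,y)\mapsto\Lambda(\varphi)(x,y)/\sqrt{xy}$ extends to a \emph{bounded continuous} function on $[0,\infty)^2$ vanishing on the axes (so $\mathscr Q_3^{(2)}(\varphi,h)=\mathscr Q_3^{(2)}(\varphi,h-h(\{0\})\delta_0)$ automatically, and an atom of $h$ at the origin is immaterial), whence weak-$*$ convergence of $h_\varepsilon\otimes h_\varepsilon$ plus tightness gives $\mathscr Q_3^{(2)}(\varphi,h_\varepsilon(\tau))\to\mathscr Q_3^{(2)}(\varphi,h(\tau))$; dominated convergence in $\tau$ then promotes the integrated identities to (\ref{AUXW}). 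For the equality in (\ref{EE}): apply (\ref{AUXW}) to the $\varphi_R$ above and integrate; since $-\widetilde{\mathscr Q}_3^{(1)}(\varphi_R,h)\ge 0$ one gets $\int\varphi_R h(\tau)-\int\varphi_R h_0\ge\int_0^\tau\mathscr Q_3^{(2)}(\varphi_R,h(\sigma))d\sigma$, and letting $R\to\infty$ the left side increases to $M_1(h(\tau))-E$ while the right side tends to $0$ (dominated convergence: $\Lambda(\varphi_R)\to 0$ pointwise, $|\Lambda(\varphi_R)|\le 2\min\{x,y\}$, integrable against $h\otimes h$ as $M_0(h)<\infty$), so $M_1(h(\tau))\ge E$ and hence $=E$.

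\textbf{Main obstacle.} The delicate point I anticipate is precisely the passage to the limit in the singular collision operator: the whole argument hinges on recognizing that the weights $1/\sqrt{xy}$ and $1/\sqrt x$ combine with $\Lambda(\varphi)$ and $\mathcal L(\varphi)$ into \emph{continuous} factors (bounded and axis-vanishing for the quadratic part), which is what makes weak-$*$ compactness of \emph{measures} enough; and one has to carry the $\varepsilon$-regularization simultaneously through this limit, so that one is really passing to a double limit in which the deformed operators $\widetilde{\mathscr Q}_3^{(\varepsilon)}$ converge to $\widetilde{\mathscr Q}_3$ on the converging measures. The secondary load is the bundle of kernel estimates on $\Lambda(x^\alpha)$ and $\mathcal L(x^\alpha)$ underpinning (\ref{MAh}). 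The measure-versus-$L^1$ ambiguity (the limit $h$ may carry an atom at $0$) is harmless for the reason just noted.
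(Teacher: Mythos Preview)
Your proposal is essentially correct and follows the same route as the paper: uniform estimates from Theorem~\ref{Ex1T2}, equicontinuity, Arzel\`a--Ascoli, and passage to the limit using that $\Lambda(\varphi)/\sqrt{xy}$ and $\mathcal L(\varphi)/\sqrt x$ extend continuously to the closed domain (exactly the content of Lemma~\ref{lemma regularity}), with the equality in (\ref{EE}) recovered a posteriori via concave truncations. One point you glossed over: Theorem~\ref{Ex1T2} requires $h_0\in C_c([0,\infty))$, so you must first approximate the measure $h_0$ by such data (the paper does this via Corollary~\ref{APD1}, letting the cutoff index $n$ simultaneously regularize both the operator through $\phi_n$ and the initial datum); this is routine but should be said, and it is also why (\ref{MAh}) is obtained by lower semicontinuity of $M_\alpha$ under the limit rather than by rerunning the differential inequality on $h$ itself. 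A small inaccuracy: the sharp constant $\alpha 2^{\alpha-1}$ in (\ref{MAh}) comes from the Taylor bound $\Lambda(x^\alpha)(x,y)\le \alpha(\alpha-1)2^{\alpha-3}x^{\alpha-2}y^2$ for $y\le x$ and then simply \emph{dropping} the linear term (whose sign is favorable), not from retaining it.
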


The proof of  Theorem \ref{Ex1T1} is made in two steps. We first solve a regularised version of (\ref{AUXW}). Then, in a second step, we use an approximation argument. More precisely, we consider the following cutoff:
\begin{cutoff}
\label{cut-off}
For every $n\in \NN$ let $\phi _n\in  C_c([0,\infty))$ be such that $\supp\phi_n=[0,n+1]$,
$\phi _n(x)\le x^{-1/2}$ for all $x>0$ and $\phi_n(x)=x^{-1/2}$ for all $x\in\left(\frac {1} {n}, n \right)$,
in such a way that:
\begin{align}
\forall x>0\,\,\,\,\,\lim _{ n\to \infty }\phi _n(x)=\frac {1} {\sqrt x}.
\end{align}
\end{cutoff}

\subsection{Regularised problem}
We now solve in Theorem \ref{Ex1T2} a regularised version of (\ref{AUXW}) with the operator $\widetilde{\mathscr{Q}}_{3,n}$ defined in 
(\ref{Aq3tilden})--(\ref{Q3n1w}). The solution $h_n$ is obtained as a mild solution to the equation
\begin{align}
\label{Ex1EApn}
\frac {\partial h_n} {\partial\tau}(\tau,x)=J_{3,n}(h_n(\tau))(x),
\end{align} 
where $J_{3,n}$ is defined in (\ref{A1E32})-(\ref{A1E35}), and corresponds to a regularised version of the term $J_3$ defined in (\ref{PR2}). Namely,
$J_{3,n}(h)=J_3(h\phi_n)$, where $\phi_n$ is as in Cutoff \ref{cut-off}.
\begin{theorem}
\label{Ex1T2}
For any $n\in\NN$ and any nonnegative function  $h_0\in C_c([0,\infty))$,
there exists a unique nonnegative function $h_n\in C\big([0,\infty), L^{\infty}(\RR_+)\cap L^1_{x}(\RR_+)\big)$ such that for all $\tau\in[0,\infty)$ 
and all $\varphi\in  L^1_{loc}(\RR_+)$:
\begin{align}
&\tau\mapsto\int_{[0,\infty)}\varphi(x)h(\tau,x)dx\in W^{1,\infty}_{ loc  }([0,\infty)) \label{S5E765}\\
&\frac{d}{d\tau}\int_0^{\infty}\varphi(x)h_n(\tau,x)dx=\widetilde{\mathscr{Q}}_{3,n}(\varphi,h_n(\tau)).\label{REGWEAK}\\
&h_n(0,x)=h_0(x)\label{datan}
\end{align}
Moreover, if we denote by $N=M_0(h_0)$ and $E=M_1(h_0)$, then for every $\tau\in[0,\infty)$ and $\alpha\geq 3$:
\begin{align}
&M_0(h_n(\tau))\leq\bigg(\frac{E}{2}\tau+\sqrt{N}\bigg)^2, \label{mass inequality} \\
&M_1(h_n(\tau))=E, \label{conservation of energy}\\
&M_{\alpha}(h_n(\tau))\leq\left(M_{\alpha}(h_0)^{\frac{2}{\alpha-1}}+\alpha 2^{\alpha-1}E^{\frac{\alpha+1}{\alpha-1}}\tau
\right)^{\frac{\alpha-1}{2}}. \label{MAhn}
\end{align}
Furthermore, there exist two positive constants $C _{ 1, n }$ and $C _{ 2, n }$ depending on $n$ and $\|h_0\| _{ L^\infty\cap L^1_x }$ such that for all $\tau >0$:
\begin{equation}
\label{a priori sup norm}\|h_n(\tau)\|_{\infty}\leq C _{ 1, n }e^{C _{ 2, n }(\tau^2+\tau)}.
\end{equation}
\end{theorem}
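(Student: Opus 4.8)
The plan is to construct $h_n$ as the unique fixed point of a Duhamel (mild solution) reformulation of (\ref{Ex1EApn}), and then to extend it to all of $[0,\infty)$ by a priori estimates; the operator being a bounded, compactly supported nonlinearity, the arguments are classical and the real work is the bookkeeping of the $n$-dependent bounds.

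\emph{Step 1 (reformulation).} Collecting in (\ref{PR2}) the terms carrying $f(x)$ as a factor, with $f=h\phi_n$, one gets the splitting $J_{3,n}(h)=J_3(h\phi_n)=\mathcal G_n[h]-h\,\mathcal A_n[h]$, where the gain part
\[
\mathcal G_n[h](x)=(h\phi_n*h\phi_n)(x)+2\int_x^\infty(h\phi_n)(y)\,dy+2\int_x^\infty(h\phi_n)(y)(h\phi_n)(y-x)\,dy\ \ge 0
\]
and the loss coefficient $\mathcal A_n[h](x)=\phi_n(x)\big(x+4\int_0^x(h\phi_n)(y)\,dy\big)\ge 0$. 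I would then look for $h_n$ solving $h_n=\Phi(h_n)$, where
\[
\Phi(h)(\tau,x)=h_0(x)\,e^{-\int_0^\tau\mathcal A_n[h(s)](x)\,ds}+\int_0^\tau e^{-\int_s^\tau\mathcal A_n[h(\sigma)](x)\,d\sigma}\,\mathcal G_n[h(s)](x)\,ds .
\]

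\emph{Step 2 (local existence, uniqueness, positivity).} I would work in $X=L^\infty(\RR_+)\cap L^1(\RR_+;x\,dx)$. Since $\phi_n\in C_c([0,\infty))$ with $\supp\phi_n=[0,n+1]$ (hence $\phi_n\in L^1\cap L^\infty$), the map $h\mapsto h\phi_n$ sends $L^\infty$ into $L^1\cap L^\infty$ with support in $[0,n+1]$, and Young's inequality controls the convolution; these bounds show $\Phi$ maps a ball of $C([0,T],X)$ into itself and is a contraction for $T=T(n,\|h_0\|_X)$ small, giving a unique local solution $h_n\in C([0,T],X)$. Uniqueness on any interval follows from Grönwall on the difference of two solutions. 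Nonnegativity comes from the Picard iteration $h_n^{(k+1)}=\Phi(h_n^{(k)})$, $h_n^{(0)}=h_0\ge0$: $\mathcal G_n[\,\cdot\,]$ is order-preserving on the nonnegative cone and the exponential weights are positive, so every iterate, and hence the limit, is nonnegative. Moreover $\mathcal G_n[h]$ is supported in $[0,2(n+1)]$ and the loss term cannot enlarge the support, so $h_n(\tau)$ stays supported in the fixed compact set $[0,R_n]$ with $R_n=\max\{\sup\supp h_0,\,2(n+1)\}$; in particular $h_n(\tau)$ has finite moments of every order, which makes the estimates below rigorous.

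\emph{Step 3 (a priori estimates and global existence).} Testing against monomials $x^\alpha$ I would use the identity obtained from (\ref{PR2}) by the usual change of variables,
\[
\int_0^\infty x^\alpha J_3(F)(x)\,dx=\iint_{[0,\infty)^2}\Lambda(x^\alpha)(u,v)\,F(u)F(v)\,du\,dv-\frac{\alpha-1}{\alpha+1}M_{\alpha+1}(F),\qquad\alpha\ge0,
\]
with $\Lambda$ as in (\ref{S1E154}). For $\alpha=1$, $\Lambda(x)\equiv0$ gives the energy conservation $M_1(h_n(\tau))=E$, i.e. (\ref{conservation of energy}). For $\varphi\equiv1$ one gets $\tfrac{d}{d\tau}M_0(h_n)=M_1(h_n\phi_n)\le M_{1/2}(h_n)\le\sqrt{E\,M_0(h_n)}$ (from $\phi_n\le x^{-1/2}$ and Cauchy--Schwarz), hence $\tfrac{d}{d\tau}\sqrt{M_0(h_n)}\le\tfrac{\sqrt E}{2}$ and (\ref{mass inequality}) follows. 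For $\alpha\ge3$ I would drop the nonnegative term $\tfrac{\alpha-1}{\alpha+1}M_{\alpha+1}(h_n\phi_n)$ and use the convexity bound $\Lambda(x^\alpha)(u,v)\le\alpha(\alpha-1)2^{\alpha-2}\max\{u,v\}^{\alpha-2}\min\{u,v\}^2$ together with $\phi_n(u)\phi_n(v)\le(uv)^{-1/2}$ and $\min\le\max$, obtaining $\tfrac{d}{d\tau}M_\alpha(h_n)\le C_\alpha\,E\,M_{\alpha-2}(h_n)$; interpolating $M_{\alpha-2}(h_n)\le E^{2/(\alpha-1)}M_\alpha(h_n)^{(\alpha-3)/(\alpha-1)}$ against the conserved energy turns this (with a careful treatment of the constants) into $\tfrac{d}{d\tau}M_\alpha(h_n)^{2/(\alpha-1)}\le\alpha2^{\alpha-1}E^{(\alpha+1)/(\alpha-1)}$, which integrates to (\ref{MAhn}). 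The sup-norm bound (\ref{a priori sup norm}) I would read off the Duhamel formula: $0\le h_n(\tau,x)\le h_0(x)+\int_0^\tau\mathcal G_n[h_n(s)](x)\,ds$, with $\|\mathcal G_n[h_n(s)]\|_\infty\le C(n)\big(1+M_0(h_n(s))\big)\big(1+\|h_n(s)\|_\infty\big)$ by Young's inequality and $\|\phi_n\|_{L^1\cap L^\infty}$; since $M_0(h_n(s))$ grows at most quadratically in $s$ by (\ref{mass inequality}), Grönwall's inequality gives $\|h_n(\tau)\|_\infty\le C_{1,n}e^{C_{2,n}(\tau^2+\tau)}$. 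These bounds keep $\|h_n(\tau)\|_X$ finite on bounded time intervals, so the local solution extends to $[0,\infty)$.

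\emph{Step 4 (weak formulation).} Finally, since $h_n\in C([0,\infty),X)$ is supported in the fixed compact $[0,R_n]$ and $\partial_\tau h_n=J_3(h_n\phi_n)$ is bounded in $L^\infty$ on compact time intervals, $\tau\mapsto h_n(\tau)$ is locally Lipschitz into $L^\infty$; hence for $\varphi\in L^1_{loc}(\RR_+)$ the map $\tau\mapsto\int\varphi h_n(\tau)\,dx$ lies in $W^{1,\infty}_{loc}([0,\infty))$ and $\tfrac{d}{d\tau}\int\varphi h_n(\tau)=\int\varphi\,J_3(h_n\phi_n)\,dx=\widetilde{\mathscr Q}_{3,n}(\varphi,h_n(\tau))$, which is (\ref{S5E765})--(\ref{REGWEAK}); (\ref{datan}) holds by construction. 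I expect the main difficulties to be: (a) choosing the function space so that the singular weight $\phi_n\sim x^{-1/2}$ near the origin and the three different gain structures are simultaneously controlled in the contraction; and (b) the global $L^\infty$ estimate — the gain is quadratic, so one must arrange that one factor carries $\|h_n(s)\|_\infty$ while the other is a priori controlled (by $\|\phi_n\|_{L^1}$ and the slowly growing mass $M_0$) in order for Grönwall to produce an exponential of a polynomial rather than finite-time blow-up. Getting the moment constant to be exactly $\alpha2^{\alpha-1}E^{(\alpha+1)/(\alpha-1)}$ is the most delicate piece of bookkeeping.
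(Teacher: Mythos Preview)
Your proposal is correct and follows essentially the same route as the paper: the same gain--loss splitting $J_{3,n}(h)=K_n(h)+L_n(h)-hA_n(h)$, the same Duhamel fixed-point in $C([0,T],L^\infty\cap L^1_x)$, and the same a priori estimates (energy via $\varphi=x$, mass via $\varphi=1$ and $\phi_n\le x^{-1/2}$, the $\alpha$-moment via the Taylor bound on $\Lambda(x^\alpha)$ and H\"older interpolation, and the $L^\infty$ bound via Gr\"onwall with the quadratically growing mass). Your explicit compact-support propagation (support stays in $[0,R_n]$) is a small addition not made in the paper; it cleanly justifies finiteness of all moments and of $M_0(h_n)$, which the paper uses somewhat implicitly.
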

\begin{proof}
Using
(\ref{A1E32}) we write equation (\ref{Ex1EApn}) as
\bear
\label{Ex1EApn2}
\frac {\partial h_n} {\partial \tau }+h_nA_n(h_n)=K_n(h_n)+L_n(h_n),
\eear
and the solution $h_n$ is obtained as a fixed point of the operator:
\begin{align}
R_n(h_n)(\tau,x)=&h_0(x)S_n(0, \tau; x)\nonumber \\
&+\int_0^\tau S_n(\sigma , \tau; x)\big(K_n(h_n)(\sigma,x)+L_n(h_n)(\sigma,x)\big)d\sigma, \label{Ex1E1}\\
S_n(\sigma,\tau; x)=&e^{-\int_\sigma ^\tau A_n(h_n)(\sigma,x)d\sigma}\label{Ex1E4}
\end{align} 
on
\begin{align}
B(T):=\Big\{h\in C\big([0,T],L^{\infty}&(\RR_+)\cap L^1_x(\RR_+)\big): h\geq 0\quad\text{and} \nonumber\\
&\sup_{\tau\in[0,T]}\|h(\tau)\|_{L^{\infty}\cap L^1_x}\leq 2\|h_0\|_{L^{\infty}\cap L^1_x}\Big\}. \label{Ex1E2}
\end{align}
Let us show first that $R_n$ sends $B(T)$ into itself. Let $r_0:=\|h_0\|_{L^\infty\cap L^1_x}$ and for an arbitrary $T>0$, let $h\in  B(T)$. 
By Proposition \ref{well defined operators}  with $\rho (x)=x$, 
\begin{align*}
&R_n(h)(\tau,x)\geq 0\qquad\forall\tau\in[0,T],\;\forall x\in\RR_+,\\
&R_n(h)\in C\big([0,T],L^{\infty}(\RR_+)\cap L^1_x(\RR_+)\big).
\end{align*}
Moreover, using (\ref{SAE100}) and  (\ref{bound L}):
\begin{align*}
\sup _{ \tau \in [0,T] }\|R_n(h)(\tau)\|_{L^{\infty}\cap L^1_x}
\leq r_0+T\,C(n)(4r_0^2+2r_0).
\end{align*}
If $T$ satisfies:
\bear
T\le \frac {1} {C(n)(4r_0+2)}
\label{Ex1E257}
\eear
then 
$R_n(h)\in B(T)$.

To prove that $R_n$ is a contraction, let $h_1\in B(T)$, $h_2\in B(T) $ and write:

\begin{align*}
&\big|R_n(h_1)(\tau,x)-R_n(h_2)(\tau,x)\big|\le h_0(x)\left|  S_1(0, \tau ; x)-S_2(0, \tau ; x)\right| +\\
&+\int_0^{\tau}\left|  S_1(\sigma , \tau ; x)-S_2(\sigma , \tau ; x) \right|
\big(K_n(h_1)(\sigma,x)+L_n(h_1)(\sigma,x)\big)d\sigma\\
&+\int_0^{\tau}\big|K_n(h_1)(\sigma,x)-K_n(h_2)(\sigma,x)\big|d\sigma\\
&+\int_0^{\tau} \big|L_n(h_1)(\sigma,x)-L_n(h_2)(\sigma,x)\big|d\sigma.
\end{align*}
By (\ref{SaE121}), for all $\sigma \ge 0$ and $\tau \ge0$
\begin{align}
\left|  S_1(\sigma , \tau ; x)-S_2(\sigma , \tau ; x) \right| & \le   \int_0^\tau |A_n(h_1)(\sigma,x)-A_n(h_2)(\sigma,x)|d\sigma \nonumber \\
& \le C(n)\,\tau  \sup_{\tau\in[0,T]}\|h_1(\tau)-h_2(\tau)\|_{\infty}.   \label{Ex1E258}
\end{align}
Using now (\ref{Ex1E258}) and (\ref{SAE100})--(\ref{SaE121}), we deduce:
\begin{align*}
&\|R_n(h_1)(\tau)-R_n(h_2)(\tau)\| _{ L^\infty \cap L^1_x }\le  C_1
\sup_{\tau\in[0,T]}\|h_1(\tau)-h_2(\tau)\|_{\infty},\\
&C_1\equiv C_1(n, T, r_0)=C(n)T\left(1+ 3r_0+2Tr_0(1+2r_0)\right).
\end{align*}
If (\ref{Ex1E257}) holds and
\begin{align*}
C(n)T\left(1+ 3r_0+2Tr_0(1+2r_0)\right)<1,
\end{align*}
$R_n$ will be a contraction from $B(T)$ into itself. This is achieved, for example, as soon as:
\begin{align*}
T<\min \left\{ \frac {1} {2r_0(1+2r_0)}, \frac {1} {2C(n) (1+2r_0)}\right\}=\kappa _{ r_0 }.
\end{align*}
The fixed point $h_n$  of $R_n$ in $B(T)$  is then a  mild solution of  (\ref{Ex1EApn}), that can be extended to a maximal interval of existence $[0, T _{ n, \max })$. 

We claim now that $h_n$ satisfies (\ref{S5E765}), (\ref{REGWEAK}).
Since $h_n$ is a mild solution of (\ref{Ex1EApn}):
\begin{equation}
\label{Ex1mildE}
h_n(\tau , x)=h_0(x)S_n(0, \tau; x)+\int_0^\tau S_n(\sigma , \tau; x)\big(K_n(h_n)(\sigma,x)+L_n(h_n)(\sigma,x)\big)d\sigma 
\end{equation}
We multiply this equation by $\varphi\in L^1_{loc}(\RR_+)$ and integrate on $(0, \infty)$:
\begin{align*}
\int _0^\infty & h_n(\tau , x)\varphi ( x)dx=\int _0^\infty h_0(x)S_n(0, \tau ; x) \varphi (x)dx+\\
&+\int _0^\tau \int _0^\infty S_n(\sigma , \tau; x)\big(K_n(h_n)(\sigma,x)+L_n(h_n)(\sigma,x)\big)\varphi (x)dx d\sigma.
\end{align*}
Using Lemma \ref{well defined operators} and $h_0\in C_c([0,\infty))$, it follows that the integrals above are well define.
It also follows from Lemma \ref{well defined operators} and (\ref{Ex1E4}) that $\tau \mapsto \int _0^\infty h_n(\tau , x)\varphi (x)dx$ is  locally Lipschitz on $(0, T _{ n, \max })$, and:
\begin{align*}
\frac {d} {dt}\int _0^\infty &h_n(\tau , x)\varphi (x)dx= \int _0^\infty h_0(x)(S_n(0, \tau ; x)) _{ \tau  } \varphi  (x)dx+\\
&+\int _0^\infty\big(K_n(h_n)(\tau ,x)+L_n(h_n)(\tau ,x)\big)\varphi (x) dx+\\
&+\int _0^\tau \int _0^\infty (S_n(\sigma , \tau; x))_\tau \big(K_n(h_n)(\sigma,x)+L_n(h_n)(\sigma,x)\big)\varphi (x)dx d\sigma.
\end{align*}
We use now that $(S_n(\sigma , \tau; x))_\tau=-A_n(h_n)(\tau , x)S_n(\sigma , \tau; x)$
 and the identity (\ref{Ex1mildE}) to deduce:
\begin{align*}
\frac {d} {dt}\int _0^\infty h_n(\tau , x)\varphi (x)dx=&\int _0^\infty\!\!\big(K_n(h_n)(\tau ,x)+L_n(h_n)(\tau, x)\big)\varphi (x) dx- \nonumber\\
&-\int _0^\infty A_n(h_n)h_n(\tau , x)\varphi (\tau , x)dx,
\end{align*}
that is (\ref{REGWEAK}).

Suppose now that $T _{ n, \max }<\infty$ and 
$$
\sup_{\tau \in [0, T _{ n, \max })}\|h_n(\tau)\|_{L^{\infty}\cap L^1_x}<\infty.
$$
Then there is an increasing sequence 
$\tau_j\rightarrow T_{n,\max}$  as $j\rightarrow\infty$  and $L>0$ such that
$$
\sup_{j }\|h_n(\tau_j)\|_{L^{\infty}\cap L^1_x}\le L<\infty.
$$

Fix $\delta >0$ such that $
\delta <\kappa _{ r_0+1 }$.
Starting with the initial value $h(\tau _j)$ we have a mild solution $h_j$ defined on $[0, \delta]$. Gluing together $h$ with $h_j$ we obtain a mild solution on
$[0, t_j+\delta]$. For $j$ large enough, $t_j+\delta >T _{ n, \max }$, and this is a contradiction. Therefore, either $T _{ n, \max }=\infty$ or, if 
$T _{ n, \max }=\infty$, then $\limsup \|h_n(\tau)\|_{L^{\infty}\cap L^1_x}=\infty$ as $\tau\to T_{n,\max}$.
 
Let us prove now the estimates (\ref{mass inequality}), (\ref{conservation of energy}) and (\ref{a priori sup norm}), first for all $\tau\in (0, T _{ n, \max })$. Then,  the property $T _{ n, \max }=\infty$ will follow. We start proving (\ref{conservation of energy}). To this end we use (\ref{REGWEAK}) with $\varphi=x$. Since in that case 
$\Lambda(\varphi)(x,y)=0$ and $\mathcal{L}(\varphi)(x)=0$, 
(\ref{conservation of energy}) is immediate. To prove (\ref {mass inequality}), we use  (\ref{REGWEAK})  with $\varphi =1$. 
Then, $\Lambda(\varphi)(x,y)=0$ and $\mathcal{L}(\varphi)(x)=-x$
 and then, using $\phi _n\le x^{-1/2}$, H\"older inequality and  (\ref{conservation of energy}): 
\begin{align*}
\frac{d}{d\tau}\left(\int_0^\infty h_n(\tau,x)d x\right)^{1/2}\leq\frac{\sqrt{E}}{2},
\end{align*}
from where (\ref {mass inequality}) follows. 

In order to prove \eqref{a priori sup norm} we use \eqref{mass inequality}:
\begin{align*}
\|K_n(h_n)(\sigma)\|_{\infty}&\leq\|\phi_n\|_{\infty}^2\|h_n(\sigma)\|_1\|h_n(\sigma)\|_{\infty}\\
&\leq\|\phi_n\|_{\infty}^2\bigg(\frac{\sqrt{E}}{2}\sigma+\sqrt{N}\bigg)^2\|h_n(\sigma)\|_{\infty},
\end{align*}
which combined with the estimate $\|L_n(h_n)(\sigma)\|_{\infty}\leq 2\|\phi_n\|_1\|h_n(\sigma)\|_{\infty}$, gives 
\begin{align*}
\|h_n(\tau)\|_{\infty}&\leq\|h_0\|_{\infty}
+\int_0^{\tau}\big(\|K_n(h_n)(\sigma)\|_{\infty}+\|L_n(h_n)(\sigma)\|_{\infty}\big)d\sigma\\
&\leq \|h_0\|_{\infty}+C(n,h_0)\int_0^{\tau}(\sigma^2+1)\|h_n(\sigma)\|_{\infty}d\sigma.
\end{align*}
where
\bean
C(n,h_0)=\max\left\{\|\phi _n\|_1\|\phi _n\| ^2_{ \infty } \|h_0\|_1,\, \frac {\|\phi _n\| ^2_{ \infty }} {4}\|h_0\| _{ L^1_x }\right\}.
\eean
Then \eqref{a priori sup norm} follows from Gronwall's inequality.

For the proof of (\ref{MAhn}) we use (\ref{REGWEAK}) with $\varphi(x)=x^{\alpha}$ for $\alpha\geq 3$:
\bear
\label{S5E902}
\frac {d} {d\tau }M_{\alpha}(h_n(\tau))=\widetilde{\mathscr{Q}}_{3,n}(\varphi,h_n(\tau)).
\eear
Since:
\begin{align}
\label{MAL}
\mathcal{L}(\varphi)(x)=\left(\frac{\alpha-1}{\alpha+1}\right)x^{\alpha+1}\geq 0,
\end{align}
we have,
\bean
\frac{d}{d\tau}M_{\alpha}(h_n(\tau))\leq 
 2\int_0^{\infty}\!\!\!\!\int_0^x \Lambda(\varphi)(x,y)\phi_n(x)\phi_n(y)h_n(\tau,x)h_n(\tau,x) dydx.
\eean
Then, we write
$\Lambda(\varphi)(x,y)=x^{\alpha}\big((1+z)^{\alpha}+(1-z)^{\alpha}-2\big),$
where $z=y/x$, and by Taylor's expansion around $z=0$:
\begin{align*}
u(z)\leq\frac{\|u''\|_{\infty}}{2}z^2\leq \alpha(\alpha-1)2^{\alpha-3}z^2.
\end{align*}
Hence for all $0\leq y\leq x$:
\begin{align}
\label{MAQ}
\Lambda(\varphi)(x,y)&\leq C_{\alpha}x^{\alpha-2}y^2,\qquad\text{where}\qquad C_{\alpha}=\alpha(\alpha-1)2^{\alpha-3},
\end{align}
and then, using $\phi_n(x)\phi_n(y)\leq y^{-1}$ and (\ref{conservation of energy}),
\begin{align*}
\frac{d}{d\tau}M_{\alpha}(h_n(\tau) \leq 2C_{\alpha}M_{\alpha-2}(h_n(\tau))E.
\end{align*}
Since  by Holder's inequality and (\ref{conservation of energy})
\begin{align*}
M_{\alpha-2}(h_n(\tau))\leq E^{\frac{2}{\alpha-1}}M_{\alpha}(h_n(\tau))^{\frac{\alpha-3}{\alpha-1}},
\end{align*} 
we deduce
\begin{align*}
\frac{d}{d\tau}\left(M_{\alpha}(h_n(\tau))^{\frac{2}{\alpha-1}}\right)\leq\frac{4C_{\alpha}}{\alpha-1}E^{\frac{\alpha+1}{\alpha-1}},
\end{align*}
and (\ref{MAhn}) follows.
\end{proof}
\subsection{Proof of Theorem \ref{Ex1T1}.}

The solution $h$ whose existence is claimed in Theorem \ref{Ex1T1}  is obtained as the limit of a subsequence of solutions  $(h_n) _{ n\in \NN }$  to the regularized problems obtained in Theorem \ref{Ex1T2}. We first prove the following Lemma.

\begin{lemma}
\label{precomp}
Let $h_0\in C_c([0,\infty))$ be nonnegative with $N=M_0(h_0)>0$ and $E=M_1(h_0)>0$, and consider $(h_n)_{ n\in \NN }$ the sequence of functions  given by Theorem \ref{Ex1T2}. Then for every $\tau\in[0,\infty)$ there exists a subsequence, still denoted $(h_n(\tau))_{n\in\NN}$, and a measure $h(\tau)\in\mathscr{M}^1_+([0,\infty))$ such that, as $n\to \infty$,  
$h_n(\tau)$ converges to $h(\tau)$ in the following sense:
\begin{align}
\label{growth condition}
&\forall\varphi \in C([0, \infty));\;\exists\theta\in [0, 1): \quad \sup _{ x\ge 0} \frac {\varphi (x)} {1+x^\theta}<\infty,\\
\label{sense of convergence}
&\lim _{ n\to \infty }\int_{[0,\infty)}\varphi(x)h_{n}(\tau,x)d x=\int_{[0,\infty)}\varphi(x)h(\tau,x)d x.
\end{align}
Moreover, for every $\tau\in[0,\infty)$:
\begin{align}
\label{pre conservation laws for widetildeG}
&M_0(h(\tau))\leq\bigg(\frac{\sqrt{E}}{2}\tau+\sqrt{N}\bigg)^2,\\
\label{pre energy}
&M_1(h(\tau))\leq E.
\end{align}
\end{lemma}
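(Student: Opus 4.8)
Here is how I would prove Lemma~\ref{precomp}. The plan is to fix $\tau\in[0,\infty)$, extract the subsequence by weak-$*$ compactness, and then upgrade the mode of convergence using the uniform first-moment (energy) bound; the measure $h(\tau)$ and the estimates \eqref{pre conservation laws for widetildeG}, \eqref{pre energy} come out by lower semicontinuity along the way. Since $\tau$ is fixed no diagonal procedure is needed.

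\emph{Step 1 (extraction).} By the mass bound \eqref{mass inequality} of Theorem~\ref{Ex1T2}, for this fixed $\tau$ the total masses $M_0(h_n(\tau))$ are bounded uniformly in $n$; being nonnegative, $(h_n(\tau))_n$ is a bounded sequence in the space of finite Radon measures on $[0,\infty)$, which is the dual of the separable Banach space $C_0([0,\infty))$. By sequential Banach--Alaoglu (equivalently, Helly's selection theorem applied to the distribution functions), a subsequence, still denoted $(h_n(\tau))_n$, converges weakly-$*$ against $C_0([0,\infty))$ to some finite measure $h(\tau)$, and testing against nonnegative $\varphi\in C_0([0,\infty))$ shows $h(\tau)\ge 0$.

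\emph{Step 2 (no loss of mass; convergence against $C_b$, and the moment bounds).} The energy conservation \eqref{conservation of energy}, $M_1(h_n(\tau))=E$, gives, for every $R\ge 1$,
\[
h_n(\tau)\big([R,\infty)\big)\le\frac1R\int_{[R,\infty)}x\,h_n(\tau,x)\,dx\le\frac ER
\]
uniformly in $n$, i.e. tightness. Fix $\psi_R\in C_c([0,\infty))$ with $0\le\psi_R\le 1$ and $\psi_R\equiv 1$ on $[0,R]$. Given $\varphi\in C_b([0,\infty))$, split $\varphi=\varphi\psi_R+\varphi(1-\psi_R)$: the first term is in $C_0([0,\infty))$, so $\int\varphi\psi_R\,dh_n(\tau)\to\int\varphi\psi_R\,dh(\tau)$, while $|\int\varphi(1-\psi_R)\,dh_n(\tau)|\le\|\varphi\|_\infty E/R$ for all $n$, and the same bound passes to $h(\tau)$. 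A three-$\varepsilon$ argument (choose $R$ large, then $n$ large) gives $\int\varphi\,dh_n(\tau)\to\int\varphi\,dh(\tau)$ for every $\varphi\in C_b([0,\infty))$. Taking $\varphi\equiv 1$ yields $M_0(h(\tau))=\lim_n M_0(h_n(\tau))$, hence \eqref{pre conservation laws for widetildeG} by \eqref{mass inequality}; taking $\varphi_M(x)=\min\{x,M\}\in C_b([0,\infty))$ yields $\int\varphi_M\,dh(\tau)=\lim_n\int\varphi_M\,dh_n(\tau)\le E$, and letting $M\to\infty$ (monotone convergence) gives $M_1(h(\tau))\le E$, i.e. \eqref{pre energy}. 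In particular $h(\tau)\in\mathscr{M}^1_+([0,\infty))$.

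\emph{Step 3 (the growth class).} Let $\varphi\in C([0,\infty))$ satisfy $|\varphi(x)|\le C(1+x^\theta)$ for some $\theta\in[0,1)$, as in \eqref{growth condition}. With $\psi_R$ as above and $R\ge 1$, $\varphi\psi_R$ is continuous with compact support, so Step~2 applies to it; on $\{x\ge R\}$ one has $1+x^\theta\le 2x^\theta\le 2x/R^{1-\theta}$, whence
\[
\int_{[0,\infty)}\big|\varphi(1-\psi_R)\big|\,h_n(\tau,x)\,dx\le\frac{2C}{R^{1-\theta}}\,M_1(h_n(\tau))=\frac{2CE}{R^{1-\theta}},
\]
uniformly in $n$, and the same estimate holds for $h(\tau)$ since $M_1(h(\tau))\le E$. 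Thus for every $\varepsilon>0$ first choose $R$ with $4CE/R^{1-\theta}<\varepsilon/2$ and then $n$ large so that $|\int\varphi\psi_R\,d(h_n(\tau)-h(\tau))|<\varepsilon/2$; this proves \eqref{sense of convergence}.

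The only genuinely delicate point is Step~2: bare weak-$*$ compactness only controls test functions in $C_0([0,\infty))$, and it is precisely the uniform first-moment bound \eqref{conservation of energy} that simultaneously prevents escape of mass to infinity and allows one to trade the admissible polynomial growth $x^\theta$, $\theta<1$, against the finite energy; everything else is routine truncation and lower semicontinuity.
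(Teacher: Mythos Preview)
Your proof is correct and reaches the same conclusion as the paper, but by a slightly different (and somewhat more economical) route. The paper, after extracting the weak-$*$ limit $h(\tau)$ of $(h_n(\tau))_n$ against $C_0([0,\infty))$, performs a \emph{second} weak-$*$ extraction for the sequence $(x\,h_n(\tau))_n$ (which is bounded in total variation by \eqref{conservation of energy}), obtains a limit measure $\nu(\tau)$, and then spends some effort identifying $\nu(\tau)=x\,h(\tau)$ by testing first against $C_c$ and then extending to $C_0$ by density. Only after this identification does the paper get $M_1(h(\tau))\le E$ and run the cutoff argument for the growth class \eqref{growth condition}.

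You bypass the second extraction entirely: the uniform energy bound gives tightness directly, which lets you upgrade weak-$*$ convergence against $C_0$ to convergence against $C_b$ in one step, and \eqref{pre energy} then follows by testing against $\min\{x,M\}$ and monotone convergence. The one place you are a little terse is the claim that ``the same bound passes to $h(\tau)$'' in Step~2; this is correct but relies implicitly on lower semicontinuity of the first moment under weak-$*$ convergence (test $x\chi(x)$ with $\chi\in C_c$, $0\le\chi\le 1$, $\chi\uparrow 1$, to get $M_1(h(\tau))\le E$ \emph{before} invoking the tail bound for $h(\tau)$). With that small clarification your argument is complete, and it avoids the auxiliary measure $\nu(\tau)$ altogether.
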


\begin{proof}
Let us prove first the convergence for a subsequence of $(h_n(\tau))_{n\in\NN}$. For every $\tau\geq 0$ we have by (\ref{mass inequality}) that 
\begin{align*}
\sup_{n\in\NN}\int_0^\infty h_n(\tau,x)d x\leq \bigg(\frac{\sqrt{E}}{2}\tau+\sqrt{N}\bigg)^2.
\end{align*}
Therefore, there exists a subsequence, still denoted $(h_n(\tau))_{n\in\NN}$, and a measure $h(\tau)$ such that 
 $(h_n(\tau))_{n\in\NN}$  converges to $h(\tau)$ in the weak* topology of $\mathscr{M}([0,\infty))$, as $n\to \infty$: 
\begin{equation}
\label{weak* 1}
\lim _{ n\to \infty  }\int_{[0,\infty)}\!\!\!\varphi(x)h_{n}(\tau,x)d x=
\int_{[0,\infty)}\!\!\varphi(x)h(\tau,x)d x,\,\,
\forall\varphi\in C_0([0,\infty)).
\end{equation}
Since for all $n\in\NN$, $h_n(\tau)$ is nonnegative, then $h(\tau)$ is a positive measure. Also by weak* convergence and \eqref{mass inequality} we deduce that $h(\tau)$ is a finite measure:
\begin{align}
\label{weak mass}
\int_{[0,\infty)}h(\tau,x)d x\leq\liminf_{n\rightarrow\infty}\int_0^\infty h_{n}(\tau,x)d x\leq\bigg(\frac{\sqrt{E}}{2}\tau+\sqrt{N}\bigg)^2.
\end{align}
Moreover, by (\ref{conservation of energy}) we also have that the sequence $(h_n(\tau))_{n\in\NN}$ is bounded in $L^1_x(\RR_+)$. 
Hence there exists a subsequence (not relabelled) that converges to a measure 
$\nu(\tau)$ in the weak* topology of $\mathscr{M}([0,\infty))$, i.e., such that
\begin{equation}
\label{weak* 2}
\lim _{ n\to \infty  }\int_0^\infty\varphi(x)\,x\,h_{n}(\tau,x)d x=\int_{[0,\infty)}\!\!\!\varphi(x)\nu(\tau,x)dx,\,\forall\varphi\in C_0([0,\infty)).
\end{equation}
Again, since $h_n(\tau)$ is nonnegative for all $n\in\NN$ then $\nu(\tau)$ is a positive measure. Also by weak* convergence and \eqref{conservation of energy} we have
\begin{align}
\label{weak energy}
\int_{[0,\infty)}\nu(\tau,x)d x\leq\liminf_{n\rightarrow\infty}\int_0^\infty\,x\,h_{n}(\tau,x)d x=E.
\end{align}
Let us show now that $\nu(\tau)=x\,h(\tau)$. This will follow from 
\begin{align}
\label{equality measures}
\forall \varphi\in C_0([0,\infty)):  \int_{[0,\infty)}\varphi(x)\nu(\tau,x)d x=\int_{[0,\infty)}\varphi(x)\,x\,h(\tau,x)d x
\end{align}
 In a first step we show that (\ref{equality measures}) holds for $\varphi\in C_c([0,\infty))$ and then we use a density argument. 
Let $\varepsilon>0$ and 
$\varphi\in C_c([0,\infty))$. Using \eqref{weak* 2} with test function $\varphi$, and \eqref{weak* 1} with test function $x\varphi(x)$, we deduce that 
\begin{align*}
\bigg|\int_{[0,\infty)}&\varphi(x)\nu(\tau,x)d x-\int_{[0,\infty)}\varphi(x)\,x\,h(\tau,x)d x\bigg |\\
&\leq \bigg|\int_0^\infty\varphi(x)\nu(\tau,x)d x-\int_{[0,\infty)}\varphi(x)\,x\,h_n(\tau,x)d x\bigg|\\
&+\bigg|\int_0^\infty\varphi(x)\,x\,h_n(\tau,x)d x-\int_{[0,\infty)}\varphi(x)\,x\,h(\tau,x)d x\bigg|<\varepsilon
\end{align*}
for $n$ large enough. Hence \eqref{equality measures} holds for all $\varphi\in C_c([0,\infty))$. Now let $\varphi\in C_0([0,\infty))$ and consider a sequence 
$(\varphi_k)_{k\in\NN}\subset C_c([0,\infty))$ such that \\$\|\varphi_k-\varphi\|_{\infty}\rightarrow0$ as $k\rightarrow\infty$. 
Using \eqref{equality measures} with $\varphi_k$ and the bounds \eqref{weak mass} and \eqref{weak energy}, we deduce that

\begin{align*}
\bigg|\int_{[0,\infty)}&\varphi(x)	\nu(\tau,x)d x-\int_{[0,\infty)}\varphi(x)\,x\,h(\tau,x)d x \bigg|\\
&\leq\int_{[0,\infty)}\big|\varphi(x)-\varphi_k(x)\big|\nu(\tau,x)d x\\
&+\bigg|\int_{[0,\infty)}\varphi_k(x)\nu(\tau,x)d x-\int_{[0,\infty)}\varphi_k(x)\,x\,h(\tau,x)d x \bigg|\\
&+\int_{[0,\infty)}\big|\varphi_k(x)-\varphi(x)\big|\,x\,h(\tau,x)d x<\varepsilon
\end{align*}
for $k$ large enough. Therefore \eqref{equality measures} holds for all $\varphi \in C_0([0,\infty))$, i.e., $\nu(\tau)=x\,h(\tau)$. Hence we rewrite \eqref{weak* 2} as 
\begin{align}
\lim _{ n\to \infty }\int_0^{\infty}\varphi(x)x\,h_n(\tau,x)d x =\int_{[0,\infty)} \!\!\!\!\!\!\varphi(x)x\,h(\tau,x)dx, \,\,\forall\varphi\in C_0([0,\infty)).
\end{align}
Let us show now (\ref{growth condition}), (\ref{sense of convergence}).
Let  then  $\varphi\in C([0, \infty))$  be any nonnegative test function that satisfies (\ref{growth condition}). We denote $(\zeta _j) _{ j\in \NN }$ a sequence of nonnegative and nonincreasing functions  of  $C_c^\infty([0, \infty))$ such that:
\begin{equation*}
\zeta_j(x)= 1\,\,\hbox{if}\,\,\,x\in [0, j),\qquad
\zeta_j(x)= 0\,\,\,\hbox{if}\,\,x>j+1,
\end{equation*}
and  define $\varphi_j=\varphi\,\zeta_j$. Then for every $n,\,j\in\NN$:
\begin{align}
\label{est 1. lemma convergence for tau fixed}
\bigg|\int_0^{\infty}&\varphi(x)h_n(\tau,x)d x-\int_{[0,\infty)}\varphi(x)h(\tau,x)d x \bigg|\\
&\leq\int_0^{\infty}\big|\varphi(x)-\varphi_j(x)\big|h_n(\tau,x)d x\nonumber\\
&+\bigg|\int_0^{\infty}\varphi_j(x)h_n(\tau,x)d x-\int_{[0,\infty)}\varphi_j(x)h(\tau,x)d x \bigg|\nonumber\\
&+\int_{[0,\infty)}\big|\varphi_j(x)-\varphi(x)\big|h(\tau,x)d x\nonumber
\end{align}
Since $\varphi_j\in C_0([0,\infty))$, using \eqref{weak* 2}, the second term in the right hand side of 
\eqref{est 1. lemma convergence for tau fixed} converges to zero as $n\rightarrow\infty$ for every $j\in\NN$. The first and the third term in the right hand side of \eqref{est 1. lemma convergence for tau fixed} are treated in the same way, using that $\varphi _j(x)=\varphi (x)$ for all $x\in [0, j)$. For instance, in the first term:
\begin{align*}
\int_0^{\infty}\big|\varphi(x)&-\varphi_j(x)\big|h_n(\tau,x)d x
=\int_j^{\infty}\big|\varphi(x)-\varphi_j(x)\big|h_n(\tau,x)d x\\
&\leq 2\int_j^{\infty}|\varphi(x)| h_n(\tau,x)d x \leq 2C\int_j^{\infty}(1+x^{\theta})h_n(\tau,x)d x\\
&\leq 2C\left(\frac{1+j^{\theta}}{j}\right)\int_j^{\infty}x\,h_n(\tau,x)d x\leq 2C\left(\frac{1+j^{\theta}}{j}\right)E.
\end{align*}
Therefore this term is small provided $j$ is large enough. In conclusion, the difference in \eqref{est 1. lemma convergence for tau fixed} is less than $\varepsilon$ for $n$ sufficiently large, i.e., \eqref{sense of convergence} holds.
\end{proof}

\begin{remark}
\label{remark. weak* topology generated by a distance}
The so-called narrow topology $\sigma(\mathscr{M}([0,\infty)),C_b([0,\infty)))$ on $\mathscr{M}_+([0,\infty))$ is generated by the metric
$d(\mu,\nu)=\|\mu-\nu\|_0$, where
\begin{align*}
\|\mu\|_0=\sup\left\{\int_{[0,\infty)}\varphi d\mu:\varphi\in\text{Lip}_1([0,\infty)),\;\|\varphi\|_{\infty}\leq 1\right\},
\end{align*}
(cf. \cite{BOG} Theorem 8.3.2).
\end{remark}

Using this Remark, Lemma \ref{precomp} and the Arzel\`{a}-Ascoli's Theorem we prove now the following:
\begin{proposition}
\label{equicont}
Let $h_0$ and $(h_n) _{ n\in \NN }$ be as in Lemma \ref{precomp}.
Then there exist a subsequence (not relabelled) and $h\in C\big([0,\infty),\mathscr{M}_+([0,\infty))\big)$ such that 
\begin{align}
\label{S5E90}
h_n\xrightarrow[n\rightarrow\infty]{}h\quad\text{in}\quad C\big([0,\infty),\mathscr{M}_+([0,\infty))\big).
\end{align}
Moreover, if we denote by $N=M_0(h_0)$ and $E=M_1(h_0)$, then for all $\tau\geq 0$ 
\begin{align}
\label{MASS IN}
&M_0(h(\tau))\leq\bigg(\frac{\sqrt{E}}{2}\tau+\sqrt{N}\bigg)^2,\\
\label{EN IN}
&M_1(h(\tau))\leq E,
\end{align}
and for all $\varphi\in C([0,\infty))$ satisfying the growth condition (\ref{growth condition}):
\begin{equation}
\label{sense of convergence 2}
\lim _{ n\to \infty  }\int_0^{\infty}\varphi(x)h_{n}(\tau,x)d x=\int_{[0,\infty)}\varphi(x)h(\tau,x)d x.
\end{equation}
\end{proposition}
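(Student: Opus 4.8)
The idea is to view each $h_n$ as a map from $[0,\infty)$ into the metric space $(\mathscr{M}_+([0,\infty)),d)$ of Remark \ref{remark. weak* topology generated by a distance}, and to run an Arzel\`{a}--Ascoli argument: a uniform-in-$n$ equicontinuity estimate for $\tau\mapsto h_n(\tau)$ in the metric $d$, together with pointwise (in $\tau$) relative compactness of $\{h_n(\tau):n\in\NN\}$ in the narrow topology, will produce the limit $h$; the moment bounds and (\ref{sense of convergence 2}) are then obtained by passing to the limit.

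The first step is the equicontinuity estimate. Fix $T>0$ and let $\varphi\in\text{Lip}_1([0,\infty))$ with $\|\varphi\|_{\infty}\le 1$. From the Lipschitz bound one gets $|\Lambda(\varphi)(x,y)|\le 2\min\{x,y\}\le 2\sqrt{xy}$ and from $\|\varphi\|_\infty\le1$ one gets $|\mathcal{L}(\varphi)(x)|\le 3x$. Using the explicit form of $\widetilde{\mathscr{Q}}_{3,n}$ together with $\phi_n(x)\phi_n(y)\le(xy)^{-1/2}$ and $\phi_n(x)\le x^{-1/2}$ (Cutoff \ref{cut-off}), conservation of energy (\ref{conservation of energy}) and the Cauchy--Schwarz inequality $M_{1/2}(h_n(\tau))\le\sqrt{M_0(h_n(\tau))\,E}$, one obtains
\[
|\widetilde{\mathscr{Q}}_{3,n}(\varphi,h_n(\tau))|\le 2M_0(h_n(\tau))^2+3\sqrt{E\,M_0(h_n(\tau))}.
\]
By the mass bound (\ref{mass inequality}) the right-hand side is bounded by a constant $C_T=C_T(N,E,T)$ for all $\tau\in[0,T]$ and all $n$. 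Integrating (\ref{REGWEAK}) between $\tau_1$ and $\tau_2$ (legitimate by (\ref{S5E765})) and taking the supremum over such $\varphi$ yields $d(h_n(\tau_1),h_n(\tau_2))\le C_T|\tau_1-\tau_2|$ for all $\tau_1,\tau_2\in[0,T]$ and all $n$, so the family is equi-Lipschitz on $[0,T]$.

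The second step is pointwise relative compactness: for each fixed $\tau$, the set $\{h_n(\tau):n\in\NN\}$ has uniformly bounded mass by (\ref{mass inequality}) and is tight, since $M_1(h_n(\tau))=E$ forces $h_n(\tau)((R,\infty))\le E/R$ for every $R$; by Prokhorov's theorem it is relatively compact for the narrow topology, hence for $d$. This tightness coming from the energy bound is, together with the equicontinuity estimate, the main point of the proof: it is what allows an Arzel\`a--Ascoli argument even though $\mathscr{M}_+([0,\infty))$ with the narrow topology is not locally compact. Arzel\`{a}--Ascoli for maps from the compact interval $[0,T]$ into $(\mathscr{M}_+([0,\infty)),d)$ then gives a subsequence converging uniformly on $[0,T]$, and a diagonal extraction over $T=1,2,\dots$ produces a single subsequence and a limit $h\in C\big([0,\infty),\mathscr{M}_+([0,\infty))\big)$ with $h_n\to h$ uniformly on compact time intervals, i.e.\ (\ref{S5E90}).

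Finally I would pass to the limit. For fixed $\tau$, narrow convergence with $\varphi\equiv1$ gives $M_0(h(\tau))=\lim_n M_0(h_n(\tau))\le\big(\tfrac{\sqrt E}{2}\tau+\sqrt N\big)^2$, which is (\ref{MASS IN}); testing with $x\wedge R\in C_b([0,\infty))$, passing to the limit in $n$ using $M_1(h_n(\tau))=E$, and letting $R\to\infty$ by monotone convergence gives (\ref{EN IN}). For (\ref{sense of convergence 2}) I would reproduce the truncation argument of Lemma \ref{precomp}: write $\varphi=\varphi\zeta_j+\varphi(1-\zeta_j)$, use narrow convergence of $h_n(\tau)$ on the compactly supported piece $\varphi\zeta_j$, and bound the tails of both $h_n(\tau)$ and $h(\tau)$ by $2C\frac{1+j^\theta}{j}E$ uniformly in $n$, using the energy bounds $M_1(h_n(\tau))=E$ and $M_1(h(\tau))\le E$; letting $j\to\infty$ closes the argument.
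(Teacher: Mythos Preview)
Your proof is correct and follows essentially the same route as the paper: an equicontinuity estimate on $\widetilde{\mathscr{Q}}_{3,n}$ via the bounds of Lemma \ref{lemma regularity} combined with (\ref{mass inequality})--(\ref{conservation of energy}), pointwise relative compactness in the narrow topology, and Arzel\`a--Ascoli in the metric of Remark \ref{remark. weak* topology generated by a distance}. The only minor difference is that you invoke Prokhorov's theorem directly for pointwise compactness (using tightness from the energy bound), whereas the paper appeals to Lemma \ref{precomp}; both work, and the moment bounds and (\ref{sense of convergence 2}) are obtained in the same way.
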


\begin{proof}[\upshape\bfseries{Proof of Proposition \ref{equicont}}]
By Lemma \ref{precomp} the sequence 
$(h_n(\tau))_{n\in\NN}$ is relatively compact in $\mathscr{M}([0,\infty))$ for every $\tau\in[0,\infty)$. Let us show now that $(h_n)_{n\in\NN}$ is also equicontinuous. To this end let $\tau_2\geq\tau_1\geq 0$, and consider $\varphi$ as in Remark \ref{remark. weak* topology generated by a distance}, i.e., 
$\varphi\in\text{Lip}([0,\infty))$ with Lipschitz constant $\text{Lip}(\varphi)\leq 1$, and $\|\varphi\|_{\infty}\leq 1$.
Then, using $\phi_n(x)\leq x^{-1/2}$, (\ref{lemma regularity 1}) and (\ref{lemma regularity 4}) in Lemma \ref{lemma regularity}, we have
\begin{align}
&\bigg|\int_0^{\infty}\varphi(x)h_n(\tau_1,x)d x-\int_0^{\infty}\varphi(x)h_n(\tau_2,x)d x\bigg|\nonumber \\
&\leq\int_{\tau_1}^{\tau_2}\big|\widetilde{\mathscr{Q}}_{3,n}(\varphi,h_n(\sigma))\big|d\sigma\leq 2\int_{\tau_1}^{\tau_2}\bigg(\int_0^{\infty}h_n(\sigma,x)d x\bigg)^2d\sigma\nonumber\\
&+4\int_{\tau_1}^{\tau_2}\int_0^{\infty}\sqrt{x}\,h_n(\sigma,x)d xd\sigma.\label{equicontinuity 1}
\end{align}
Using H\"{o}lder's inequality and the estimates (\ref{mass inequality}) and (\ref{conservation of energy}) in (\ref{equicontinuity 1}), it follows that 
\begin{align*}
&\bigg|\int_0^{\infty}\varphi(x)h_n(\tau_1,x)d x-\int_0^{\infty}\varphi(x)h_n(\tau_2,x)d x\bigg|\\
&\leq 2\int_{\tau_1}^{\tau_2}\bigg(\frac{\sqrt{E}}{2}\sigma+\sqrt{N}\bigg)^4d\sigma
+4\sqrt{E}\int_{\tau_1}^{\tau_2}\bigg(\frac{\sqrt{E}}{2}\sigma+\sqrt{N}\bigg)d\sigma\quad\forall n\in\NN.\nonumber
\end{align*}
We then deduce using Remark \ref{remark. weak* topology generated by a distance} that $(h_n)_{n\in\NN}$ is equicontinuous.
It then follows from Arzel\`{a}-Ascoli's Theorem (cf. for example \cite{Roy})  that there exists $h\in C\big([0,\infty),\mathscr{M}_+([0,\infty))\big)$ such that
$h_n\rightarrow h$ in $C\big([0,T], \mathscr{M}_+([0,\infty))\big)$, for every $T>0$, as $n\rightarrow\infty$. 

The estimates 
(\ref{MASS IN}), (\ref{EN IN}) and the convergence (\ref{sense of convergence 2}) are deduced in the same way as in the Proof of  Lemma \ref{precomp}.
\end{proof}

\begin{proof}[\upshape\bfseries{Proof of Theorem \ref{Ex1T1}}]
By Corollary \ref{APD1}, there exists a sequence of nonnegative function $(h_{0,n})_{n\in\NN}\in C_c([0,\infty))$ that approximate $h_0$ in the weak* topology of the space $C_b([0,\infty))^*$.
Let then  $(h_n) _{ n\in \NN }\subset  C\big([0,\infty),\mathscr{M}_+([0,\infty))\big)$ be the sequence of solutions to (\ref{S5E765}), (\ref{REGWEAK})
obtained by Theorem \ref{Ex1T2} with the initial data $h_{0,n}$.  By Proposition \ref{equicont}  there exists a subsequence, still denoted $(h_n) _{ n\in \NN }$, and $h\in  C\big([0,\infty), \mathscr{M}_+([0,\infty))\big)$ such that
$h_n$ converges to $h$ in the topology of $C\big([0,\infty),\mathscr{M}_+([0,\infty))\big)$. 

By (\ref{REGWEAK}) and (\ref{datan}), for all  $\varphi \in C^1_b([0, \infty))$ and $\tau >0$:
\begin{equation}
\label{S5E456}
\int_0^{\infty}\varphi (x)h_n(\tau , x)dx-\int_0^{\infty}\varphi (x)h _{ 0, n }( x)dx=\int _0^\tau \widetilde{\mathscr{Q}}_{3,n}(\varphi,h_n(\sigma))d\sigma. 
\end{equation}
By construction, for every $\varphi\in C_b^1([0,\infty))$ and every $\tau\in[0,\infty)$:
\begin{align}
\label{LIM55}
\lim _{ n\to \infty  }\int_0^{\infty}\varphi(x)h_n(\tau,x)d x=\int_{[0,\infty)}\varphi(x)h(\tau,x)d x.
\end{align}
We prove now the convergence of the linear term: for all $\varphi\in C_b^1([0,\infty))$ and $\tau\in[0,\infty)$
\begin{align}
\label{linear limit 3}
\lim_{n\to\infty}\widetilde{\mathscr{Q}}_{3,n}^{(1)}(\varphi, h_n(\tau))=\widetilde{\mathscr{Q}}_3^{(1)}(\varphi, h_n(\tau)).
\end{align}
By definition:
\begin{align}
&\Big|\widetilde{\mathscr{Q}}_3^{(1)}(\varphi,h(\tau))- \widetilde{\mathscr{Q}}_{3,n}^{(1)}(\varphi,h_n(\tau))\Big|\nonumber \\
&\leq  \bigg|\int_0^{\infty}\frac{\mathcal{L}(\varphi)(x)}{\sqrt{x}}h(\tau,x)d x-\int_0^{\infty}\frac{\mathcal{L}(\varphi)(x)}{\sqrt{x}}h_n(\tau,x)d x \bigg|\nonumber\\
&+\int_0^{\infty}\bigg|\mathcal{L}(\varphi)(x)\phi_n(x)-\frac{\mathcal{L}(\varphi)(x)}{\sqrt{x}}\bigg|h_n(\tau,x)d x. \label{linear limit 1}
\end{align}
From Lemma \ref{lemma regularity} (iii) and (\ref{sense of convergence 2}):
\begin{equation}
\lim _{ n\to \infty } \bigg|\int_0^{\infty}\frac{\mathcal{L}(\varphi)(x)}{\sqrt{x}}h(\tau,x)d x-\int_0^{\infty}\frac{\mathcal{L}(\varphi)(x)}{\sqrt{x}}h_n(\tau,x)d x \bigg|=0
\end{equation}
For the second term  in the right hand side of (\ref {linear limit 1}) we split the integral $\int _0^\infty$ in two:  $\int_0^R$ and  $\int_R^{\infty}$ for   $R>0$,  
and apply (\ref{lemma regularity 4}). We obtain:
\begin{align}
\label{linear limit 2}
\int_0^{\infty}&\bigg|\mathcal{L}(\varphi)(x)\phi_n(x)-\frac{\mathcal{L}(\varphi)(x)}{\sqrt{x}}\bigg|h_n(\tau,x)d x  \\
&\leq \bigg\|\mathcal{L}(\varphi)(x)\phi_n(x)-\frac{\mathcal{L}(\varphi)(x)}{\sqrt{x}}\bigg\|_{C([0,R])}\int_0^R h_n(\tau,x)d x+\nonumber\\
&\hskip 4.5cm +4\|\varphi\|_{\infty}\int_R^{\infty}\sqrt{x}\;h_n(\tau,x)d x\nonumber.
\end{align}
By (\ref{conservation of energy}), for any $\varepsilon >0$ and $R> (E/\varepsilon )^2$:
\begin{align*}
\int_R^{\infty}\sqrt{x}\;h_n(\tau,x)d x \leq\frac{E}{\sqrt{R}}<\varepsilon \qquad\forall n\in\NN.
\end{align*}
Then by Lemma \ref{regularised operators converge uniformly} and (\ref{mass inequality}), the part on $[0,R]$ converges to zero as $n\to\infty$. 
Since $R>0$ is arbitrary we finally deduce that (\ref{linear limit 2}) converges to zero as $n\to\infty$.
Therefore (\ref{linear limit 3}) holds.

Let us prove now the convergence of the quadratic term: for all $\varphi\in C_b^1([0,\infty))$ and all $\tau\in[0,\infty)$:
\begin{align}
\label{quadratic limit 2}
\lim_{n\to\infty}\mathscr{Q}_{3,n}^{(2)}(\varphi, h_n(\tau))=\mathscr{Q}_3^{(2)}(\varphi, h_n(\tau)).
\end{align}
As before
\begin{align}
\label{quadratic limit 1}
&\bigg|\mathscr{Q}_{3}^{(2)}(\varphi,h(\tau))-\mathscr{Q}_{3,n}^{(2)}(\varphi,h_n(\tau))\bigg|\\
&\leq\bigg|\mathscr{Q}_{3}^{(2)}(\varphi,h(\tau))-\int_0^{\infty}\!\!\!\!\int_0^{\infty}\frac{\Lambda(\varphi)(x,y)}{\sqrt{xy}}h_n(\tau,x)h_n(\tau,y)d xd y\bigg|\nonumber\\
&+\int_0^{\infty}\!\!\!\!\int_0^{\infty}\bigg|\Lambda(\varphi)(x,y)\phi_n(x)\phi_n(y)-\frac{\Lambda(\varphi)(x,y)}{\sqrt{xy}}\bigg|h_n(\tau,x)h_n(\tau,y)d xd y\nonumber.
\end{align}
It follows from Lemma \ref{lemma regularity} (ii) and (\ref{sense of convergence 2})
that the first term in the right hand side above converges to zero as $n\rightarrow\infty$.
For the second term we proceed as before. For any $R>0$ we split the double integral:
\begin{align*}
&\int_0^{\infty}\!\!\!\!\int_0^{\infty}\bigg|\Lambda(\varphi)(x,y)\phi_n(x)\phi_n(y)-\frac{\Lambda(\varphi)(x,y)}{\sqrt{xy}}\bigg|h_n(\tau,x)h_n(\tau,y)d xd y\\
&\leq \bigg\|\Lambda(\varphi)(x,y)\phi_n(x)\phi_n(y)-\frac{\Lambda(\varphi)(x,y)}{\sqrt{xy}}\bigg\|_{C([0,R]^2)}\left(\int_0^R h_n(\tau,x)dx\right)^2\nonumber\\
&+\iint_{(0,\infty)^2\setminus (0,R)^2}\bigg|\Lambda(\varphi)(x,y)\phi_n(x)\phi_n(y)-\frac{\Lambda(\varphi)(x,y)}{\sqrt{xy}}\bigg|h_n(\tau,x)h_n(\tau,y)d xd y\\
&=I_1+I_2.
\end{align*}
By Lemma \ref{regularised operators converge uniformly} and (\ref{mass inequality}), $I_1$ converges to zero as 
$n\to\infty$.
For the term $I_2$ we use (\ref{lemma regularity 1}) in Lemma \ref{lemma regularity} and the estimates (\ref{conservation of energy}) and (\ref{mass inequality}):
\begin{align*}
&\int_R^{\infty}\!\!\!\!\int_R^{\infty}\bigg|\Lambda(\varphi)(x,y)\phi_n(x)\phi_n(y)-\frac{\Lambda(\varphi)(x,y)}{\sqrt{xy}}\bigg|
h_n(\tau,x)h_n(\tau,y)d xd y\\
&\leq 4\|\varphi'\|_{\infty}\bigg(\int_R^{\infty}h_n(\tau,x)d x\bigg)^2
\leq \frac{4\|\varphi'\|_{\infty}E^2}{R^2}\qquad\forall n\in\NN
\end{align*}
and
\begin{align*}
&2\int_R^{\infty}\!\!\!\int_0^R\bigg|\Lambda(\varphi)(x)\phi_n(x)\phi_n(y)-\frac{\Lambda(\varphi)(x)}{\sqrt{xy}}\bigg|
h_n(\tau,x)h_n(\tau,y)d xd y\\
&\leq4\|\varphi'\|_{\infty}\int_R^\infty\int_0^Rh_n(\tau,x)h_n(\tau,y)d xd y
\leq \frac{4 \|\varphi'\|_{\infty}E}{R}\bigg(\frac{\sqrt{E}}{2}\tau+\sqrt{N}\bigg)^2.
\end{align*}
Since $R>0$ is arbitrary we deduce that $I_2$ also converges to zero as $n\to\infty$.
We then conclude that (\ref{quadratic limit 2}) holds.

Combining (\ref{linear limit 3}) and (\ref{quadratic limit 2}) it follows that for all $\varphi\in C_b^1([0,\infty))$ and all $\tau\in[0,\infty)$: 
\begin{align}
\label{LIMQ3N}
\lim _{ n\to \infty  }\widetilde{\mathscr{Q}}_{3,n}(\varphi,h_n(\tau))=\widetilde{\mathscr{Q}}_3(\varphi,h(\tau)).
\end{align}
Moreover, using $\phi_n(x)\leq x^{-1/2}$, (\ref{lemma regularity 1}) and (\ref{lemma regularity 4}) in Lemma \ref{lemma regularity}, and the estimates
(\ref{mass inequality}) and (\ref{conservation of energy}), we have for all $\varphi\in C_b^1([0,\infty))$, all $\tau\in[0,\infty)$ and all $n\in\NN$:

\begin{align*}
&\Big|\widetilde{\mathscr{Q}}_{3,n}(\varphi,h_n(\tau))\Big|\leq\\
&\leq 2\|\varphi'\|_{\infty}\bigg(\int_0^{\infty}h_n(\tau,x)dx\bigg)^2+4\|\varphi\|_{\infty}\int_0^{\infty}\sqrt{x}\,h_n(\tau,x)dx\\
&\leq2\|\varphi'\|_{\infty}\bigg(\frac{\sqrt{E}}{2}\tau+\sqrt{N}\bigg)^4+4\|\varphi\|_{\infty}\sqrt{E}\bigg(\frac{\sqrt{E}}{2}\tau+\sqrt{N}\bigg).
\end{align*}
By  (\ref{LIMQ3N}) and the dominated convergence Theorem:
\begin{align}
\label{CQ3I}
\lim _{ n\to \infty  }\int_0^{\tau}\widetilde{\mathscr{Q}}_{3,n}(\varphi,h_n(\sigma))d\sigma=\int_0^{\tau}\widetilde{\mathscr{Q}}_3(\varphi,h(\sigma))d\sigma.
\end{align}
Using now (\ref{LIM55}) and (\ref{CQ3I}), we may pass to the limit as $n\to \infty$ in (\ref{S5E456}) for all $\varphi\in C_b^1([0,\infty))$ and all $\tau\in[0,\infty)$ to obtain:
\begin{equation}
\label{S5E963}
\int_{[0,\infty)}\varphi(x)h(\tau,x)dx=\int_{[0,\infty)}\varphi(x)h_0(x)dx+\int_0^{\tau}\widetilde{\mathscr{Q}}_3(\varphi,h(\sigma))d\sigma.
\end{equation}
The map $\tau\mapsto\int_{[0,\infty)}\varphi(x)h(\tau,x)dx$ is then locally Lipschitz on $[0,\infty)$, and  $h$ satisfies  (\ref{lip loc h}), (\ref{AUXW}) for all
$\varphi\in C_b^1([0,\infty))$ and for a.e. $\tau\in[0,\infty)$. It also follows from (\ref{S5E963}) that $h(0)=h_0$ in $\mathscr M_+$.

The property (\ref{MMI}) follows from (\ref{MASS IN}). The  conservation of energy (\ref{EE}) is obtained as follows. We already know by (\ref{EN IN}) that $M_1(h(\tau ))\le E$. On the other hand, let  $\varphi_k\in C^1_b([0,\infty))$ be a concave test function such that $\varphi_k(x)=x$ for $x\in[0,k)$ and $\varphi_k(x)=k+1$ for $x\geq k+2$. Notice that there exists a positive constant $C$ such that
\begin{align}
\label{SUP9}
\sup_{k\in\NN}\|\varphi_k'\|_{\infty}\leq C.
\end{align}
By Remark \ref{concave-negativity}, $\widetilde{\mathscr{Q}}_3^{(1)}(\varphi_k,h)\leq 0$ and $\mathscr{Q}_3^{(2)}(\varphi_k,h)\leq 0$ for all $k\in\NN$,
and then, from (\ref{S5E963}):
\begin{align}
\label{INW1}
\int_{[0,\infty)}\!\!\!\varphi_k(x)h(\tau,x)dx\geq\int_{[0,\infty)}\!\!\!\varphi_k(x)h_0(x)dx+\int_0^{\tau}\mathscr{Q}_{3}^{(2)}(\varphi_k,h(\sigma))d\sigma.
\end{align}
We now prove that for all $\tau\in[0,\infty)$:
\begin{align}
\label{DOMQ32}
\lim _{ k\to \infty }\int_0^{\tau}\mathscr{Q}_{3}^{(2)}(\varphi_k,h(\sigma))d\sigma =0.
\end{align}
Notice that $\Lambda(\varphi_k)(x,y)\to 0$ as $k\to\infty$, since $\varphi_k(x)\to x$. Then, using (\ref{lemma regularity 1}) in Lemma \ref{lemma regularity}, (\ref{SUP9}) and (\ref{mass inequality}), we deduce for all $\tau\in[0,\infty)$ and $\sigma \in (0, \tau )$:
\begin{align}
&\lim _{ k\to \infty }\mathscr{Q}_{3}^{(2)}(\varphi_k,h(\sigma))=0\\
&\Big| \mathscr{Q}_{3}^{(2)}(\varphi_k,h(\tau))\Big|\leq 2C\bigg(\frac{\sqrt{E}}{2}\tau+\sqrt{N}\bigg)^4\qquad\forall k\in\NN.
\end{align}
and  (\ref{DOMQ32}) follows from the dominated convergence Theorem. We take now limits in (\ref{INW1}) as $k\to \infty$. By (\ref{DOMQ32}) and the monotone convergence Theorem we obtain that $M_1(h(\tau ))\ge E$ and then $M_1(h(\tau ))=E$ for all $\tau >0$.

We assume now that $M_\alpha (h_0)<\infty $ for some $\alpha \geq 3$ and prove (\ref{MAh}). By  (\ref{MAhn}) and 
(\ref{APD56}) in Corollary \ref{APD1}:
\begin{align}
M_{\alpha}(h(\tau))&\leq  \liminf_{ n\to \infty } \left(M_{\alpha}(h _{ 0, n })^{\frac{2}{\alpha-1}}+\alpha 2^{\alpha-1}M_1(h_{0,n})^{\frac{\alpha+1}{\alpha-1}}\tau
\right)^{\frac{\alpha-1}{2}}\\
&\leq\left(M_{\alpha}(h _{ 0})^{\frac{2}{\alpha-1}}+\alpha 2^{\alpha-1}E^{\frac{\alpha+1}{\alpha-1}}\tau
\right)^{\frac{\alpha-1}{2}}.
\end{align}
\end{proof}

\subsection{ Proof of Theorem \ref{S5T5R}}
\begin{proof}[\upshape\bfseries{Proof of Theorem \ref{S5T5R}}] 
Consider again the sequence of initial data $h _{ 0, n }$  used in the proof of Theorem \ref{Ex1T1} and the sequence of solutions  $h _{ n }$  obtained  by Theorem \ref{Ex1T1}. Using  (\ref{MAhn})  we know that $M_\alpha (h_n(\tau ))<\infty$ for $\tau >0$ and $n\in \NN$. 

Our first step is to prove that (\ref{AUXW}) holds also true for $\varphi (x)=x^\alpha $. Notice that $h_n$ solves now the equation (\ref{AUXW}), with the operator $\widetilde{\mathscr Q}_3$ in the right-hand side, whose kernel is not compactly supported and the argument in the proof of (\ref{MAhn}) must be slightly modified.

In order to use (\ref{AUXW}) we consider a sequence $(\varphi_k)\subset C^1_b([0,\infty))$ such that:\begin{align}
\label{M7a5}
&\varphi_k\to\varphi\quad\text{as}\quad k\to\infty\\
\label{M7a6}
&\varphi_k\leq\varphi_{k+1}\leq \varphi\\
\label{M7a7}
&\varphi'\geq\varphi_k'\geq 0.
\end{align}
Let us prove by the dominated convergence Theorem  that for all $\tau\geq 0$:
\begin{align}
\label{Q332}
(i)\quad&\widetilde{\mathscr{Q}}_3(\varphi,h_n)\in L^1(0,\tau),\\
\label{Q333}
(ii)\quad&\lim _{ k\to \infty }\int_0^{\tau}\widetilde{\mathscr{Q}}_3(\varphi_k,h_n(\sigma))d\sigma=\int_0^{\tau}\widetilde{\mathscr{Q}}_3(\varphi,h_n(\sigma))d\sigma.
\end{align}
To this end we first observe that, for $x\ge y>0$:
\begin{align}
\label{MAJ1}
\lim _{ k\to \infty }\Lambda(\varphi_k)(x, y)=\Lambda(\varphi)\qquad\text{and}\qquad \lim _{ n\to \infty  }\mathcal{L}(\varphi_k)=\mathcal{L}(\varphi)(x)
\end{align}
and, by the mean value Theorem:
\begin{align*}
\frac{\Lambda(\varphi_k)(x,y)}{\sqrt{xy}}\leq \varphi_k'(\xi_1)-\varphi_k'(\xi_2)
\end{align*}
for some $\xi_1\in (x,x+y)$ and $\xi_2\in(x-y,x)$. Using then (\ref{M7a7}):
\begin{align}
\label{MAJ2}
\frac{|\Lambda(\varphi_k)(x,y)|}{\sqrt{xy}}\leq \alpha\big(2^{\alpha-1}+1\big)x^{\alpha-1}\qquad\forall k\in\NN,
\end{align}
and by (\ref{M7a6}):
\begin{align*}
\frac{|\mathcal{L}(\varphi_k)(x)|}{\sqrt{x}}\leq \left(\frac{\alpha+3}{\alpha+1}\right)x^{\alpha+\frac{1}{2}}\qquad\forall k\in\NN.
\end{align*}
Since by Theorem \ref{Ex1T1}: $M_{\alpha-1}(h_n(\tau))<\infty$ and $M_{\alpha+1/2}(h_n(\tau))<\infty$, for every fixed $n$  we may apply the Lebesgue's convergence Theorem 
to  the sequences $\left\{\frac{\Lambda(\varphi_k)(x,y)}{\sqrt{xy}}h_n(\sigma , x)h_n(\sigma , y)\right\} _{ k\in \NN }$ and
$\left\{\frac{\mathcal{L}(\varphi_k)(x)}{\sqrt{x}}h_n(\sigma , x)\right\} _{ k\in \NN }$ and obtain (\ref{Q332}), (\ref{Q333}).

We use now $\varphi_k$ in (\ref{AUXW}) and take the limit $k\to\infty$. We obtain from (\ref{M7a5}), (\ref{M7a6}), (\ref{Q333}) and monotone convergence:
\begin{align}
M_{\alpha}(h_n(\tau))=M_{\alpha}(h _{ 0, n })+\int_0^{\tau}\widetilde{\mathscr{Q}}_3(\varphi,h_n(\sigma))d\sigma\qquad\forall\tau\geq 0,
\end{align}
and then, using (\ref{Q332}):
\begin{align}
\label{EQMA}
\frac{d}{d\tau}M_{\alpha}(h_n(\tau))=\widetilde{\mathscr{Q}}_3(\varphi,h_n(\tau))\qquad a.e.\;\tau>0.
\end{align}
If we use  (\ref{MAL}) and (\ref{MAQ}) in the right hand side of (\ref{EQMA}), we obtain
\begin{align*}
\frac{d}{d\tau}M_{\alpha}(h_n)\leq 2^{\alpha-2}\alpha(\alpha-1)E_n M_{\alpha-2}(h_n)-\left(\frac{\alpha-1}{\alpha+1}\right)M_{\alpha+\frac{1}{2}}(h_n),
\end{align*}
where $E_n=M_1(h_{0,n})$.
Now by H\"{o}lder's inequality:
\begin{align*}
&M_{\alpha-2}(h_n)\leq E_n^{2/(\alpha-1)}M_{\alpha}(h_n)^{(\alpha-3)/(\alpha-1)}\\
&M_{\alpha}(h_n)\leq E_n^{1/(2\alpha-1)}M_{\alpha+\frac{1}{2}}(h_n)^{2(\alpha-1)/(2\alpha-1)}.
\end{align*}
Then we obtain
\begin{align*}
\frac{d}{d\tau}M_{\alpha}(h_n)&\leq 2^{\alpha-2}\alpha(\alpha-1)E_n^{1+2/(\alpha-1)}M_{\alpha}(h_n)^{(\alpha-3)/(\alpha-1)}\\
&-\left(\frac{\alpha-1}{\alpha+1}\right)E_n^{-1/(2(\alpha-1))}M_{\alpha}(h_n)^{(2\alpha-1)/(2(\alpha-1))}.\nonumber
\end{align*}
Since $(\alpha-3)/(\alpha-1)\in [0,1)$ then
\begin{align*}
M_{\alpha}(h_n)^{(\alpha-3)/(\alpha-1)}\leq 1+M_{\alpha}(h_n),
\end{align*}
and :
\begin{align}
\label{MOM19}
\frac{d}{d\tau}M_{\alpha}(h_n)&\leq 2^{\alpha-2}\alpha(\alpha-1)E_n^{1+2/(\alpha-1)}\big(1+M_{\alpha}(h_n)\big)\\
&-\left(\frac{\alpha-1}{\alpha+1}\right)E_n^{-1/(2(\alpha-1))}M_{\alpha}(h_n)^{(2\alpha-1)/(2(\alpha-1))},\nonumber
\end{align}
where $(2\alpha-1)/(2(\alpha-1))>1$. If we define:
\begin{align*}
&u(\sigma)=M_{\alpha}(h_n(\tau)),\quad\sigma=C_1\tau,\quad q=2(\alpha-1),\\
&C_1=2^{\alpha-2}\alpha(\alpha-1)E^{1+2/(\alpha-1)}\\
&C_2=\left(\frac{\alpha-1}{\alpha+1}\right)E^{-1/(2(\alpha-1))},\quad C=\frac {C_2} {C_1}.
\end{align*}
We deduce from (\ref{MOM19}) that
\begin{align}
u'\leq1+u-Cu^{1+1/q},
\end{align}
and then by  Lemma 6.3 in \cite{Bob}, for every $n\in \NN$:
\begin{equation}
M_{\alpha}(h_n(\tau))\leq C(\alpha,E_n)\left(\frac{1}{1-e^{-\gamma(\alpha,E_n)\tau}}\right)^{2(\alpha-1)},
\end{equation}
where the constants $C(\alpha,E_n)$ and $\gamma (\alpha,E_n)$ are defined as in Theorem \ref{S5T5R}. We may argue  now as in the proof of Theorem \ref{Ex1T1}
and  pass to the limit along a subsequence to obtain a limit $h\in C\big([0,\infty), \mathscr{M}_+ ([0,\infty))\big)$ satisfying (\ref{lip loc h})--(\ref{EE}) and (\ref{S5Ealpha }). Using (\ref{S5Ealpha }) and $h\in C\big([0,\infty), \mathscr{M}_+ ([0,\infty))\big)$ we deduce  as in the proof of Theorem \ref{Ex1T1} that 
$h\in C\big((0,\infty), \mathscr{M}_+^\alpha  ([0,\infty))\big)$ for all $\alpha\geq 1$.
\end{proof}

\begin{proof}[\upshape\bfseries{Proof of Corollary \ref{S5C52R}}] 
We first observe that  the map $\tau\mapsto M_{1/2}(h(\tau))$ is locally bounded. Indeed by H\"{o}lder's inequality, (\ref{MMI}) and (\ref{EE}):
$$
M_{1/2}(h(\tau))\leq\sqrt{M_1(h(\tau))M_0(h(\tau))}\leq\sqrt{E}\bigg(\frac{\sqrt{E}}{2}\tau+\sqrt{N}\bigg).
$$
Then by (\ref{lip loc h}) it follows (\ref{lip loc H}). Now for all $\varphi\in C^1_b([0,\infty))$ and for a.e. $\tau> 0$, we deduce from (\ref{AUXW}): 
\begin{align*}
\quad\frac{d}{d\tau}\int_{[0,\infty)}\varphi(x)H(\tau,x)dx&=\widetilde{\mathscr{Q}}_3(\varphi,h(\tau))-\varphi(0)M_{1/2}(h(\tau))\\
&=\mathscr{Q}_3(\varphi,h(\tau)).
\end{align*}
Since $H=h$ on $(0,\infty)$ then $\mathscr{Q}_3(\varphi,H)\equiv\mathscr{Q}_3(\varphi,h)$, and therefore (\ref{AUXWH}) holds.

Now for the initial data: $H(0)=h(0)=h_0$.
The conservation of mass (\ref{MMH}) follows from (\ref{AUXWH}) for $\varphi=1$, since $\Lambda(\varphi)=0=\mathcal{L}_0(\varphi)$. The conservation of energy (\ref{EEH}) follows directly from (\ref{EE}) since $H=h$ on $(0,\infty)$. 
\end{proof}

\section{Properties of $h(\tau , \{0\})$.}
\label{SectionC}
\setcounter{equation}{0}
\setcounter{theorem}{0}

In all this Section we denote
\begin{equation}
\label{mZ}
m(\tau )=h(\tau , \{0\}).
\end{equation}
The main result of this Section  is the following.
\begin{theorem}
\label{S1T4h}
Suppose that $h\in C([0, \infty); \mathscr{M}_+^1([0,\infty))$ is a solution of (\ref{S1E16ha}) with  $h(0)=h_0\in\mathscr{M}_+^1([0,\infty))$, $N=M_0(h_0)>0$ and $E=M_1(h_0)>0$. 
Then $m$ is right continuous, a.e. differentiable and strictly increasing on $[0,\infty)$.
\end{theorem}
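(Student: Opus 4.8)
\textbf{Proof plan for Theorem~\ref{S1T4h}.}

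The plan is to test the weak formulation~(\ref{S1E16ha}) against the cut-off functions $\varphi_\varepsilon(x)=(1-x/\varepsilon)^2_+$ of Remark~\ref{TEST} and let $\varepsilon\to0$. These $\varphi_\varepsilon$ lie in $C^1_b([0,\infty))$ and are nonnegative, convex, nonincreasing, with $\varphi_\varepsilon(0)=1$. Fix $0\le\tau_1<\tau_2$. By convexity (Jensen applied to $\max\{x,y\}=\tfrac12((x+y)+|x-y|)$) we get $\Lambda(\varphi_\varepsilon)(x,y)\ge0$, hence $\mathscr{Q}_3^{(2)}(\varphi_\varepsilon,h(\sigma))\ge0$; and since $\varphi_\varepsilon\ge0$ is nonincreasing, $\mathcal{L}(\varphi_\varepsilon)(0)=0$ and $\tfrac{d}{dx}\mathcal{L}(\varphi_\varepsilon)(x)=x\varphi_\varepsilon'(x)-\varphi_\varepsilon(x)\le0$, so $\mathcal{L}(\varphi_\varepsilon)\le0$ and $\widetilde{\mathscr{Q}}_3^{(1)}(\varphi_\varepsilon,h(\sigma))\le0$. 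Therefore $\widetilde{\mathscr{Q}}_3(\varphi_\varepsilon,h(\sigma))=\mathscr{Q}_3^{(2)}(\varphi_\varepsilon,h(\sigma))-\widetilde{\mathscr{Q}}_3^{(1)}(\varphi_\varepsilon,h(\sigma))\ge0$ for a.e.\ $\sigma$. Integrating~(\ref{S1E16ha}) on $[\tau_1,\tau_2]$ and using~(\ref{TEST2}) to pass to the limit on the left-hand side gives $m(\tau_2)-m(\tau_1)=\lim_{\varepsilon\to0}\int_{\tau_1}^{\tau_2}\widetilde{\mathscr{Q}}_3(\varphi_\varepsilon,h(\sigma))\,d\sigma\ge0$, so $m$ is nondecreasing.

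For the regularity statements: for each fixed $\tau$ the map $\varepsilon\mapsto\int_{[0,\infty)}\varphi_\varepsilon(x)h(\tau,x)\,dx$ is nondecreasing and converges to $m(\tau)$ as $\varepsilon\downarrow0$, while by~(\ref{lip loc h}) each $\tau\mapsto\int_{[0,\infty)}\varphi_\varepsilon(x)h(\tau,x)\,dx$ is continuous; hence $m$ is an infimum of continuous functions, so upper semicontinuous, and an upper semicontinuous nondecreasing function is right continuous. A nondecreasing function is differentiable at a.e.\ point by Lebesgue's theorem on the differentiation of monotone functions.

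The core of the theorem is that $m$ is \emph{strictly} increasing. Fix $0\le\tau_1<\tau_2$. First, the linear term is negligible in the limit: since $|\mathcal{L}(\varphi_\varepsilon)(x)|\le3x$ for $0<x<\varepsilon$ and $\mathcal{L}(\varphi_\varepsilon)(x)=-\tfrac23\varepsilon$ for $x\ge\varepsilon$, together with the a priori bounds $M_1(h(\sigma))\equiv E$ and $M_0(h(\sigma))\le(\tfrac{\sqrt E}{2}\sigma+\sqrt N)^2$ (both from~(\ref{S1E16ha}) with $\varphi\equiv1$ and $\varphi(x)=x$ and H\"older), one gets $|\widetilde{\mathscr{Q}}_3^{(1)}(\varphi_\varepsilon,h(\sigma))|\le C(\tau_2,N,E)\sqrt\varepsilon$ uniformly for $\sigma\in[0,\tau_2]$. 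Hence $m(\tau_2)-m(\tau_1)=\lim_{\varepsilon\to0}\int_{\tau_1}^{\tau_2}\mathscr{Q}_3^{(2)}(\varphi_\varepsilon,h(\sigma))\,d\sigma$, and since the integrands are nonnegative it suffices, by Fatou, to bound $\liminf_{\varepsilon\to0}\mathscr{Q}_3^{(2)}(\varphi_\varepsilon,h(\sigma))$ away from $0$ on a positive-measure subset of $(\tau_1,\tau_2)$. As $M_1(h(\sigma))\equiv E>0$, $h(\sigma)$ carries positive mass on $(0,\infty)$; by narrow continuity of $\sigma\mapsto h(\sigma)$ (Portmanteau on the open set $(0,R)$) there are $R,\eta,\delta>0$ with $h(\sigma,(0,R))\ge\eta$ for all $\sigma\in[\tau_1,\tau_1+\delta]$. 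On this interval $h$ is a super solution in the sense of Definition~\ref{SUPER}, and the instantaneous-condensation mechanism of~\cite{KIER},~\cite{AV1} — which uses only $\Lambda(\varphi_\varepsilon)\ge0$, monotonicity of the flow and a self-improving lower bound for $\tau\mapsto\int\varphi_\varepsilon h(\tau)$ — yields $c=c(R,\eta,E)>0$ with $\liminf_{\varepsilon\to0}\int_{\tau_1}^{\tau_1+\delta}\mathscr{Q}_3^{(2)}(\varphi_\varepsilon,h(\sigma))\,d\sigma\ge c$. Thus $m(\tau_1+\delta)>m(\tau_1)$, and since $\tau_1$ is arbitrary and $m$ is nondecreasing, $m$ is strictly increasing.

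The main obstacle is exactly this last step: transporting the instantaneous-condensation estimate from the pure $\mathscr{Q}_3^{(2)}$ equation of~\cite{KIER},~\cite{AV1} to the full operator $\widetilde{\mathscr{Q}}_3$, i.e.\ showing that the solution instantly carries enough mass near the origin (the behaviour related to $M_{1/2}(h)$, cf.\ Proposition~\ref{LM-1/2}) for the time-integrated limit of $\mathscr{Q}_3^{(2)}(\varphi_\varepsilon,h(\sigma))$ to be strictly positive. The additional linear term is harmless: for nonincreasing convex test functions $-\widetilde{\mathscr{Q}}_3^{(1)}\ge0$, and its size is only $O(\sqrt\varepsilon)$. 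Everything else — monotonicity, upper semicontinuity, a.e.\ differentiability — is soft.
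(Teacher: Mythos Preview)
Your treatment of the ``soft'' parts is correct and in fact slightly slicker than the paper's. The monotonicity argument is the same; your right-continuity argument (writing $m(\tau)=\inf_{\varepsilon>0}\int\varphi_\varepsilon\,dh(\tau)$ as an infimum of continuous functions, hence upper semicontinuous, hence right continuous once you know $m$ is nondecreasing) is a clean alternative to the paper's direct computation in Lemma~\ref{S6L1'}. Lebesgue's differentiation theorem for monotone functions gives a.e.\ differentiability just as you say. Your estimate $|\widetilde{\mathscr Q}_3^{(1)}(\varphi_\varepsilon,h(\sigma))|=O(\sqrt\varepsilon)$ is also correct and agrees with (\ref{limQ31b}).

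The gap is the strict-increase step, and it is a genuine one: you invoke ``the instantaneous-condensation mechanism of \cite{KIER}, \cite{AV1}'' as a black box, but this is precisely the content of the theorem. Two specific issues. First, the Fatou idea you float cannot work as stated: for a fixed $\sigma$, the pointwise $\liminf_{\varepsilon\to0}\mathscr Q_3^{(2)}(\varphi_\varepsilon,h(\sigma))$ can vanish (cf.\ Proposition~\ref{LM-1/2}: it is zero whenever $h(\sigma)$ has no atoms on $(0,\infty)$ and $M_{-1/2}(h(\sigma))<\infty$), so there is no pointwise lower bound to feed into Fatou. Second, the passage from ``$h(\sigma,(0,R))\ge\eta$ on a short interval'' to ``$\liminf_\varepsilon\int\mathscr Q_3^{(2)}(\varphi_\varepsilon,h)\ge c$'' is not a single estimate but an iterative cascade that must be carried out in full.

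The paper does \emph{not} argue via a direct lower bound on $\int\mathscr Q_3^{(2)}(\varphi_\varepsilon,h)$. Instead it proceeds by contradiction on the mass near the origin. Using only the super-solution inequality (Definition~\ref{SUPER}, which you correctly note holds since $-\widetilde{\mathscr Q}_3^{(1)}\ge0$), one first gets a linear seed $\int_{[0,r]}h(\tau)\ge C_1 r$ for small $r$ and all $\tau\ge\tau_0$ (Lemma~\ref{S5L47}; this is the one place the full equation, including the linear term, is used). A rescaling/iteration argument (Lemmas~\ref{rescaled}, \ref{concentration lemma}, Propositions~\ref{S4P47}, \ref{S5LB}, \ref{S5PLB}) upgrades this to $\int_{[0,r]}h(\tau)\ge C\,r^\alpha$ for \emph{every} $\alpha\in(0,1)$, $r$ small, $\tau\ge\tau_0$. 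On the other hand, testing against $(1-x/r)^2_+$ and using only $\mathscr Q_3^{(2)}\ge0$ yields the upper bound $\int_{\tau_1}^{\tau_2}\int_{(0,r]}h(\sigma)\,d\sigma\le C'\sqrt r$ (Proposition~\ref{S5P2}). If $m$ were constant on some $[\tau_1,\tau_2]$ then $\int_{[0,r]}h=\int_{(0,r]}h$ there, and choosing $\alpha<\tfrac12$ produces $C\,r^\alpha\le C''\sqrt r$ for all small $r$, a contradiction. Time-translation invariance then gives strict increase everywhere. This is the machinery your sketch is gesturing at; it is several pages of work and cannot be replaced by a citation.
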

We begin with the following properties of the function $m$ defined in (\ref{mZ}).
\begin{lemma}
\label{S6L1'}
The function $m$ is nondecreasing, a.e. differentiable and right continuous on $[0,\infty)$.
\end{lemma}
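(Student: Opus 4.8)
The plan is to squeeze $m(\tau)=h(\tau,\{0\})$ between a family of continuous, nondecreasing functions and then recover its regularity by soft arguments. Fix a test function $\varphi_\varepsilon$ as in Remark~\ref{TEST}, for concreteness $\varphi_\varepsilon(x)=(1-x/\varepsilon)^2_+$, and set
\[
F_\varepsilon(\tau)=\int_{[0,\infty)}\varphi_\varepsilon(x)\,h(\tau,x)\,dx .
\]
Each $\varphi_\varepsilon$ lies in $C^1_b([0,\infty))$ and is nonnegative, convex and nonincreasing, with $\varphi_\varepsilon(0)=1$; moreover, for each fixed $x>0$ the map $\varepsilon\mapsto\varphi_\varepsilon(x)$ is nondecreasing, so that $F_\varepsilon(\tau)$ decreases to $m(\tau)$ as $\varepsilon\downarrow0$ by (\ref{TEST2}). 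Hence $m(\tau)=\inf_{\varepsilon>0}F_\varepsilon(\tau)$ is an infimum of continuous functions, the continuity of $F_\varepsilon$ coming from $h\in C\big([0,\infty),\mathscr{M}_+^1([0,\infty))\big)$ (or from (\ref{lip loc h})).

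Next I would show that each $F_\varepsilon$ is nondecreasing. By the weak formulation (\ref{S1E16ha}) (together with the local Lipschitz regularity (\ref{lip loc h})) one has, for a.e.\ $\tau>0$,
\[
F_\varepsilon'(\tau)=\widetilde{\mathscr{Q}}_3(\varphi_\varepsilon,h(\tau))=\mathscr{Q}_3^{(2)}(\varphi_\varepsilon,h(\tau))-\widetilde{\mathscr{Q}}_3^{(1)}(\varphi_\varepsilon,h(\tau)).
\]
For $0\le y\le x$, convexity of $\varphi_\varepsilon$ gives $\Lambda(\varphi_\varepsilon)(x,y)=\varphi_\varepsilon(x+y)+\varphi_\varepsilon(x-y)-2\varphi_\varepsilon(x)\ge0$, since $x$ is the midpoint of $x\pm y$; hence $\mathscr{Q}_3^{(2)}(\varphi_\varepsilon,h(\tau))\ge0$. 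Since $\varphi_\varepsilon$ is nonincreasing, $\int_0^x\varphi_\varepsilon(y)\,dy\ge x\,\varphi_\varepsilon(x)$, so $\mathcal{L}(\varphi_\varepsilon)(x)=x\varphi_\varepsilon(x)-2\int_0^x\varphi_\varepsilon(y)\,dy\le-x\varphi_\varepsilon(x)\le0$, and therefore $\widetilde{\mathscr{Q}}_3^{(1)}(\varphi_\varepsilon,h(\tau))\le0$. Thus $F_\varepsilon'\ge0$ a.e., so $F_\varepsilon$ is nondecreasing on $[0,\infty)$.

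It then remains to transfer these facts to $m$. As a pointwise infimum of nondecreasing functions, $m$ is nondecreasing, hence differentiable a.e.\ by Lebesgue's theorem on monotone functions. As a pointwise infimum of continuous functions, $m$ is upper semicontinuous. For right continuity at any $\tau_0\ge0$: the right limit $m(\tau_0^+)$ exists with $m(\tau_0^+)\ge m(\tau_0)$ by monotonicity, while upper semicontinuity gives $\limsup_{\tau\to\tau_0}m(\tau)\le m(\tau_0)$, in particular $m(\tau_0^+)\le m(\tau_0)$; thus $m(\tau_0^+)=m(\tau_0)$.

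I do not expect a genuine obstacle in this lemma. The only points needing care are that the chosen test functions are admissible (in $C^1_b$, convex and nonincreasing) so that both the weak formulation and the sign computations for $\Lambda(\varphi_\varepsilon)$ and $\mathcal{L}(\varphi_\varepsilon)$ apply, and that the convergence $F_\varepsilon\to m$ is \emph{monotone}, which is what upgrades (\ref{TEST2}) to the representation $m=\inf_\varepsilon F_\varepsilon$ and hence yields upper semicontinuity rather than mere measurability. The genuinely harder assertion of Theorem~\ref{S1T4h}, that $m$ is \emph{strictly} increasing, is not part of this lemma; it will require the strict positivity of the associated measure proved later (cf.\ Theorem~\ref{MU1}).
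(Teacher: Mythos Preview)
Your proof is correct. The monotonicity and a.e.\ differentiability parts coincide with the paper's argument: both use the sign of $\Lambda(\varphi_\varepsilon)$ and $\mathcal{L}(\varphi_\varepsilon)$ (the paper packages this as Lemma~\ref{convex-positivity}) to show each $F_\varepsilon$ is nondecreasing, and then pass to the limit. The right-continuity step, however, is genuinely different. The paper argues directly: it drops the nonpositive linear term, bounds $\mathscr{Q}_3^{(2)}(\varphi_\varepsilon,h(\sigma))\le \tfrac{2}{\varepsilon}\,M_0(h(\sigma))^2$ via (\ref{lemma regularity 1}), integrates over $[\tau,\tau+\delta]$, sends $\delta\to0$ at fixed $\varepsilon$, and only then lets $\varepsilon\to0$. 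You instead exploit the \emph{monotone} convergence $F_\varepsilon\downarrow m$ to write $m=\inf_{\varepsilon>0}F_\varepsilon$, conclude upper semicontinuity from an infimum of continuous functions, and combine this with monotonicity to get $m(\tau_0^+)=m(\tau_0)$. Your route is softer and avoids any quantitative bound on $\mathscr{Q}_3^{(2)}$; the paper's route is more hands-on but has the virtue of making explicit the order of limits $(\delta\to0$ then $\varepsilon\to0)$, which is the same mechanism reused later (e.g.\ in Proposition~\ref{origin G}). One small point worth recording: the representation $m=\inf_\varepsilon F_\varepsilon$ relies on $\varepsilon\mapsto\varphi(x/\varepsilon)$ being nondecreasing, which indeed holds for any $\varphi$ as in Remark~\ref{TEST} since such $\varphi$ is necessarily nonincreasing (convex, $\varphi(0)=1$, $\varphi(x)\to0$); you use the explicit $(1-x/\varepsilon)_+^2$, which is fine.
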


\begin{proof}
Given any $\varphi_{\varepsilon}$ as in Remark \ref{TEST}, then for all $\tau \ge 0$
\begin{align}
&m(\tau)=\lim _{ \varepsilon \to 0 }\int_{[0,\infty)}\varphi_{\varepsilon}(x)h(\tau,x)dx,\label{S6L1E1}
\end{align}
and by (\ref{S1E16ha})-(\ref{S1EB2})
\begin{align}
\frac{d}{d\tau}\int_{[0,\infty)}\varphi_{\varepsilon}(x)h(\tau,x)dx=\mathscr{Q}_3^{(2)}(\varphi_{\varepsilon},h(\tau))
-\widetilde{\mathscr{Q}}_3^{(1)}(\varphi_{\varepsilon},h(\tau)). \label{S6E47}
\end{align}
Since $\varphi_{\varepsilon}$ is convex, nonnegative and decreasing, it follows from Lemma \ref{convex-positivity} 
that $\mathscr{Q}_3^{(2)}(\varphi_{\varepsilon},h)\geq 0$ and 
$\widetilde{\mathscr{Q}}_3^{(1)}(\varphi_{\varepsilon},h)\leq 0$ for all $\varepsilon>0$. 
Then by (\ref{S6E47})
\begin{equation*}
\int_{[0,\infty)}\varphi_{\varepsilon}(x)h(\tau_2,x)d x\geq\int_{[0,\infty)}\varphi_{\varepsilon}(x)h(\tau_1,x)dx\qquad\forall\tau_2\geq\tau_1\geq 0.
\end{equation*}
Letting $\varepsilon\to 0$ it follows from (\ref{S6L1E1}) that $m$ is nondecreasing on $[0,\infty)$ and, for any $\tau\geq 0$ and $\delta>0$,
\begin{align}
\label{right continuity 1}
\liminf_{\delta\rightarrow 0^+}m(\tau+\delta)\geq m(\tau).
\end{align}
Using Lebesgue's Theorem (cf. \cite{Roy}), $m$ is a.e. differentiable on $[0,\infty)$. 
On the other hand, if in (\ref{S6E47}) the term $\widetilde{\mathscr{Q}}_3^{(1)}(\varphi_{\varepsilon},h)$ is dropped,
\begin{align*}
\int_{[0,\infty)}\varphi_{\varepsilon}(x)h(\tau+\delta,x)&d x\leq\int_{[0,\infty)}\varphi_{\varepsilon}(x)h(\tau,x)d x
+\int_{\tau}^{\tau+\delta}\mathscr{Q}_3^{(2)}(\varphi_{\varepsilon},h(\sigma))d\sigma.
\end{align*}
Using $\mathds{1}_{\{0\}}\leq\varphi_{\varepsilon}$ for all $\varepsilon>0$, and  the bound \eqref{lemma regularity 1}, we deduce
\begin{equation*}
m(\tau+\delta)\leq\int_{[0,\infty)}\varphi_{\varepsilon}(x)h(\tau,x)d x
+\frac {2\delta} {\varepsilon }(M_0(h(\tau)))^2.
\end{equation*}
If we take now superior limits as $\delta\to 0^+$
at $\varepsilon >0$ fixed,
\begin{align*}
\limsup_{\delta\rightarrow 0^+}m(\tau+\delta)&\leq\int_{[0,\infty)}\varphi_{\varepsilon}(x)h(\tau,x)d x\qquad\forall\varepsilon>0.
\end{align*}
We may pass now to the limit  as $\varepsilon \to 0$ in the right hand side above
and use (\ref{S6L1E1}) to get,
\begin{align}
\label{right continuity 2}
\limsup_{\delta\rightarrow 0^+}m(\tau+\delta)\leq m(\tau).
\end{align}
The right continuity then follows from \eqref{right continuity 1} and \eqref{right continuity 2}.
\end{proof}

\begin{corollary}
\label{S6L1}
The map $\tau\mapsto H(\tau,\{0\})$, defined for all $\tau \ge 0$, is right continuous on $[0,\infty)$ and 
\begin{align}
\label{JIM}
\limsup_{\delta\rightarrow 0^+}H(\tau-\delta,\{0\})\leq H(\tau,\{0\})\qquad\forall\tau> 0.
\end{align}
\end{corollary}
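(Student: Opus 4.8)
The plan is to read everything off from the explicit formula for $H(\tau,\{0\})$. By the definition (\ref{DEFH}) of $H$ in Corollary \ref{S5C52R} and the notation (\ref{mZ}), since the Dirac mass subtracted in (\ref{DEFH}) only affects the value at the origin, one has for every $\tau\ge 0$
\[
H(\tau,\{0\})=m(\tau)-\int_0^\tau M_{1/2}(h(\sigma))\,d\sigma .
\]
So the whole statement reduces to treating the two terms on the right separately: the monotone, right continuous part $m$, and the correction integral.

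For the correction term I would invoke the estimate already obtained in the proof of Corollary \ref{S5C52R}: by Hölder's inequality together with (\ref{MMI}) and (\ref{EE}), the map $\sigma\mapsto M_{1/2}(h(\sigma))$ is locally bounded on $[0,\infty)$. Hence $\tau\mapsto\int_0^\tau M_{1/2}(h(\sigma))\,d\sigma$ is locally Lipschitz, in particular continuous, on $[0,\infty)$. Since $m$ is right continuous by Lemma \ref{S6L1'}, adding this continuous function to $m$ shows that $\tau\mapsto H(\tau,\{0\})$ is right continuous, which is the first assertion.

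For the inequality (\ref{JIM}) I would use that $m$ is nondecreasing (Lemma \ref{S6L1'}). A monotone function has one-sided limits everywhere, and $m(\tau^-):=\lim_{\delta\to 0^+}m(\tau-\delta)\le m(\tau)$ for every $\tau>0$. Combining this with the continuity of the correction term,
\[
\limsup_{\delta\to 0^+}H(\tau-\delta,\{0\})=m(\tau^-)-\int_0^\tau M_{1/2}(h(\sigma))\,d\sigma\le m(\tau)-\int_0^\tau M_{1/2}(h(\sigma))\,d\sigma=H(\tau,\{0\}),
\]
which is exactly (\ref{JIM}).

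There is essentially no serious obstacle here; the only point worth emphasising is \emph{why} one only gets the inequality (\ref{JIM}) and not genuine left continuity of $H(\cdot,\{0\})$: the function $m$ is known to be right continuous and nondecreasing but may jump to the left at the times when a discrete mass is absorbed at the origin, so $m(\tau^-)<m(\tau)$ is possible. If one wished, one could add the remark that on the (co-countable) set of continuity points of $m$, the map $H(\cdot,\{0\})$ is in fact continuous, but that is not needed for the statement.
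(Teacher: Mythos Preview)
Your proof is correct and follows essentially the same route as the paper: write $H(\tau,\{0\})=m(\tau)-\int_0^\tau M_{1/2}(h(\sigma))\,d\sigma$, use that the integral term is continuous (the paper says absolutely continuous via $M_{1/2}(h)\in L^1_{loc}$, you say locally Lipschitz via local boundedness --- both work), and then inherit right continuity from $m$ and the left-limit inequality from the monotonicity of $m$. Your closing remark about why only an inequality holds is a welcome clarification not present in the paper.
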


\begin{proof}
By construction (cf.(\ref{S1EdecompH})) it follows
\begin{align*}
H(\tau,\{0\})=m(\tau)-\int_0^{\tau}M_{1/2}(h(\sigma))d\sigma.
\end{align*}
Since $M_{1/2}(h)\in L^1_{loc}(\RR_+)$ then $\tau\mapsto\int_0^{\tau}M_{1/2}(h(\sigma))d\sigma$ is absolutely continuous, 
and since $m$ is right continuous by Lemma \ref{S6L1'}, it follows that $\tau\mapsto H(\tau,\{0\})$ is also right continuous. 
To prove (\ref{JIM}) we use the continuity of $\tau\mapsto\int_0^{\tau}M_{1/2}(h(\sigma))d\sigma$ and the monotonicity of $h(\tau,\{0\})$:
for all $\tau>0$ and $\delta\in(0,\tau)$,
\begin{align*}
\limsup_{\delta\to 0^+} H(\tau-\delta,\{0\})&=\limsup_{\delta\to 0^+} m(\tau-\delta)-\int_0^{\tau}M_{1/2}(h(\sigma))d\sigma\\
&\leq m(\tau)-\int_0^{\tau}M_{1/2}(h(\sigma))d\sigma=H(\tau,\{0\}).
\end{align*}
\end{proof} 

\begin{remark}
\label{S6Ej}
We do not know if the map $\tau\mapsto H(\tau,\{0\})$ is continuous. By property \eqref{JIM} however, $H(\tau,\{0\})$ does not decrease through the possible discontinuities.
\end{remark}

The proof of Theorem \ref{S1T4h} closely follows the proof of Proposition 1.21 in \cite{AV1} (see also \cite{KIER}, Ch. 3), where the authors proved the same result for the equation without the linear term $\widetilde{\mathscr Q}_3^{(1)}$. The main arguments in the proof are, on the one hand, the invariance of the problem (\ref{S1E16ha}) with respect to time translation and under a suitable  scaling transformation. On the other hand, the fact that 
$\Lambda(\varphi)\ge 0$ on $\RR_+^2$ for convex test functions $\varphi $, and that the map $\tau\mapsto\mathscr{Q}_3^{(2)}(\varphi,h(\tau))$ is locally integrable on $[0, T)$. When the linear term $\widetilde{\mathscr Q}_3^{(1)}$ is added, a slight modification of these argument still leads to the proof. 
Since by Lemma \ref{convex-positivity}, for all nonnegative, convex decreasing test function $\varphi \in C_b^1([0, \infty))$, we have
$\widetilde{\mathscr{Q}}_3^{(1)}(\varphi,h)\leq 0$, then solutions $h$ to (\ref{S1E16ha}) are also super solutions (cf. Definition \ref{SUPER}).

\begin{proposition}
\label{S5P1}
Let $h$ be a super solution of (\ref{S1E16ha}). Then for any $R>0$ and $\theta\in(0,1)$
\begin{align}
\int_{[0,R]}h(\tau ,x)d x\geq (1-\theta)\int_{[0,\theta R]}h(\tau_0 ,x)d x\qquad\forall \tau\geq\tau_0\geq 0.\label{S5EP12}
\end{align}
\end{proposition}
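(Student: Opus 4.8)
The plan is to test the super solution inequality (\ref{S1E16haB}) against smooth approximations of the ``tent'' function $\psi_R(x)=(1-x/R)_+$, taking advantage of the fact that $\psi_R$ is nonnegative, convex and nonincreasing, so that the quadratic functional $\mathscr{Q}_3^{(2)}$ has a favourable sign on it.

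Fix $R>0$ and $\theta\in(0,1)$. First I would build a sequence $(\varphi_n)_{n\in\NN}\subset C^1_b([0,\infty))$ of nonnegative, convex, nonincreasing functions with $\varphi_n(0)=1$, $\psi_R(x)\le\varphi_n(x)$ for every $x\ge 0$, and $\varphi_n(x)\to\psi_R(x)$ pointwise as $n\to\infty$. A concrete choice is to let $\varphi_n$ coincide with the affine function $1-x/R$ on $[0,R(1-1/n)]$ and to continue it for $x\ge R(1-1/n)$ by the exponential branch $\tfrac1n\exp\!\big(-n(x-R(1-1/n))/R\big)$; this matches both the value $1/n$ and the slope $-1/R$ at the junction, so $\varphi_n\in C^1$, it is convex and decreasing on all of $[0,\infty)$, it satisfies $0\le\varphi_n\le 1$, and since each exponential lies above its tangent line one checks $\varphi_n\ge\psi_R$ everywhere. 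Pointwise convergence is clear: for $x<R$ one has $\varphi_n(x)=1-x/R$ once $n$ is large, and for $x\ge R$ one has $\varphi_n(x)\le \tfrac1{en}\to 0$.

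Next, apply Definition \ref{SUPER} with $\varphi=\varphi_n$. Since $\varphi_n$ is nonnegative, convex and decreasing, Lemma \ref{convex-positivity} gives $\mathscr{Q}_3^{(2)}(\varphi_n,h(\tau))\ge 0$ for a.e.\ $\tau>0$, whence $\frac{d}{d\tau}\int_{[0,\infty)}\varphi_n(x)h(\tau,x)\,dx\ge 0$ for a.e.\ $\tau>0$; together with the local absolute continuity in $\tau$ of $\tau\mapsto\int_{[0,\infty)}\varphi_n h(\tau)$ (part of the structure assumed on $h$, cf.\ (\ref{lip loc h})), this shows that this map is nondecreasing, so $\int\varphi_n h(\tau)\ge\int\varphi_n h(\tau_0)$ whenever $\tau\ge\tau_0\ge 0$. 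Now estimate both sides. On the one hand, $\varphi_n\le 1$ yields $\int_{[0,\infty)}\varphi_n h(\tau)\le\int_{[0,R]}h(\tau)+\int_{(R,\infty)}\varphi_n h(\tau)$; on the other hand, $\varphi_n\ge\psi_R\ge 1-\theta$ on $[0,\theta R]$ yields $\int_{[0,\infty)}\varphi_n h(\tau_0)\ge(1-\theta)\int_{[0,\theta R]}h(\tau_0)$. Combining the three inequalities gives, for every $n$,
$$\int_{[0,R]}h(\tau,x)\,dx \ge (1-\theta)\int_{[0,\theta R]}h(\tau_0,x)\,dx - \int_{(R,\infty)}\varphi_n(x)h(\tau,x)\,dx.$$
Letting $n\to\infty$ and using that $\varphi_n\to 0$ pointwise on $(R,\infty)$ with $0\le\varphi_n\le 1$ and $h(\tau)$ a finite measure, dominated convergence kills the last term and (\ref{S5EP12}) follows.

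The main (and essentially only) delicate point is the passage from the a.e.\ differential inequality to the monotonicity of $\tau\mapsto\int\varphi_n h(\tau)$, i.e.\ the validity of the fundamental theorem of calculus for this function of $\tau$; this is exactly where the time regularity of (super)solutions, namely the $W^{1,\infty}_{loc}$ bound as in (\ref{lip loc h}), enters. Everything else — the construction of $\varphi_n$ and the two one-sided bounds — is routine.
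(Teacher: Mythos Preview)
Your proof is correct and follows essentially the same strategy as the paper: approximate the tent function $\psi_R(x)=(1-x/R)_+$ by $C^1_b$ nonnegative convex nonincreasing test functions, use the sign of $\mathscr{Q}_3^{(2)}$ to get monotonicity in $\tau$, then sandwich. The only cosmetic difference is that the paper approximates $\psi_R$ from \emph{below} (so that $\varphi_{R,n}\le\psi_R\le\mathds 1_{[0,R]}$ gives the upper bound $\int\varphi_{R,n}h(\tau)\le\int_{[0,R]}h(\tau)$ immediately), whereas you approximate from above and then kill the tail $\int_{(R,\infty)}\varphi_n h(\tau)$ by dominated convergence; both routes are equally short, and your explicit construction of $\varphi_n$ and your remark on the $W^{1,\infty}_{loc}$ regularity needed to pass from the a.e.\ inequality to monotonicity are welcome clarifications.
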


\begin{proof}
Chose $\varphi _R(x)=(1-x/R)_+$ for $R>0$, and consider a sequence $(\varphi_{R,n})_{n\in\NN}\subset C_b^1([0, \infty))$ such that $\varphi_{R,n}\to\varphi_R$,
$\varphi _{R,n}\le\varphi_{R}$ and $\varphi_{R,n}(0)=1$ for all $n\in\NN$. Since by convexity $\mathscr{Q}_3^{(2)}(\varphi_{R,n},h)\geq 0$, then for all
$\tau$ and $\tau_0$ with $\tau\geq\tau_0\geq 0$,
\begin{align*}
\int_{ [0, \infty) } &\varphi_{ R, n } (x)h(\tau  , x)dx\ge \int_{ [0, \infty) } \varphi _{ R, n }(x)h(\tau_0  , x)dx\\
&\ge  \int_{ [0,\theta R] } \varphi _{ R, n }(x)h(\tau_0  , x)dx
\ge \varphi_{ R, n }(\theta R) \int_{ [0,\theta R]}h(\tau_0  , x)dx,
\end{align*}
and (\ref{S5EP12}) follows since, if we let $n\to\infty$,
$$
\int_{[0,R]}h(\tau ,x)d x\geq \int_{ [0,\infty) }\varphi_R(x)h(\tau  , x)dx
\ge \varphi _R(\theta R) \int_{ [0,\theta R]}h(\tau_0, x)dx.
$$
\end{proof}

\begin{lemma}
\label{S5L1}
Let $h$ be a super solution of (\ref{S1E16ha}). Let $R>0$ and consider a sequence $R:=a_0< a_1< a_2<...< a_n<...$ such that 
$|a_i-a_{i-1}|\leq\frac{R}{2}$ for all $i\in\{1, 2, 3,... \}$. Then for all $\tau \geq\tau_0 \geq 0$ there holds
\begin{align}
\int_{[0,R]}h(\tau,x)d x\geq\sum_{i=1}^\infty \frac{1}{2a_i}\int_{\tau _0}^{\tau}\bigg(\int_{(a_{i-1},a_i]}h(\sigma,x)d x\bigg)^2d\sigma. \label{S5E97}
\end{align}
\end{lemma}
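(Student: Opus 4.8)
The plan is to use the super solution inequality (\ref{S1E16haB}) with a carefully chosen family of convex, nonnegative, decreasing test functions adapted to the partition $\{a_i\}$, and then to pass to the limit so as to isolate one quadratic ``block'' per subinterval $(a_{i-1},a_i]$. Concretely, for each $i\geq 1$ I would like to use a test function behaving like $\varphi_{a_i}(x)=(1-x/a_i)_+$, or rather a $C^1_b$ convex decreasing approximation $\varphi_{a_i,n}\le\varphi_{a_i}$ with $\varphi_{a_i,n}(0)=1$, as in the proof of Proposition \ref{S5P1}. The key algebraic observation is that, for such a tent function, $\mathscr{Q}_3^{(2)}(\varphi_{a_i},h)$ is not merely nonnegative but bounded below by the contribution of mass sitting in $(a_{i-1},a_i]$ interacting with itself: for $x,y\in(a_{i-1},a_i]$ one has $x+y\le 2a_i$, $\max\{x,y\}\le a_i$, and $|x-y|\le a_i/2<a_i$, so from (\ref{S1E154}),
\begin{align*}
\Lambda(\varphi_{a_i})(x,y)&=\Big(1-\tfrac{x+y}{a_i}\Big)_+ +\Big(1-\tfrac{|x-y|}{a_i}\Big)_+ -2\Big(1-\tfrac{\max\{x,y\}}{a_i}\Big)_+\\
&\ge \Big(1-\tfrac{|x-y|}{a_i}\Big) - 2\cdot\frac{a_i/2 \;\text{type bound}}{a_i}\ \ge\ \frac{|x-y|}{a_i}\cdot(\text{something})\ \ge\ \ldots
\end{align*}
— the honest bound I want is the elementary one that on $(a_{i-1},a_i]^2$,
$$
\frac{\Lambda(\varphi_{a_i})(x,y)}{\sqrt{xy}}\ \ge\ \frac{1}{2a_i},
$$
which follows because $\sqrt{xy}\le a_i$, $\Lambda(\varphi_{a_i})(x,y)=\min\{x,y\}/a_i+ \cdots$, and the first two tent values contribute at least $\tfrac12$ while the subtracted term is small on this square (this is exactly the standard ``instantaneous condensation'' computation of \cite{AV1, KIER}, and I would verify the constant $\tfrac12$ carefully since it is what produces the $1/(2a_i)$ in (\ref{S5E97})).

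**Summation and telescoping.** Granting that pointwise lower bound, $\mathscr{Q}_3^{(2)}(\varphi_{a_i},h(\sigma))\ge \frac{1}{2a_i}\big(\int_{(a_{i-1},a_i]}h(\sigma,x)dx\big)^2$. Integrating the super solution inequality (\ref{S1E16haB}) for $\varphi_{a_i,n}$ from $\tau_0$ to $\tau$ and letting $n\to\infty$ (using $\varphi_{a_i,n}\nearrow\varphi_{a_i}$ and monotone/dominated convergence together with the local bound $|\mathscr Q_3^{(2)}(\varphi,h)|\le 2\|\varphi'\|_\infty M_0(h)^2$ of Lemma \ref{lemma regularity}, and the local boundedness of $M_0(h(\cdot))$) gives
$$
\int_{[0,a_i]}h(\tau,x)dx\ \ge\ \int_{[0,\infty)}\varphi_{a_i}(x)h(\tau,x)dx\ \ge\ \int_{[0,\infty)}\varphi_{a_i}(x)h(\tau_0,x)dx + \frac{1}{2a_i}\int_{\tau_0}^{\tau}\Big(\int_{(a_{i-1},a_i]}h(\sigma,x)dx\Big)^2 d\sigma,
$$
hence, dropping the nonnegative $h(\tau_0)$ term and using $a_0=R$, $a_i\ge R$ so $[0,R]\subset[0,a_i]$ but crucially monotonicity of $\int_{[0,a]}h$ in $a$ — wait, that goes the wrong way. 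The right move instead is: each term on the right is bounded above by $\int_{[0,a_i]}h(\tau,x)dx$, but I need a bound in terms of $\int_{[0,R]}h(\tau,x)dx$. So I would not sum the inequalities naively; rather I apply Proposition \ref{S5P1} (or re-derive) to control $\int_{[0,a_i]}h(\tau,\cdot)$ — no. The correct and cleaner route, which is the one in \cite{AV1}, is to observe that the \emph{same} tent bound gives, for each $i$, using the single test function $\varphi_{a_i}$,
$$
\int_{[0,R]}h(\tau,x)dx\ \ge\ \int_{[0,\infty)}\varphi_{a_i}(x)\,h(\tau,x)\,dx\qquad\text{is FALSE in general};
$$
so the actual argument must be the telescoping one: set $b_i=\int_{(a_{i-1},a_i]}h(\sigma,x)dx$ and note $\int_{[0,a_i]}h=\int_{[0,R]}h+\sum_{j\le i}b_j$; applying the $\varphi_{a_i}$ inequality and the lower bound $\varphi_{a_i}\ge \mathbf 1_{[0,a_{i-1}]}$ on one side while $\varphi_{a_i}\le \mathbf 1_{[0,a_i]}$ is used to bound $\int \varphi_{a_i}h(\tau,\cdot)\le \int_{[0,R]}h(\tau,\cdot)+\sum_{j\le i}(\text{mass in }(a_{j-1},a_j])$, and then letting $i\to\infty$ with $\sum_i b_i^2<\infty$ forcing $b_i\to 0$, one gets $\sum_i \frac{1}{2a_i}\int_{\tau_0}^\tau b_i^2\,d\sigma$ as a convergent sum dominated by $\int_{[0,R]}h(\tau,x)dx$ plus a telescoping remainder that vanishes. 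I would follow the \cite{AV1} bookkeeping verbatim here.

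**Main obstacle.** The genuinely delicate point is the handling of the subtracted linear term $\widetilde{\mathscr Q}_3^{(1)}$ when passing from super solutions to the inequality I use: since $h$ is only a super solution, (\ref{S1E16haB}) already has $\ge\mathscr Q_3^{(2)}$ on the right with the linear term \emph{absent}, so in fact this direction is fine and no linear term appears — that is precisely why Definition \ref{SUPER} is stated the way it is. Thus the real obstacle is purely the combinatorial/limiting bookkeeping: justifying that the infinite sum of the squared block-masses is controlled by the single quantity $\int_{[0,R]}h(\tau,x)dx$, which requires the telescoping cancellation of the partial masses $\sum_{j\le i}b_j$ to survive in the limit $i\to\infty$, together with the measure-theoretic care that $h(\tau,(a_{i-1},a_i])=h(\tau,\{a_{i-1}<x\le a_i\})$ is the right object (half-open intervals, possible atoms at the $a_i$) so that the blocks partition $(R,\infty)$ exactly. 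I expect that verifying the elementary constant $\tfrac12$ in the tent estimate for $\Lambda(\varphi_{a_i})/\sqrt{xy}$ on $(a_{i-1},a_i]^2$ under the hypothesis $|a_i-a_{i-1}|\le R/2\le a_i/2$, and then assembling the telescoping sum, will be the two steps that need the most care; everything else is a direct transcription of the super solution inequality.
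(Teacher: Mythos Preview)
Your proposal has a genuine structural gap: you use the wrong test function. You take a separate tent $\varphi_{a_i}(x)=(1-x/a_i)_+$ for each block $(a_{i-1},a_i]$, which produces a left-hand side $\int_{[0,a_i]}h(\tau,\cdot)$, and then you are stuck because these quantities are all \emph{larger} than $\int_{[0,R]}h(\tau,\cdot)$ and there is no telescoping mechanism that turns the collection of inequalities into the single sum (\ref{S5E97}). You yourself flag this (``wait, that goes the wrong way'') and then appeal to a vague ``telescoping bookkeeping'' from \cite{AV1}; but no such telescoping is needed or used.

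The paper's proof is much simpler and avoids this entirely: it uses a \emph{single} test function $\varphi_R(x)=(1-x/R)_+$ (approximated by $C^1_b$ convex decreasing $\varphi_{R,n}\le\varphi_R$), and bounds $\mathscr{Q}_3^{(2)}(\varphi_{R,n},h(\sigma))$ from below by restricting the double integral to $\bigcup_i (a_{i-1},a_i]^2\subset (R,\infty)^2$. The point is that for $x,y>R$ one has $x+y>R$ and $\max\{x,y\}>R$, so $\varphi_R(x+y)=\varphi_R(\max\{x,y\})=0$ and hence $\Lambda(\varphi_R)(x,y)=\varphi_R(|x-y|)$ \emph{exactly}. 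On each square $(a_{i-1},a_i]^2$ one has $|x-y|\le a_i-a_{i-1}\le R/2$, so $\varphi_R(|x-y|)\ge \varphi_R(R/2)=1/2$, and $\sqrt{xy}\le a_i$, giving $\Lambda(\varphi_R)(x,y)/\sqrt{xy}\ge 1/(2a_i)$. All block contributions appear simultaneously on the right, while the left-hand side is $\int\varphi_R\,h(\tau,\cdot)\le\int_{[0,R]}h(\tau,\cdot)$; passing $n\to\infty$ finishes. Incidentally, your claimed bound $\Lambda(\varphi_{a_i})/\sqrt{xy}\ge 1/(2a_i)$ on $(a_{i-1},a_i]^2$ is also not quite right: when $x,y$ are both near $a_{i-1}$ one has $\Lambda(\varphi_{a_i})(x,y)=2a_{i-1}/a_i-1$, which under the hypotheses can be as small as $1/3$, not $1/2$. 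But the real fix is to switch to $\varphi_R$.
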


\begin{proof}
We chose $\varphi _R$ and $\varphi _{R, n }$ as in the proof of Proposition \ref{S5P1} above. Since $h$ is a super solution of (\ref{S1E16ha}), then
for all $n\in\NN$,
$$
\frac {d} {d\tau }\int  _{ [0, \infty) }h(\tau , x)\varphi  _{ R, n }(x)dx\ge 
{\mathscr Q}_3^{(2)}(\varphi  _{ R, n },h(\tau)).
$$
We have now:
\bean
&&{\mathscr Q}_3^{(2)}(\varphi  _{ R, n },h(\tau)) \ge \iint _{ (R, \infty)^2 }h(\tau, x)h(\tau , y)
\frac {\varphi  _{ R, n }(|x-y|)} {\sqrt {x y}}dxdy\\
&&\ge \sum_{ i=1 } ^\infty\frac { \varphi  _{ R, n }(R/2)} {a_i} \iint _{(a _{ i-1 }, a_i]^2}h(\tau, x)h(\tau , y)dxdy\\
&&=\sum_{ i=1 } ^\infty\frac { \varphi  _{ R, n }(R/2)} {a_i} \left(\int _{(a _{ i-1 }, a_i]}h(\tau, x)dx\right)^2.
\eean
Estimate (\ref{S5E97}) follows in the limit $n\to\infty$, since $\varphi_{ R, n }(R/2)\to1/2.$
\end{proof}

\begin{proposition}
\label{S5P2}
Let $h$ be a super solution of (\ref{S1E16ha}) with initial data $h_0\in\mathscr{M}_+^1([0,\infty))$, and denote $N=M_0(h_0)$ and $E=M_1(h_0)$. Then for all $R>0$, 
$\alpha\in\left(-\frac{1}{2},\infty\right)$, and $\tau_1$ and $\tau_2$ with $0\leq\tau_1\leq\tau_2$:
\begin{align}
\label{MNEG6}
\int_{\tau_1}^{\tau_2}\int_{(0,R]}x^{\alpha}h(\tau,x)dxd\tau
\leq\frac{2R^{\frac{1}{2}+\alpha}\sqrt{\tau_2-\tau_1}}{1-\left(\frac{2}{3}\right)^{\frac{1}{2}+\alpha}}\left(\frac{\sqrt{E}}{2}\tau_2+\sqrt{N}\right).
\end{align}
\end{proposition}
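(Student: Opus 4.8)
The plan is to split the interval $(0,R]$ into geometric shells of ratio $2/3$, bound the time–integrated squared mass of each shell by means of Lemma \ref{S5L1}, and then combine a Cauchy--Schwarz inequality in the time variable with the mass bound $M_0(h(\tau))\le\big(\tfrac{\sqrt E}{2}\tau+\sqrt N\big)^2$ (cf. (\ref{MASS IN})) and the summation of a geometric series. The ratio $2/3$ is dictated by two competing constraints: it is the largest ratio for which consecutive shell endpoints differ by at most half the left endpoint, so that Lemma \ref{S5L1} applies to each shell with $a_1$ equal to the next endpoint; and it is precisely the ratio for which the resulting sum over shells converges if and only if $\alpha>-\tfrac12$.

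Concretely, set $r_k=R(2/3)^k$ for $k\ge0$ and $J_k=(r_{k+1},r_k]$, so that $(0,R]=\bigcup_{k\ge0}J_k$. Fix $k$ and apply Lemma \ref{S5L1} with $R$ replaced by $r_{k+1}$, choosing $a_0=r_{k+1}$, $a_1=r_k$, and $a_i=a_{i-1}+r_{k+1}/2$ for $i\ge2$; the step condition $|a_i-a_{i-1}|\le r_{k+1}/2$ holds because $r_k-r_{k+1}=r_{k+1}/2$. Keeping only the $i=1$ term in (\ref{S5E97}) and discarding the remaining nonnegative terms gives, for $0\le\tau_1\le\tau_2$,
\begin{align*}
\int_{\tau_1}^{\tau_2}\Big(\int_{J_k}h(\sigma,x)\,dx\Big)^2 d\sigma\le 2r_k\int_{[0,r_{k+1}]}h(\tau_2,x)\,dx\le 2r_k\,M_0(h(\tau_2)),
\end{align*}
and hence, by the Cauchy--Schwarz inequality in $\sigma\in[\tau_1,\tau_2]$,
\begin{align*}
\int_{\tau_1}^{\tau_2}\int_{J_k}h(\sigma,x)\,dx\,d\sigma\le\sqrt{\tau_2-\tau_1}\,\big(2r_k\,M_0(h(\tau_2))\big)^{1/2}.
\end{align*}

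Since on $J_k$ the weight satisfies $x^\alpha\le c_\alpha r_k^\alpha$ with $c_\alpha=\max\{1,(2/3)^\alpha\}$, summing over $k\ge0$ yields
\begin{align*}
\int_{\tau_1}^{\tau_2}\!\!\int_{(0,R]}\!x^\alpha h(\tau,x)\,dx\,d\tau
&\le\sqrt2\,c_\alpha\sqrt{\tau_2-\tau_1}\,\sqrt{M_0(h(\tau_2))}\sum_{k\ge0}r_k^{\alpha+\frac12}\\
&=\frac{\sqrt2\,c_\alpha}{1-(2/3)^{\frac12+\alpha}}\,R^{\frac12+\alpha}\sqrt{\tau_2-\tau_1}\,\sqrt{M_0(h(\tau_2))},
\end{align*}
the geometric series $\sum_k(2/3)^{k(\frac12+\alpha)}$ converging exactly because $\alpha>-\tfrac12$. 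Inserting $\sqrt{M_0(h(\tau_2))}\le\tfrac{\sqrt E}{2}\tau_2+\sqrt N$ and noting $\sqrt2\,c_\alpha\le2$ in every case (for $\alpha\ge0$, $c_\alpha=1$ and $\sqrt2<2$; for $-\tfrac12<\alpha<0$, $c_\alpha=(2/3)^\alpha<(3/2)^{1/2}$, so $\sqrt2\,c_\alpha<\sqrt3<2$) gives (\ref{MNEG6}).

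The one genuinely delicate point is the calibration of the geometric ratio, as explained above: a smaller ratio would violate the step condition of Lemma \ref{S5L1}, a larger one would break the convergence of the shell-sum at the endpoint $\alpha=-\tfrac12$, and the value $2/3$ is exactly the one that reconciles the two. The rest — extracting $x^\alpha$ from each shell, the Cauchy--Schwarz step, and the check $\sqrt2\,c_\alpha\le2$ — is routine bookkeeping. (One should also note that the mass bound $M_0(h(\tau))\le\big(\tfrac{\sqrt E}{2}\tau+\sqrt N\big)^2$ is available for the solutions $h$ to which this proposition is applied.)
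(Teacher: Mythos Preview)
Your proof is correct and follows essentially the same strategy as the paper's: the geometric decomposition of $(0,R]$ with ratio $2/3$, a bound on the time-integrated squared mass of each shell coming from the super-solution inequality, Cauchy--Schwarz in the time variable, summation of the resulting geometric series, and finally the mass bound $M_0(h(\tau_2))\le\big(\tfrac{\sqrt E}{2}\tau_2+\sqrt N\big)^2$. The only presentational difference is that you obtain the shell estimate by quoting Lemma~\ref{S5L1} (built from the linear tent function $(1-x/R)_+$), whereas the paper redoes that step directly with the quadratic test function $\varphi(x)=(1-x/r)_+^2$ restricted to the box $(r,3r/2]^2$; this yields the constant $2$ immediately, while you arrive at $\sqrt2\,c_\alpha\le 2$ after a short check. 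Your parenthetical remark about the mass bound is apt: as in the paper's own proof, (\ref{MMI}) is not a consequence of the super-solution inequality alone but is available for the solutions to which the proposition is actually applied. One small expository point: your claim that $2/3$ is the \emph{unique} admissible ratio is slightly overstated --- any $\rho\in[2/3,1)$ satisfies the step hypothesis of Lemma~\ref{S5L1} and still gives a convergent series for $\alpha>-\tfrac12$; the value $2/3$ is simply the smallest such ratio, and the one that matches the paper's constant.
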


\begin{proof}
Since h is a super solution of (\ref{S1E16ha}), if we chose $\varphi(x)=(1-x/r)^2_+$ for $r>0$, then 
\begin{align}
\label{MNEG4}
\int_{[0,\infty)}\varphi(x)h(\tau_2,x)dx\geq\int_{\tau_1}^{\tau_2}\mathscr{Q}_3^{(2)}(\varphi,h(\tau))d\tau.
\end{align}
Since 
$\supp\Lambda(\varphi)=\{(x,y)\in[0,\infty)^2:|x-y|\leq r\}$
and
$\Lambda(\varphi)(x,y)=\varphi(|x-y|)$ for all $(x,y)\in[r,\infty)^2$,
then for all $\tau\geq 0$:
\begin{align*}
\mathscr{Q}_3^{(2)}(\varphi,h(\tau))&\geq \iint_{\left(r,\frac{3r}{2}\right]^2}\frac{\varphi(|x-y|)}{\sqrt{xy}}h(\tau,x)h(\tau,y)dxdy\\
&\geq\frac{1}{4}\left(\int_{\left(r,\frac{3r}{2}\right]}\frac{h(\tau,x)}{\sqrt{x}}dx\right)^2.
\end{align*}
If we use that $\varphi\leq1$ 
in the left hand side of (\ref{MNEG4}), and  the estimate above in the right hand side, then
\begin{align*}
\int_{\tau_1}^{\tau_2}\left(\int_{\left(r,\frac{3r}{2}\right]}\frac{h(\tau,x)}{\sqrt{x}}dx\right)^2d\tau\leq 4 M_0(h(\tau_2)).
\end{align*} 
Since for any $\alpha\in(-1/2,\infty)$
\begin{align*}
\int_{\left(r,\frac{3r}{2}\right]}\frac{h(\tau,x)}{\sqrt{x}}dx
\geq \left(\frac{3r}{2}\right)^{-\alpha-\frac{1}{2}}\int_{\left(r,\frac{3r}{2}\right]}x^{\alpha}h(\tau,x)dx,
\end{align*}
we then obtain
\begin{align}
\label{MNEG5}
\int_{\tau_1}^{\tau_2}\left(\int_{\left(r,\frac{3r}{2}\right]}x^{\alpha}h(\tau,x)dx\right)^2d\tau\leq 4M_0(h(\tau_2))\left(\frac{3r}{2}\right)^{1+2\alpha}.
\end{align}
For any given $R>0$, using the decomposition 
$$(0,R]=\bigcup_{k=0}^\infty(a_{k+1},a_k],\qquad a_k=\left(\frac{2}{3}\right)^{k}\!\!\!\!R,$$
and Cauchy-Schwarz inequality we obtain
\begin{align*}
\int_{\tau_1}^{\tau_2}\int_{(0,R]}x^{\alpha}h(\tau,x)dxd\tau
\leq\sqrt{\tau_2-\tau_1}\sum_{k=0}^{\infty}\left(\int_{\tau_1}^{\tau_2}\bigg(\int_{(a_{k+1},a_k]}\!\!\!\!\!\!\!\!\!\!\!\!x^{\alpha}h(\tau,x)dx\bigg)^2d\tau\right)^{\frac{1}{2}}.
\end{align*}
If we chose $r=a_{k+1}$ so that $(a_{k+1},a_k]=(r,(3/2)r]$ for every $k\in\NN$, then by (\ref{MNEG5}) we deduce
\begin{align*}
\int_{\tau_1}^{\tau_2}\int_{(0,R]}x^{\alpha}h(\tau,x)dxd\tau
\leq2\sqrt{(\tau_2-\tau_1)M_0(h(\tau_2))}\sum_{k=0}^{\infty}a_k^{\frac{1}{2}+\alpha}.
\end{align*}
Using the estimate (\ref{MMI}) for $M_0(h(\tau_2))$ and 
\begin{align*}
\sum_{k=0}^{\infty}a_k^{\frac{1}{2}+\alpha}
=\frac{R^{\frac{1}{2}+\alpha}}{1-\left(\frac{2}{3}\right)^{\frac{1}{2}+\alpha}},
\end{align*}
we finally obtain (\ref{MNEG6}).
\end{proof}

\begin{lemma}
\label{lemma 2^n}
Let $h$ be a super solution of (\ref{S1E16ha}). Then for all $r>0$, $\tau\geq \tau_0\geq 0$ and $n\in\NN$:
\begin{equation}
\label{e1}
\int_{[0,r]}h(\tau,x)dx \geq\frac{1}{4^{n+1}r}\int_{\tau_0}^{\tau}\bigg(\int_{(r,r2^n]}h(\sigma,x)dx\bigg)^2d\sigma.
\end{equation}
\end{lemma}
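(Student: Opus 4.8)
The plan is to derive (\ref{e1}) directly from Lemma \ref{S5L1} by applying that lemma with $R=r$ to a carefully chosen increasing sequence $(a_i)_{i\geq 0}$, and then recombining the resulting boundary pieces by means of the Cauchy--Schwarz inequality.

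First I would dispose of the degenerate case $n=0$: then $(r,r2^0]=(r,r]=\emptyset$, the right-hand side of (\ref{e1}) vanishes, and the inequality reduces to $\int_{[0,r]}h(\tau,x)dx\geq 0$, which holds since $h(\tau)$ is a nonnegative measure. So assume $n\geq 1$. Set $\delta=\dfrac{r(2^n-1)}{2^{n+1}}>0$ and define $a_0=r$ together with $a_i=r+i\delta$ for $1\leq i\leq 2^{n+1}$, so that $a_{2^{n+1}}=r2^n$; then extend the sequence to an infinite strictly increasing one in any admissible way, for instance by $a_i=a_{i-1}+r/2$ for $i>2^{n+1}$. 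Since $2^n-1\leq 2^n$ we have $\delta\leq r/2$, hence $|a_i-a_{i-1}|\leq r/2=R/2$ for every $i\geq 1$, so the hypotheses of Lemma \ref{S5L1} are met ($h$ being a super solution of (\ref{S1E16ha}) by assumption).

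Applying (\ref{S5E97}), discarding the nonnegative terms with index $i>2^{n+1}$, and using that $a_i\leq r2^n$ — hence $\tfrac{1}{2a_i}\geq\tfrac{1}{r\,2^{n+1}}$ — for $1\leq i\leq 2^{n+1}$, one gets
\[
\int_{[0,r]}h(\tau,x)dx\;\geq\;\frac{1}{r\,2^{n+1}}\int_{\tau_0}^{\tau}\sum_{i=1}^{2^{n+1}}\bigg(\int_{(a_{i-1},a_i]}h(\sigma,x)dx\bigg)^2 d\sigma .
\]
Since the intervals $(a_{i-1},a_i]$, $1\leq i\leq 2^{n+1}$, are pairwise disjoint with union $(r,r2^n]$, the Cauchy--Schwarz inequality yields
\[
\sum_{i=1}^{2^{n+1}}\bigg(\int_{(a_{i-1},a_i]}h(\sigma,x)dx\bigg)^2\;\geq\;\frac{1}{2^{n+1}}\bigg(\int_{(r,r2^n]}h(\sigma,x)dx\bigg)^2 ,
\]
and inserting this into the previous display produces the constant $\tfrac{1}{r\,2^{n+1}}\cdot\tfrac{1}{2^{n+1}}=\tfrac{1}{4^{n+1}r}$, which is precisely (\ref{e1}).

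There is no genuine obstacle in this argument; it amounts to a routine piece of bookkeeping. The only points requiring a little care are the choice of the number of subintervals — taking exactly $2^{n+1}$ of them (rather than the minimal $2^{n+1}-2$ needed to reach $r2^n$ in steps of size $r/2$) is what makes the two gains $\tfrac{1}{2a_i}$ and $\tfrac{1}{2^{n+1}}$ multiply to the clean constant $4^{-(n+1)}$ — together with checking the mesh condition $\delta\leq R/2$ and the harmless extension of the finite partition to an infinite admissible sequence.
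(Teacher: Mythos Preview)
Your proof is correct and follows essentially the same approach as the paper: apply Lemma \ref{S5L1} to a uniform partition of $(r,r2^n]$ and recombine via Cauchy--Schwarz. The paper chooses step size exactly $r/2$ (hence $2^{n+1}-2$ subintervals) and invokes the weighted form of Cauchy--Schwarz (cited as Lemma 3.12 in \cite{AV1}) to obtain the slightly sharper constant $\big((2^n-1)(2^{n+1}+3)r\big)^{-1}$ before relaxing it to $4^{-(n+1)}r^{-1}$; your choice of $2^{n+1}$ subintervals lands directly on the stated constant, which is arguably cleaner.
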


\begin{proof}
Consider the decomposition
$$
\left(r,2^nr\right]=\bigcup_{i=3}^{2^{n+1}}\left(\frac{r}{2}(i-1),\frac{r}{2}i\right].
$$
Then by Lemma \ref{S5L1}, and Lemma 3.12 in \cite{AV1}, we have
\begin{align*}
\int_{[0,r]}h(\tau,x)dx&\geq\int_{\tau_0}^{\tau}\sum_{i=3}^{2^{n+1}}\frac{1}{ri}\bigg(\int_{\left(\frac{r}{2}(i-1),\frac{r}{2}i\right]}
h(\sigma,x)dx\bigg)^2d\sigma\\
&\geq\int_{\tau_0}^{\tau}\frac{1}{r}\Bigg(\sum_{i=3}^{2^{n+1}}i\Bigg)^{-1}\bigg(\int_{(r,r2^n]}h(\sigma,x)dx\bigg)^2d\sigma\\
&\geq\frac{1}{(2^n-1)(2^{n+1}+3)r}\int_{\tau_0}^{\tau}\bigg(\int_{(r,r2^n]}h(\sigma,x)dx\bigg)^2d\sigma.
\end{align*}
Notice that $(2^n-1)(2^{n+1}+3)\leq 4^{n+1}$.
\end{proof}

The next Lemma takes into account the linear term $\widetilde{\mathscr Q}_3^{(1)}$.
\begin{lemma}
\label{S5L47} 
Let $h$ be a solution of (\ref{S1E16ha}) with initial data $h_0\in \mathscr{M}_+^1([0,\infty))$ satisfying
\begin{equation}
m_0=\int_{(0,\infty)}h_0(x)dx>0. \label{S5EL467}
\end{equation}
Then, for any $\tau _0\ge 0$  there exist $R_1>0$, $C_1>0$  such that
\begin{equation}
\int_{[0,r]}h(\tau ,x)d x\geq C_1\,r\qquad\forall r\in[0,R_1],\quad\forall\tau\geq \tau _0.
\end{equation}
\end{lemma}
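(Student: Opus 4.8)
The plan is to combine the forward mass-transport estimate of Proposition \ref{S5P1} with the quadratic lower bound of Lemma \ref{lemma 2^n}. Throughout one uses that a solution of (\ref{S1E16ha}) is in particular a super solution, so the linear contribution $\widetilde{\mathscr{Q}}_3^{(1)}$ — which carries the favourable sign on convex decreasing test functions — may simply be discarded in every inequality below. \textbf{Step 1 (a uniform-in-time macroscopic bound).} Since $m_0=\int_{(0,\infty)}h_0(x)\,dx>0$, monotone convergence gives $R_0>0$ with $\int_{[0,R_0/2]}h_0(x)\,dx\geq m_0/2$, and Proposition \ref{S5P1} (with $\theta=1/2$ and initial time $0$) then yields
\[
\int_{[0,R_0]}h(\tau,x)\,dx\;\geq\;c_0:=\frac{m_0}{4}\qquad\forall\,\tau\geq 0 .
\]
Moreover $M_1(h(\tau))=E>0$ for every $\tau$ by conservation of energy, so $h(\tau)$ always charges $(0,\infty)$; this is what will let us shift the time origin freely.

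\textbf{Step 2 (the bootstrap).} Fix $r\in(0,R_0)$ and write $u_r(\tau)=\int_{[0,r]}h(\tau,x)\,dx$. Choosing in Lemma \ref{lemma 2^n} the integer $n$ with $r2^{n}\leq R_0<r2^{n+1}$, so that $(r,r2^{n}]\subset(r,R_0]$ and $4^{n+1}\leq 4(R_0/r)^{2}$, and bounding $\int_{(r,r2^{n}]}h(\sigma)\geq c_0-u_r(\sigma)$, one gets
\[
u_r(\tau)\;\geq\;\frac{r}{4R_0^{2}}\int_{\tau_0}^{\tau}(c_0-u_r(\sigma))_+^{2}\,d\sigma,\qquad \tau\geq\tau_0 .
\]
Then a dichotomy: either $u_r(\sigma)<c_0/2$ on all of $[\tau_0,\tau]$, whence $u_r(\tau)\geq \frac{r c_0^{2}}{16 R_0^{2}}(\tau-\tau_0)$; or there is $\sigma^{*}\in[\tau_0,\tau]$ with $u_r(\sigma^{*})\geq c_0/2$, whence Proposition \ref{S5P1} at time $\sigma^{*}$ (with $\theta=1/2$) gives $\int_{[0,2r]}h(\tau,x)\,dx\geq\tfrac12 u_r(\sigma^{*})\geq c_0/4$. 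Carrying out this dichotomy at scale $r/2$ rather than $r$ — so that the factor-$2$ loss in Proposition \ref{S5P1} falls precisely on $r$ — and using $u_r\geq u_{r/2}$, one obtains a threshold time $\tau^{*}=\tau^{*}(N,E,m_0)$, a radius $R_1\in(0,R_0/2)$ and a constant $C_1>0$ (all depending only on $N,E,m_0$) such that $u_r(\tau)\geq C_1 r$ for every $r\leq R_1$ and every $\tau\geq\tau_0+\tau^{*}$.

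\textbf{Step 3 (all $\tau\geq\tau_0$).} To remove the shift $\tau^{*}$, re-run Steps 1--2 with the time origin $\tau_0$ replaced by $(\tau_0-\tau^{*})_+$, which is legitimate because $h((\tau_0-\tau^{*})_+)$ still charges $(0,\infty)$; on the remaining compact interval one propagates a lower bound forward using the continuity of $\tau\mapsto h(\tau)$ and Proposition \ref{S5P1}, shrinking $R_1$ and $C_1$ if necessary. This gives the asserted estimate for all $\tau\geq\tau_0$.

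\textbf{Main obstacle.} The delicate point is to keep the exponent linear: a dyadic iteration of the quadratic estimate costs a fixed constant factor per halving of scale and only produces $u_r(\tau)\gtrsim r^{2}$. The fix used above is \emph{not} to descend scale by scale but to estimate $u_r$ directly against the fixed, scale-independent macroscopic mass $c_0$ at the reference radius $R_0$, so that the only $r$-dependent prefactor is $(4^{n+1}r)^{-1}\simeq r/R_0^{2}$; this is exactly why Lemma \ref{lemma 2^n} is stated with a free $n$, and why the uniform-in-time bound of Step 1 is indispensable. What remains is routine: the book-keeping of constants in the dichotomy and in Step 3, and the usual check that the approximations of $(1-x/r)_+$ by $C^1_b$ convex decreasing functions with value $1$ at the origin — already built into Proposition \ref{S5P1} and Lemma \ref{lemma 2^n} — suffice.
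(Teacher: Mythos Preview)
Your Step~2 has the inclusion backwards. You choose $n$ with $r2^{n}\le R_0<r2^{n+1}$, note $(r,r2^{n}]\subset(r,R_0]$, and then claim $\int_{(r,r2^{n}]}h(\sigma)\ge c_0-u_r(\sigma)$. But from $\int_{[0,R_0]}h\ge c_0$ one only gets $\int_{(r,R_0]}h\ge c_0-u_r$, and your interval $(r,r2^{n}]$ is the \emph{smaller} one, so no lower bound follows. The fix is to take instead the least $n$ with $r2^{n}\ge R_0$; then $(r,R_0]\subset(r,r2^{n}]$, the desired lower bound holds, and one has $4^{n+1}\le 16(R_0/r)^{2}$ in place of your $4(R_0/r)^{2}$.

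With that correction the argument works, and it is genuinely different from the paper's. The paper does not obtain a global-in-time macroscopic bound from Proposition~\ref{S5P1}; instead it uses the full equation --- in particular an explicit estimate on the linear term $\widetilde{\mathscr{Q}}_3^{(1)}$ --- together with a bump test function supported in an annulus $(a/2,2b]\subset(0,\infty)$ to show, by a short-time continuity argument, that mass persists in that annulus on some interval $[0,T']$. Because the annulus sits away from the origin, Lemma~\ref{lemma 2^n} then yields $u_r(\tau)\gtrsim r\tau$ directly on $[0,T']$, with no subtraction of $u_r$ and hence no dichotomy; a single application of Proposition~\ref{S5P1} at time $\tau'=\min\{\tau_0,T'\}$ propagates this to all $\tau\ge\tau_0$. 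Your route trades the linear-term estimate for the dichotomy, relying only on the super-solution structure throughout; it also makes your Step~3 unnecessary, since running Lemma~\ref{lemma 2^n} from time $0$ rather than $\tau_0$ already delivers $u_r(\tau)\ge C_1 r$ for all $\tau\ge\tau_0$ whenever $\tau_0>0$.
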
 

\begin{proof}
By (\ref{S5EL467}), there exist $0<a\leq b<\infty$ such that 
\begin{align}
\label{mass of g0}
\int_{(a,b]}h_0(x)dx>\frac{m_0}{2}.
\end{align}
We prove now
\bear
\exists T'>0;\,\forall \tau\in [0, T'):\,\,\,\int_{\left(\frac{a}{2},2b\right]}h(\tau,x)dx\geq\frac{m_0}{4}. \label{S5EL468}
\eear
To this end we use  (\ref{S1E16ha}) with a test function
 $\varphi\in C^1_c([0,\infty))$ such that $0\leq\varphi\leq 1$, $\varphi=1$ on $(a,b]$ and $\varphi=0$ on $[0,\infty)\setminus\left(\frac{a}{2},2b\right]$ and  (\ref{mass of g0}) to obtain:
\begin{align}
\label{es1}
\int_{\left(\frac{a}{2},2b\right]} h(\tau,x)dx
&\geq\frac{m_0}{2}+\int_0^{\tau}\widetilde{\mathscr{Q}}_3(\varphi,h(\sigma))d\sigma.
\end{align}
Now using (\ref{lemma regularity 1}) and (\ref{MMI}) we deduce
$$
\left|\mathscr{Q}_3^{(2)}(\varphi,h(\sigma))\right|\leq 2\|\varphi'\|_{\infty}\bigg(\frac{\sqrt{M_1(h_0)}}{2}\sigma+\sqrt{M_0(h_0)}\bigg)^4.
$$
Using now $\frac{|\mathcal{L}(\varphi)(x)|}{\sqrt{x}}\leq 3\|\varphi\|_{\infty}\sqrt{x}$
and $M_{1/2}(h)\leq \sqrt{M_0(h)M_1(h)}$,
we have by the conservation of energy and the mass inequality
$$
\left|\widetilde{\mathscr{Q}}_3^{(1)}(\varphi,h(\sigma))\right|\leq 2\|\varphi\|_{\infty}\sqrt{M_1(h_0)}\left(\frac{M_1(h_0)}{2}\sigma+\sqrt{M_0(h_0)}\right).
$$
It follows that  $ \widetilde{\mathscr{Q}}_3(\varphi,h)\in L^1_{loc}(\RR_+)$
and  we deduce  (\ref{S5EL468}) from (\ref{es1}).\\

By Lemma \ref{lemma 2^n} and (\ref{S5EL468}), for any $r\in\left(0,\frac{a}{2}\right]$ and $n\in\NN$ such that $r2^n\in(2b,3b]$ we have
\begin{align}
\int_{[0,r]}h(\tau,x)dx
&\geq\int_0^\tau\frac{1}{4^{n+1}r}\left(\int_{\left(\frac{a}{2},2b\right]}h(\sigma,x)dx\right)^2d\sigma \nonumber\\
&\geq\frac{\tau}{4^{n+1}r}\left(\frac{m_0}{4}\right)^2  \ge  \frac{m_0^2}{4^3(3b)^2} \tau\,r \qquad\forall\tau\in[0,T']. \label{1}
\end{align}
where $\left(\frac{a}{2},2b\right]\subset (r,r2^n]$ has been used. 

For any given   $\tau _0\ge 0$  define $\tau'=\min\{\tau_0,T'\}$. Then by (\ref{S5EP12}) in Proposition \ref{S5P1} 
with $\theta=\frac{1}{2}$ and $R=2r$, we deduce from (\ref{1}):
\begin{equation}
\label{2}
\int_{[0,2r]}h(\tau,x)dx \geq\frac{C\tau'}{2}r \qquad\forall\tau\geq \tau'.
\end{equation}
and this proves the Lemma, where $R_1=a/2$ and $C_1=C\tau '/4$.
\end{proof}

\begin{proposition}
\label{S4P47}
Let $h$ and $h_0$ be as in Lemma \ref{S5L47}. For all $L>0$ and  every $\tau _1>0$  there exists $R_0=R_0(h,L,\tau _1)>0$ such that
\begin{align}
\label{S4EP47}
\int_{[0,R_0]}h(\tau ,x)dx\geq LR_0\qquad\forall\tau\geq\tau_1.
\end{align}
\end{proposition}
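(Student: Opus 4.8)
The plan is to amplify the linear lower bound $\int_{[0,r]}h(\tau,x)\,dx\ge C_1 r$ furnished by Lemma~\ref{S5L47} up to the arbitrarily large constant $L$, by iterating the quadratic gain of Lemma~\ref{lemma 2^n} against the time-propagation of Proposition~\ref{S5P1}. Write $M(s,t)=\int_{[0,s]}h(t,x)\,dx$. First I would reduce the claim to a single time: since the solution $h$ is in particular a super solution, Proposition~\ref{S5P1} with $\theta=\tfrac12$ and initial time $\tau_1$ gives $M(R_0,\tau)\ge\tfrac12 M(R_0/2,\tau_1)$ for all $\tau\ge\tau_1$, so it suffices to show that for every $L'>0$ there is $\rho>0$ with $M(\rho,\tau_1)\ge L'\rho$ (then $R_0=2\rho$ works with $L=L'/4$); equivalently, that $\sup_{\rho>0}M(\rho,\tau_1)/\rho=\infty$.

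Next I would fix starting data by applying Lemma~\ref{S5L47} with $\tau_0=\tau_1/2$, obtaining $R_1,C_1>0$ with $M(s,\sigma)\ge C_1 s$ for $s\le R_1$ and $\sigma\ge\tau_1/2$. For $n\in\NN$ put $r_n=R_1/2^n$; Lemma~\ref{lemma 2^n} with initial time $\tau_1/2$ yields
\begin{equation*}
M(r_n,\tau)\ \ge\ \frac{1}{4^{\,n+1}r_n}\int_{\tau_1/2}^{\tau}\bigl(M(R_1,\sigma)-M(r_n,\sigma)\bigr)^{2}\,d\sigma .
\end{equation*}
This produces a dichotomy at scale $r_n$: \emph{either} $M(r_n,\sigma^\ast)\ge\tfrac12 C_1R_1$ at some $\sigma^\ast\in[\tau_1/2,\tau]$, so that $M(r_n,\sigma^\ast)/r_n\ge\tfrac12 C_1 2^{\,n}$ and Proposition~\ref{S5P1} carries this forward, $M(2r_n,\tau)\ge\tfrac14 C_1R_1$ for all $\tau\ge\sigma^\ast$, whence the density at scale $2r_n$ exceeds $L$ once $n\ge 3+\log_2(L/C_1)$; \emph{or} $M(r_n,\cdot)<\tfrac12 C_1R_1$ throughout the slab, in which case $M(R_1,\sigma)-M(r_n,\sigma)\ge\tfrac12 C_1R_1$ there and the displayed bound gives a \emph{new} linear estimate $M(r_n,\tau)\ge c\,(\tau-\tfrac{\tau_1}{2})\,C_1^{2}\,r_n$ — the constant has been squared, up to universal factors and the slab length. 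Iterating — feeding the improved constant back into Lemma~\ref{lemma 2^n} at the finer scale and on a later sub-slab, and using Proposition~\ref{S5P1} to keep the bound valid for increasing $\tau$ — one reaches a constant $>L$ after finitely many steps (a number depending only on $L$), and the associated scale serves as $R_0$; the scaling invariance of \eqref{S1E16ha} and time translation are used to renormalise each step so the iteration is self-similar.

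The delicate point, and the main obstacle, is the bookkeeping that makes the iteration close: one must choose the decreasing scales, the intermediate times $\tau_1/2<t_1<t_2<\dots$ and the constants $A_k$ so that (i) at each step the quadratic gain strictly beats the factor $\tfrac12$ lost through Proposition~\ref{S5P1} (this forces a lower threshold that has to be cleared, which is the subtle part of the estimate), (ii) all the $t_k$ remain $<\tau_1$, which is precisely what makes the final bound hold for every $\tau\ge\tau_1$ rather than only for large $\tau$, and (iii) all the scales stay positive. Controlling the tension between the finite time budget $[\tau_1/2,\tau_1]$ and the factor $4^{\,n+1}$ in Lemma~\ref{lemma 2^n} is where the scaling transformation enters decisively. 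Once a scale $\rho$ with $M(\rho,\tau_1)\ge 4L\rho$ is produced, the first-paragraph reduction completes the proof, and $R_0$ can be recorded explicitly in terms of $L$, $\tau_1$, $C_1$ and $R_1$.
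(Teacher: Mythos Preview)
Your reduction to a single time via Proposition~\ref{S5P1} is correct and matches the paper's use of that lemma. Your dichotomy at scale $r_n=R_1/2^n$ is also sound, and branch~1 (some $\sigma^\ast$ with $M(r_n,\sigma^\ast)\ge\tfrac12 C_1R_1$) does give a density $\gtrsim C_12^n$ at scale $2r_n$, which exceeds $L$ for $n$ large. The gap is in branch~2. Your computation there yields, with $r_n=R_1/2^n$,
\[
M(r_n,\tau_1)\ \ge\ \frac{1}{4^{\,n+1}r_n}\cdot\frac{\tau_1}{2}\cdot\Bigl(\tfrac12 C_1R_1\Bigr)^{2}\ =\ \frac{C_1^{2}\tau_1}{32}\,r_n,
\]
so the new linear constant is $C_1^{2}\tau_1/32$, \emph{independent of $n$}. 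If $C_1\tau_1<32$ this is \emph{worse} than $C_1$, and iterating only drives the constant further down: the ``squaring'' $C_k\mapsto c\,\tau_1 C_k^{2}$ has a repelling fixed point at $0$ and helps only above a threshold. You identify this as the subtle point but do not say how to cross the threshold; the scaling of Lemma~\ref{rescaled} produces only a \emph{super} solution, on which you cannot re-invoke Lemma~\ref{S5L47} to refresh the linear lower bound, so the renormalisation you allude to does not close the loop here.

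The paper avoids this obstruction by a different mechanism: a \emph{logarithmic} gain instead of an iterative squaring. One fixes an integer $p\ge2$ with $C_1p\ge 8L$ and runs the same dichotomy, but phrased as: either some $r'\le R_1$, $\tau'\in[\tau_1/2,\tau_1]$ satisfy $\int_{[0,r'/p]}h(\tau')\ge C_1r'/2$ (then $R_0=2r'/p$ works by Proposition~\ref{S5P1}), or else $\int_{(r/p,\,r]}h(\tau)\ge C_1r/2$ for \emph{every} $r\le R_1$ and $\tau\in[\tau_1/2,\tau_1]$. In the second alternative one applies Lemma~\ref{S5L1} (not Lemma~\ref{lemma 2^n}) with the fine partition $a_i=\tfrac{r}{p}i$ of $(r,rp^n]$, groups the indices into the $n$ dyadic blocks $[p^k+1,p^{k+1}]$, and uses Cauchy--Schwarz together with the annular lower bound on each $(rp^{k-1},rp^k]$. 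Each block contributes the same amount, so the sum produces a factor $n\sim\log_p(R_1/r)$:
\[
\int_{[0,r]}h(\tau_1,x)\,dx\ \ge\ \frac{\tau_1 C_1^{2}\,n}{16\,p}\,r.
\]
Since $n\to\infty$ as $r\to0$, one simply chooses $r$ small enough that $\tau_1C_1^{2}n/(16p)\ge 2L$, then propagates by Proposition~\ref{S5P1}. No iteration, no threshold to clear. If you rewrite your branch~2 using Lemma~\ref{S5L1} with this multi-scale partition rather than a single application of Lemma~\ref{lemma 2^n}, your outline becomes a complete proof.
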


\begin{proof}
By Lemma \ref{S5L47} for $\tau _0=\frac{\tau_1}{2}$
\begin{align}
\label{by the lemma}
\exists C_1>0, \, \exists R_1>0;\,\,\int_{[0,r]}h(\tau,x)dx &\geq C_1r,\quad\forall r\in[0,R_1],\,\,\,\forall\tau\geq\frac{\tau_1}{2}.
\end{align}
Now fix an integer $p\geq 2$ such that $C_1p \geq 8 L$. We divide the proof in two parts.  Assume first :
\begin{equation}
\label{assumption 1}
\exists r'\in (0, R_1],\; \exists \tau'\in\left[\frac{\tau_1}{2},\tau_1\right]:\,\,\,\int_{\left[0,\frac{r'}{p}\right]}h(\tau',x)dx\geq \frac{C_1r'}{2}.
\end{equation}
It follows from lemma \ref{S5P1} with $\theta=\frac{1}{2}$ and $R=\frac{2r'}{p}$ that
$$
\int_{\left[0,\frac{2r'}{p}\right]}h(\tau,x)dx\geq\frac{C_1r'}{4}\qquad\forall \tau\geq\tau',
$$
If we take $R_0:=\frac{2r'}{p}$, we have, by our choice of $p$,
$$
\int_{[0,R_0]}h(\tau,x)dx\geq \frac{C_1p}{8}R_0\geq LR_0\qquad\forall \tau\geq\tau',
$$
so  (\ref{S4EP47})  holds.

Assume now that \eqref{assumption 1} does not hold,
then, by (\ref{by the lemma}):
\begin{equation}
\label{assumption 2}
\int_{\left(\frac{r}{p},r\right]}h(\tau,x)dx\geq\frac{C_1r}{2}\qquad\forall r\in(0,R_1],\quad\forall\tau\in\left[\frac{\tau_1}{2},\tau_1\right].
\end{equation}
Take now any $r\in\left(0,\frac{R_1}{p}\right]$, let $n\in\NN$ be the largest integer such that 
$r p^n\in\left(\frac{R_1}{p},R_1\right]$, and consider now the following decomposition
$$(r,rp^n]=\bigcup_{i=p+1}^{p^{n+1}}\left(\frac{r}{p}(i-1),\frac{r}{p}i\right]
=\bigcup_{k=1}^n\bigcup_{i=p^k+1}^{p^{k+1}}\left(\frac{r}{p}(i-1),\frac{r}{p}i\right].$$
By lemma \ref{S5L1} on $(\tau _1/2, \tau _1)$ with $a_i=ri/p$, $i=p+1, \cdots, p^{n+1}$:
\begin{align}
\label{ue1}
\int_{[0,r]}h(&\tau_1,x)dx
\geq\int_{\frac{\tau_1}{2}}^{\tau_1}\left[\frac{p}{2r}\sum_{i=p+1}^{p^{n+1}}\frac{1}{i}\left(\int_{\left(\frac{r}{p}(i-1),\frac{r}{p}i\right]} h(\sigma,x)dx\right)^2\right]d\sigma\nonumber\\
&=\int_{\frac{\tau_1}{2}}^{\tau_1}\left[\frac{p}{2r}\sum_{k=1}^n\sum_{i=p^k+1}^{p^{k+1}}\frac{1}{i}\left(\int_{\left(\frac{r}{p}(i-1),\frac{r}{p}i\right]} h(\sigma,x)dx\right)^2\right]d\sigma\nonumber\\
&\geq\int_{\frac{\tau_1}{2}}^{\tau_1}\left[\frac{1}{2r}\sum_{k=1}^n\frac{1}{p^k}\sum_{i=p^k+1}^{p^{k+1}}\left(\int_{\left(\frac{r}{p}(i-1),\frac{r}{p}i\right]} h(\sigma,x)dx\right)^2\right]d\sigma.
\end{align}
We use now  Lemma  3.12 in \cite{AV1}
\begin{align*}
&\sum_{i=p^k+1}^{p^{k+1}}\left(\int_{\left(\frac{r}{p}(i-1),\frac{r}{p}i\right]} h(\sigma,x)dx\right)^2
\geq\frac{1}{p^k(p-1)}\times \\ 
&\times \left(\sum_{i=p^k+1}^{p^{k+1}}\int_{\left(\frac{r}{p}(i-1),\frac{r}{p}i\right]} h(\sigma,x)dx\right)^2
\geq\frac{1}{p^{k+1}}\left(\int_{(rp^{k-1},rp^k]}h(\sigma,x)dx\right)^2
\end{align*}
and deduce
\begin{align*}
\int_{[0,r]}h(\tau_1,x)dx\geq\int_{\frac{\tau_1}{2}}^{\tau_1}\left[\frac{1}{2r}\sum_{k=1}^n\frac{1}{p^{2k+1}}\left(\int_{(rp^{k-1},rp^k]}h(\sigma,x)dx\right)^2\right]d\sigma.
\end{align*}
Due to the choice of the integer $n$, $r p^k\in (0, R_1]$ for all $k=1, \cdots, n$, and we can use (\ref{assumption 2}) on each interval
$(rp^{k-1},rp^k]$ to obtain:
\begin{align*}
\int_{[0,r]}h(\tau_1,x)dx
&\geq\int_{\frac{\tau_1}{2}}^{\tau_1}\left[ \frac{1}{2r}\sum_{k=1}^n\frac{1}{p^{2k+1}}\left(\frac{C_1rp^k}{2}\right)^2\right]d\sigma
=\frac{\tau_1C_1^2n}{16p}\;r.
\end{align*}
It then follows from lemma \ref{S5P1} with $\theta=\frac{1}{2}$ and $R=2r$ that
\begin{align}
\label{estimate unbounded}
\int_{[0,2r]}h(\tau,x)dx\geq\frac{\tau_1C_1^2n}{32p}\;r\qquad\forall\tau\geq\tau_1.
\end{align}
Since $r p^n\in\left(\frac{R_1}{p},R_1\right]$, then $n\geq\frac{\log\left(\frac{R_1}{rp}\right)}{\log(p)}$, and
we chose $r>0$ small enough in order to have $r\in (0, R_1/p)$ and 
$$\frac{\tau_1C_1^2}{64p}\frac{\log\left(\frac{R_1}{rp}\right)}{\log p}\geq L;$$
and set $R_0:=2r$. The result then follows from (\ref{estimate unbounded}).
\end{proof}

\begin{lemma}
\label{rescaled}
Let $h$ be a solution of (\ref{S1E16ha}) and, for any $\kappa>0$ and $\lambda>0$, consider the rescaled measure
$h_{\kappa,\lambda}$ defined as:
\begin{equation}
\label{S5Escaled}
\int_{[0,\infty)} \!\!\!\! h_{\kappa,\lambda}(\tau,x)\varphi(x)dx
=\kappa\!\int_{[0,\infty)}\!\!\!\!h(\kappa\lambda\tau, x)\varphi\left(\frac{x}{\lambda}\right)dx,\;\forall \varphi \in C_b([0, \infty)).
\end{equation}
Then $h_{\kappa,\lambda}$ is a super solution of (\ref{S1E16ha}).
\end{lemma}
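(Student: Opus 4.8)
The plan is to verify Definition \ref{SUPER} directly for $h_{\kappa,\lambda}$ by transporting the weak formulation (\ref{S1E16ha}) of $h$ through the combined space--time rescaling, and then dropping the linear term using its favourable sign. The only input about $h$ that is really needed is that it is a (genuine) solution of (\ref{S1E16ha}) with the local Lipschitz regularity (\ref{lip loc h}), together with the sign property $\widetilde{\mathscr Q}_3^{(1)}(\varphi,h)\le 0$ for nonnegative convex decreasing $\varphi$ supplied by Lemma \ref{convex-positivity}.

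Fix a test function $\varphi\in C^1_b([0,\infty))$ that is nonnegative, convex and decreasing, and set $\psi(x)=\varphi(x/\lambda)$. Since $\lambda>0$, the function $\psi$ is again in $C^1_b([0,\infty))$, nonnegative, convex and decreasing, with $\|\psi\|_\infty=\|\varphi\|_\infty$. By the definition (\ref{S5Escaled}),
\[
\int_{[0,\infty)}\varphi(x)\,h_{\kappa,\lambda}(\tau,x)\,dx=\kappa\int_{[0,\infty)}\psi(x)\,h(\kappa\lambda\tau,x)\,dx\qquad\forall\tau\ge 0 .
\]
By (\ref{lip loc h}) the right-hand side is locally Lipschitz in $\tau$ (an affine change of the time variable preserves this), and differentiating a.e. in $\tau$ via (\ref{S1E16ha}) with test function $\psi$ gives
\[
\frac{d}{d\tau}\int_{[0,\infty)}\varphi(x)\,h_{\kappa,\lambda}(\tau,x)\,dx=\kappa^2\lambda\,\widetilde{\mathscr Q}_3\big(\psi,h(\kappa\lambda\tau)\big)\qquad\text{a.e. }\tau>0 .
\]
Because $\psi$ is nonnegative, convex and decreasing, Lemma \ref{convex-positivity} yields $\widetilde{\mathscr Q}_3^{(1)}(\psi,h)\le 0$, hence $\widetilde{\mathscr Q}_3(\psi,h)\ge\mathscr Q_3^{(2)}(\psi,h)$, and therefore
\[
\frac{d}{d\tau}\int_{[0,\infty)}\varphi(x)\,h_{\kappa,\lambda}(\tau,x)\,dx\ \ge\ \kappa^2\lambda\,\mathscr Q_3^{(2)}\big(\psi,h(\kappa\lambda\tau)\big)\qquad\text{a.e. }\tau>0 .
\]

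It then remains to identify $\kappa^2\lambda\,\mathscr Q_3^{(2)}(\psi,h(\kappa\lambda\tau))$ with $\mathscr Q_3^{(2)}(\varphi,h_{\kappa,\lambda}(\tau))$. From the definition (\ref{S1E154}) one reads off the scaling identity $\Lambda(\psi)(x,y)=\Lambda(\varphi)(x/\lambda,y/\lambda)$; inserting this into (\ref{S1E1Q32}) and performing the change of variables $x=\lambda u$, $y=\lambda v$ (so $dx\,dy=\lambda^2\,du\,dv$ and $\sqrt{xy}=\lambda\sqrt{uv}$) gives
\[
\mathscr Q_3^{(2)}\big(\psi,h(\kappa\lambda\tau)\big)=\lambda\iint_{(0,\infty)^2}\frac{\Lambda(\varphi)(u,v)}{\sqrt{uv}}\,h(\kappa\lambda\tau,\lambda u)\,h(\kappa\lambda\tau,\lambda v)\,du\,dv .
\]
On the other hand, (\ref{S5Escaled}) states precisely that $h_{\kappa,\lambda}(\tau,\cdot)$ equals $\kappa\lambda$ times the pushforward of $h(\kappa\lambda\tau,\cdot)$ under the dilation $x\mapsto x/\lambda$, so substituting this into (\ref{S1E1Q32}) for $\varphi$ and $h_{\kappa,\lambda}(\tau)$ and undoing the same change of variables shows that the integral above equals $(\kappa^2\lambda^2)^{-1}\mathscr Q_3^{(2)}(\varphi,h_{\kappa,\lambda}(\tau))$, whence $\mathscr Q_3^{(2)}(\psi,h(\kappa\lambda\tau))=(\kappa^2\lambda)^{-1}\mathscr Q_3^{(2)}(\varphi,h_{\kappa,\lambda}(\tau))$. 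Combining this with the previous inequality gives, for a.e. $\tau>0$,
\[
\frac{d}{d\tau}\int_{[0,\infty)}\varphi(x)\,h_{\kappa,\lambda}(\tau,x)\,dx\ \ge\ \mathscr Q_3^{(2)}\big(\varphi,h_{\kappa,\lambda}(\tau)\big),
\]
which is exactly (\ref{S1E16haB}); since $h_{\kappa,\lambda}(\tau)$ is manifestly a nonnegative finite measure for every $\tau$ (take $\varphi\equiv 1$ in (\ref{S5Escaled}) for finiteness), this proves that $h_{\kappa,\lambda}$ is a super solution. The one point I would be careful about is the bookkeeping of the powers of $\kappa$ and $\lambda$ across the two rescalings: the factor $\kappa^2\lambda$ coming out of differentiating the left-hand side (the explicit $\kappa$ in (\ref{S5Escaled}) and the time dilation $\kappa\lambda$) must cancel exactly against the $(\kappa^2\lambda)^{-1}$ produced by $\mathscr Q_3^{(2)}$, and it does precisely because $\mathscr Q_3^{(2)}$ is quadratic in the measure while $\Lambda$ is evaluated at rescaled spatial arguments. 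Everything else --- preservation of convexity and monotonicity under $\varphi\mapsto\varphi(\cdot/\lambda)$, and the transfer of local Lipschitz regularity and of the a.e. identity (\ref{S1E16ha}) through an affine time change --- is routine.
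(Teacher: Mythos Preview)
Your proof is correct and follows essentially the same approach as the paper: set $\psi(x)=\varphi(x/\lambda)$, apply the weak formulation (\ref{S1E16ha}) with $\psi$, drop the linear term via Lemma \ref{convex-positivity}, and check the scaling identity $\mathscr Q_3^{(2)}(\psi,h(\kappa\lambda\tau))=(\kappa^2\lambda)^{-1}\mathscr Q_3^{(2)}(\varphi,h_{\kappa,\lambda}(\tau))$. Your write-up simply spells out the change-of-variables computation that the paper asserts in one line; the only minor wording slip is calling $h_{\kappa,\lambda}(\tau)$ ``$\kappa\lambda$ times the pushforward'' (as a measure it is $\kappa$ times the pushforward, though as a density the factor is indeed $\kappa\lambda$), but your actual bookkeeping of the powers is correct.
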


\begin{proof}
Let $\varphi\in C^1_b([0,\infty))$ be nonnegative, convex and decreasing, $\psi(x)=\varphi(x/\lambda)$, and $\eta=\kappa\lambda\tau$.
By Lemma \ref{convex-positivity}, 
$\widetilde{\mathscr{Q}}_3^{(1)}(\psi,h)\leq 0$, and by (\ref{S1E16ha})
\begin{align*}
\frac{d}{d\eta}&\int_{[0,\infty)}\psi(x)h(\eta, x)dx\geq \mathscr{Q}_3^{(2)}(\psi,h(\eta)).
\end{align*}
Since $\mathscr{Q}_3^{(2)}(\psi,h(\eta))=\kappa^{-2}\lambda^{-1}\mathscr{Q}_3^{(2)}(\varphi,h_{\kappa,\lambda}(\tau))$, then
\begin{align*}
\frac{d}{d\tau}\int_{[0,\infty)} \varphi(x)h_{\kappa,\lambda}(\tau,x)dx&=\kappa^2\lambda\frac{d}{d\eta}\int_{[0,\infty)}\psi(x)h(\eta, x)dx
\geq\mathscr{Q}_3^{(2)}(\varphi,h_{\kappa,\lambda}(\tau)).
\end{align*}
\end{proof}

\begin{lemma}
\label{concentration lemma}
Let $h$ be a super solution of (\ref{S1E16ha}).
Suppose that there exists $\tau'>0$ such that
\begin{equation}
\label{S5HC}
\int_{[0,1]}h(\tau,x)dx\geq 1\qquad\forall \tau\geq\tau'.
\end{equation}
Then for any given $\delta>0$ there exist  $\tau _0$ such that
\begin{align}
&\tau '\leq\tau _0\leq\tau '+T_0(\delta),\qquad T_0(\delta )=\frac{64}{\delta^3}\left(1-\frac{\delta}{2}\right) \label{S5EX2}\\
&\hbox{and}\,\,\,\,\int_{\left[0,\frac{\delta}{4}\right]}h(\tau_0,x)dx\geq 1-\frac{\delta}{2}. \label{S5EX3}
\end{align}
\end{lemma}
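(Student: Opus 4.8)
The plan is to argue by contradiction, using Lemma~\ref{S5L1} as the engine that converts mass sitting away from the origin into mass accumulating at the origin. We may assume $\delta\in(0,2)$, since for $\delta\ge 2$ the bound $1-\frac{\delta}{2}\le 0$ makes (\ref{S5EX3}) trivially true with $\tau_0=\tau'$. Write $R=\delta/4$ and $T_0=T_0(\delta)=\frac{64}{\delta^{3}}\bigl(1-\frac{\delta}{2}\bigr)$, and suppose, aiming at a contradiction, that $\int_{[0,R]}h(\tau,x)\,dx<1-\frac{\delta}{2}$ for every $\tau\in[\tau',\tau'+T_0]$. I would then show that this assumption in fact forces $\int_{[0,R]}h(\tau'+T_0,x)\,dx\ge 1-\frac{\delta}{2}$; the contradiction produces a $\tau_0\in[\tau',\tau'+T_0]$ for which (\ref{S5EX3}) holds, and (\ref{S5EX2}) then holds by construction.

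First, combining the standing hypothesis (\ref{S5HC}) with the contradiction assumption, for every $\sigma\in[\tau',\tau'+T_0]$ one obtains $\int_{(R,1]}h(\sigma,x)\,dx=\int_{[0,1]}h(\sigma,x)\,dx-\int_{[0,R]}h(\sigma,x)\,dx>\frac{\delta}{2}$, so a definite amount of mass stays in $(R,1]$ on the whole interval. Next, I would apply Lemma~\ref{S5L1} with $\tau_0=\tau'$, the given $R=\delta/4$, and the partition $R=a_0<a_1<\dots<a_M=1<a_{M+1}<\dots$ whose first $M:=\lceil 8/\delta-2\rceil$ steps are equal to $(1-R)/M$; a short computation shows $(1-R)/M\le \delta/8=R/2$, so the sequence can be extended to infinity with all consecutive gaps $\le R/2$. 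Discarding the terms $i>M$ and using $a_i\le 1$ for $i\le M$, Lemma~\ref{S5L1} gives
\[
\int_{[0,R]}h(\tau'+T_0,x)\,dx\;\ge\;\frac{1}{2}\int_{\tau'}^{\tau'+T_0}\sum_{i=1}^{M}\Bigl(\int_{(a_{i-1},a_i]}h(\sigma,x)\,dx\Bigr)^{2}d\sigma .
\]
Since the intervals $(a_{i-1},a_i]$, $1\le i\le M$, partition $(R,1]$, the Cauchy--Schwarz inequality $\sum_{i=1}^{M}m_i^{2}\ge\frac1M\bigl(\sum_{i=1}^{M}m_i\bigr)^{2}$ combined with the first step yields $\sum_{i=1}^{M}\bigl(\int_{(a_{i-1},a_i]}h(\sigma,x)\,dx\bigr)^{2}\ge\frac{1}{M}\bigl(\tfrac{\delta}{2}\bigr)^{2}$ for each such $\sigma$; inserting this and using $M\le 8/\delta$,
\[
\int_{[0,R]}h(\tau'+T_0,x)\,dx\;\ge\;\frac{\delta^{2}}{8M}\,T_0\;\ge\;\frac{\delta^{3}}{64}\,T_0\;=\;1-\frac{\delta}{2},
\]
which is the desired contradiction.

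The only genuine obstacle is the bookkeeping of the partition: one must cover $(R,1]=(\delta/4,1]$ by at most $8/\delta$ subintervals while keeping every step $\le R/2=\delta/8$, and it is precisely this count that makes the constant $T_0(\delta)=\frac{64}{\delta^{3}}(1-\frac{\delta}{2})$ come out. Everything else is routine: Lemma~\ref{S5L1} already packages the super solution inequality and the $\varphi_{R,n}$ approximation, and the remaining input is the one-line Cauchy--Schwarz estimate together with the splitting of the mass constraint carried out in the first step.
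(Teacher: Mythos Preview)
Your argument is correct and follows essentially the same route as the paper: a contradiction argument that feeds the residual mass on $(\delta/4,1]$ into Lemma~\ref{S5L1}, followed by the Cauchy--Schwarz bound $\sum_{i=1}^{M}m_i^{2}\ge\frac{1}{M}\bigl(\sum m_i\bigr)^{2}$ (which is what the paper invokes as ``Lemma~3.12 in \cite{AV1}''). The only cosmetic difference is the choice of partition: the paper takes $a_i=\frac{\delta}{4}\bigl(1+\frac{i}{2}\bigr)$ with the final node reset to $1$, whereas you take $M=\lceil 8/\delta-2\rceil$ equal subintervals of $(\delta/4,1]$; both choices yield at most $8/\delta$ pieces of width at most $\delta/8$, and the constants match.
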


\begin{proof}
 The statement of the Lemma is  equivalent to show that the following set
$$
A:=\left\{\tau\in[\tau', \tau '+T_0( \delta )]:\int_ {\left[0,\frac{\delta}{4}\right]}h(\tau,x)dx\geq 1-\frac{\delta}{2}\right\}.
$$
is non empty, where $T_0(\delta )$ is defined in (\ref{S5EX2}). 
To this end we first apply  Lemma \ref{S5L1}  with  $a_0=\frac{\delta}{4}$, $a_i=\frac{\delta}{4}\left(1+\frac{i}{2}\right)$ for $i\in\{1,...,n-1\}$ and $a_n=1$. The number $n$ is chosen to be the largest integer such that $a_{n-1}<1$, which implies
\begin{equation}
\label{S5e0}
\frac{1}{n+1}>\frac{\delta}{8}.
\end{equation}
Then, using ${a_i}^{-1}\geq 1$ for all $i\in\{1,...,n\}$:
\begin{align*}
\int_{\left[0,\frac{\delta}{4}\right]}h(\tau,x)dx
\geq\frac{1}{2}\int_{\tau'}^{\tau} \sum_{i=1}^n\bigg(\int_{(a_{i-1},a_i]}h(\sigma,x)dx\bigg)^2d\sigma,\,\,\,\forall \tau >\tau '.
\end{align*}
Since by Lemma 3.12 in \cite{AV1} and (\ref{S5e0}):
\begin{align*}
\sum_{i=1}^n\bigg(\int_{(a_{i-1},a_i]}h(\sigma,x)dx\bigg)^2
\geq\frac{\delta}{8}\bigg(\int_{\left(\frac{\delta}{4},1\right]}h(\sigma,x)dx\bigg)^2,
\end{align*}
we obtain, for all $\tau >\tau '$
\begin{align}
\label{estimate for contradiction}
\int_{\left[0,\frac{\delta}{4}\right]}h(\tau,x)dx
\geq\frac{\delta}{16}\int_{\tau'}^{\tau}\bigg(\int_{\left(\frac{\delta}{4},1\right]}h(\sigma,x)dx\bigg)^2d\sigma.
\end{align}
Arguing by contradiction suppose that $A=\emptyset$:
\begin{align*}
\int_{\left(0,\frac{\delta}{4}\right]}h(\tau ,x)dx< 1-\frac{\delta}{2}\qquad\forall\tau \in[\tau',\tau '+T_0(\delta )]
\end{align*}
and by (\ref{S5HC}):
$$
\int_{\left(\frac{\delta}{4},1\right]}h(\tau ,x)dx\geq \frac{\delta}{2}\qquad\forall\tau \in[\tau',\tau '+T_0(\delta )].
$$
It  follows from (\ref{estimate for contradiction}) that
$
1-\frac{\delta}{2}>\frac{\delta^3}{64}(\tau-\tau')$ for all $\tau\in[\tau',\tau '+T_0(\delta )]$
which is a contradiction for $\tau=\tau '+T_0(\delta )$. 
\end{proof}

\begin{proposition}
\label{S5LB}
Let $h$ be a solution of  (\ref{S1E16ha}). Suppose that there exist $m$, $R>0$ such that
\begin{equation}
\int_{[0,R]}h(\tau,x)dx\geq m\qquad\forall\tau\in[0,\infty).
\end{equation}
Then given any $\alpha\in(0,1)$ there exists $T_*=T_*(\alpha)>0$ such that
\begin{align}
\label{S5ELB}
\int_{[0,r]}h(\tau,x)dx\geq \frac{m}{(2R)^{\alpha}}\,r^{\alpha}\qquad\forall r\in[0,R],\quad\forall\tau\in\bigg[\frac{R T_*}{m},\infty\bigg).
\end{align}
\end{proposition}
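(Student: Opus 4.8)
\emph{Reduction by scaling.} Since $h$ is a solution of (\ref{S1E16ha}), Lemma \ref{rescaled} shows that $\bar h:=h_{1/m,\,R}$ is a super solution, and by the definition (\ref{S5Escaled}) one has $\int_{[0,1]}\bar h(\tau,x)dx=\tfrac1m\int_{[0,R]}h(\tfrac{R}{m}\tau,x)dx\ge 1$ for all $\tau\ge 0$. Unwinding the change of variables, the desired bound (\ref{S5ELB}) for $h$ is equivalent to the same bound for $\bar h$ in the normalised case $m=R=1$. Hence it suffices to prove: \emph{if $g$ is a super solution of (\ref{S1E16ha}) with $\int_{[0,1]}g(\tau,x)dx\ge 1$ for all $\tau\ge 0$, then for every $\alpha\in(0,1)$ there is $T_*(\alpha)>0$ with $\int_{[0,r]}g(\tau,x)dx\ge 2^{-\alpha}r^\alpha$ for all $r\in[0,1]$ and all $\tau\ge T_*(\alpha)$.} I will also use that $g\mapsto g_{\kappa,\lambda}$ sends super solutions to super solutions: the proof of Lemma \ref{rescaled} only uses the super solution inequality for nonnegative convex decreasing test functions, so it applies verbatim.

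\emph{One halving step.} Fix $\alpha\in(0,1)$ and set $\delta_*:=2\big(1-2^{-\alpha/2}\big)\in(0,2-\sqrt2)$, so that $1-\delta_*/2=2^{-\alpha/2}$. Given a super solution $g$ with $\int_{[0,1]}g(\tau,x)dx\ge 1$ for $\tau\ge 0$, Lemma \ref{concentration lemma} applied with this $\delta_*$ and $\tau'=0$ yields $\tau_0\le T_0(\delta_*)$ with $\int_{[0,\delta_*/4]}g(\tau_0,x)dx\ge 1-\delta_*/2$. Proposition \ref{S5P1} with $\theta=\delta_*/2\in(0,1)$ and $R=1/2$ (so that $\theta R=\delta_*/4$) then gives
\begin{equation*}
\int_{[0,1/2]}g(\tau,x)dx\ \ge\ (1-\delta_*/2)^2\ =\ 2^{-\alpha}\qquad\forall\,\tau\ge\tau_0 .
\end{equation*}
Rescaling, the super solution $\hat g:=\big(g(\tau_0+\cdot)\big)_{2^{\alpha},\,1/2}$ satisfies $\int_{[0,1]}\hat g(\tau,x)dx\ge 1$ for all $\tau\ge 0$, and the time variable of $g$ is dilated relative to that of $\hat g$ by the factor $2^{\alpha}\cdot\tfrac12=2^{\alpha-1}<1$.

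\emph{Iteration and conclusion.} Iterating the halving step — applying it to $\hat g$, then to its rescaled version, and so on, always with the same $\delta_*$ — produces an increasing sequence of times $t_k$ with $t_k-t_{k-1}\le 2^{(\alpha-1)(k-1)}T_0(\delta_*)$, hence $t_k\le T_*(\alpha):=T_0(\delta_*)\big(1-2^{\alpha-1}\big)^{-1}<\infty$, and, translating each estimate back through the composed rescalings,
\begin{equation*}
\int_{[0,\,2^{-k}]}g(\tau,x)dx\ \ge\ 2^{-k\alpha}\qquad\forall\,\tau\ge t_k,\ \ \forall\,k\ge 0 .
\end{equation*}
For $r\in(0,1]$, choose $k\ge 0$ with $2^{-(k+1)}<r\le 2^{-k}$; then for $\tau\ge T_*(\alpha)\ge t_{k+1}$ monotonicity gives $\int_{[0,r]}g(\tau,x)dx\ge\int_{[0,\,2^{-(k+1)}]}g(\tau,x)dx\ge 2^{-(k+1)\alpha}\ge 2^{-\alpha}r^{\alpha}$, which is the reduced claim. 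Undoing the initial scaling of the Reduction step (with $\rho=rR$, $\sigma=\tfrac{R}{m}\tau$) turns this into (\ref{S5ELB}) with $T_*=T_*(\alpha)$.

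\emph{Main obstacle.} The delicate point is the choice of $\delta_*$: one must simultaneously keep the mass at scale $2^{-k}$ bounded below by $2^{-k\alpha}$ — which forces $(1-\delta_*/2)^2\ge 2^{-\alpha}$ — and keep the accumulated time finite — which after the dilation above forces $2^{\alpha-1}<1$. Both hold precisely because $\alpha<1$, with $\delta_*=2(1-2^{-\alpha/2})$ realising the borderline case of the first condition. The other key point is that the concentration step alone only reaches scale $\delta_*/4$ with a mass loss of order $\delta_*/2$; pairing it with Proposition \ref{S5P1} at $\theta=\delta_*/2$ is what makes it land exactly at scale $1/2$ with the controlled loss $(1-\delta_*/2)^2$, allowing the clean geometric iteration.
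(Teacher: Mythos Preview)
Your proof is correct and follows essentially the same approach as the paper: normalise by rescaling, use the concentration lemma together with Proposition~\ref{S5P1} to halve the spatial scale with a controlled mass loss, iterate, sum the resulting geometric time increments (finite precisely because $\alpha<1$), and finish by dyadic interpolation. The only difference is cosmetic---you take $\delta_*=2(1-2^{-\alpha/2})$ so that $(1-\delta_*/2)^2=2^{-\alpha}$ exactly, whereas the paper takes $\delta=1-2^{-\alpha}$ and uses the cruder bound $(1-\delta/2)^2\ge 1-\delta=2^{-\alpha}$; both choices yield the same iteration structure and the same form of $T_*$.
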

\begin{proof}
We argue by induction and define first the scaled measure  $h_1=h _{ \kappa_1, \lambda _1 } $, defined as in (\ref{S5Escaled}), that satisfies condition (\ref{S5HC}) for $\kappa_1=\frac {1} {m},\,\,\,\lambda _1=R.$
From Lemma \ref{rescaled}, and Lemma \ref{concentration lemma} with $\tau '=0$, we deduce that for all $\delta \in (0, 1)$ there exists  $\tau _1>0$ such that:
\begin{align*}
&0\leq\tau _1\leq T_0(\delta),\quad \int_{\left[0,\frac{\delta}{4}\right]}h_1(\tau_1,x)dx\geq 1-\frac{\delta}{2}.
\end{align*}
Then from Lemma \ref{rescaled}, and Proposition \ref{S5P1} with $\theta=\delta /2$ and $R=1/2$,
\begin{align}
\int_{\left[0,\frac{1}{2}\right]}h_1(\tau,x)dx\geq \left(1-\frac{\delta}{2}\right)^2,\,\,\,\forall \tau \geq T_0(\delta ),\nonumber\\
\label{S5EIt1}
\int_{\left[0,\frac{R}{2}\right]}h\left(\tau , x\right)dx\geq m\left(1-\delta \right),\,\,\,\forall \tau \geq \frac {R} {m}T_0(\delta ).
\end{align}
Exactly as before we now define $h_2=h _{ \kappa_2, \lambda _2 } $ as in (\ref{S5Escaled}), that satisfies condition (\ref{S5HC}) for
$\kappa_2=\frac {1} {m(1-\delta )^2},\,\,\,\lambda _2=\frac {R} {2},\,\,\,\tau '= 2(1-\delta )T_0(\delta ).$
The same argument gives then:
\begin{equation}
\label{S5EIt2}
\int_{\left[0,\frac{R}{4}\right]}h\left(\tau , x\right)dx\geq m\left(1-\delta \right)^2,\quad\forall \tau \geq \frac {R T_0(\delta )} {m}\left(1+\frac {1} {2(1-\delta )} \right).
\end{equation}
We deduce after $n$ iterations
\begin{align}
\label{S5EItn}
\int_{\left[0,\frac{R}{2^n}\right]}h\left(\tau , x\right)dx\geq m\left(1-\delta \right)^{n},\quad\forall \tau \geq \frac {R T_0(\delta )} {m}
\sum_{ k=0 }^{n-1}\frac {1} {2^{k}(1-\delta )^k}
\end{align}
If we chose $\delta=1-2^{-\alpha }$, for any $0<\alpha <1$, we may  define
\bear
\label{S5ET*}
T_*=T_0(\delta )\sum_{ k=0 }^{\infty}2^{-(1-\alpha)  k}= \frac {T_0(\delta )} {1-2^{-(1-\alpha) }}.
\eear
Since for any $r\in (0, R)$ there exists $n\in \NN$ such that $r\in \left(\frac{R}{2^n} ,\frac{R}{2^{n-1}}\right]$,
\begin{align*}
\int_{\left[0, r \right]}h\left(\tau , x\right)dx\geq m2^{-\alpha n} 
,\,\,\,\forall \tau > \frac {R T_*} {m}
\end{align*}
and using $2^{-n}>r/2R$, (\ref{S5ELB}) follows.
\end{proof}

\begin{proposition}
\label{S5PLB}
Let $h$ be a solution of  (\ref{S1E16ha}).  Then, for all $\tau_0>0$ and for any  $\alpha\in(0,1)$ there exists 
$R_*=R_*(h,\tau_0,\alpha)>0$ such that
\begin{align}
\int_{[0,r]}h(\tau,x)\dd x\geq C\,r^{\alpha}\qquad\forall r\in[0,R_*]\quad\forall \tau\in[\tau_0,\infty),
\end{align} 
where $C=\frac{T_*(\alpha)}{\tau_0}(2R_*)^{1-\alpha}$, and $T_*(\alpha)$ is given by Proposition \ref{S5LB}. 
\end{proposition}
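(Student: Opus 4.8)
The plan is to bootstrap from the flat lower bound of Proposition~\ref{S4P47} to the power-type bound of Proposition~\ref{S5LB}, using the autonomy of (\ref{S1E16ha}) to reset the clock. Fix $\tau_0>0$ and $\alpha\in(0,1)$, and let $T_*=T_*(\alpha)$ be the constant furnished by Proposition~\ref{S5LB}. The key quantitative point is to demand from Proposition~\ref{S4P47} an amount of mass near the origin large enough that the latency $R_*T_*/m$ appearing in Proposition~\ref{S5LB} is exactly compensated by working on $[\tau_0/2,\infty)$ rather than on $[\tau_0,\infty)$.

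First I would set $L:=\frac{2T_*(\alpha)}{\tau_0}$ and apply Proposition~\ref{S4P47} with this value of $L$ and with $\tau_1:=\tau_0/2$. (The hypotheses of Proposition~\ref{S4P47}, namely those of Lemma~\ref{S5L47}, hold in the present setting since $E=M_1(h_0)>0$ forces $m_0=\int_{(0,\infty)}h_0(x)\dd x>0$.) This produces a radius $R_*=R_0(h,L,\tau_0/2)>0$, depending only on $h$, $\tau_0$ and $\alpha$, such that
\[
\int_{[0,R_*]}h(\tau,x)\dd x\geq L\,R_*\qquad\forall\,\tau\geq\frac{\tau_0}{2}.
\]

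Next I would pass to the time-shifted measure $\tilde h(\tau):=h(\tau+\tau_0/2)$. Since the weak formulation (\ref{S1E16ha}) is autonomous, $\tilde h\in C\big([0,\infty),\mathscr{M}_+^1([0,\infty))\big)$ is again a solution of (\ref{S1E16ha}), and by the previous display $\int_{[0,R_*]}\tilde h(\tau,x)\dd x\geq LR_*$ for every $\tau\geq0$. Applying Proposition~\ref{S5LB} to $\tilde h$ with $m=LR_*$ and $R=R_*$ then yields, for the chosen $\alpha$,
\[
\int_{[0,r]}\tilde h(\tau,x)\dd x\geq\frac{LR_*}{(2R_*)^{\alpha}}\,r^{\alpha}\qquad\forall\,r\in[0,R_*],\quad\forall\,\tau\geq\frac{R_*T_*(\alpha)}{LR_*}=\frac{T_*(\alpha)}{L}=\frac{\tau_0}{2}.
\]
Undoing the time shift, this says exactly that $\int_{[0,r]}h(\tau,x)\dd x\geq\frac{LR_*}{(2R_*)^{\alpha}}\,r^{\alpha}$ for all $r\in[0,R_*]$ and all $\tau\geq\tau_0$, and a direct computation with $L=2T_*(\alpha)/\tau_0$ gives $\frac{LR_*}{(2R_*)^{\alpha}}=\frac{L}{2}(2R_*)^{1-\alpha}=\frac{T_*(\alpha)}{\tau_0}(2R_*)^{1-\alpha}=C$, which is the asserted constant.

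I do not expect a genuine obstacle: the argument is essentially an assembly of Proposition~\ref{S4P47} and Proposition~\ref{S5LB}. The only step requiring care is the calibration of $L$, which must be chosen so that the waiting time $T_*/L$ coming out of Proposition~\ref{S5LB} and the half-gap $\tau_0/2$ saved when invoking Proposition~\ref{S4P47} add up to exactly $\tau_0$; this is what forces $L=2T_*(\alpha)/\tau_0$ and, in turn, pins down $C$. One should also record the immediate fact that a time translate of a solution of the autonomous problem (\ref{S1E16ha}) is again such a solution, so that Proposition~\ref{S5LB} applies to $\tilde h$.
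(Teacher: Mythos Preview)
Your proof is correct and follows essentially the same route as the paper: apply Proposition~\ref{S4P47} on $[\tau_0/2,\infty)$ to secure a flat lower bound $LR_*$, then feed this into Proposition~\ref{S5LB} and choose $L=2T_*(\alpha)/\tau_0$ so that the waiting time $T_*/L$ plus the saved half-gap $\tau_0/2$ yields exactly $\tau_0$. The only cosmetic difference is that you make the time-translation step explicit by introducing $\tilde h$, whereas the paper applies Proposition~\ref{S5LB} directly and tacitly shifts the threshold to $\tau_0/2+T_*/L$.
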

\begin{proof}
By Proposition \ref{S4P47} with $L>0$ and for $\tau _1=\tau _0/2 $, there exists $R_0(h,L,\tau _1)>0$ such that
\begin{align*}
\int_{[0,R_0]}h(\tau ,x)dx\geq LR_0\qquad\forall\tau\geq\frac{\tau_0}{2}.
\end{align*}
Then by Proposition \ref{S5LB}, with $m=LR_0$ and $R=R_0$, we obtain that for any given $\alpha\in(0,1)$ there exists $T_*=T_*(\alpha)>0$ such that
\begin{align*}
\int_{[0,r]} h(\tau,x)\dd x\geq \frac{LR_0}{(2R_0)^{\alpha}}\,r^{\alpha}\qquad\forall r\in[0,R_0],\quad\forall \tau\in\bigg[\frac{\tau_0}{2}+\frac{ T_*}{L},\infty\bigg).
\end{align*}
If we chose $L=2T_*/\tau_0$, then the Proposition follows with $R_*=R_0$.
\end{proof}

\begin{proof}[\upshape{\bfseries{Proof of Theorem \ref{S1T4h}}}]
By Lemma \ref{S6L1'} the map $\tau\mapsto h(\tau,\{0\})$ is right continuous, nondecreasing and a.e. differentiable on $[0,\infty)$. It remains to prove that it is actually strictly increasing.
We  first suppose that  $h_0$ is such that 
\bear
\label{S5E54}
\int_{\{0\}}h_0(x)dx=0,\qquad \int_{(0,\infty)}h_0(x)dx>0,
\eear
and prove 
\begin{equation}
\label{S1ET4h}
h(\tau,\{0\})>0\qquad\forall \tau>0.
\end{equation}
Arguing by contradiction, if we suppose that there exists $\tau_0>0$ such that $h(\tau_0,\{0\})=0$, by monotonicity 
$h(\tau,\{0\})=0$ for all $\tau\in[0,\tau_0]$. In particular
\begin{align}
\int_{\frac{\tau_0}{2}}^{\tau_0}\int_{[0,r]}h(\sigma,x)dxd\sigma=\int_{\frac{\tau_0}{2}}^{\tau_0}\int_{(0,r]}h(\sigma,x) dxd\sigma
\end{align}
for all $r>0$. Now using  Proposition \ref{S5P2} with $\alpha=0$, and Proposition \ref{S5PLB},  we deduce that, for any $\alpha\in(0,1/2)$, there exists $R_*=R_*(h, \tau _0/2, \alpha )$ such that
\begin{align*}
&C_2\,r^{\alpha}\leq \int_{\frac{\tau_0}{2}}^{\tau_0}\int_{(0,r]}h(\sigma,x)dxd\sigma \leq C_1\sqrt{r},
\qquad\forall r\in[0,R_*];\\
&C_1=8\, \sqrt{\frac{\tau_0}{2}} \left( \frac {\sqrt{M_1(h_0)}} {2}\tau_0 +\sqrt{M_0(h_0)}\right),\quad C_2=\frac{T_*(\alpha)}{2}(2R_*)^{1-\alpha},
\end{align*}
and that leads to a contradiction for $r$ small enough.

Consider now a general initial data $h_0$ such that $\int_{\{0\}}h_0(x)dx>0$. Let $h$ be a solution of (\ref{S1E16ha}) with initial data $h_0$ and define
$$
\tilde h(\tau )=h(\tau )-h_0(\{0\})\delta _0.
$$
Then, on the one hand, the initial data of $\tilde h$  satisfies $\tilde h(0, \{0\})=0$. On the other hand we claim that $\tilde h$ is still a solution of   (\ref{S1E16ha}). Notice indeed that $\tilde h_\tau \equiv h_\tau $ and, moreover,
$\widetilde{\mathscr{Q}}_3(\varphi,h(\tau))=\widetilde{\mathscr{Q}}_3(\varphi,\tilde h(\tau))$. Using the previous case
$$
\int  _{ \{0\}}\tilde h(\tau, x)dx>0,\quad\forall \tau >0,
$$
and then
$$
\int  _{ \{0\}} h(\tau, x)dx>\int  _{ \{0\}} h_0(x)dx,\quad\forall \tau >0.
$$
The Theorem follows using now the time translation invariance of the equation.
\end{proof}

The last result of this section describes the relation between the Lebesgue-Stieltjes measure associated to the (right continuous and strictly increasing) function
$m(\tau)=h(\tau,\{0\})$, and the equation for $h$ (\ref{S1E16ha}).

\begin{proposition}
\label{Stieltjes1}
Let $h$ be a solution of (\ref{S1E16ha}) for a initial data $h_0\in\mathscr{M}_+^1([0,\infty))$ with $N=M_0(h_0)>0$ and $E=M_1(h_0)>0$.
If we denote $m(\tau)=h(\tau,\{0\})$ and $\lambda$ is the Lebesgue-Stieltjes measure associated to $m$, then for all $\varphi_{\varepsilon}$ as in Remark \ref{TEST} and for all $\tau_1$ and $\tau_2$ with $0\leq\tau_1<\tau_2$:
\begin{align}
\label{mm1}
&m(\tau_2)-m(\tau_1)=\lambda((\tau_1,\tau_2]),\\
\label{Stieltjes3}
&\lambda((\tau_1,\tau_2])=\lim_{\varepsilon\to 0}\int_{\tau_1}^{\tau_2}\mathscr{Q}_3^{(2)}(\varphi_{\varepsilon},h(\tau))d\tau,
\end{align}
\begin{flalign}
\label{Stieltjes0}
&\text{and}&&0<\lambda((\tau_1,\tau_2]))<\infty.&&
\end{flalign} 
Furthermore, for all $\varphi_{\varepsilon}$ as in Remark \ref{TEST}
\begin{align}
\label{Stieltjes7}
\lim_{\varepsilon\to 0}\mathscr{Q}_3^{(2)}(\varphi_{\varepsilon},h)\in\mathscr{D}'(0,\infty),
\end{align}
and if we denote $m'$ the  derivative in the sense of Distributions of $m$, then 
\begin{align}
\label{Stieltjes2}
m'=\lambda=\lim_{\varepsilon\to 0}\mathscr{Q}_3^{(2)}(\varphi_{\varepsilon},h)\quad\text{in}\quad\mathscr{D}'(0,\infty).
\end{align}
\end{proposition}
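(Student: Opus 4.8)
The plan is to read off everything from the weak formulation (\ref{S1E16ha}) tested against the functions $\varphi_\varepsilon$ of Remark \ref{TEST}, combined with Theorem \ref{S1T4h}. Since Theorem \ref{S1T4h} already tells us that $m(\tau)=h(\tau,\{0\})$ is right continuous and strictly increasing, the Lebesgue--Stieltjes measure $\lambda$ is well defined and (\ref{mm1}) is nothing but its defining property $\lambda((\tau_1,\tau_2])=m(\tau_2)-m(\tau_1)$; moreover (\ref{Stieltjes0}) is then immediate, positivity being strict monotonicity of $m$ and finiteness following from $m(\tau)\le M_0(h(\tau))<\infty$.

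For (\ref{Stieltjes3}) I would fix $\varphi_\varepsilon$ as in Remark \ref{TEST}, set $u_\varepsilon(\tau)=\int_{[0,\infty)}\varphi_\varepsilon(x)h(\tau,x)dx$, and use, exactly as in the proof of Lemma \ref{S6L1'}, that $u_\varepsilon$ is locally absolutely continuous with $u_\varepsilon'=\mathscr{Q}_3^{(2)}(\varphi_\varepsilon,h)-\widetilde{\mathscr{Q}}_3^{(1)}(\varphi_\varepsilon,h)$ a.e.; integrating over $(\tau_1,\tau_2]$ gives
\[
u_\varepsilon(\tau_2)-u_\varepsilon(\tau_1)=\int_{\tau_1}^{\tau_2}\mathscr{Q}_3^{(2)}(\varphi_\varepsilon,h(\tau))\,d\tau-\int_{\tau_1}^{\tau_2}\widetilde{\mathscr{Q}}_3^{(1)}(\varphi_\varepsilon,h(\tau))\,d\tau .
\]
The step I expect to carry the weight of the proof is showing that the linear term disappears uniformly as $\varepsilon\to0$. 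For this I would use the explicit expression $\mathcal{L}(\varphi_\varepsilon)(x)=x\varphi(x/\varepsilon)-2\int_0^x\varphi(y/\varepsilon)dy$ together with the decay assumption $\lim_{x\to\infty}\sqrt{x}\,\varphi(x)=0$: setting $M=\sup_{u>0}\sqrt{u}\,\varphi(u)<\infty$ one gets $x\varphi(x/\varepsilon)\le M\sqrt{\varepsilon x}$ and $\int_0^x\varphi(y/\varepsilon)dy\le 2M\sqrt{\varepsilon x}$, hence $|\mathcal{L}(\varphi_\varepsilon)(x)|/\sqrt{x}\le 5M\sqrt{\varepsilon}$ for all $x>0$, so that $|\widetilde{\mathscr{Q}}_3^{(1)}(\varphi_\varepsilon,h(\tau))|\le 5M\sqrt{\varepsilon}\,M_0(h(\tau))$. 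Since $M_0(h(\cdot))$ is locally bounded by (\ref{MMI}) and $u_\varepsilon(\tau)\to m(\tau)$ by Remark \ref{TEST}, letting $\varepsilon\to0$ in the displayed identity shows that the limit in (\ref{Stieltjes3}) exists and equals $m(\tau_2)-m(\tau_1)=\lambda((\tau_1,\tau_2])$.

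It remains to prove (\ref{Stieltjes7}) and (\ref{Stieltjes2}). Given $\psi\in\mathscr{D}(0,\infty)$, integration by parts (legitimate because $u_\varepsilon$ is locally absolutely continuous and $\psi$ is compactly supported in $(0,\infty)$) gives
\[
\int_0^\infty\psi(\tau)\,\mathscr{Q}_3^{(2)}(\varphi_\varepsilon,h(\tau))\,d\tau=-\int_0^\infty\psi'(\tau)\,u_\varepsilon(\tau)\,d\tau+\int_0^\infty\psi(\tau)\,\widetilde{\mathscr{Q}}_3^{(1)}(\varphi_\varepsilon,h(\tau))\,d\tau .
\]
Because $0\le u_\varepsilon(\tau)\le M_0(h(\tau))$ (as $\varphi_\varepsilon\le\varphi(0)=1$) with $u_\varepsilon\to m$ pointwise, dominated convergence sends the first integral to $-\int_0^\infty\psi' m\,d\tau=\langle m',\psi\rangle$, while the second tends to $0$ by the bound just obtained; hence $\mathscr{Q}_3^{(2)}(\varphi_\varepsilon,h)\to m'$ in $\mathscr{D}'(0,\infty)$, which is (\ref{Stieltjes7}) and the second equality in (\ref{Stieltjes2}). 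The first equality $m'=\lambda$ is the classical fact that the distributional derivative of a right-continuous nondecreasing function coincides with its Lebesgue--Stieltjes measure (and it is also forced by (\ref{Stieltjes3})): writing $m(\tau)=m(\tau_0)+\lambda((\tau_0,\tau])$ for $\tau\ge\tau_0$ and applying Fubini yields $-\int_0^\infty\psi' m\,d\tau=\int_0^\infty\psi\,d\lambda$ for every $\psi\in\mathscr{D}(0,\infty)$. This completes the proof.
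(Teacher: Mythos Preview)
Your proof is correct and follows essentially the same architecture as the paper's: both integrate the weak formulation (\ref{S1E16ha}) against $\varphi_\varepsilon$, pass to the limit, and then run the integration-by-parts/Fubini argument for the distributional identities. The only noteworthy difference is in how the linear term $\widetilde{\mathscr{Q}}_3^{(1)}(\varphi_\varepsilon,h(\tau))$ is eliminated: the paper invokes its Lemma~\ref{convergence lemma} (dominated convergence with the uniform bound $|\mathcal{L}(\varphi_\varepsilon)(x)|/\sqrt{x}\le 3\sqrt{x}$, which requires $M_{1/2}(h(\tau))<\infty$), whereas you exploit the decay hypothesis $\sqrt{x}\,\varphi(x)\to 0$ of Remark~\ref{TEST} to obtain the explicit quantitative estimate $|\mathcal{L}(\varphi_\varepsilon)(x)|/\sqrt{x}\le 5M\sqrt{\varepsilon}$, needing only $M_0(h(\tau))<\infty$. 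Your argument is slightly more self-contained and even yields a rate, but otherwise the two proofs coincide.
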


\begin{proof}
By Lemma \ref{S6L1'}, $m$ is right continuous and nondecreasing on $[0,\infty)$. Then it has a Lebesgue-Stieltjes measure associated to it, $\lambda$, that satisfies (\ref{mm1}) (c.f.  \cite{Fo} Ch.1).

On the other hand, since $h$ is a solution of (\ref{S1E16ha}), using $\varphi_{\varepsilon}$ as in Remark \ref{TEST} and taking the limit $\varepsilon\to 0$, it follows from (\ref{limQ31b}) in Lemma \ref{convergence lemma} that for all $\tau_1$ and $\tau_2$ with $0\leq\tau_1<\tau_2$:
\begin{align}
\label{ZE3}
m(\tau_2)-m(\tau_1)=\lim_{\varepsilon\to 0}\int_{\tau_1}^{\tau_2}\mathscr{Q}_3^{(2)}(\varphi_{\varepsilon},h(\sigma))d\sigma,
\end{align}
and then (\ref{Stieltjes3}) follows from (\ref{mm1}). Moreover, since by Theorem \ref{S1T4h} $m$ is strictly increasing, then (\ref{Stieltjes0}) holds.

Notice that the limit in (\ref{ZE3}) is independent of the choice of the test function $\varphi_{\varepsilon}$. Indeed, if $\psi_{\varepsilon}$ is another test function as in Remark \ref{TEST}, since for all $\tau\geq 0$
\begin{align*}
\lim_{\varepsilon\to 0}\int_{[0,\infty)}\psi_{\varepsilon}(x)h(\tau,x)dx=m(\tau)=\lim_{\varepsilon\to 0}\int_{[0,\infty)}\varphi_{\varepsilon}(x)h(\tau,x)dx,
\end{align*}
it follows from (\ref{ZE3}) that for all $0\leq \tau_1\leq\tau_2$
\begin{align*}
\lim_{\varepsilon\to 0}\int_{\tau_1}^{\tau_2}\mathscr{Q}_3^{(2)}(\psi_{\varepsilon},h(\sigma))d\sigma=
\lim_{\varepsilon\to 0}\int_{\tau_1}^{\tau_2}\mathscr{Q}_3^{(2)}(\varphi_{\varepsilon},h(\sigma))d\sigma.
\end{align*}

Now, for all $\varphi_{\varepsilon}$ as in Remark \ref{TEST}, consider the absolutely continuous function 
\begin{align*}
\theta_{\varepsilon}(\tau)=\int_{[0,\infty)}\varphi_{\varepsilon}(x)h(\tau,x)dx.
\end{align*}
Then the equation in (\ref{AUXW}) reads $\theta_{\varepsilon}'(\tau)=\widetilde{\mathscr{Q}}_3(\varphi_{\varepsilon},h(\tau))
$.
Using integration by parts we deduce that for all $\varepsilon>0$:
\begin{align*}
-\int_0^{\infty}\phi'(\tau)\theta_{\varepsilon}(\tau)d\tau=\int_0^{\infty}\phi(\tau)\widetilde{\mathscr{Q}}_3(\varphi_{\varepsilon},h(\tau))d\tau
\quad\forall \phi\in C^{\infty}_c(0,\infty).
\end{align*}
Taking the limit $\varepsilon\to 0$ it then follows from Lemma \ref{convergence lemma} that
\begin{align*}
-\int_0^{\infty}\phi'(\tau)m(\tau)d\tau=\lim_{\varepsilon\to 0}\int_0^{\infty}\phi(\tau)\mathscr{Q}_3^{(2)}(\varphi_{\varepsilon},h(\tau))d\tau,
\end{align*}
hence, $m'=\lim_{\varepsilon\to 0}\mathscr{Q}_3^{(2)}(\varphi_{\varepsilon},h)$. On the other hand, by Fubini's theorem
\begin{align*}
\int_0^{\infty}\phi(\tau)d\lambda(\tau)=\int_0^{\infty}\int_0^{\tau}\phi'(\sigma)d\sigma d\lambda(\tau)
=-\int_0^{\infty}\phi'(\sigma)m(\sigma)d\sigma
\end{align*}
for all $\phi\in C^{\infty}_c(0,\infty)$ (cf. \cite{WR3}, Example 6.14), thus $m'=\lambda$.
\end{proof}

\section{Existence of solutions $G$, proof of  Theorem \ref{S1T1}.}
\label{sectionG}
\setcounter{equation}{0}
\setcounter{theorem}{0}

Given a initial data $G_0\in \mathscr M_+^1$ as in Theorem \ref{S1T1}, let $h\in C\big([0,\infty), \mathscr{M}_+([0,\infty))\big)$ satisfy (\ref{lip loc h})--(\ref{EE}),  (\ref{S5Ealpha }), given by (\ref{S5C52R}) and $H$ defined by (\ref{DEFH}) and satisfying  (\ref{lip loc H})--(\ref{EEH}), (\ref{S5EalphaR }) by Corollary \ref{S5C52R}.  It is natural, in view of the change of variables  (\ref{S1E45b})  to define now, 
\begin{equation}
\label{S6EG1}
G(t) =H(\tau ),\quad\tau =\int _0^tG(s, \{0\})ds.
\end{equation}
Notice nevertheless that since $G(s, \{0\})$ is still unknown,  (\ref{S6EG1}) does not define $G(t)$ actually.  What we know is rather, given $\tau >0$, what would be the value of $t$ such that 
\begin{equation}
\label{S6EtCh}
t =\int _0^\tau \frac {d\sigma  } {H(\sigma  , \{0\})},
\end{equation}
since we expect to have $G(s,\{0\})=H(\sigma , \{0\})$ for $s$ and $\sigma $ such that
\bean
\sigma =\int _0^sG(r, \{0\})dr,\quad\hbox{or}\quad s=\int _0^\sigma \frac {d\rho } {H(\rho , \{0\})}.
\eean
If $G$ is going to be defined in that way it is then necessary first to check that the range of values taken by the variable $t$ in (\ref{S6EtCh}) is all of $[0, \infty)$. By definition (\ref{DEFH}),
\begin{equation}
\label{{DEFH}2}
H(\tau,\{0\})=h(\tau, \{0\})-\int_0^\tau M_{1/2}(h(\sigma))d\sigma.
\end{equation}
Since both terms in the right hand side are nonnegative, $H(\tau, \{0\})$ has no a priori  definite sign. We must then consider that question in some detail.
Our first step is to prove the following

\begin{lemma}
\label{S6C1}
If $G_0(\{0\})>0$, then
\begin{align}
\label{TAU1}
&\tau_*=\inf\{\tau>0:H(\tau,\{0\})=0\}>0,\\
\label{TAU2}
&H(\tau_*,\{0\})=0,\\
\label{TAU3}
&H(\tau,\{0\})>0\qquad\forall\tau\in[0,\tau_*).
\end{align}
\end{lemma}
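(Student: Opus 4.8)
The plan is to use the two one-sided regularity properties of $\tau\mapsto H(\tau,\{0\})$ collected in Corollary~\ref{S6L1}: right continuity on $[0,\infty)$, and the absence of downward jumps, $\limsup_{\delta\to0^+}H(\tau-\delta,\{0\})\le H(\tau,\{0\})$ for all $\tau>0$. These are available precisely because $H(\tau,\{0\})=h(\tau,\{0\})-\int_0^\tau M_{1/2}(h(\sigma))\,d\sigma$ is a right continuous nondecreasing function (Lemma~\ref{S6L1'}) minus an absolutely continuous one. Since $H(0)=h_0=G_0$ by construction (Corollary~\ref{S5C52R}), we have $H(0,\{0\})=G_0(\{0\})>0$, and the whole proof consists in propagating this positivity along the two one-sided bounds; I will never use continuity of $H(\cdot,\{0\})$, which is not known.

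For (\ref{TAU1}): right continuity at $0$ and $H(0,\{0\})>0$ give an $\varepsilon>0$ with $H(\tau,\{0\})>0$ on $[0,\varepsilon)$, whence $\tau_*\ge\varepsilon>0$. (If the set in (\ref{TAU1}) is empty we set $\tau_*=+\infty$; in that case part~(\ref{TAU3}) below shows $H(\cdot,\{0\})>0$ throughout $[0,\infty)$ and (\ref{TAU2}) is vacuous.)

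For (\ref{TAU3}): consider $I=\{\tau\in[0,\tau_*):H(\sigma,\{0\})>0\text{ for all }\sigma\in[0,\tau]\}$, which is a subinterval of $[0,\tau_*)$ containing $[0,\varepsilon)$; let $a=\sup I$. I claim $a=\tau_*$, which is exactly (\ref{TAU3}). If not, $a<\tau_*$; since $I$ is downward closed, $H(\sigma,\{0\})>0$ for all $\sigma<a$, and since $a\in(0,\tau_*)$ the definition of $\tau_*$ gives $H(a,\{0\})\ne0$. If $H(a,\{0\})>0$, then $H(\cdot,\{0\})>0$ on $[0,a]$, and right continuity at $a$ propagates this to some interval $[0,a+\eta)$, $\eta>0$, with $a+\eta\le\tau_*$, contradicting $a=\sup I$. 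If $H(a,\{0\})<0$, then since $H(a-\delta,\{0\})>0$ for small $\delta>0$ we get $\limsup_{\delta\to0^+}H(a-\delta,\{0\})\ge0$, contradicting (\ref{JIM}). Hence $a=\tau_*$.

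For (\ref{TAU2}) (with $\tau_*<\infty$): choose a sequence in $\{\tau>0:H(\tau,\{0\})=0\}$ decreasing to $\tau_*$ --- or equal to $\tau_*$, in which case there is nothing to prove --- and pass to the limit using right continuity at $\tau_*$. The only real subtlety throughout is the possible jump discontinuities of $H(\cdot,\{0\})$, which is why every step is phrased so as to use only the right continuity and the one-sided inequality (\ref{JIM}) of Corollary~\ref{S6L1}, rather than two-sided control.
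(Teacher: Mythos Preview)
Your proof is correct and follows essentially the same approach as the paper: both use right continuity of $\tau\mapsto H(\tau,\{0\})$ for (\ref{TAU1}) and (\ref{TAU2}), and the one-sided inequality (\ref{JIM}) to rule out negative values for (\ref{TAU3}). Your treatment of (\ref{TAU3}) via the supremum $a=\sup I$ is a bit more explicit than the paper's, which simply asserts that a point $\tau_0$ with $H(\tau_0,\{0\})<0$ would have to be a left discontinuity contradicting (\ref{JIM}); your version makes transparent why the left-hand values are positive, and you also handle the $\tau_*=+\infty$ case, which the paper leaves implicit.
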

\begin{proof}
$H(0)=G_0$ by (\ref{S5IDh}), and then, using $\varphi_{\varepsilon}$ as in Remark \ref{TEST}, we deduce $H(0,\{0\})=G_0(\{0\})$, which is strictly positive by hypothesis. Then (\ref{TAU1}) follows from the right continuity of $H(\tau,\{0\})$ (cf. Corollary \ref{S6L1}).

In order to prove (\ref{TAU2}) we use a minimizing sequence $(\tau _n)_{ n\in \NN }$, i.e., $\tau _n\ge \tau _*$, $H(\tau _n,\{0\})=0$ for every $n\in \NN$,  and $\tau _n\to \tau _*$ as $n\to \infty$. Then from the right continuity (\ref{TAU2}) holds.

Let us prove now (\ref{TAU3}). If $H(\tau_0 , \{0\})<0$ for some $\tau_0 \in (0, \tau_*)$, then $\tau _0$ must be a left discontinuity point of $H(\tau, \{0\})$ and
\begin{equation*}
\limsup_{\delta\rightarrow 0^+}H(\tau_0-\delta,\{0\})> H(\tau_0,\{0\}),
\end{equation*}
and this would contradict (\ref{JIM}). That proves (\ref{TAU3}).
\end{proof}

It follows from Lemma \ref{S6C1} that the function:
\begin{equation}
t =\xi (\tau )=\int _0^\tau \frac {d\sigma  } {H(\sigma  , \{0\})}
\end{equation}
introduced  in (\ref{S6EtCh}) is well defined, monotone nondecreasing  and continuous on  the interval $ [0, \tau _*)$. We  then define,
\begin{align}
\label{S6DG2}
\forall t\in [0, \xi (\tau_*)):\quad G(t)=H(\xi ^{-1}(t)).
\end{align}
By (\ref{S6DG2}) and (\ref{{DEFH}2}), if $G(t)=G(t,\{0\})\delta _0+g(t)$ and $H(\tau)=H(\tau , \{0\})\delta _0+\tilde h(\tau )$, then
\begin{align}
&G(t,\{0\})=H(\tau , \{0\}), \label{S6DG2bis}\\
&\tilde h(\tau )=h(\tau )-h(\tau , \{0\})\delta_0, \label{S6DG20}\\
&g(t)=\tilde h(\tau ).\label{ght}
\end{align}

\begin{remark}
Formula (\ref{S6DG2}) defines the function $G$ at time $t\in (0, \xi (\tau _*))$ from the knowledge of the function $H(\tau )$ for $\tau >0$ such that $\tau =\xi ^{-1}(t)$. Moreover,
\begin{equation}
\forall t\in (0, \xi (\tau _*)): \quad \xi ^{-1}(t)=\int _0^t G(s, \{0\})ds.
\end{equation}
\end{remark}
We have now,

\begin{proposition}
\label{S6P3G1}
The function  $G$ defined by (\ref{S6DG2}) is  such that
\bear
G\in C\big([0, \xi (\tau _* )),\mathscr{M}_+^1([0,\infty))\big),\,\,\,G(0)=G_0
\eear 
and satisfies (\ref{S1ED3S}), (\ref{S1E16}), (\ref{S1E210}) and (\ref{S1E220}) on the time interval $[0, \xi (\tau_* ))$.
\end{proposition}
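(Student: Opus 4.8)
The plan is to transport every property of the measure $H$ obtained in Corollary \ref{S5C52R} through the time change $t=\xi(\tau)$, $\xi(\tau)=\int_0^\tau H(\sigma,\{0\})^{-1}\,d\sigma$, which by Lemma \ref{S6C1} (and the remark following it) is a continuous, strictly increasing bijection of $[0,\tau_*)$ onto $[0,\xi(\tau_*))$ with a.e. derivative $\xi'(\tau)=H(\tau,\{0\})^{-1}>0$. Since $G=H\circ\xi^{-1}$ and, by (\ref{TAU3}) together with (\ref{DEFH}), $H$ takes values in $\mathscr M_+^1([0,\infty))$ on $[0,\tau_*)$ (it is nonnegative off the origin, has strictly positive mass at the origin there, and $M_1(H(\tau))=E<\infty$), the continuity of $G$ on $[0,\xi(\tau_*))$ follows from the continuity of $\xi^{-1}$ and of $H$, and $G(0)=H(\xi^{-1}(0))=H(0)=G_0$ by (\ref{S5IDh}). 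The conservation laws are then immediate from (\ref{MMH}) and (\ref{EEH}): $M_0(G(t))=M_0(H(\xi^{-1}(t)))=N$ and $M_1(G(t))=M_1(H(\xi^{-1}(t)))=E$ for every $t\in[0,\xi(\tau_*))$, which is (\ref{S1E210}), (\ref{S1E220}).

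The core of the argument is the weak formulation. First I would rewrite (\ref{AUXWH}) in integrated form: since by (\ref{lip loc H}) the map $\tau\mapsto\int_{[0,\infty)}\varphi(x)H(\tau,x)\,dx$ is locally Lipschitz, the fundamental theorem of calculus together with (\ref{AUXWH}) and (\ref{S5IDh}) gives, for all $\varphi\in C^1_b([0,\infty))$ and $\tau\ge 0$,
\begin{equation*}
\int_{[0,\infty)}\varphi(x)H(\tau,x)\,dx=\int_{[0,\infty)}\varphi(x)G_0(x)\,dx+\int_0^\tau\mathscr Q_3(\varphi,H(\sigma))\,d\sigma,
\end{equation*}
the integrand being bounded uniformly in $\sigma$ by a constant depending only on $\varphi$, $N$, $E$, via an estimate $|\mathscr Q_3(\varphi,H(\sigma))|\lesssim\|\varphi'\|_\infty M_0(H(\sigma))^2+\|\varphi\|_\infty M_{1/2}(H(\sigma))$ and (\ref{MMH}), (\ref{EEH}). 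Next I substitute $\sigma=\xi^{-1}(s)$, $\tau=\xi^{-1}(t)$: this is legitimate because $\xi$ is absolutely continuous on compact subintervals of $[0,\tau_*)$ with a.e. strictly positive derivative, so by the Banach--Zarecki theorem $\xi^{-1}$ is absolutely continuous on compact subintervals of $[0,\xi(\tau_*))$, with $(\xi^{-1})'(s)=1/\xi'(\xi^{-1}(s))=H(\xi^{-1}(s),\{0\})=G(s,\{0\})$ a.e. Since $\mathscr Q_3$ only sees the restriction of its argument to $(0,\infty)$ (cf. (\ref{S1E1Q3})--(\ref{S1E1Q31})) and, by (\ref{S6DG20})--(\ref{ght}), $g(t)=\tilde h(\xi^{-1}(t))$ agrees with $H(\xi^{-1}(t))$ off the origin, one has $\mathscr Q_3(\varphi,H(\xi^{-1}(s)))=\mathscr Q_3(\varphi,g(s))$; writing $n(t)=G(t,\{0\})$ this yields
\begin{equation*}
\int_{[0,\infty)}\varphi(x)G(t,x)\,dx=\int_{[0,\infty)}\varphi(x)G_0(x)\,dx+\int_0^t n(s)\mathscr Q_3(\varphi,g(s))\,ds,\qquad t\in[0,\xi(\tau_*)).
\end{equation*}

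To conclude, the integrand $s\mapsto n(s)\mathscr Q_3(\varphi,g(s))$ lies in $L^\infty_{loc}([0,\xi(\tau_*)))$: indeed $0\le n(s)\le M_0(G(s))=N$, while $|\mathscr Q_3(\varphi,g(s))|$ is controlled, as above, by $M_0(g(s))\le N$ and $M_{1/2}(g(s))\le\sqrt{M_0(g(s))M_1(g(s))}\le\sqrt{NE}$. Hence $t\mapsto\int_{[0,\infty)}\varphi(x)G(t,x)\,dx\in W^{1,\infty}_{loc}([0,\xi(\tau_*)))$, which is (\ref{S1ED3S}), and differentiating the identity above for a.e. $t$ gives $\frac{d}{dt}\int_{[0,\infty)}\varphi(x)G(t,x)\,dx=n(t)\mathscr Q_3(\varphi,g(t))$, i.e. (\ref{S1E16}), with $n(t)=G(t,\{0\})$ as required.

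The step I expect to be the most delicate is precisely the change of variables inside the $\sigma$-integral: one must be sure that $\xi^{-1}$ is genuinely absolutely continuous with $(\xi^{-1})'=G(\cdot,\{0\})$ a.e., which rests on $\xi$ having an a.e. strictly positive derivative on $[0,\tau_*)$ (a consequence of (\ref{TAU3})) and on $1/H(\cdot,\{0\})$ being locally integrable there, i.e. on the well-definedness and continuity of $\xi$ already recorded after Lemma \ref{S6C1}. Everything else is bookkeeping: propagating continuity, the moment identities, and exploiting that the operator $\mathscr Q_3$ is blind to the atom at the origin.
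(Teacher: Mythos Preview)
Your proof is correct and follows the same overall approach as the paper: transport the properties of $H$ through the time change $t=\xi(\tau)$. If anything, your treatment is more careful than the paper's: the paper argues for (\ref{S1ED3S}) by writing the formal chain-rule bound $|\partial G/\partial t|\le |H(\tau,\{0\})|\,|\partial H/\partial\tau|$ and then says in one line that ``the change of variables ensures that $G$ satisfies (\ref{S1E16})''; you instead pass through the integrated form of (\ref{AUXWH}), justify the substitution $\sigma=\xi^{-1}(s)$ via the absolute continuity of $\xi^{-1}$ (which you correctly deduce from $\xi$ being absolutely continuous with a.e.\ strictly positive derivative), and make explicit that $\mathscr Q_3$ ignores the atom at the origin so that $\mathscr Q_3(\varphi,H(\xi^{-1}(s)))=\mathscr Q_3(\varphi,g(s))$. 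This yields the same conclusion by a cleaner route, and your uniform bounds on $n(s)\mathscr Q_3(\varphi,g(s))$ give (\ref{S1ED3S}) directly rather than through an informal derivative estimate.
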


\begin{proof}
We first prove that $G(t)$ is a positive measure for all $t\in[0,\xi(\tau_*))$. By (\ref{TAU3}) and (\ref{S6DG2bis}) we have $G(t,\{0\})>0$ for all $t\in[0,\xi(\tau_*))$. Then, since $h(\tau)$ is a positive measure for all $\tau\in[0,\infty)$, we deduce from (\ref{ght}) and (\ref{S6DG20}) that $g(t)$ is a positive measure for all $t\in[0,\xi(\tau_*)).$ Hence $G(t)=G(t,\{0\})\delta_0+g(t)$ is also positive.

All the properties of $G(t)$ at $t\in [0, \xi (\tau_*))$ fixed follow from the corresponding property of $H(\tau )$ with $t=\xi(\tau )$. The only property where $t$ is not fixed are (\ref{S1ED2S}) and (\ref{S1ED3S}). Since
\begin{equation*}
\left|\frac {\partial G(t)} {\partial t}\right|= \left|\frac {\partial \tau } {\partial t}\frac {\partial H(\tau )} {\partial \tau }\right|
\le |H(\tau , \{0\})|\left|\frac {\partial H(\tau )} {\partial \tau }\right|
\end{equation*}
By definition,
\bean
|H(\tau , \{0\})|\le |h(\tau, \{0\})|+\int_0^\tau M_{1/2}(h(\sigma))d\sigma.
\eean
Since $h\in C([0, \infty), \mathscr M_+^1)$ it follows using also (\ref{MMI}),  (\ref{EE}) and H\"older inequality that 
$H(\tau , \{0\})\in L^\infty _{ loc }([0, \infty))$.Then, by  (\ref{lip loc h}), $G(t)$ is locally Lipschitz on $[0, \xi (\tau _*))$ and satisfies (\ref{S1ED3S}).
Since $H$ satisfies (\ref{AUXW}) the change of variables ensures that $G$ satisfies (\ref{S1E16}).
\end{proof}
We prove now the following property of the function $G$ defined in (\ref{S6DG2}).
\begin{proposition}
\label{origin G}
Let  $G$ be the function defined in  (\ref{S6DG2}) for $t\in (0, \xi (\tau _*))$. Then the map $t\mapsto G(t,\{0\})$ is right continuous and differentiable for almost every   $ t\in [0,\xi (\tau _*))$ and, for all $t_0\in (0, \xi (\tau _*))$ 
\begin{align}
\label{CMI}
G(t,\{0\})\geq G(t_0,\{0\})e^{-\int_{t_0}^tM_{1/2}(g(s))d s}\quad\forall t\in (t_0, \xi (\tau _*)).
\end{align} 
In particular, if $G(0, \{0\})>0$, then $G(t,\{0\})> 0$ for all $t\in (0, \xi (\tau _*))$.  
\end{proposition}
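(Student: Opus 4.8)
The plan is to read everything off the weak formulation (\ref{S1E16}), which $G$ satisfies by Proposition \ref{S6P3G1}, tested against the cut-off functions $\varphi_{\varepsilon}$ of Remark \ref{TEST}, together with the structural facts about $H$, $h$ and $\xi$ already established. First I would record the regularity statements. Since the integrand $1/H(\sigma,\{0\})$ is finite and strictly positive on $[0,\tau_*)$ by Lemma \ref{S6C1}, the map $\xi$ is continuous and strictly increasing there, so $\xi^{-1}$ is continuous and strictly increasing. Writing $H(\tau,\{0\})=h(\tau,\{0\})-\int_0^{\tau}M_{1/2}(h(\sigma))\,d\sigma$ and composing with $\xi^{-1}$ gives
\[
G(t,\{0\})=h(\xi^{-1}(t),\{0\})-\int_0^{\xi^{-1}(t)}M_{1/2}(h(\sigma))\,d\sigma ,
\]
which exhibits $t\mapsto G(t,\{0\})$ as a difference of two nondecreasing functions on $[0,\xi(\tau_*))$ (each factor is nondecreasing, and $\tau\mapsto h(\tau,\{0\})$ is right continuous by Lemma \ref{S6L1'}). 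Hence $t\mapsto G(t,\{0\})$ is locally of bounded variation, so a.e.\ differentiable, and its right continuity follows from that of $\tau\mapsto H(\tau,\{0\})$ (Corollary \ref{S6L1}) combined with the monotonicity and continuity of $\xi^{-1}$.

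Next I would derive an integral inequality for $G(t,\{0\})$. Any $\varphi$ as in Remark \ref{TEST} is necessarily nonincreasing (a nonnegative convex function with $\varphi(0)=1$ that vanishes at infinity cannot increase anywhere), so each $\varphi_{\varepsilon}$ is nonnegative, convex and decreasing, and Lemma \ref{convex-positivity} yields $\mathscr{Q}_3^{(2)}(\varphi_{\varepsilon},g)\ge 0$ and $\widetilde{\mathscr{Q}}_3^{(1)}(\varphi_{\varepsilon},g)\le 0$. By (\ref{S1EB1})--(\ref{S1EB2}) and $\varphi_{\varepsilon}(0)=1$,
\[
\mathscr{Q}_3(\varphi_{\varepsilon},g)=\mathscr{Q}_3^{(2)}(\varphi_{\varepsilon},g)-\widetilde{\mathscr{Q}}_3^{(1)}(\varphi_{\varepsilon},g)-M_{1/2}(g)\ge -M_{1/2}(g).
\]
Inserting this into (\ref{S1E16}), integrating on $[t_0,t]$ (licit since the map is locally Lipschitz by (\ref{S1ED3S})), and letting $\varepsilon\to 0$ using (\ref{TEST2}), I obtain
\[
G(t,\{0\})-G(t_0,\{0\})\ge -\int_{t_0}^{t} G(s,\{0\})\,M_{1/2}(g(s))\,ds\qquad\forall\,t>t_0\ge 0,
\]
where the integrand is bounded because $M_{1/2}(g(s))\le\sqrt{M_0(g(s))M_1(g(s))}\le\sqrt{NE}$ by (\ref{S1E210})--(\ref{S1E220}) and $G(s,\{0\})\le N$.

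Finally I would turn this into (\ref{CMI}) by a Gr\"onwall argument adapted to the fact that $u(t):=G(t,\{0\})$ is only of bounded variation. Applying the inequality to arbitrary pairs $t_1<t_2$ shows that $P(t):=u(t)+\int_{t_0}^{t}M_{1/2}(g(s))\,u(s)\,ds$ is nondecreasing on $[t_0,\xi(\tau_*))$, i.e.\ $\mathrm dP\ge 0$ as a measure, while by construction $\mathrm du=\mathrm dP-M_{1/2}(g)\,u\,\mathrm dt$. Taking $b(t):=\exp\!\big(\int_{t_0}^{t}M_{1/2}(g(s))\,ds\big)$, which is continuous, positive and absolutely continuous with $\mathrm db=M_{1/2}(g)\,b\,\mathrm dt$, the Leibniz rule for a bounded-variation function times a continuous one gives $\mathrm d(bu)=b\,\mathrm du+u\,\mathrm db=b\,\mathrm dP\ge 0$, so $bu$ is nondecreasing on $[t_0,\xi(\tau_*))$; since $b(t_0)=1$, this is precisely (\ref{CMI}). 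The last assertion then follows at once: for $t\in(0,\xi(\tau_*))$ one has $\xi^{-1}(t)\in[0,\tau_*)$, hence $G(t,\{0\})=H(\xi^{-1}(t),\{0\})>0$ by (\ref{TAU3}).

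The only genuinely delicate step is the last one: $u=G(\cdot,\{0\})$ need not be absolutely continuous — the atom at the origin may jump — so one cannot simply differentiate and multiply by $b$; the Gr\"onwall argument must be run at the level of Lebesgue--Stieltjes measures, which is why the bounded-variation decomposition of the first paragraph is needed, and why it is convenient that in the product $bu$ one factor is continuous, so that no jump-correction term enters the differentiation.
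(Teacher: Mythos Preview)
Your proof is correct and rests on the same key observation as the paper's: for the convex decreasing test functions $\varphi_\varepsilon$, Lemma \ref{convex-positivity} forces $\mathscr{Q}_3(\varphi_\varepsilon,g)\ge -M_{1/2}(g)$, which after a Gr\"onwall argument yields (\ref{CMI}). The execution differs in two places. For the regularity, you read off the BV decomposition directly from $G(t,\{0\})=h(\xi^{-1}(t),\{0\})-\int_0^{\xi^{-1}(t)}M_{1/2}(h)$ and get right continuity and a.e.\ differentiability at once; the paper instead derives both from the monotonicity of $J(t)G(t,\{0\})$ that emerges from the estimate itself, plus a separate $\limsup$ argument for right continuity. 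For (\ref{CMI}), you first let $\varepsilon\to 0$ to obtain an integral inequality for the BV function $u=G(\cdot,\{0\})$ and then run a Lebesgue--Stieltjes Gr\"onwall (product rule for BV~$\times$~continuous); the paper applies the integrating factor $J$ already at the $\varepsilon$-level, where $\int\varphi_\varepsilon G$ is locally Lipschitz, uses $G(t,\{0\})\le\int\varphi_\varepsilon G$ to close the differential inequality, and only then passes to the limit---this avoids any Stieltjes calculus. Both routes are valid; yours is more structural, the paper's sidesteps the BV product rule entirely.
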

\begin{proof}
Using (\ref{S1E16}) and (\ref{S1EB1}) with $\varphi _{\varepsilon}$ as in Remark \ref{TEST}, we have
\begin{align}
\frac{d}{dt}\int_{[0,\infty)}\varphi_{\varepsilon}(x)G(t,x)d x+G(t,\{0\})M_{1/2}&(G(t))
=G(t,\{0\}) \widetilde{\mathscr Q}_3(\varphi _\varepsilon,G(t))
.\label{S6E78}
\end{align}
We use now that for all $\varepsilon >0$:
\begin{align}
\label{ZAS}
G(t,\{0\})\leq \int_{[0,\infty)}\varphi_{\varepsilon}(x)G(t,x)d x,
\end{align}
and we deduce from (\ref{S6E78}), using  $J(t)=\exp\left({\int_0^t M_{1/2}(G(s))d s}\right)$,
\begin{align}
\label{S6E79}
&\frac{d}{d t}\bigg(J(t)\int_{[0,\infty)}\varphi_{\varepsilon}(x)G(t,x)d x\bigg) \geq G(t,\{0\})J(t)  \widetilde{\mathscr Q}_3(\varphi _\varepsilon,G(t)).
\end{align}
By Lemma \ref{convex-positivity} the right hand side of (\ref{S6E79}) is nonnegative, and we deduce
\begin{equation*}
\label{S6E80}
J(t)\int_{[0,\infty)}\varphi_{\varepsilon}(x)G(t,x)d x \geq J(t_0)\int_{[0,\infty)}\varphi_{\varepsilon}(x)G(t_0,x)d x
\end{equation*}
for all $t\in (t_0, \xi (\tau _*))$ and all $\varepsilon >0$.  If we pass now to the limit as $\varepsilon \to 0$:
\begin{equation}
\label{S6E80}
J(t)G(t,\{0\}) \geq J(t_0)G(t_0,\{0\}),
\end{equation}
and this proves the estimate (\ref{CMI}). It  also follows from Lebesgue's Theorem that $J(t)G(t,\{0\})$ is differentiable for almost every $t\in (0, \xi (\tau _*))$ (cf. \cite{Roy},  Theorem 2). On the other hand, since $J(t)$ is a.e differentiable and $J(t)>0$ for all $t\in [0, \xi (\tau _*))$, we deduce that $G(t,\{0\})$ is also differentiable for almost every 
$t\in [0, \xi (\tau _*))$.

We prove now the  right continuity  of $G(t, \{0\})$. It follows from  (\ref{S6E80}),
\begin{equation*}
J(t+\delta)G(t+\delta,\{0\}) \geq J(t)G(t,\{0\}),\quad\forall \delta>0\,\,\forall t>0.
\end{equation*}
If we take inferior limits and use that $J$ is continuous and strictly positive we obtain,
\begin{equation}
\label{S6E578}
\liminf _{ \delta\to 0 }G(t+\delta,\{0\}) \geq G(t,\{0\}),\quad\forall t>0.
\end{equation}

Since $\mathcal L_0(\varphi_{\varepsilon})\ge 0$ by convexity (cf. Lemma \ref{convex-positivity}), we deduce 
\begin{align*}
\frac{d}{d t}&\int_{[0,\infty)}\varphi_{\varepsilon}(x)G(t,x)d x
\leq G(t,\{0\})\iint_{(0,\infty)^2}\frac{\Lambda(\varphi_{\varepsilon})(x,y)}{\sqrt{x y}}G(t,x)G(t,y)d xd y,
\end{align*}
and the argument  follows now as in the proof of the right continuity of $H$. 
From the inequality (\ref{ZAS}), the bound \eqref{lemma regularity 1} and the conservation of mass, we deduce for all  $t\in [0, \xi (\tau _*))$ fixed  and  $\delta\in[0, \xi (\tau _*)-t)$,
$$
G(t+\delta,\{0\})\leq\int_{[0,\infty)}\varphi_{\varepsilon}(x)G(t,x)d x
+\frac {2N^2\delta} {\varepsilon }\int_0^t G(s,\{0\})ds.
$$
If we take superior limits as $\delta\to 0$, and then let $\varepsilon \to 0$ we obtain, using  (\ref{S6L1E1}) with $G$ instead of $H$:
$$
\limsup _{ \delta\to 0 }G(t+\delta,\{0\})\leq G(t, \{0\}).
$$
and this combined with (\ref{S6E578}) proves that  $G(t, \{0\})$ is right continuous on $[0, \xi (\tau _*))$.
\end{proof}

In the next Lemma we prove that the function $G$ defined by (\ref{S6DG2}) is actually well defined for all $t>0$.
\begin{lemma}
\label{S6LG}
\begin{equation}
\lim _{ \tau \to \tau _*^- }\xi (\tau )=\infty.
\end{equation}
\end{lemma}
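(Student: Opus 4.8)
The plan is to split according to whether $\tau_*$ is finite or infinite, and in each case to produce an upper bound on $H(\tau,\{0\})$ that forces $\xi(\tau)=\int_0^\tau H(\sigma,\{0\})^{-1}\,d\sigma$ to diverge as $\tau\to\tau_*^-$ (recall that $\xi$ is nondecreasing on $[0,\tau_*)$, so this limit exists in $[0,\infty]$).

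First I would record two elementary facts. On $[0,\tau_*)$ we have $H(\tau,\{0\})>0$ by \eqref{TAU3}; writing $\tilde h(\tau)=h(\tau)-h(\tau,\{0\})\delta_0$, which is a nonnegative measure, we get $M_0(H(\tau))=H(\tau,\{0\})+M_0(\tilde h(\tau))\ge H(\tau,\{0\})$, so by \eqref{MMH}
\[
0<H(\tau,\{0\})\le N\qquad\forall\,\tau\in[0,\tau_*).
\]
Next, using \eqref{{DEFH}2}, $m(\tau):=h(\tau,\{0\})$ nondecreasing (Lemma \ref{S6L1'}), and H\"older's inequality together with \eqref{MMI}, \eqref{EE} (so that $M_{1/2}(h(\sigma))\le\sqrt{E}\bigl(\tfrac{\sqrt E}{2}\sigma+\sqrt N\bigr)$ is locally bounded), one has for $\tau\le\tau'<\tau_*$
\[
H(\tau',\{0\})-H(\tau,\{0\})=(m(\tau')-m(\tau))-\int_\tau^{\tau'}M_{1/2}(h(\sigma))\,d\sigma\ge-\int_\tau^{\tau'}M_{1/2}(h(\sigma))\,d\sigma.
\]

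In the case $\tau_*=\infty$ the bound $H(\sigma,\{0\})\le N$ already suffices: $\xi(\tau)\ge\tau/N\to\infty$. In the case $\tau_*<\infty$ I would first show $\lim_{\tau'\to\tau_*^-}H(\tau',\{0\})=0$; the inequality $\liminf\ge 0$ comes from \eqref{TAU3}, and $\limsup\le 0$ from \eqref{JIM} in Corollary \ref{S6L1} combined with $H(\tau_*,\{0\})=0$ from \eqref{TAU2}. Letting $\tau'\to\tau_*^-$ in the last display and bounding $M_{1/2}(h(\cdot))$ by a constant $K=K(N,E,\tau_*)$ on the compact interval $[0,\tau_*]$ gives $H(\tau,\{0\})\le K(\tau_*-\tau)$ for all $\tau\in[0,\tau_*)$. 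Hence, for any fixed $\tau_0\in(0,\tau_*)$,
\[
\xi(\tau)\ge\int_{\tau_0}^{\tau}\frac{d\sigma}{K(\tau_*-\sigma)}=\frac{1}{K}\bigl(\ln(\tau_*-\tau_0)-\ln(\tau_*-\tau)\bigr)\longrightarrow+\infty\quad\text{as }\tau\to\tau_*^-,
\]
which proves the Lemma.

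The computation is routine once the right quantity is controlled; the only point requiring a little care is the passage to the limit $\lim_{\tau'\to\tau_*^-}H(\tau',\{0\})=0$ via \eqref{JIM}, which is needed precisely because $H(\cdot,\{0\})$ is only known to be right continuous. Beyond that, the sole thing not to overlook is that the finite-$\tau_*$ argument produces the blow-up through the logarithm, whereas the $\tau_*=\infty$ case must be dispatched separately by the linear-in-$\tau$ lower bound $\xi(\tau)\ge\tau/N$.
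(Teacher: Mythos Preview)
Your proof is correct and takes a genuinely different route from the paper's. The paper argues by contradiction: assuming $\ell=\lim_{\tau\to\tau_*^-}\xi(\tau)<\infty$, it invokes the lower bound \eqref{CMI} from Proposition~\ref{origin G} (together with $M_{1/2}(g)\le\sqrt{NE}$) to show that $G(t,\{0\})=H(\tau,\{0\})$ is bounded below by a positive constant on $[0,\tau_*)$, and then uses \eqref{JIM} and \eqref{TAU2} to obtain a contradiction. You instead work directly with the $H$ side: from the decomposition \eqref{{DEFH}2} and the monotonicity of $m$ you extract the upper bound $H(\tau,\{0\})\le K(\tau_*-\tau)$ when $\tau_*<\infty$, and then integrate the resulting $1/(\tau_*-\tau)$ divergence. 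Your argument is slightly more self-contained in that it avoids Proposition~\ref{origin G} entirely, relying only on Lemma~\ref{S6L1'}, Corollary~\ref{S6L1}, Lemma~\ref{S6C1}, and the moment bounds for $h$. You also treat the case $\tau_*=\infty$ explicitly via $H(\tau,\{0\})\le N$, a point the paper's proof does not separate out (its use of \eqref{JIM} and \eqref{TAU2} tacitly assumes $\tau_*<\infty$). Both approaches are short; the paper's is a clean contradiction once \eqref{CMI} is available, while yours is a direct estimate that makes the mechanism of divergence (logarithmic blow-up of $\xi$) more transparent.
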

\begin{proof}
Since the function $\xi (\tau )$ is monotone  nondecreasing and continuous on $[0, \tau _*)$, its limit as $\tau \to \tau _*^-$ exists in $\overline{\RR}_+$. Let us call it $\ell$ and  suppose  $\ell\in \RR_+$. Now, from (\ref{CMI}) and the fact that $G$ satisfies :
$0\le M _{ 1/2 }(G(s))\le \sqrt {N E}$,
we deduce 
\begin{equation}
\label{S6C56E1}
\limsup _{t\to \ell^-  }G(t, \{0\})\ge e^{-\sqrt{NE} \ell}G(0, \{0\})>0,
\end{equation}
and by (\ref{JIM})
$$
H(\tau_*, \{0\})\ge \limsup _{\tau \to \tau _*^-  }H(\tau, \{0\})=\limsup _{ t\to \ell^- }G(t, \{0\})>0,
$$
and this contradicts (\ref{TAU2}). This proves that $\ell=\infty$.
\end{proof}

\begin{proof}[\upshape\bfseries{Proof of Theorem \ref{S1T1}}] 
 By Lemma \ref{S6LG} the function $G$ is defined for all $t>0$. As we have seen in the proof of Lemma \ref{S6LG}, $G(t)\in \mathscr M_+([0, \infty))$ for all $t>0$. It then follows from Proposition \ref{S6P3G1} that $G$ satisfies now all the conditions (\ref{S1ED2S})--(\ref{S1E16}) and (\ref{S1T1E0})--(\ref{S1E220}).
Property (\ref{S1E23}) follows from the corresponding estimate (\ref{MAh}) for $h$. Similarly, property (\ref{S5Ealphahh}) follows from the property (\ref{S5Ealpha }) of $h$.
We prove now the point (iv). Suppose then $\alpha \in (1, 3]$ and condition (\ref{PRO112}). For $\varphi(x)=x^{\alpha}$ we have,
$$
\mathscr{Q}_3^{(1)}(\varphi,G(t))=\left(\frac{\alpha-1}{\alpha+1}\right) M_{\alpha+\frac{1}{2}}(G(t)).
$$
On the other hand, for $0\le y\le x$,
$$
\Lambda(\varphi)(x,y)=x^{\alpha}\left(\big(1+z\big)^{\alpha}+\big(1-z\big)^{\alpha}-2\right),\quad z=\frac{y}{x}\in[0,1],
$$
If  $\alpha\in(1,2]$, for all $x\ge y >0$,
$$
\frac{\Lambda(\varphi)(x,y)}{\sqrt{xy}}\leq (2^{\alpha}-2) x^{\alpha-\frac{3}{2}}y^{\frac{1}{2}}\le  (2^{\alpha}-2) (xy)^{\frac{\alpha-1}{2}}.
$$
We deduce
$$
\mathscr{Q}_3^{(2)}(\varphi,G(t))\leq (2^{\alpha}-2) \Big( M_{\frac{\alpha-1}{2}}(G(t))\Big)^2.
$$
and obtain
\begin{align*}
\frac{d}{dt}M_{\alpha}(G(t))\leq G(t,\{0\})\left[C_{1,1}\Big(M_{\frac{\alpha-1}{2}}(G(t))\Big)^2-C_2M_{\alpha+\frac{1}{2}}(G(t))\right],
\end{align*}
where $C_{1,1}=2^{\alpha}-2$ and $C_2=(\alpha-1)/(\alpha+1).$
Using H\"{o}lder's inequality
\begin{align}
\label{CT98}
\frac{d}{dt}M_{\alpha}(G(t))\leq G(t,\{0\})\Big[C_{1,1}N^{3-\alpha}E^{\alpha-1}
-C_2E^{(2\alpha+1)/2}N^{(1-2\alpha)/2}\Big].
\end{align}
By (\ref{PRO112}), the right hand side of (\ref{CT98}) is negative, and then $M_{\alpha}(G(t))$ is decreasing on $(0,\infty)$.

For $\alpha\in[2,3]$ we use the estimate (\ref{MAQ}) with $C_{1,2}=\alpha(\alpha-1)$ instead of $C_{\alpha}$. Then we proceed as in the previous case to obtain
\begin{equation}
\label{CT99}
\frac{d}{dt}M_{\alpha}(G(t))\leq G(t,\{0\})\Big[C _{ 1,2 }N^{3-\alpha}E^{\alpha-1}
-C_2E^{(2\alpha+1)/2}N^{(1-2\alpha)/2}\Big].
\end{equation}
As before, (\ref{PRO112}) implies that the right hand side of (\ref{CT99}) is negative, and then $M_{\alpha}(G(t))$ is decreasing.
\end{proof}

\begin{proof}[\upshape\bfseries{Proof of Theorem \ref{S1Treg}}] 
By construction
\begin{align*}
G(t)=H(\tau)=h(\tau)-\left(\int_0^{\tau}M_{1/2}(h(\sigma))d\sigma\right)\delta_0,
\end{align*}
where $\tau$ and $t$ are related by
\begin{align}
\label{CHANGE}
t=\xi(\tau)=\int_0^{\tau}\frac{d\sigma}{H(\sigma,\{0\})};\qquad \tau=\xi^{-1}(t)=\int_0^t G(s,\{0\})ds.
\end{align}
Therefore $G(t,x)=h(\tau,x)$ for $x\in(0,\infty)$, and 
\begin{align*}
\int_0^{T}G(t,\{0\})\int_{(0,\infty)}x^{\alpha}G(t,x)dxdt
=\int_0^{\xi^{-1}(T)}\int_{(0,\infty)}x^{\alpha}h(\tau,x)dxd\tau.
\end{align*} 
The result  then follows from Proposition \ref{S5P2}.
\end{proof}

\begin{remark}
One could try  to directly solve the  system (\ref{PR}), (\ref{PR19}), written in $(g, n)$ variables. First, to obtain a sequence of solutions $(g_k, n_k)$ of an approximated system where the factor $x^{-1/2}$ is modified by truncation and regularization, and then pass to the limit. However, the limit obtained in that way, say $(g, n)$ is not a solution of (\ref{PR}), (\ref{PR19}). The reason is that all the solutions $g_k$ of the approximated system will be functions with a bounded moment of order $-1/2$. Then, the right hand side of the equation (\ref{S1E9}) is equal to $M _{ 1/2 }(g_k)$ and by passage to the limit the equation for $n$ will be $n'(t)=-n(t)M _{ 1/2 }(g(t))$, and  the total mass will not be conserved. 
\end{remark}

\section{Proofs of Theorems \ref{THn01}, \ref{MU1} and \ref{EQUIV}.}
\label{SectionK}
\setcounter{equation}{0}
\setcounter{theorem}{0}

We first prove Theorem \ref{THn01}. 

\begin{proof}[\upshape\bfseries{Proof of Theorem \ref{THn01} }]
We already know by Proposition \ref{origin G} and Lemma \ref{S6LG} that $n$ is right continuous and a.e. differentiable on $[0,\infty)$.
Then, by construction
\begin{align*}
G(t)=H(\tau)=h(\tau)-\left(\int_0^{\tau}M_{1/2}(h(\sigma))d\sigma\right)\delta_0,
\end{align*}
where $\tau\in[0,\tau^*)$ and $t\in[0,\infty)$ are related by (\ref{CHANGE}).
Hence
\begin{align}
\label{ZE2}
n(t)=m(\tau)-\int_0^{\tau}M_{1/2}(h(\sigma))d\sigma=m(\tau)-\int_0^t n(s)M_{1/2}(g(s))ds.
\end{align}
Since $n(0)=m(0)$, it then follows from Proposition \ref{Stieltjes1} that for all $t>0$:
\begin{align}
\label{ZEaa}
n(t)-n(0)+\int_0^t n(s)M_{1/2}(g(s))ds=\lambda((0,\tau]),
\end{align}
and using (\ref{CHANGE})
\begin{align}
\label{ZEbb}
\lambda((0,\tau])=\lim_{\varepsilon\to 0}\int_0^t n(s)\mathscr{Q}_3^{(2)}(\varphi_{\varepsilon},g(s))ds.
\end{align}
If we denote $\mu=\xi_{\#}\lambda$ (c.f. \cite{AMB}, Ch. 5), i.e., the push-forward  of $\lambda$ through the function $\xi:[0,\tau^*)\to[0,\infty)$ in (\ref{CHANGE}), then from the definition of $\mu$ we obtain
\begin{equation}
\label{Mul}
\mu((0,t])=\lambda((0,\tau])\qquad\forall t>0.
\end{equation}
Then (\ref{ZE02}) and  (\ref{ZE01}) follows from (\ref{ZEaa}), (\ref{ZEbb}) and (\ref{Mul}). Moreover, (\ref{ZE00}) follows from (\ref{Stieltjes0}) in Proposition \ref{Stieltjes1}. 
\end{proof}

The following properties of $n(t)$ follows by   the same arguments used in the proofs of properties (\ref{Stieltjes7}) and (\ref{Stieltjes2}) of Proposition \ref{Stieltjes1} 

\begin{proposition} 
Let $G$, $g$, and $n(t)$ be as in Theorem \ref{THn01}.  Then, for all $\varphi_{\varepsilon}$ as in Remark \ref{TEST}, the following limit exists 
in $\mathscr{D}'(0,\infty)$:
\begin{flalign}
\label{ZE04}
&&&\lim_{\varepsilon\to 0} n\mathscr{Q}_3^{(2)}(\varphi_{\varepsilon},g)=T(G),&&\\
\text{and}&&& 
\label{ZE05}
n'+nM_{1/2}(g)=T(G)\quad\text{in}\quad\mathscr{D}'(0,\infty).&&
\end{flalign}
\end{proposition}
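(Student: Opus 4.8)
The plan is to transcribe, at the level of the pair $(g,n)$, the integration‑by‑parts argument that produced (\ref{Stieltjes7}) and (\ref{Stieltjes2}) in Proposition \ref{Stieltjes1}. Fix $\varphi_\varepsilon$ as in Remark \ref{TEST} and set $\Theta_\varepsilon(t)=\int_{[0,\infty)}\varphi_\varepsilon(x)G(t,x)\,dx$. By (\ref{S1ED3S}) this function is locally Lipschitz on $[0,\infty)$, and by (\ref{S1E16}) together with (\ref{S1EB1}), using $\varphi_\varepsilon(0)=1$, it satisfies for a.e. $t>0$
$$\Theta_\varepsilon'(t)=n(t)\mathscr{Q}_3(\varphi_\varepsilon,g(t))=n(t)\mathscr{Q}_3^{(2)}(\varphi_\varepsilon,g(t))-n(t)\widetilde{\mathscr{Q}}_3^{(1)}(\varphi_\varepsilon,g(t))-n(t)M_{1/2}(g(t)).$$
Testing against an arbitrary $\phi\in C^\infty_c(0,\infty)$ and integrating by parts gives, for every $\varepsilon>0$,
$$-\int_0^\infty\phi'(t)\Theta_\varepsilon(t)\,dt=\int_0^\infty\phi\, n\,\mathscr{Q}_3^{(2)}(\varphi_\varepsilon,g)\,dt-\int_0^\infty\phi\, n\,\widetilde{\mathscr{Q}}_3^{(1)}(\varphi_\varepsilon,g)\,dt-\int_0^\infty\phi\, n\, M_{1/2}(g)\,dt.$$

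Next I would let $\varepsilon\to0$ in this identity. On the left, $\Theta_\varepsilon(t)\to G(t,\{0\})=n(t)$ pointwise by (\ref{TEST2}), with $|\Theta_\varepsilon(t)|\le M_0(G(t))=N$ on $\supp\phi$, so dominated convergence yields $-\int\phi'\Theta_\varepsilon\,dt\to-\int\phi' n\,dt=\langle n',\phi\rangle$. For the linear term, Lemma \ref{convergence lemma} gives $\widetilde{\mathscr{Q}}_3^{(1)}(\varphi_\varepsilon,g(t))\to0$ as $\varepsilon\to0$ together with a bound integrable against $|\phi|$ (which follows from $M_0(g(t))\le N$, $M_1(g(t))=E$ and H\"older's inequality, exactly as in the proof of Lemma \ref{S5L47}); since $n\le N$, the second term on the right tends to $0$. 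The third term does not depend on $\varepsilon$. Hence the first term on the right must also converge, and
$$\lim_{\varepsilon\to0}\int_0^\infty\phi\, n\,\mathscr{Q}_3^{(2)}(\varphi_\varepsilon,g)\,dt=\langle n',\phi\rangle+\int_0^\infty\phi\, n\, M_{1/2}(g)\,dt\qquad\forall\phi\in C^\infty_c(0,\infty).$$

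To conclude, I would note that for each fixed $\varepsilon$ the map $t\mapsto n(t)\mathscr{Q}_3^{(2)}(\varphi_\varepsilon,g(t))$ lies in $L^\infty_{loc}(0,\infty)$ — use $n\le N$, the positivity $\mathscr{Q}_3^{(2)}(\varphi_\varepsilon,g)\ge0$ from Lemma \ref{convex-positivity}, and the bound $\mathscr{Q}_3^{(2)}(\varphi_\varepsilon,g(t))\le2\varepsilon^{-1}M_0(g(t))^2$ coming from $\mathds{1}_{\{0\}}\le\varphi_\varepsilon$ and (\ref{lemma regularity 1}) — so each $n\mathscr{Q}_3^{(2)}(\varphi_\varepsilon,g)$ is a distribution on $(0,\infty)$. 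The displayed convergence then says precisely that these distributions converge in $\mathscr{D}'(0,\infty)$ to the functional $T(G):\phi\mapsto\langle n',\phi\rangle+\int_0^\infty\phi\, n\, M_{1/2}(g)\,dt$, which is itself a distribution because $n'\in\mathscr{D}'(0,\infty)$ and $nM_{1/2}(g)\in L^1_{loc}(0,\infty)$ (again by H\"older). This gives (\ref{ZE04}); rearranging its defining identity gives $n'+nM_{1/2}(g)=T(G)$ in $\mathscr{D}'(0,\infty)$, i.e. (\ref{ZE05}); and the limit is automatically independent of the chosen $\varphi_\varepsilon$ since its value was identified without reference to $\varphi$. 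As in Proposition \ref{Stieltjes1}, the only delicate point is the two passages to the limit inside the time integrals, which are supplied by Lemma \ref{convergence lemma} and the conservation laws (\ref{S1E210})--(\ref{S1E220}), so there is no genuine obstacle beyond this bookkeeping. (Alternatively, one may differentiate the identity (\ref{ZE02}) of Theorem \ref{THn01} in $\mathscr{D}'(0,\infty)$ and identify the distributional derivative of $t\mapsto\mu((0,t])$ with $\lim_{\varepsilon\to0}n\mathscr{Q}_3^{(2)}(\varphi_\varepsilon,g)$ via the Fubini computation at the end of the proof of Proposition \ref{Stieltjes1}.)
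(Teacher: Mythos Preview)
Your proof is correct and follows essentially the same route as the paper: define $\eta_\varepsilon(t)=\int_{[0,\infty)}\varphi_\varepsilon\,dG(t)$, write the weak equation as $\eta_\varepsilon'=n\mathscr{Q}_3(\varphi_\varepsilon,g)$, integrate by parts against $\phi\in C_c^\infty(0,\infty)$, and pass to the limit via Lemma \ref{convergence lemma}. The only cosmetic difference is that you decompose $\mathscr{Q}_3(\varphi_\varepsilon,g)$ as $\mathscr{Q}_3^{(2)}-\widetilde{\mathscr{Q}}_3^{(1)}-M_{1/2}(g)$ (using (\ref{S1EB1})--(\ref{S1EB2})), whereas the paper uses $\mathscr{Q}_3^{(2)}-\mathscr{Q}_3^{(1)}$ and then (\ref{limQ31a}); these are equivalent since $\mathscr{Q}_3^{(1)}(\varphi_\varepsilon,g)=\widetilde{\mathscr{Q}}_3^{(1)}(\varphi_\varepsilon,g)+M_{1/2}(g)$ when $\varphi_\varepsilon(0)=1$.
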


\begin{proof}
Consider, for all  $\varphi_{\varepsilon}$ as in Remark \ref{TEST}, the absolutely continuous functions 
\begin{align}
\eta_{\varepsilon}(t)=\int_{[0,\infty)}\varphi_{\varepsilon}(x)G(t,x)dx.
\end{align}
Then equation (\ref{S1E16}) becomes $
\eta_{\varepsilon}'=n\mathscr{Q}_3(\varphi_{\varepsilon},g).$
Using integration by parts,
\begin{align*}
-\int_0^{\infty}\phi'(t)\eta_{\varepsilon}(t)dt=\int_0^{\infty}\phi(t)n(t)\mathscr{Q}_3(\varphi_{\varepsilon},g(t))dt
\quad\forall \phi\in C^{\infty}_c(0,\infty).
\end{align*}
Taking the limit $\varepsilon\to 0$ we deduce, using Lemma \ref{convergence lemma}, that
\begin{align*}
-\int_0^{\infty}\phi'(t)n(t)dt=&\lim_{\varepsilon\to 0}\int_0^{\infty}\phi(t)n(t)\mathscr{Q}_3^{(2)}(\varphi_{\varepsilon},g(t))dt\\
&-\int_0^{\infty}\phi(t)n(t)M_{1/2}(g(t))dt,
\end{align*}
and then (\ref{ZE04}), (\ref{ZE05}) follows. 
\end{proof}

\begin{remark}
If we take distributional derivatives in both sides of  (\ref{ZE02}) we obtain:
$$
n'+n M _{ 1/2 }(g)=\mu\quad\text{in}\quad\mathscr{D}'(0,\infty),$$
and by (\ref{ZE05}), $\mu =T(G)$.
\end{remark}

\begin{proof}[\upshape\bfseries{Proof of Theorem  \ref{MU1}}]
The statement of the Theorem follows from (\ref{Stieltjes0}) in Proposition \ref{Stieltjes1} and (\ref{Mul}).
\end{proof}

\begin{proof}[\upshape\bfseries{Proof of Theorem \ref{EQUIV}}]
Proof of part (i).
By Theorem \ref{THn01}, $n$ is given by (\ref{ZE02}) and (\ref{ZE01}). On the other hand, since $G$ satisfies (\ref{S1E16}), and for all
$\varphi\in C^1_b([0,\infty))$ such that $\varphi(0)=0$:
\begin{align}
\label{G=g}
\int_{[0,\infty)}\varphi(x)G(t,x)dx=\int_{[0,\infty)}\varphi(x)g(t,x)dx,
\end{align}
then $g$ satisfies (\ref{2E765}). In order to prove part (ii) we first show the existence of the limit in (\ref{ZE01}). To this end we write $\varphi_\varepsilon  = (1-\psi _\varepsilon )$, where  $\psi _\varepsilon$ is as in Remark \ref{TEST}. Then $\varphi _\varepsilon (0)=0$, and by (\ref{2E765}) and (\ref{S1EB1}),
using that $\mathscr{Q}_3(1-\psi_{\varepsilon},g)=\mathscr{Q}_3(1,g)-\mathscr{Q}_3(\psi_{\varepsilon},g)$, and $\mathscr{Q}_3(1,g)=0$, we deduce
\begin{align}
\label{Bht}
\int _0^tn(s)\widetilde{\mathscr Q}_3(\psi _\varepsilon,g(s))ds&=\int  _{ (0, \infty) }\varphi _\varepsilon (x)\left(g(0, x)-g(t, x)\right) dx\nonumber \\
&+\int _0^tn(s)M _{ 1/2 }(g(s))ds.
\end{align}
The existence of the limit in (\ref{ZE01}) follows and, if we pass to the limit,
\begin{align}
\lim _{ \varepsilon \to 0 }\int _0^tn(s) \mathscr Q_3^{(2)}(\psi _\varepsilon,g(s))ds&=\int  _{ (0, \infty) }(g(0, x)-g(t, x))dx\nonumber\\
&+\int _0^tn(s)M _{ 1/2 }(g(s))ds. \label{Bhtw}
\end{align}
We now check that, if $n$ satisfies the equation (\ref{ene1}) then  $G$ satisfies equation (\ref{S1E16}) for a.e. $t>0$ and for every $\varphi \in C_b^{1}([0, \infty))$. 
If $\varphi (0)=0$ this follows from (\ref{2E765}) and (\ref{G=g}).

For $\varphi (0)\not= 0$ we may assume without loss of generality that $\varphi (0)=1$, and write $\varphi =(\varphi -\psi _\varepsilon )+\psi _\varepsilon $, where 
$\psi _\varepsilon$ is as in Remark \ref{TEST}. 
Since  $(\varphi -\psi _\varepsilon )(0)=0$, using (\ref{2E765}) and  (\ref{S1ED3S})
\begin{align}
\label{S5P10E1'}
\int_{[0,\infty)}(\varphi-\psi_{\varepsilon})(x)g(t,x)dx&=\int_{[0,\infty)}(\varphi-\psi_{\varepsilon})(x)g(0,x)dx\nonumber\\
&+\int_0^t n(s)\widetilde{\mathscr{Q}}_3((\varphi-\psi_{\varepsilon}),g(s))ds.
\end{align}
In order to pas to the limit as $\varepsilon\to 0$, we first use
$\widetilde {\mathscr{Q}}_3((\varphi -\psi _\varepsilon ),g)=\widetilde {\mathscr{Q}}_3(\varphi,g)-\widetilde{\mathscr{Q}}_3(\psi_{\varepsilon},g).$
Then, since for all $t\geq 0$
\begin{align}
\label{gzero}
\lim_{\varepsilon\to 0}\int_{[0,\infty)}\psi_{\varepsilon}(x)g(t,x)dx=0,
\end{align}
and $n$ satisfies (\ref{ene1}), we deduce from (\ref{S5P10E1'}) and Lemma \ref{convergence lemma}:
\begin{align*}
\int_{[0,\infty)}\varphi(x)g(t,x)dx=&\int_{[0,\infty)}\varphi(x)g(0,x)dx+\int_0^t n(s)\widetilde{\mathscr{Q}}_3(\varphi,g(s))ds\\
&+n(0)-n(t)-\int_0^t n(s)M_{1/2}(g(s))ds.
\end{align*}
Since $\widetilde {\mathscr{Q}}_3(\varphi,G)-M_{1/2}(g)=\mathscr Q _3(\varphi,G)$,
it follows that $G$ satisfies 
\begin{align*}
\int_{[0,\infty)}\varphi(x)G(t,x)dx=\int_{[0,\infty)}\varphi(x)G(0,x)dx+\int_0^t n(s)\mathscr{Q}_3(\varphi,g(s))ds,
\end{align*}
thus (\ref{S1E16}) holds for a.e. $t>0$.

In order to check that $G$ satisfies (\ref{S1ED2S}) we first use (\ref{S1E16})  with $\varphi =1\in C_b^1([0, \infty))$. For that choice of $\varphi $ we have $\Lambda(\varphi)=\mathcal L_0(\varphi )\equiv 0$
and then:
$$
\int  _{ [0, \infty) }G(t, x)dx=\int  _{ [0, \infty) }G_0(x)dx.
$$
Because:
$$
\int  _{ [0, \infty) }x\,G(t, x)dx=\int  _{ [0, \infty) }x\,g(t, x)dx,
$$
$G$ satisfies (\ref{S1ED2S}) since by hypothesis so does $g$.
\end{proof}

\begin{remark} If $G$ is a weak radial solution of (\ref{PA}), (\ref{PB}), we know by  Theorem \ref{EQUIV} that  $g$ satisfies (\ref{2E765}). It is straightforward to check that it also satisfies,
\begin{equation*}
\frac{d}{dt}\int_{(0,\infty)}\varphi(x)g(t,x)dx=n(t)\widetilde{\mathscr{Q}} _3(\varphi,g(t))-\varphi (0)\frac{d}{dt}\mu((0,t]), \label{LE02'}
\end{equation*}
where $\mu$ is as in Theorem \ref{THn01}, and $\widetilde{\mathscr{Q}}_3$ is defined in (\ref{S1EB2})--(\ref{S1E21R}).
\end{remark}

\begin{proof}[\upshape\bfseries{Proof of Corollary \ref{S1C31}}]  If we prove that $n$ satisfies  (\ref{ene1}), the conclusion of the Corollary will follow from part (ii) of Theorem \ref{EQUIV}. 
By the hypothesis and part (ii) of Theorem \ref{EQUIV}, the limit in (\ref{ZE01}) exists, and (\ref{Bhtw}) holds, that we write:
\begin{align*}
\lim _{ \varepsilon \to 0 }\int _0^tn(s) \mathscr Q_3^{(2)}(\psi _\varepsilon,g(s))ds-\int _0^tn(s)M _{1/2}(g(s))ds=\\
=\int  _{ [0, \infty) }(G(0, x)-G(t, x))dx+n(t)-n(0).
\end{align*}
Using  the conservation of mass (\ref{MBC}) it follows that $n$ satisfies equation (\ref{ene1}).
\end{proof}

\begin{proposition}
\label{LM-1/2}
Let $G\in\mathscr{M}_+([0,\infty))$. If $G$ has no atoms on $(0,\infty)$ and $\int_{(0,\infty)}\frac{G(x)}{\sqrt{x}}dx<\infty$, 
then, for all $\varphi _\varepsilon $ as in Remarrk \ref{TEST},
\begin{equation*}
\mathscr{T}(G)=\lim_{\varepsilon\to 0}  \mathscr Q_3^{(2)}(\varphi_{\varepsilon},G)=0.
\end{equation*}
\end{proposition}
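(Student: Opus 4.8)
The plan is to prove $\mathscr{T}(G)=\lim_{\varepsilon\to 0}\mathscr{Q}_3^{(2)}(\varphi_\varepsilon,G)=0$ by applying the dominated convergence theorem to
\[
\mathscr{Q}_3^{(2)}(\varphi_\varepsilon,G)=\iint_{(0,\infty)^2}\frac{\Lambda(\varphi_\varepsilon)(x,y)}{\sqrt{xy}}\,G(x)G(y)\,dx\,dy
\]
(cf. (\ref{S1E1Q32}), (\ref{S1E154})), viewed as an integral against the product measure $G\otimes G$ on $(0,\infty)^2$. The two hypotheses on $G$ will be used in exactly the two places the argument needs them: $\int_{(0,\infty)}x^{-1/2}G(x)\,dx<\infty$ supplies an $\varepsilon$-independent dominating function, and the absence of atoms of $G$ on $(0,\infty)$ ensures that the integrand converges to $0$ pointwise $G\otimes G$-a.e.

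First I would record elementary properties of $\varphi$ and $\varphi_\varepsilon$: a nonnegative convex function on $[0,\infty)$ with $\varphi(0)=1$ and $\lim_{x\to\infty}\sqrt x\,\varphi(x)=0$ is necessarily nonincreasing with $0\le\varphi\le 1$, and $\varphi(x)\to 0$ as $x\to\infty$; hence the same holds for each $\varphi_\varepsilon(x)=\varphi(x/\varepsilon)$. Then, for $0<y<x$, convexity of $\varphi_\varepsilon$ applied with $x$ the midpoint of $x-y$ and $x+y$ gives $\Lambda(\varphi_\varepsilon)(x,y)=\varphi_\varepsilon(x+y)+\varphi_\varepsilon(x-y)-2\varphi_\varepsilon(x)\ge 0$, while $0\le\varphi_\varepsilon\le 1$ gives $\Lambda(\varphi_\varepsilon)(x,y)\le 2$; by the symmetry of $\Lambda$ in its two variables this yields
\[
0\le\frac{\Lambda(\varphi_\varepsilon)(x,y)}{\sqrt{xy}}\le\frac{2}{\sqrt{xy}}\qquad\text{for all }(x,y)\in(0,\infty)^2,\ x\neq y,\ \varepsilon>0 .
\]
Since, by Tonelli, $\iint_{(0,\infty)^2}\tfrac{2}{\sqrt{xy}}\,G(x)G(y)\,dx\,dy=2\big(\int_{(0,\infty)}x^{-1/2}G(x)\,dx\big)^2<\infty$, the $\varepsilon$-independent function $2(xy)^{-1/2}$ dominates the integrands in $L^1\big((0,\infty)^2,G\otimes G\big)$; in particular each $\mathscr{Q}_3^{(2)}(\varphi_\varepsilon,G)$ is well defined and finite.

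Next I would check the pointwise limit $G\otimes G$-a.e. The diagonal $\{(x,x):x>0\}$ has $G\otimes G$-measure zero because $G$ has no atoms on $(0,\infty)$. Off the diagonal, for $x,y>0$ with $x\neq y$ all three numbers $(x+y)/\varepsilon$, $|x-y|/\varepsilon$, $x/\varepsilon$ tend to $+\infty$ as $\varepsilon\to 0$, so $\Lambda(\varphi_\varepsilon)(x,y)=\varphi\!\big(\tfrac{x+y}{\varepsilon}\big)+\varphi\!\big(\tfrac{|x-y|}{\varepsilon}\big)-2\varphi\!\big(\tfrac{x}{\varepsilon}\big)\to 0$, hence $\Lambda(\varphi_\varepsilon)(x,y)/\sqrt{xy}\to 0$. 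The dominated convergence theorem then gives $\lim_{\varepsilon\to 0}\mathscr{Q}_3^{(2)}(\varphi_\varepsilon,G)=0$, which is the claim.

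The computation is essentially routine; the one point deserving care, and what I regard as the real content, is the recognition that the no-atom hypothesis is not a technical convenience but precisely what makes the argument work: on the diagonal $\Lambda(\varphi_\varepsilon)$ does \emph{not} vanish in the limit (indeed $\Lambda(\varphi_\varepsilon)(x,x)=\varphi(2x/\varepsilon)+1-2\varphi(x/\varepsilon)\to 1$), so control of the diagonal is indispensable; and, dually, it is the finiteness of $\int_{(0,\infty)}x^{-1/2}G(x)\,dx$ rather than merely of $M_0(G)$ that furnishes the dominating function.
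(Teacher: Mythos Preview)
Your proof is correct and follows essentially the same approach as the paper: dominated convergence with $(xy)^{-1/2}$ (up to a constant) as the dominating function, justified by the finiteness of $M_{-1/2}(G)$, together with the observation that the diagonal carries no $G\otimes G$-mass when $G$ has no atoms on $(0,\infty)$. The only cosmetic difference is that the paper first identifies the pointwise limit as $\mathds{1}_{\{x=y>0\}}$ and then computes the resulting diagonal integral to be zero via Fubini, whereas you dispose of the diagonal first and then apply DCT directly to a limit that is $0$ $G\otimes G$-a.e.; also note the minor slip in your last display, where $-2\varphi(x/\varepsilon)$ should read $-2\varphi(\max\{x,y\}/\varepsilon)$ for general $x\neq y$, though this does not affect the conclusion.
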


\begin{proof}
By definition 
$$
\mathscr{T}(G)=\lim_{\varepsilon\to 0}\iint_{(0,\infty)^2}\frac{\Lambda(\varphi_{\varepsilon})(x,y)}{\sqrt{xy}}G(x)G(y)dxdy,
$$
Since $\Lambda(\varphi_{\varepsilon})\leq 1$ for all $\varepsilon>0$ and 
\begin{align*}
\lim _{ \varepsilon \to 0 }\Lambda(\varphi_{\varepsilon})(x,y)=\mathds{1}_{\{x=y>0\}}(x,y)\qquad\forall (x,y)\in(0,\infty)^2,
\end{align*}
and $\int_{(0,\infty)}\frac{G(x)}{\sqrt{x}}dx<\infty$, then by dominated convergence
$$
\mathscr{T}(G)=\iint_{\{x=y>0\}}\frac{G(x)G(y)}{\sqrt{xy}}dxdy.
$$
Since $G$ has no atoms on $(0,\infty)$, i.e., $G(\{x\})=0$ for all $x>0$, by Fubini's theorem
\begin{align*}
\iint_{\{x=y>0\}}\frac{G(x)G(y)}{\sqrt{xy}}dxdy
&=\int_{(0,\infty)}\frac{G(x)}{x}G(\{x\})dx=0.
\end{align*}
\end{proof}

\begin{remark}
\label{HH}
From Proposition \ref{LM-1/2}, if  $M_{-1/2}(g)<\infty$ and $g$ has no atoms, then $\mu((0,t])=0$ for all $t>0$. 
If  $g\in L^1(0, \infty)$ and $x=0$  is a Lebesgue point of $g$ then $\mathscr{T}(g)=0$
(cf. \cite{Nouri1}) and again $\mu((0,t])=0$ for all $t>0$. 
If $g(x)=x^{-1/2}$, then $\mathscr{T}(g)=\pi^2/6$,
(cf. \cite{Lu3}), and a similar result holds if 
$\lim _{ x\to 0 }\sqrt x g(x)=C>0$ (cf. \cite{Spohn}). In that case,  $\mu((0,t])=\pi ^2/6\int _0^t n(s)ds$.
\end{remark}

\section{Proof of Theorem \ref{S1T5}}
\label{SectionD}
\setcounter{equation}{0}
\setcounter{theorem}{0}

\begin{proof}
By (\ref{CT98})  and (\ref{CT99}), we deduce that for all $t>t_0>0$:
\begin{align}
&\int _{ t_0 }^t G(s,\{0\})ds\leq \big(M_{\alpha}(G(t_0))-M_{\alpha}(G(t))\big)C(N,E,\alpha) \nonumber \\
\label{PRO116}
&C(N,E,\alpha)=\left[\left(\frac{\alpha-1}{\alpha+1}\right)E^{(2\alpha+1)/2}N^{(1-2\alpha)/2}-C_{1}N^{3-\alpha}E^{\alpha-1}\right]^{-1},
\end{align}
where $C_{1}=2^{\alpha}-2$ for $\alpha\in(1,2]$ and $C_1=\alpha(\alpha-1)$ for $\alpha\in[2,3]$.
Since by part (i), $0\leq M_{\alpha}(G(t_0))-M{\alpha}(G(t))\leq M_{\alpha}(G(t_0))$ for every $t>t_0$, we immediately deduce (\ref{S1ET5B}).

We prove now (\ref{S1ET5C}). 
Since, as we have seen in (\ref{S6E80}), the function $n(t)J(t)$ is monotone nondecreasing, from where, for all $t>0$ and $s\in (0, t)$:
\begin{equation*}
n(t)\ge e^{-\int _s^tM _{ 1/2 }(g(r))dr}n(s).
\end{equation*}
As we have $M _{ 1/2 }(g(r))\le \sqrt{NE}$ for all $r\ge 0$,
\begin{equation}
\label{S80E20}
n(t)\ge e^{-\sqrt {NE}(t-s)}n(s).
\end{equation}
By (\ref{S1ET5B}) we already have a sequence of times $\theta_k$ such that $\theta_k \to \infty $ and  $n(\theta_k)\to 0$ as $k\to \infty$.
Suppose that there exists, for some $\rho >0$, an increasing sequence of times $(s_k) _{ k\in \NN }$ such that $s_k\to\infty$ as $k\to\infty$ and :
\begin{align*}
\forall k,\;n(s_k)\ge \rho\quad\hbox{and}\quad s _{ k+1 }-s_k>\frac {\log 2} {\sqrt {NE}}.
\end{align*} 
Then, if we denote $t_k=s_k+\frac {\log 2} {\sqrt {NE}}$,
we deduce from (\ref{S80E20}) that for all $t\in (s_k, t_k)$:
\bean
n(t)\ge e^{-\sqrt {NE}(t-s_k)}n(s_k)\ge e^{-\sqrt {NE}(t_k-s_k)}\rho =\frac {\rho } {2}.
\eean
This would imply
\begin{equation*}
\int _0^\infty n(t)dt\ge \sum_{ k=0 } ^\infty \int  _{ s_k }^{t_k}n(t)dt=\infty,
\end{equation*}
and this contradiction proves (\ref{S1ET5C}).
\end{proof}

\section{Appendix}
\label{Appendix}
\setcounter{equation}{0}
\setcounter{theorem}{0}
We have gathered in this Section several results that are important and useful, but not directly related to the main results. For the sake of clarity, we present them in two different Sub Sections. In the first one, we find results that are used all along the manuscript, perhaps several times. In the second, we present results that are needed in Section \ref{model}. 

\subsection{A1}

\begin{lemma}[Convex-positivity]
\label{convex-positivity}
Let $\varphi\in C([0,\infty))$. If $\varphi$ is convex then $\Lambda(\varphi)(x,y)\geq 0$ for all $(x,y)\in[0,\infty)^2$ and $\mathcal{L}_0(\varphi)(x)\geq 0$ for all $x\in[0,\infty)$. If $\varphi$ is nonnegative and nonincreasing, then $\mathcal{L}(\varphi)(x)\leq 0$ for all $x\in[0,\infty)$.
\end{lemma}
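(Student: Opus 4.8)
The plan is to verify each of the three claimed inequalities directly from the definitions \eqref{S1E154}, \eqref{S1E155} and \eqref{S1E21R}, using nothing more than the basic consequences of convexity and monotonicity of a one-variable function.

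First I would handle $\Lambda(\varphi)(x,y)\geq 0$. By symmetry in $x,y$ I may assume $x\geq y\geq 0$, so that $\max\{x,y\}=x$, $|x-y|=x-y$, and $\Lambda(\varphi)(x,y)=\varphi(x+y)+\varphi(x-y)-2\varphi(x)$. Writing $x=\tfrac12\big((x+y)+(x-y)\big)$, this is exactly $\varphi\!\left(\tfrac{(x+y)+(x-y)}{2}\right)\le \tfrac12\varphi(x+y)+\tfrac12\varphi(x-y)$ rearranged, which is the midpoint-convexity inequality; hence $\Lambda(\varphi)(x,y)\geq 0$. (Since $\varphi\in C([0,\infty))$ is assumed convex, midpoint convexity is all that is needed, and $x\pm y\in[0,\infty)$ so we stay in the domain.)

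Next, $\mathcal{L}_0(\varphi)(x)=x\big(\varphi(0)+\varphi(x)\big)-2\int_0^x\varphi(y)\,dy\geq 0$. The quickest route is to substitute $y=xt$, $t\in[0,1]$, so that $2\int_0^x\varphi(y)\,dy = 2x\int_0^1\varphi(xt)\,dt$, and the claim becomes $\int_0^1\varphi(xt)\,dt\leq \tfrac12\big(\varphi(0)+\varphi(x)\big)$. By convexity, $\varphi(xt)=\varphi\big((1-t)\cdot 0+t\cdot x\big)\leq (1-t)\varphi(0)+t\varphi(x)$ for $t\in[0,1]$; integrating this pointwise bound over $t\in[0,1]$ gives $\int_0^1\varphi(xt)\,dt\leq \tfrac12\varphi(0)+\tfrac12\varphi(x)$, as required. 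So $\mathcal{L}_0(\varphi)(x)\geq 0$ for all $x\geq 0$.

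Finally, if $\varphi$ is nonnegative and nonincreasing, then $\mathcal{L}(\varphi)(x)=x\varphi(x)-2\int_0^x\varphi(y)\,dy\leq 0$: indeed $\varphi(y)\geq \varphi(x)$ for all $y\in[0,x]$ since $\varphi$ is nonincreasing, hence $\int_0^x\varphi(y)\,dy\geq x\varphi(x)$, and therefore $x\varphi(x)-2\int_0^x\varphi(y)\,dy \leq x\varphi(x)-2x\varphi(x)=-x\varphi(x)\leq 0$ using $\varphi\geq 0$. (Nonnegativity is only used for the last step; in fact $\mathcal{L}(\varphi)\le -x\varphi(x)$.) None of these steps presents a genuine obstacle — the only point requiring a little care is to keep the arguments of $\varphi$ inside $[0,\infty)$ throughout, which holds automatically in each case; the "hard part" is merely choosing the cleanest parametrization so the convexity inequality can be integrated termwise.
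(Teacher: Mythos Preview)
Your proof is correct and follows essentially the same route as the paper: the midpoint-convexity argument for $\Lambda(\varphi)\ge 0$, the convex-combination bound $\varphi(y)\le (1-y/x)\varphi(0)+(y/x)\varphi(x)$ integrated over $[0,x]$ for $\mathcal{L}_0(\varphi)\ge 0$, and the monotonicity bound $\int_0^x\varphi\ge x\varphi(x)$ giving $\mathcal{L}(\varphi)\le -x\varphi(x)\le 0$. The only cosmetic difference is your substitution $y=xt$ in the second step, which amounts to the same inequality the paper writes in the $y$ variable.
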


\begin{proof}
Since $\Lambda(\varphi)(x,y)$ is symmetric we may reduce the proof to the case $0\leq y\leq x$. Putting
$x=\frac{x+y}{2}+\frac{x-y}{2},$
then by the very definition of convexity
$$\varphi(x)\leq\frac{\varphi(x+y)}{2}+\frac{\varphi(x-y)}{2},$$
therefore $\Lambda(\varphi)(x,y)\geq 0$.

The positivity of $\mathcal{L}_0(\varphi)$ is equivalent to prove 
\begin{align}
\label{positivity weak L}
\frac{1}{x}\int_0^x\varphi(y)d y\leq\frac{\varphi(0)+\varphi(x)}{2}\qquad\forall x\in[0,\infty).
\end{align}
Since for any $0\leq y\leq x$ we may trivially write $y=\left(1-\frac{y}{x}\right)\, 0+\frac{y}{x}\, x$, then by convexity 
$\varphi(y)\leq \left(1-\frac{y}{x}\right)\varphi(0)+\frac{y}{x}\varphi(x)$, which implies
\eqref{positivity weak L}.\\
If $\varphi$ is nonnegative and nonincreasing, then  $\mathcal{L}(\varphi)(x)\leq -x\,\varphi(x)\leq 0$ for all\ $x\in[0,\infty).$
\end{proof}

\begin{remark}
\label{concave-negativity}
By linearity and Lemma \ref{convex-positivity}, it follows that for all $\varphi\in C([0,\infty))$ concave,
$\Lambda(\varphi)(x,y)\leq 0$ for all $(x,y)\in[0,\infty)^2$ and $\mathcal{L}_0(\varphi)(x)\leq 0$ for all $x\in[0,\infty)$.
\end{remark}

\begin{lemma}
\label{lemma regularity}
Consider the operators $\Lambda(\cdot)$, $\mathcal{L}_0(\cdot)$ and $\mathcal{L}(\cdot)$ given in (\ref{S1E154}), (\ref{S1E155}) and (\ref{S1E21R}) respectively. Then
\begin{enumerate}[(i)]
\item
If $\varphi\in\emph{Lip}([0,\infty))$ with Lipschitz constant $L$, then  
\begin{align}
\label{lemma regularity 1}
\frac{|\Lambda(\varphi)(x,y)|}{\sqrt{xy}}\leq 2L\qquad\forall (x,y)\in[0,\infty)^2.
\end{align}
\item
If $\varphi\in C^1([0,\infty))$, then the map
$(x,y)\mapsto\frac{\Lambda(\varphi)(x,y)}{\sqrt{xy}}$ belongs to $C([0,\infty)^2)$ and 
\begin{align}
\label{lemma regularity 2}
\frac{\Lambda(\varphi)(x,y)}{\sqrt{xy}}=0\qquad\forall (x,y)\in\partial[0,\infty)^2.
\end{align}
\item 
If $\varphi\in C([0,\infty))$ then the maps $x\mapsto\frac{\mathcal{L}_0(\varphi)(x)}{\sqrt{x}}$ and  
 $x\mapsto\frac{\mathcal{L}(\varphi)(x)}{\sqrt{x}}$ belong to $C([0,\infty))$ and 
$\frac{\mathcal{L}_0(\varphi)(x)}{\sqrt{x}}= \frac{\mathcal{L}(\varphi)(x)}{\sqrt{x}}=0$ at $x=0$. If in addition 
$\varphi$ is bounded, then
\begin{align}
\frac{|\mathcal{L}_0(\varphi)(x)|}{\sqrt{x}}\leq 4\|\varphi\|_{\infty}\sqrt{x}\qquad\forall x\in[0,\infty), \label{lemma regularity 4}\\
\frac{|\mathcal{L}(\varphi)(x)|}{\sqrt{x}}\leq 3\|\varphi\|_{\infty}\sqrt{x}\qquad\forall x\in[0,\infty). \label{lemma regularity 4B}
\end{align}
\end{enumerate}
\end{lemma}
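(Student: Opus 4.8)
The plan is to establish the three items one at a time; each reduces, after using the symmetry $\Lambda(\varphi)(x,y)=\Lambda(\varphi)(y,x)$, to an elementary one–variable estimate, so the only place I anticipate any care is the behaviour of the quotient $\Lambda(\varphi)/\sqrt{xy}$ near the corner of the quadrant in item (ii). \textbf{For part (i)} I may assume $0\le y\le x$, so $\max\{x,y\}=x$, $|x-y|=x-y$, and $\Lambda(\varphi)(x,y)=\big(\varphi(x+y)-\varphi(x)\big)-\big(\varphi(x)-\varphi(x-y)\big)$. Each of the two differences is bounded by $Ly$, whence $|\Lambda(\varphi)(x,y)|\le 2Ly$; since $0\le y\le x$ gives $y\le\sqrt{xy}$, I get $|\Lambda(\varphi)(x,y)|/\sqrt{xy}\le 2L$ on $\{xy>0\}$, and on $\partial[0,\infty)^2$ the numerator vanishes so the quotient — read as the continuous extension supplied by part (ii) — is $0$. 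This is (\ref{lemma regularity 1}).

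\textbf{For part (ii)} I note that on $(0,\infty)^2$ the map $(x,y)\mapsto\Lambda(\varphi)(x,y)/\sqrt{xy}$ is continuous, being a quotient of continuous functions with nonvanishing denominator, and the point is to extend it continuously by $0$ to the boundary. Again taking $0\le y\le x$ and applying the mean value theorem to the two differences above, I obtain $\Lambda(\varphi)(x,y)=y\big(\varphi'(\xi_1)-\varphi'(\xi_2)\big)$ for some $\xi_1\in(x,x+y)$ and $\xi_2\in(x-y,x)$, hence, for $y>0$,
\[
\frac{\Lambda(\varphi)(x,y)}{\sqrt{xy}}=\sqrt{\frac{y}{x}}\,\big(\varphi'(\xi_1)-\varphi'(\xi_2)\big),\qquad \sqrt{\frac{y}{x}}\le 1 .
\]
If a sequence in $(0,\infty)^2$ tends to a boundary point — which by symmetry I may take of the form $(x_0,0)$ — then $\min\{x_n,y_n\}\to 0$; whichever argument is the smaller, the corresponding intermediate points lie in an interval shrinking to $\{x_0\}$ (to $\{0\}$ if $x_0=0$, in which case they lie in $\big(0,2\max\{x_n,y_n\}\big)$), so by continuity of $\varphi'$ the factor $\varphi'(\xi_1^n)-\varphi'(\xi_2^n)$ tends to $0$ while the $\sqrt{y_n/x_n}$ (or $\sqrt{x_n/y_n}$) factor stays $\le 1$; thus the quotient tends to $0$. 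This proves that the quotient is in $C([0,\infty)^2)$ and vanishes on $\partial[0,\infty)^2$, i.e.\ (\ref{lemma regularity 2}). The corner $x_0=0$, where the $\sqrt{y/x}$ factor no longer provides smallness, is the only mildly delicate case, and it is covered by the remark just made.

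\textbf{For part (iii)}, on $(0,\infty)$ both $\mathcal{L}_0(\varphi)$ and $\mathcal{L}(\varphi)$ are continuous — sums of products of continuous functions with the continuous primitive $x\mapsto\int_0^x\varphi$ — and dividing by $\sqrt{x}>0$ keeps continuity there. For the value at $0$ I use the crude bounds
\[
|\mathcal{L}_0(\varphi)(x)|\le x|\varphi(0)|+x|\varphi(x)|+2\!\int_0^x\!|\varphi(y)|\,dy\le 4x\sup_{[0,x]}|\varphi|,\qquad |\mathcal{L}(\varphi)(x)|\le x|\varphi(x)|+2\!\int_0^x\!|\varphi(y)|\,dy\le 3x\sup_{[0,x]}|\varphi| ,
\]
so that $|\mathcal{L}_0(\varphi)(x)|/\sqrt{x}\le 4\sqrt{x}\,\sup_{[0,x]}|\varphi|$ and $|\mathcal{L}(\varphi)(x)|/\sqrt{x}\le 3\sqrt{x}\,\sup_{[0,x]}|\varphi|$; since $\sup_{[0,x]}|\varphi|$ stays bounded as $x\to 0^+$ (continuity of $\varphi$), both quotients tend to $0$, hence extend continuously to $[0,\infty)$ with value $0$ at the origin. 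When in addition $\varphi$ is bounded on $[0,\infty)$, replacing $\sup_{[0,x]}|\varphi|$ by $\|\varphi\|_\infty$ in the last two displays gives exactly (\ref{lemma regularity 4}) and (\ref{lemma regularity 4B}), which completes the plan.
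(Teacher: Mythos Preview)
Your proof is correct and follows essentially the same route as the paper: in (i) both use symmetry and the Lipschitz bound to get $|\Lambda(\varphi)|\le 2Ly$ for $y\le x$; in (ii) both use the mean value theorem to write $\Lambda(\varphi)(x,y)=y(\varphi'(\xi_1)-\varphi'(\xi_2))$ and conclude by continuity of $\varphi'$; in (iii) both bound the numerator by a multiple of $x$. The only minor difference is that in (iii) the paper invokes $\tfrac{1}{x}\int_0^x\varphi\to\varphi(0)$ (Lebesgue differentiation / fundamental theorem of calculus) to get continuity at $0$, whereas you use the direct bound $|\mathcal{L}_0(\varphi)(x)|\le 4x\sup_{[0,x]}|\varphi|$, which is slightly more elementary and yields the $\|\varphi\|_\infty$ estimates simultaneously.
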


\begin{proof}
(i) By the symmetry of $\Lambda(\varphi)$ we can assume that $0\leq y\leq x$, and directly from the Lipschitz continuity
\begin{align*}
|\Lambda(\varphi)(x,y)|&\leq |\varphi(x+y)-\varphi(x)|+|\varphi(x-y)-\varphi(x)|\leq 2L\,y,
\end{align*}
which implies \eqref{lemma regularity 1}.\\
(ii) The only possible problem for the continuity is on the boundary of $[0,\infty)^2$.
Again by the symmetry of $\Lambda(\varphi)$ we can assume $0\leq y\leq x$. Then by the mean value theorem
$\Lambda(\varphi)(x,y)=y\,(\varphi'(\xi_1)-\varphi'(\xi_2))$ for some $\xi_1\in(x,x+y)$ and $\xi_2\in(x-y,x)$. 
Hence
\begin{align*}
\frac{\Lambda(\varphi)(x,y)}{\sqrt{xy}}\leq \varphi'(\xi_1)-\varphi'(\xi_2),
\end{align*}
and the continuity of $\frac{\Lambda(\varphi)(x,y)}{\sqrt{xy}}$ on $[0,\infty)^2$ and \eqref{lemma regularity 2} follow from the continuity of $\varphi'$.\\
(iii) The continuity of $\frac{\mathcal{L}_0(\varphi)(x)}{\sqrt{x}}$  and $\frac{\mathcal{L}(\varphi)(x)}{\sqrt{x}}$ are clear for $x>0$.  Using that $\frac{1}{x}\int_0^x \varphi(y)d y\rightarrow \varphi(0)$ as $x\rightarrow 0$ by Lebesgue differentiation Theorem, it follows the continuity at $x=0$ and that $\frac{\mathcal{L}_0(\varphi)(x)}{\sqrt{x}}=\frac{\mathcal{L}(\varphi)(x)}{\sqrt{x}}=0$ for $x=0$. The bounds (\ref{lemma regularity 4}) and (\ref{lemma regularity 4B})   are straightforward  for $\varphi\in C_b([0,\infty))$.
\end{proof}

\begin{lemma}
\label{regularised operators converge uniformly}
Consider the operators $\Lambda(\cdot)$ and $\mathcal{L}_0(\cdot)$ given in (\ref{S1E154}) and (\ref{S1E155}), and a sequence 
$(\phi_n)_{n\in\NN}\subset C_c([0,\infty))$ as in Cutoff \ref{cut-off}.
\begin{enumerate}[(i)]
\item
If $\varphi\in C^1([0,\infty))$ then $\Lambda(\varphi)(x,y)\phi_n(x)\phi_n(y)\xrightarrow[n\rightarrow\infty]{}\frac{\Lambda(\varphi)(x,y)}{\sqrt{xy}}$ uniformly on the compact sets of $[0,\infty)^2$.
\item
If $\varphi\in C([0,\infty))$ then $\mathcal{L}(\varphi)(x)\phi_n(x)\xrightarrow[n\rightarrow\infty]{}\frac{\mathcal{L}(\varphi)(x)}{\sqrt{x}}$
uniformly on the compact sets of $[0,\infty)$.
\end{enumerate}
\end{lemma}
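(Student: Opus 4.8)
The point to keep in mind is that $\phi_n(x)\to x^{-1/2}$ holds only pointwise on $(0,\infty)$ and can never be uniform near the origin, since the limit is unbounded there; hence one cannot prove the statement by estimating $\phi_n-x^{-1/2}$ directly. The plan is instead to exploit the fact, already contained in Lemma \ref{lemma regularity}, that the maps $\frac{\Lambda(\varphi)(x,y)}{\sqrt{xy}}$ and $\frac{\mathcal{L}(\varphi)(x)}{\sqrt{x}}$ extend continuously to $[0,\infty)^2$, resp. $[0,\infty)$, and vanish on the axes: this vanishing will absorb the bad behaviour of $\phi_n$ near $x=0$ or $y=0$. Throughout I would set $\psi_n(x):=\sqrt{x}\,\phi_n(x)$; by Cutoff \ref{cut-off} one has $0\le\psi_n\le 1$ on $[0,\infty)$ and $\psi_n\equiv 1$ on $(\tfrac{1}{n},n)$. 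Since any compact subset of $[0,\infty)^2$ (resp. $[0,\infty)$) is contained in some square $[0,R]^2$ (resp. interval $[0,R]$), it suffices to prove uniform convergence on such sets.

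\textbf{Part (i).} Put $\Psi(x,y):=\Lambda(\varphi)(x,y)/\sqrt{xy}$, which by Lemma \ref{lemma regularity}(ii) (this is where $\varphi\in C^1$ enters) is continuous on $[0,\infty)^2$ and vanishes on $\partial[0,\infty)^2$; interpreting the limit function in the statement as this continuous extension, the identity
\begin{align*}
\Lambda(\varphi)(x,y)\phi_n(x)\phi_n(y)=\Psi(x,y)\,\psi_n(x)\,\psi_n(y)
\end{align*}
gives $\bigl|\Lambda(\varphi)(x,y)\phi_n(x)\phi_n(y)-\Psi(x,y)\bigr|=|\Psi(x,y)|\,(1-\psi_n(x)\psi_n(y))\le|\Psi(x,y)|$. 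Now fix $R>0$ and $\varepsilon>0$. By uniform continuity of $\Psi$ on $[0,R]^2$ together with $\Psi\equiv 0$ on the two edges of that square lying on the coordinate axes, there is $\delta\in(0,R)$ with $|\Psi(x,y)|<\varepsilon$ whenever $\min\{x,y\}\le\delta$. On $\{\min\{x,y\}\le\delta\}\cap[0,R]^2$ the difference above is then $<\varepsilon$; on the complementary region $\{x>\delta,\ y>\delta\}\cap[0,R]^2$, as soon as $n>\max\{1/\delta,R\}$ we have $x,y\in(\tfrac{1}{n},n)$, so $\psi_n(x)=\psi_n(y)=1$ and the difference is $0$. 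Hence the supremum over $[0,R]^2$ is $\le\varepsilon$ for all large $n$, which is the claimed uniform convergence.

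\textbf{Part (ii).} The same scheme applies with $\ell(x):=\mathcal{L}(\varphi)(x)/\sqrt{x}$ in place of $\Psi$; here only $\varphi\in C([0,\infty))$ is needed because Lemma \ref{lemma regularity}(iii) already gives $\ell\in C([0,\infty))$ with $\ell(0)=0$. From $\mathcal{L}(\varphi)(x)\phi_n(x)=\ell(x)\psi_n(x)$ one obtains $|\mathcal{L}(\varphi)(x)\phi_n(x)-\ell(x)|=|\ell(x)|(1-\psi_n(x))\le|\ell(x)|$; choosing $\delta\in(0,R)$ with $|\ell(x)|<\varepsilon$ on $[0,\delta]$, this difference is $<\varepsilon$ on $[0,\delta]$ and equals $0$ on $[\delta,R]$ once $n>\max\{1/\delta,R\}$, giving uniform convergence on $[0,R]$.

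\textbf{Main obstacle.} There is no deep difficulty: the single idea is the factorisation $|\Psi|\,(1-\psi_n(x)\psi_n(y))$ (resp. $|\ell|\,(1-\psi_n)$) together with the splitting of the compact set into a boundary layer near the axes, where the continuous factor is small by Lemma \ref{lemma regularity}, and its complement, where $\psi_n\equiv 1$ for $n$ large. The only point requiring a little care is recording that the right-hand side of the statement must be read as the continuous extension supplied by Lemma \ref{lemma regularity}, so that ``uniform convergence on compact sets'' is meaningful up to the axes; beyond this I anticipate only routine bookkeeping.
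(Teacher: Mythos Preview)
Your proof is correct and follows essentially the same approach as the paper's own argument: both split the compact set into a thin layer near the coordinate axes, where the continuous extension $\Psi=\Lambda(\varphi)/\sqrt{xy}$ (resp.\ $\ell=\mathcal{L}(\varphi)/\sqrt{x}$) is uniformly small by Lemma \ref{lemma regularity}, and an interior region where $\phi_n$ already equals $x^{-1/2}$ for $n$ large. Your factorisation via $\psi_n=\sqrt{x}\,\phi_n$ is a slightly cleaner bookkeeping device than the paper's direct estimate, but the substance is the same.
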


\begin{proof}
(i) The pointwise convergence on $[0,\infty)^2$ is trivial since $\phi_n(x)\rightarrow x^{-1/2}$ as $n\rightarrow\infty$. Then, let $\varepsilon>0$ and $R>0$. For $n\geq R$ there holds $\phi_n(x)=x^{-1/2}$ for all 
$x\in[1/n,R]$, so we only need to show the uniform convergence on the regions $(x,y)\in[0,R]\times[0,1/n]$ and $(x,y)\in[0,1/n]\times[0,R]$. 
By the symmetry of $\Lambda(\varphi)$, we may study only one region. 

Using that $\frac{\Lambda(\varphi)(x,y)}{\sqrt{xy}}$ is continuous (hence uniformly continuous on compacts) and vanishes when  
$(x,y)\in\partial[0,\infty)^2$
(c.f. Lemma \ref{lemma regularity}), there holds for all $(x,y)\in[0,R]\times [0,1/n]$ that, for $n$ large enough,
\begin{align*}
\left|\frac{\Lambda(\varphi)(x,y)}{\sqrt{xy}}-\Lambda(\varphi)(x,y)\phi_n(x)\phi_n(y)\right|\leq\frac{|\Lambda(\varphi)(x,y)|}{\sqrt{xy}}\leq \varepsilon
\end{align*}

(ii) Let $\varepsilon>0$ and $R>0$. Since for $n\geq R$ there holds $\phi_n(x)=x^{-1/2}$ for all $x\in[1/n,R]$, we only need to prove the uniform convergence on the region $[0,1/n]$. Using that $\frac{\mathcal{L}(\varphi)(x)}{\sqrt{x}}$ is continuous (hence uniformly continuous on compacts) and vanishes when $x\rightarrow 0$ (cf. Lemma \ref{lemma regularity}), we have
\begin{align*}
\left|\frac{\mathcal{L}(\varphi)(x)}{\sqrt{x}}-\mathcal{L}(\varphi)(x)\phi_n(x)\right|\leq \frac{|\mathcal{L}(\varphi)(x)|}{\sqrt{x}}\leq\varepsilon
\qquad\forall x\in[0,1/n]
\end{align*}
for $n$ large enough.
\end{proof}

The following Lemma is about the approximation of a measure by functions, keeping the mass and the energy constants. It is taken from \cite{Lu1} with  minor modifications.
\begin{lemma}
\label{APROXDATA}
Let $h_0\in \mathscr{M}_+^{\alpha}([0,\infty))$ for some $\alpha\geq 1$.
Then there exists a sequence of functions $(f_n) _{n\in\NN}\subset C([0,\infty))\cap L^1\big(\RR_+,(1+x^{\alpha})dx\big)$ with $f_n>0$ such that
\begin{align}
\label{GROWTHA}
&\forall \varphi \in C([0,\infty)):\quad\sup_{x\geq 0}\frac{|\varphi(x)|}{1+x^{\alpha}}<\infty,\\
\label{APROXDATA1}
&\lim _{ n\to \infty  }\int_0^{\infty}\varphi(x)f_n(x)dx=\int_{[0,\infty)}\varphi(x)h_0(x)dx.
\end{align}
Moreover, if $M_1(h_0)>0$, then for all $n\in\NN$:
\begin{align}
\label{APROXDATA2}
M_0(f_n)=M_0(h_0)\qquad\text{and}\qquad M_1(f_n)=M_1(h_0).
\end{align}
\end{lemma}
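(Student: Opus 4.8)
The plan is to construct $f_n$ from $h_0$ by a one--sided mollification, adjusted afterwards by a two--parameter transformation so that the mass $M_0$ and the energy $M_1$ are restored \emph{exactly}. First, if $M_1(h_0)=0$ then $h_0=M_0(h_0)\,\delta_0$ and an explicit choice works (a scaled mollifier plus an infinitesimal exponential tail), with no constraint coming from (\ref{APROXDATA2}); so I assume from now on that $E:=M_1(h_0)>0$, which forces also $N:=M_0(h_0)>0$. Fix $\rho\in C_c^\infty(\RR)$ with $\rho\ge 0$, $\int_{\RR}\rho=1$ and $\supp\rho\subset[0,2]$, put $\rho_n(x)=n\rho(nx)$ (supported in $[0,2/n]$), and set
\[
\tilde g_n:=h_0\ast\rho_n+\tfrac1n\,\omega,\qquad \omega(x)=e^{-x}.
\]
Because $\rho_n$ is supported on the right of the origin, no mass is pushed to $\{x<0\}$; one checks that $\tilde g_n\in C^\infty([0,\infty))\cap L^1(\RR_+,(1+x^\alpha)\,dx)$, that $\tilde g_n>0$ everywhere, and, by Fubini, that $M_0(\tilde g_n)=N+\tfrac1n$, $M_1(\tilde g_n)=E+O(\tfrac1n)$, and $\int_R^\infty x^\alpha\tilde g_n\,dx\le\int_{[R-2,\infty)}(y+2)^\alpha\,h_0(dy)+\tfrac1n\int_R^\infty x^\alpha e^{-x}\,dx$, the last bound tending to $0$ as $R\to\infty$ uniformly in $n$.

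Next, $\tilde g_n\,dx$ converges narrowly to $h_0$ (the usual property of mollifying a finite measure), and together with the uniform tightness of $\{x^\alpha\tilde g_n\,dx\}$ just noted this improves to $\int_0^\infty\varphi\,\tilde g_n\,dx\to\int_{[0,\infty)}\varphi\,dh_0$ for every $\varphi$ satisfying the growth condition (\ref{GROWTHA}); in particular $M_0(\tilde g_n)\to N$ and $M_1(\tilde g_n)\to E$. I then use a dilation and a multiplicative constant --- exactly two free parameters --- to pin the two moments: writing $A_n=M_0(\tilde g_n)$, $B_n=M_1(\tilde g_n)$, I set $\lambda_n=EA_n/(NB_n)\to1$, $c_n=N^2B_n/(EA_n^2)\to1$ and
\[
f_n(x):=c_n\,\tilde g_n\!\big(x/\lambda_n\big).
\]
Then $f_n\in C^\infty([0,\infty))\cap L^1(\RR_+,(1+x^\alpha)\,dx)$, $f_n>0$ (since $c_n>0$ and $\tilde g_n>0$), and the scaling bookkeeping gives $M_0(f_n)=c_n\lambda_nA_n=N=M_0(h_0)$ and $M_1(f_n)=c_n\lambda_n^2B_n=E=M_1(h_0)$, which is (\ref{APROXDATA2}). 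For (\ref{APROXDATA1}) I write $\int_0^\infty\varphi f_n\,dx=c_n\lambda_n\int_0^\infty\varphi(\lambda_n u)\tilde g_n(u)\,du$ and split $\varphi(\lambda_n u)=\varphi(u)+\big(\varphi(\lambda_n u)-\varphi(u)\big)$: the contribution of $\varphi(u)$ converges by the previous step, while the remainder tends to $0$ by uniform continuity of $\varphi$ on compact sets, the growth bound on $\varphi$, and the uniform $\alpha$--moment tail estimate for $\tilde g_n$, using also $c_n\lambda_n\to1$.

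The delicate point --- and the reason for the two--step construction --- is to reconcile the \emph{strict} positivity required of $f_n$ with the \emph{exact} conservation of both $M_0$ and $M_1$: a single additive correction cannot fix two moments without spoiling the convergence, while any fixed additive correction would make $f_n$ negative on the set where $h_0\ast\rho_n$ vanishes. Adding the infinitesimal, everywhere positive term $\tfrac1n e^{-x}$ \emph{before} the positivity--preserving transformation $\tilde g_n\mapsto c_n\tilde g_n(\cdot/\lambda_n)$ removes this obstruction at a cost of $o(1)$ in all the relevant functionals, and the dilation/scaling then restores the two moments exactly. A secondary technical point is the passage to the limit against the \emph{unbounded} test functions allowed by (\ref{GROWTHA}); this is what makes the one--sided mollifier (retaining all of $h_0$'s mass in $[0,\infty)$ and yielding clean uniform control of the $\alpha$--moment tails) the convenient choice.
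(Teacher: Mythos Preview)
Your argument is correct and complete. The construction by one--sided mollification, followed by an additive $\tfrac{1}{n}e^{-x}$ to force strict positivity and then a dilation/rescaling $(c_n,\lambda_n)$ to restore $M_0$ and $M_1$ exactly, is a clean way to decouple the three requirements (approximation against test functions of growth $1+x^\alpha$, strict positivity, and exact conservation of two moments). The uniform $\alpha$--moment tail estimate for $\tilde g_n$ that you record, together with $\lambda_n,c_n\to 1$, is exactly what is needed to pass to the limit against unbounded $\varphi$ after the rescaling.

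The paper takes a different, one--shot route: it defines directly
\[
f_n(x)=e^n\int_0^{x/(1-e^{-n})}J_{a,b}\big(e^n(x-y(1-e^{-n}))\big)\,h_0(dy),\qquad J_{a,b}(z)=ae^{-bz^2},
\]
and then tunes the two parameters $a,b$ of the Gaussian so that $\int_0^\infty J_{a,b}=1$ (which gives $M_0(f_n)=M_0(h_0)$ identically) and so that the explicit error term in $M_1(f_n)$ vanishes (which gives $M_1(f_n)=M_1(h_0)$ identically). The Gaussian's everywhere positivity replaces your additive $\tfrac{1}{n}e^{-x}$. The trade--off: the paper's formula is more economical (no post--hoc correction step), while your approach is more modular and transparent, separating approximation, positivity, and moment matching into independent steps. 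Your method also has the minor advantage that strict positivity $f_n>0$ is manifest on all of $[0,\infty)$ regardless of where $h_0$ is supported, whereas in the paper's construction $f_n(x)$ involves only $h_0$ restricted to $[0,x/(1-e^{-n})]$, so strict positivity at small $x$ relies on $h_0$ having mass near the origin.
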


\begin{proof}
For $a>0$ and $b>0$, let
\begin{align*}
J_{a,b}(x)=a e^{-bx^2},\qquad(x\geq 0)
\end{align*}
and let, for $n\in\NN$,
\begin{align*}
f_n(x)=e^n\int_0^{\frac{x}{1-e^{-n}}}J_{a,b}\left(e^n\big(x-y(1-e^{-n})\big)\right)h(y)dy,\qquad(x\geq0).
\end{align*}
Since $J_{a,b}$ is bounded and $M_0(h_0)<\infty$, then $f_n$ is well defined. The continuity and the strict positivity of $J_{a,b}$, together with $M_0(h)>0$, implies that $f_n$ is continuous and $f_n>0$.
Now for any $\varphi\in C([0,\infty))$ satisfying (\ref{GROWTHA}), using Fubini and the change of variables $z=e^n\big(x-y(1-e^{-n})\big)$:
\begin{align}
\label{APROXDATA3}
&\int_0^{\infty}\varphi(x)f_n(x)dx=\int_{[0,\infty)}I_n(\varphi)(y)h_0(y)dy,\\
\label{APROXDATA33}
&I_n(\varphi)(y)=\int_0^{\infty}\varphi\big(ze^{-n}+y(1-e^{-n})\big)J_{a,b}(z)dz.
\end{align}
Since by (\ref{GROWTHA}):
\begin{align*}
\big|\varphi\big(ze^{-n}+y(1-e^{-n})\big)\big|&\leq C\Big(1+\big(ze^{-n}+y(1-e^{-n})\big)^{\alpha}\Big)\\
&\leq C2^{\alpha}\big(1+y^{\alpha}+z^{\alpha}\big)\\
&\leq C2^{\alpha}\big(1+y^{\alpha}\big) \big(1+z^{\alpha}\big),
\end{align*}
and $h_0\in\mathscr{M}_+^{\alpha}([0,\infty))$, we deduce from (\ref{APROXDATA3})-(\ref{APROXDATA33}) that 
$f_n\in L^1\big(\RR_+,(1+x^{\alpha})\big)$. 
We also deduce using dominated convergence that
\begin{align}
&\lim _{ n\to \infty }I_n(\varphi)(y)=\varphi(y),\,\,\forall y\geq 0,\nonumber \\
\label{APROXDATA4}
&\lim _{ n\to \infty  }\int_{[0,\infty)}I_n(\varphi)(y)h_0(y)dy=\bigg(\int_0^{\infty} J_{a,b}(z)dz\bigg)\!\!\int_{[0,\infty)}\varphi(y)h_0(y)dy.
\end{align}
We now fix $a>0$ so that $\int_0^{\infty}J_{a,b}(z)dz=1$. Namely, $a=2\sqrt{b/\pi}$. Then (\ref{APROXDATA1}) follows from (\ref{APROXDATA3}) and (\ref{APROXDATA4}). 

If we chose $\varphi=1$ in (\ref{APROXDATA3})-(\ref{APROXDATA33}), the first part of (\ref{APROXDATA2}) follows.
If we chose now $\varphi(y)=y$ then
\begin{align*}
M_1(f_n)=M_1(h)+e^{-n}\left(\frac{M_0(h_0)}{\sqrt{b\pi}}-M_1(h_0)\right).
\end{align*}
We now fix $b=\pi^{-1}(M_0(h_0)/M_1(h_0))^2$ to obtain the second part of (\ref{APROXDATA2}).
\end{proof}

\begin{corollary}
\label{APD1}
Let $h_0\in\mathscr{M}^{\alpha} _+([0,\infty))$ for some $\alpha \ge 1$. Then, there exists a sequence of nonnegative functions 
$(h_{0,n})_{n\in\NN}\subset C_c([0,\infty))$ such that
\begin{align}
\label{APD56}
\limsup_{n\to\infty}M_{\alpha}(h_{0,n})\leq M_{\alpha}(h_0),
\end{align}
and for all $\varphi\in C_b([0,\infty))$
\begin{align}
\label{APD3}
&\lim _{ n\to \infty }\int_0^{\infty}\varphi(x)h_{0,n}(x)dx=\int_{[0,\infty)}\varphi(x)h_0(x)dx.
\end{align}

\end{corollary}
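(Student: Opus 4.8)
The plan is to obtain the sequence $(h_{0,n})_{n\in\NN}$ by simply truncating the approximating functions produced by Lemma~\ref{APROXDATA}. First I would apply Lemma~\ref{APROXDATA} with the given $\alpha\ge 1$ to get a sequence $(f_n)_{n\in\NN}\subset C([0,\infty))\cap L^1\big(\RR_+,(1+x^{\alpha})\dd x\big)$ with $f_n>0$ satisfying (\ref{APROXDATA1}) for every $\varphi\in C([0,\infty))$ of growth at most $x^{\alpha}$. Taking $\varphi(x)=x^{\alpha}$ in (\ref{APROXDATA1}) gives $M_{\alpha}(f_n)\to M_{\alpha}(h_0)$, and in particular $C_\ast:=\sup_{n}M_{\alpha}(f_n)<\infty$, a bound I would record for later use.

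Next I would regularise by truncation: for each $n$ fix a continuous cutoff $\chi_n\in C_c([0,\infty))$ with $0\le\chi_n\le 1$, $\chi_n\equiv 1$ on $[0,n]$ and $\chi_n\equiv 0$ on $[n+1,\infty)$, and set $h_{0,n}:=f_n\chi_n$. Then $h_{0,n}\ge 0$ and $h_{0,n}\in C_c([0,\infty))$, being the product of a continuous function with a continuous function of compact support. Since $0\le\chi_n\le 1$ we have $M_{\alpha}(h_{0,n})\le M_{\alpha}(f_n)$, whence $\limsup_{n\to\infty}M_{\alpha}(h_{0,n})\le\lim_{n\to\infty}M_{\alpha}(f_n)=M_{\alpha}(h_0)$, which is exactly (\ref{APD56}). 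For (\ref{APD3}), given $\varphi\in C_b([0,\infty))$ I would split
\[
\Bigl|\int_0^{\infty}\varphi h_{0,n}\dd x-\int_{[0,\infty)}\varphi\,\dd h_0\Bigr|\le\|\varphi\|_{\infty}\int_{(n,\infty)}f_n\dd x+\Bigl|\int_0^{\infty}\varphi f_n\dd x-\int_{[0,\infty)}\varphi\,\dd h_0\Bigr|,
\]
and note that $1-\chi_n\le\mathds{1}_{(n,\infty)}$ justifies the first term of the bound. The second term tends to $0$ by Lemma~\ref{APROXDATA}, because a bounded $\varphi$ trivially satisfies the growth condition (\ref{GROWTHA}); and since $x^{\alpha}>n^{\alpha}$ for $x>n$, the first term is bounded by $\|\varphi\|_{\infty}\,n^{-\alpha}M_{\alpha}(f_n)\le\|\varphi\|_{\infty}C_\ast\,n^{-\alpha}$, which tends to $0$ because $\alpha\ge 1>0$. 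This yields (\ref{APD3}) and finishes the argument.

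There is no genuine obstacle here: the statement is essentially Lemma~\ref{APROXDATA} followed by a tail cutoff. The one point deserving a little care is that the tail mass $\int_{(n,\infty)}f_n\dd x$ must be controlled \emph{uniformly enough} in $n$ to survive the truncation — which is precisely why I would extract the uniform moment bound $\sup_n M_{\alpha}(f_n)=C_\ast<\infty$ before introducing the cutoffs $\chi_n$. Everything else is routine bookkeeping with the dominated convergence already done inside Lemma~\ref{APROXDATA}.
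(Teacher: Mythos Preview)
Your proof is correct and follows essentially the same approach as the paper: apply Lemma~\ref{APROXDATA}, multiply by a continuous cutoff supported on $[0,n+1]$, and control the tail via a moment bound. The only cosmetic difference is that you estimate the tail using $M_{\alpha}(f_n)$ while the paper uses $M_1(f_n)$; both are uniformly bounded in $n$ by (\ref{APROXDATA1}), so either works.
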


\begin{proof}
We consider the sequence $(f_n)$ given by Lemma \ref{APROXDATA} and a smooth cutoff $\zeta_n\in C([0,\infty))$ such that $0\leq\zeta_n\leq 1$, $\zeta_n(x)=1$ for $x\in[0,n]$ and $\zeta_n(x)=0$ for $x\geq n+1$. Then we define for all $n\in\NN$:
\begin{align}
\label{APD33}
h_{0,n}(x)=f_n(x)\zeta_n(x)
\end{align}
It then follows that $h_{0,n}$ is continuous, nonnegative and with compact support. 
The property (\ref{APD56}) follows directly from (\ref{APROXDATA1}) in Lemma \ref{APROXDATA} since $h_{0,n}\leq f_n$.
Now let $\varphi\in C_b([0,\infty))$. Since $f_n$ satisfies (\ref{APROXDATA1}), in order to prove (\ref{APD3}) it is sufficient to prove
\begin{align}
\label{APD4}
\lim _{ n\to \infty }\bigg|\int_0^{\infty}\varphi(x)h_{0,n}(x)dx-\int_0^{\infty}\varphi(x)f_n(x)(x)dx\bigg|=0,
\end{align}
and (\ref{APD4}) follows from
\begin{align*}
\lim_{n\to\infty}\int_n^{\infty}\varphi(x)f_n(x)dx\leq\lim_{n\to\infty} \frac{\|\varphi\|_{\infty}M_1(f_n)}{n}=0.
\end{align*}
\end{proof}

\begin{definition}
\label{definition operators strong}
Let $h$, $\phi_n$ and $\varphi$ be real-valued functions with domain $\RR_+$. Then, let
\begin{align}
\widetilde{\mathscr{Q}}_{3,n}(\varphi,h)=\mathscr{Q}_{3,n}^{(2)}(\varphi,h)-\widetilde{\mathscr{Q}}_{3,n}^{(1)}(\varphi,h), \label{Aq3tilden}
\end{align}
where
\begin{align}
\label{Q3n2}
&\mathscr{Q}_{3,n}^{(2)}(\varphi,h)=\int_0^{\infty}\!\!\!\int_0^{\infty} \Lambda(\varphi) (x, y)\phi_n(x)\phi_n(y)h(x)h(y)dxdy,\\
\label{Q3n1w}
&\widetilde{\mathscr{Q}}_{3,n}^{(1)}(\varphi,h)=\int_0^{\infty}\mathcal {L}(\varphi )(x)\phi_n(x)h(x)dx,
\end{align}
and let, for $x\in\RR_+$:
\begin{align}
\label{A1E32}
J_{3,n}(h)(x)&=K_n(h)(x)+L_n(h)(x)-h(x)A_n(h)(x),
\end{align} 
where
\begin{align}
K_n(h)(x)&=\int_0^x h(x-y)h(y)\phi_n(x-y)\phi_n(y)d y \nonumber\\
&+2\int_x^{\infty} h(y)h(y-x)\phi_n(y)\phi_n(y-x)d y,\label{A1E33}\\
L_n(h)(x)&=2\int_x^{\infty}h(y)\phi_n(y)d y,\label{A1E34}\\
A_n(h)(x)&=\phi_n(x)\Big(x+4\int_0^x h(y)\phi_n(y)d y\Big).\label{A1E35}
\end{align}
\end{definition}

\begin{lemma}
\label{convergence lemma}
Let $G\in\mathscr{M}_+([0,\infty))$, $\varphi_{\varepsilon}$ as in Remark \ref{TEST}, and $\phi_n$ as in Cutoff \ref{cut-off}. Then
\begin{align}
&G(\{0\})=\lim_{\varepsilon\to 0}\int_{[0,\infty)}\varphi_{\varepsilon}(x)G(x)dx,\label{convergence 1}\\
&\lim_{\varepsilon\to 0}\widetilde{\mathscr{Q}}_{3,n}^{(1)}(\varphi_{\varepsilon},G)=0\qquad\forall n\in\NN.\label{convergence 3}
\end{align}
If in addition $G$ has no singular part in $(0,\infty)$, then
\begin{align}
\lim_{\varepsilon\to 0}\mathscr{Q}_{3,n}^{(2)}(\varphi_{\varepsilon},G)=0\qquad\forall n\in\NN.\label{convergence 4}
\end{align}
Furthermore, if $G\in\mathscr{M}_+^{1/2}([0,\infty))$, then
\begin{align}
\label{limQ31a}
&\lim_{\varepsilon\to 0}\mathscr{Q}_3^{(1)}(\varphi_{\varepsilon},G)=M_{1/2}(G),\\
\label{limQ31b}
&\lim_{\varepsilon\to 0}\widetilde{\mathscr{Q}}_3^{(1)}(\varphi_{\varepsilon},G)=0,
\end{align}
where $\mathscr{Q}_3^{(1)}$ and $\widetilde{\mathscr{Q}}_3^{(1)}$ are defined in (\ref{S1E1Q31}) and (\ref{S1E20R}) respectively.
\end{lemma}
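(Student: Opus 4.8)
The plan is to prove each of the four convergence statements separately, all as consequences of dominated/monotone convergence arguments applied to the explicit kernels $\Lambda(\varphi)$, $\mathcal L(\varphi)$, $\mathcal L_0(\varphi)$ and their regularized counterparts, together with the pointwise limits of $\varphi_\varepsilon$ and $\Lambda(\varphi_\varepsilon)$.

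\textbf{Step 1: the atom formula \eqref{convergence 1}.} This is exactly \eqref{TEST2} in Remark \ref{TEST}, and I would simply record it: since $\varphi_\varepsilon$ is nonnegative with $0\le\varphi_\varepsilon\le 1$, $\varphi_\varepsilon(0)=1$, and $\varphi_\varepsilon(x)=\varphi(x/\varepsilon)\to 0$ as $\varepsilon\to 0$ for every fixed $x>0$ (because $\varphi$ is bounded, convex with $\sqrt x\,\varphi(x)\to 0$, hence $\varphi(s)\to 0$ as $s\to\infty$), dominated convergence with dominating function $\mathbf 1_{[0,\infty)}\in L^1(dG)$ gives $\int\varphi_\varepsilon\,dG\to G(\{0\})$.

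\textbf{Step 2: the linear terms \eqref{convergence 3} and \eqref{limQ31b}.} By Lemma \ref{lemma regularity}(iii), $x\mapsto \mathcal L(\varphi_\varepsilon)(x)/\sqrt x$ is continuous on $[0,\infty)$, vanishes at $0$, and is bounded by $3\|\varphi_\varepsilon\|_\infty\sqrt x\le 3\sqrt x$; for the regularized version one uses $\phi_n(x)\le x^{-1/2}$ so that $|\mathcal L(\varphi_\varepsilon)(x)\phi_n(x)|\le 3\sqrt x$ as well. The pointwise limit: for fixed $x>0$, $\mathcal L(\varphi_\varepsilon)(x)=x\varphi_\varepsilon(x)-2\int_0^x\varphi_\varepsilon(y)\,dy\to 0$ since $\varphi_\varepsilon(x)\to 0$ and, by dominated convergence in $y$, $\int_0^x\varphi_\varepsilon(y)\,dy\to 0$. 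Since $\sqrt x\in L^1(dG)$ when $G\in\mathscr M_+^{1/2}$ (resp. $\sqrt x\,$ is integrable on $\supp\phi_n\subset[0,n+1]$ against any finite $G$, so $n$ being fixed this is fine), dominated convergence yields \eqref{convergence 3} and \eqref{limQ31b}. The statement \eqref{limQ31a} is analogous but with $\mathcal L_0(\varphi_\varepsilon)(x)=x(\varphi_\varepsilon(0)+\varphi_\varepsilon(x))-2\int_0^x\varphi_\varepsilon(y)\,dy=x+x\varphi_\varepsilon(x)-2\int_0^x\varphi_\varepsilon(y)\,dy\to x$ pointwise, with the bound $|\mathcal L_0(\varphi_\varepsilon)(x)|/\sqrt x\le 4\sqrt x$ from Lemma \ref{lemma regularity}(iii); dominated convergence then gives $\mathscr Q_3^{(1)}(\varphi_\varepsilon,G)=\int_0^\infty\frac{\mathcal L_0(\varphi_\varepsilon)(x)}{\sqrt x}G(x)\,dx\to\int_0^\infty \sqrt x\,G(x)\,dx=M_{1/2}(G)$. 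Note the identity $\widetilde{\mathscr Q}_3^{(1)}=\mathscr Q_3^{(1)}-\varphi(0)M_{1/2}$ from \eqref{S1EB1} reconciles \eqref{limQ31a} and \eqref{limQ31b} directly, so one of the two can be deduced from the other.

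\textbf{Step 3: the quadratic term \eqref{convergence 4}, the main obstacle.} Here $\mathscr Q_{3,n}^{(2)}(\varphi_\varepsilon,G)=\iint_{(0,\infty)^2}\Lambda(\varphi_\varepsilon)(x,y)\phi_n(x)\phi_n(y)\,dG(x)\,dG(y)$. The key pointwise fact is that $\Lambda(\varphi_\varepsilon)(x,y)=\varphi_\varepsilon(x+y)+\varphi_\varepsilon(|x-y|)-2\varphi_\varepsilon(\max\{x,y\})\to \mathbf 1_{\{x=y>0\}}$ as $\varepsilon\to 0$ for each fixed $(x,y)\in(0,\infty)^2$: if $x\ne y$ all three terms tend to $0$ (the arguments $x+y$, $|x-y|$, $\max\{x,y\}$ are all bounded away from $0$); if $x=y>0$ then $\varphi_\varepsilon(|x-y|)=\varphi_\varepsilon(0)=1$ while the other two tend to $0$. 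Since $0\le\Lambda(\varphi_\varepsilon)\le 1$ (convexity gives $\ge 0$ via Lemma \ref{convex-positivity}; $\le 1$ from $0\le\varphi_\varepsilon\le 1$ termwise after noting $\varphi_\varepsilon(\max)\ge 0$ and $\varphi_\varepsilon(x+y)\le\varphi_\varepsilon(\max)$ by monotonicity, so $\Lambda(\varphi_\varepsilon)\le\varphi_\varepsilon(|x-y|)\le 1$), and $\phi_n(x)\phi_n(y)\le x^{-1/2}y^{-1/2}$ which on $\supp\phi_n\subset[0,n+1]^2$ is integrable against $dG\otimes dG$ only if... — and this is the delicate point — $\iint_{(0,\infty)^2}(xy)^{-1/2}dG(x)dG(y)<\infty$ is \emph{not} assumed. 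Instead I would dominate more carefully: $\phi_n$ is bounded on $(1/n,n+1]$ but blows up near $0$; on $[0,1/n]$ we have $\phi_n(x)\le x^{-1/2}$, and crucially $\Lambda(\varphi_\varepsilon)(x,y)\phi_n(x)\phi_n(y)\le \frac{|\Lambda(\varphi_\varepsilon)(x,y)|}{\sqrt{xy}}$, which by Lemma \ref{lemma regularity}(i) is bounded by $2\mathrm{Lip}(\varphi_\varepsilon)=2\varepsilon^{-1}\mathrm{Lip}(\varphi)$ — $\varepsilon$-dependent, hence useless for a uniform bound. The correct route is to bound $\Lambda(\varphi_\varepsilon)(x,y)\phi_n(x)\phi_n(y)\le \phi_n(x)\phi_n(y)$ pointwise and observe that $\phi_n\cdot dG$ has total mass $\int\phi_n\,dG<\infty$ because $\phi_n$ is bounded on $(1/n,\infty)$... no: near $0$, $\int_{[0,1/n]}x^{-1/2}dG(x)$ could diverge. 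Given the hypothesis that $G$ has no singular part on $(0,\infty)$, write $G=G(\{0\})\delta_0 + g\,dx$ with $g\in L^1_{loc}$; then $\int_{[0,1/n]}x^{-1/2}dG=\int_0^{1/n}x^{-1/2}g(x)\,dx$ which is finite iff $x^{-1/2}g\in L^1$ near $0$ — still not automatic. I expect the intended argument is: first remove the atom at $0$ (the product of $\phi_n(x)\phi_n(y)$ with $\Lambda(\varphi_\varepsilon)$ at $x=0$ or $y=0$ vanishes because $\Lambda(\varphi_\varepsilon)(0,y)=\varphi_\varepsilon(y)+\varphi_\varepsilon(y)-2\varphi_\varepsilon(y)=0$), so only $g\otimes g$ on $(0,\infty)^2$ matters; then split the double integral over $(0,\infty)^2$ according to whether $(x,y)\in[0,R]^2$ or not; on the complement use Lemma \ref{lemma regularity}(i) with $L=\mathrm{Lip}(\varphi_\varepsilon)$ is still bad, so instead use that $\phi_n$ has compact support $[0,n+1]$, reducing everything to $(0,n+1]^2$, on which one applies dominated convergence directly with dominating function $\phi_n(x)\phi_n(y)\cdot\mathbf 1_{(0,n+1]^2}(x,y)$ — finite integral because $g\in L^1$ on $[0,n+1]$ and, near the boundary $\{x=0\}\cup\{y=0\}$, the absence of atoms plus the fact that $\Lambda(\varphi_\varepsilon)$ vanishes there lets one use continuity of $\Lambda(\varphi_\varepsilon)(x,y)/\sqrt{xy}$ (Lemma \ref{lemma regularity}(ii)) to get a bound uniform in $\varepsilon$ small on a neighborhood of the boundary. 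Granting the dominating function, dominated convergence gives $\mathscr Q_{3,n}^{(2)}(\varphi_\varepsilon,G)\to\iint_{\{x=y>0\}}(xy)^{-1/2}\phi_n(x)\phi_n(y)\,dG(x)\,dG(y)$, and by Fubini this equals $\int_{(0,\infty)}x^{-1}\phi_n(x)^2 G(\{x\})\,dG(x)=0$ since $G$ has no atoms on $(0,\infty)$. This step — producing an $\varepsilon$-uniform integrable dominating function near the boundary of $(0,\infty)^2$ where $\phi_n$ is singular — is the one I expect to require the most care; everything else is a routine application of Lemma \ref{lemma regularity} and dominated convergence.
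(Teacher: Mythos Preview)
Your Steps 1 and 2 are correct and match the paper's argument exactly: dominated convergence with the bounds $\varphi_\varepsilon\le 1$, $|\mathcal L(\varphi_\varepsilon)(x)|/\sqrt x\le 3\sqrt x$, and $|\mathcal L_0(\varphi_\varepsilon)(x)|/\sqrt x\le 4\sqrt x$.

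Your Step 3, however, is built on a misreading of the cutoff. You write that ``$\phi_n$ is bounded on $(1/n,n+1]$ but blows up near $0$''. This is false: by Cutoff~\ref{cut-off}, $\phi_n\in C_c([0,\infty))$, i.e.\ $\phi_n$ is \emph{continuous on $[0,\infty)$} with compact support $[0,n+1]$, hence bounded. The inequality $\phi_n(x)\le x^{-1/2}$ is only an upper bound; equality holds on $(1/n,n)$, but on $[0,1/n]$ the function $\phi_n$ is some continuous extension that remains finite at $0$. (Indeed $\|\phi_n\|_\infty$ is used repeatedly elsewhere in the paper, e.g.\ in the proof of Theorem~\ref{Ex1T2}.)

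Once you see this, the dominating function in Step 3 is trivial: $0\le\Lambda(\varphi_\varepsilon)(x,y)\phi_n(x)\phi_n(y)\le \|\phi_n\|_\infty^2$, which is integrable over $[0,\infty)^2$ against the finite measure $G\otimes G$. Dominated convergence then gives
\[
\lim_{\varepsilon\to 0}\mathscr Q_{3,n}^{(2)}(\varphi_\varepsilon,G)=\iint_{\{x=y>0\}}\phi_n(x)\phi_n(y)\,dG(x)\,dG(y)=\int_{(0,\infty)}\phi_n(x)^2\,G(\{x\})\,dG(x)=0,
\]
since $G$ has no atoms on $(0,\infty)$. This is exactly the paper's argument, and all of your attempted workarounds (splitting into $[0,R]^2$ and its complement, invoking continuity of $\Lambda(\varphi_\varepsilon)/\sqrt{xy}$ near the boundary, writing $G=G(\{0\})\delta_0+g\,dx$) are unnecessary.
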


\begin{proof}
The proof only uses dominated convergence. Since $\varphi_{\varepsilon}\leq 1$ for all $\varepsilon>0$, and $M_0(G)<\infty$, and 
$\varphi_{\varepsilon}\to\mathds{1}_{\{0\}}$ as $\varepsilon\to 0$, then (\ref{convergence 1}) holds.
Then, since for all $x\in[0,\infty)$ it follows from dominated convergence that
\begin{align}
\label{LIN1}
\lim_{\varepsilon\to 0}\mathcal{L}_0(\varphi_{\varepsilon})(x)=x
\qquad\text{and}\qquad
\lim_{\varepsilon\to 0}\mathcal{L}(\varphi_{\varepsilon})(x)=0,
\end{align}
and $\phi_n$ is compactly supported, then (\ref{convergence 3}) follows. 
Also, since for all $(x,y)\in[0,\infty)^2$, $\Lambda(\varphi_{\varepsilon})(x,y)\leq 1$ for all $\varepsilon>0$, and 
\begin{align*}
\lim_{\varepsilon\to 0}\Lambda(\varphi_{\varepsilon})(x,y)=\mathds{1}_{\{x=y>0\}}(x,y),
\end{align*}
then
\begin{align*}
\lim_{\varepsilon\to 0}\mathscr{Q}_{3,n}^{(2)}(\varphi_{\varepsilon},G)=\iint_{\{x=y>0\}}\phi_n(x)\phi_n(y)G(x)G(y)d xd y,
\end{align*}
Using that $G$ has no singular part on $(0,\infty)$, (\ref{convergence 4}) follows.

Lastly, since 
\begin{align}
\widetilde{\mathscr{Q}}_3^{(1)}(\varphi_{\varepsilon},G)\leq \mathscr{Q}_3^{(1)}(\varphi_{\varepsilon},G)=\int_{(0,\infty)}\frac{\mathcal{L}_0(\varphi_{\varepsilon})(x)}{\sqrt{x}}G(x)dx,
\end{align}
and by (\ref{lemma regularity 4})
\begin{align*}
\int_{(0,\infty)}\frac{|\mathcal{L}_0(\varphi_{\varepsilon})(x)|}{\sqrt{x}}G(x)dx\leq 4M_{1/2}(G)\qquad\forall\varepsilon>0.
\end{align*}
then (\ref{limQ31a}) and (\ref{limQ31b}) follows from (\ref{LIN1}) and dominated convergence.
\end{proof}

\begin{lemma}
\label{well defined operators}
Consider $n\in\NN$, $\phi_n\in C_c([0,\infty))$ nonnegative and  $\rho\in L^1_{loc}(\RR_+)$ nonnegative. Then for every nonnegative functions 
$h$, $h_1$ and $h_2$ in $L^{\infty}(\RR_+)$, the functions $K_n(h)$, $L_n(h)$, $A_n(h)$ and $hA_n(h)$ are also nonnegative, belong to 
$L^{\infty}(\RR_+)\cap L^1_{\rho}(\RR_+)$, and there exists a positive constant $C(n,\rho)$ such that:
\begin{align}
&\|K_n(h_1)-K_n(h_2)\|_{L^{\infty}\cap L^1_{\rho}}\leq C(n,\rho)\|h_1\|_{\infty}\|h_1-h_2\|_{\infty}\label{SAE100}\\
&\|L_n(h)\|_{L^{\infty}\cap L^1_{\rho}}\leq C(n,\rho)\|h\|_{\infty}\label{bound L}\\
&\|A_n(h)\|_{L^{\infty}\cap L^1_{\rho}}\leq C(n,\rho)\big(1+\|h\|_{\infty}\big)\label{SaE120}\\
&\|A_n(h_1)-A_n(h_2)\|_{L^{\infty}\cap L^1_{\rho}}\leq C(n,\rho)\|h_1-h_2\|_{\infty}.\label{SaE121}
\end{align}
\begin{flalign}
\text{Moreover}&&J_{3,n}(h)\in L^{\infty}(\RR_+)\cap L^1_{\rho}(\RR_+).&&
\end{flalign}
\end{lemma}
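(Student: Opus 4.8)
The plan is to reduce everything to three elementary observations about the cutoff $\phi_n$. First, $\phi_n\in C_c([0,\infty))$ with $\supp\phi_n=[0,n+1]$, so $\|\phi_n\|_{\infty}<\infty$; second, $\|\phi_n\|_1\le\int_0^{n+1}x^{-1/2}dx=2\sqrt{n+1}<\infty$ by the bound $\phi_n(x)\le x^{-1/2}$; and third, every operator in the statement is supported in a fixed bounded interval, namely $[0,2(n+1)]$ for $K_n(h)$ (because the shifted arguments force $x=y+(x-y)\le 2(n+1)$) and $[0,n+1]$ for $L_n(h)$ and $A_n(h)$ (because a factor $\phi_n(\cdot)$ vanishes off $[0,n+1]$). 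Since $\rho\in L^1_{loc}(\RR_+)$, for any $f$ supported in $[0,R]$ we have $\|f\|_{L^1_{\rho}}\le\|f\|_{\infty}\int_0^{R}\rho(x)dx$; hence once the $L^{\infty}$ bounds are in hand, all the $L^1_{\rho}$ bounds follow at the cost of the factor $\int_0^{2(n+1)}\rho$, which is absorbed into the constant $C(n,\rho)$. Nonnegativity of $K_n(h)$, $L_n(h)$, $A_n(h)$, and therefore of $hA_n(h)$, is immediate for $h\ge0$, since each is an integral (or product) of nonnegative quantities.

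Next I would establish the $L^{\infty}$ estimates by Hölder's inequality, keeping one factor in $L^{\infty}$ and integrating the other against $dy$. In each of the two integrals of (\ref{A1E33}) I bound $h(\cdot)\phi_n(\cdot)\le\|h\|_{\infty}\|\phi_n\|_{\infty}$ in one slot and integrate $h(\cdot)\phi_n(\cdot)dy$ in the other, whose contribution is at most $\|h\|_{\infty}\|\phi_n\|_1$; this gives $\|K_n(h)\|_{\infty}\le 3\|\phi_n\|_{\infty}\|\phi_n\|_1\|h\|_{\infty}^2$. From (\ref{A1E34}), $\|L_n(h)\|_{\infty}\le 2\|\phi_n\|_1\|h\|_{\infty}$, and from (\ref{A1E35}), using that $\phi_n(x)\neq0$ forces $x\le n+1$ in the prefactor, $\|A_n(h)\|_{\infty}\le\|\phi_n\|_{\infty}\bigl((n+1)+4\|\phi_n\|_1\|h\|_{\infty}\bigr)$; in particular $hA_n(h)\in L^{\infty}$. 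Multiplying by $\rho$ and integrating over the compact supports yields the $L^1_{\rho}$ halves of (\ref{bound L}) and (\ref{SaE120}).

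For the Lipschitz estimates I would use the bilinear/affine structure. Inside (\ref{A1E33}) I write $h_1(x-y)h_1(y)-h_2(x-y)h_2(y)=h_1(x-y)\bigl(h_1(y)-h_2(y)\bigr)+h_2(y)\bigl(h_1(x-y)-h_2(x-y)\bigr)$ and estimate exactly as above, which gives (\ref{SAE100}); and since $A_n(h_1)(x)-A_n(h_2)(x)=4\phi_n(x)\int_0^x\bigl(h_1(y)-h_2(y)\bigr)\phi_n(y)dy$ is linear in $h_1-h_2$, the bound $\|A_n(h_1)-A_n(h_2)\|_{\infty}\le 4\|\phi_n\|_{\infty}\|\phi_n\|_1\|h_1-h_2\|_{\infty}$ is immediate, giving (\ref{SaE121}) (again passing to $L^1_{\rho}$ via compact support). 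Finally, $J_{3,n}(h)=K_n(h)+L_n(h)-h\,A_n(h)$ by (\ref{A1E32}) is a finite combination of functions already shown to lie in $L^{\infty}(\RR_+)\cap L^1_{\rho}(\RR_+)$, hence lies there as well.

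There is no real obstacle in this lemma; the only points demanding a little care are the correct identification of the compact supports — in particular that $K_n(h)$ is supported on $[0,2(n+1)]$ rather than $[0,n+1]$ — and checking that the constants depend only on $n$ and $\rho$ (through $\|\phi_n\|_{\infty}$, $\|\phi_n\|_1$, $\int_0^{2(n+1)}\rho$) together with the indicated norm of $h$. One small caveat: the natural estimate in (\ref{SAE100}) produces the factor $\bigl(\|h_1\|_{\infty}+\|h_2\|_{\infty}\bigr)$; the form stated in the lemma is recovered either by symmetrising or, as in the fixed-point argument of Theorem \ref{Ex1T2} where the lemma is used, by noting $\|h_2\|_{\infty}\le\|h_1\|_{\infty}+\|h_1-h_2\|_{\infty}$ when $h_1,h_2$ lie in a common ball.
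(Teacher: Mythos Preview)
Your proof is correct and follows essentially the same strategy as the paper's: both rely on $\|\phi_n\|_\infty$, $\|\phi_n\|_1$, and the compact support induced by $\phi_n$ to get the $L^\infty$ and $L^1_\rho$ bounds, and both use the bilinear decomposition for the Lipschitz estimate on $K_n$. The only organisational difference is that the paper packages the constants via $\sup_x\int\phi_n(|x-y|)\phi_n(y)\,dy$ and $\iint\rho(x)\phi_n(|x-y|)\phi_n(y)\,dy\,dx$ rather than your explicit compact-support-plus-$\int_0^{2(n+1)}\rho$ argument, and it handles your caveat on (\ref{SAE100}) by assuming $\|h_1\|_\infty\geq\|h_2\|_\infty$ without loss of generality.
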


\begin{proof}
The positivity of the operators is clear from their definitions. Notice that since $\phi_n$ is bounded and compactly supported on $\RR_+$ and $\rho\in L^1_{loc}(\RR_+)$, there exist two positive constants $C(n)$ and $C(n,\rho)$ such that
\begin{align*}
&\sup_{x\geq 0}\int_0^{\infty} \phi_n(|x-y|)\phi_n(y)d y\leq C(n),\\
&\int_0^{\infty}\!\!\!\int_0^{\infty}\rho(x)\phi_n(|x-y|)\phi_n(y)d yd x\leq C(n,\rho).
\end{align*}

\noindent1. Estimates for $K_n$. For all $x\geq 0$:
\begin{align*}
K_n(h)(x)&\leq 3\|h\|_{\infty}^2\int_0^{\infty} \phi_n(|x-y|)\phi_n(y)d y\leq 3\|h\|_{\infty}^2C(n),
\end{align*}
and
\begin{align*}
\|K_n(h)\|_{L^1_{\rho}}
&\leq 3\|h\|_{\infty}^2\int_0^{\infty}\!\!\! \int_0^{\infty}  \rho(x)\phi_n(|x-y|)\phi_n(y)d yd x\leq 3\|h\|_{\infty}^2 C(n,\rho).
\end{align*}
Then for all $x\geq 0$:
\begin{align}
\label{KNs}
&\big|K_n(h_1)(x)-K_n(h_2)(x)\big|\\
&\leq3\int_0^{\infty}\phi_n(|x-y|)\phi_n(y)\big|h_1(|x-y|)h_1(y)-h_2(|x-y|)h_2(y)\big|d y.\nonumber
\end{align}
Without loss of generality we assume that $\|h_1\|_{\infty}\geq \|h_2\|_{\infty}$. Using 
\begin{align*}
&\big|h_1(|x-y|)h_1(y)-h_2(|x-y|)h_2(y)\big|\leq 2\|h_1\|_{\infty}\|h_1-h_2\|_{\infty}
\end{align*}
in (\ref{KNs}) then (\ref{SAE100}) follows.\\
2. Estimates for $L_n$. Since $\phi_n$ is bounded and compactly supported and $\rho\in L^1_{loc}(\RR_+)$, there exist two positive constants $C(n)$ and 
$C(n,\rho)$ such that
\begin{align*}
\int_0^{\infty}\phi_n(x)d x\leq C(n)\quad\text{and}\quad
\int_0^{\infty}\rho(x)\int_x^{\infty}\phi_n(y) dy dx\leq C(n,\rho)
\end{align*}
and (\ref{bound L}) follows.\\
3. Estimates for $A_n$.
The estimate  (\ref{SaE120}) follows from
\begin{align*}
\|A_n(h)\|_{\infty}&\leq \|x\,\phi_n(x)\|_{\infty}+4\|\phi_n\|_{\infty}^2\|h\|_{\infty}|\supp(\phi_n)|\leq C(n)(1+\|h\|_{\infty}),
\end{align*}
and
\begin{align*}
\|A_n(h)\|_{L^1_{\rho}}&\leq \int_0^{\infty}\rho(x)\,x\,\phi_n(x)d x+4\,\|h\|_{\infty}\int_0^{\infty}\rho(x)\,\phi_n(x)\int_0^x \phi_n(y)d yd x\\
&\leq C(n,\rho)(1+\|h\|_{\infty}).
\end{align*}
For all $x\geq 0$,
\begin{align*}
|A_n(h_1)(x)-A_n(h_2)(x)|&\leq 4\|h_1-h_2\|_{\infty}\phi_n(x)\int_0^x\phi_n(y)d y\\
&\leq C(n) \|h_1-h_2\|_{\infty}.
\end{align*}
We also have, 
\begin{align*}
\|A_n(h_1)-A_n(h_2)\|_{L^1_{\rho}}&\leq 4\|h_1-h_2\|_{\infty}\int_0^{\infty}\rho(x)\phi_n(x)\int_0^x\phi_n(y)d yd x\\
&\leq C(n,\rho)\,\|h_1-h_2\|_{\infty},
\end{align*}
and then, (\ref{SaE121}) follows.\\
4. Since $h\in L^{\infty}(\RR_+)$ and $A_n(h)\in L^{\infty}(\RR_+)\cap L^1_{\rho}(\RR_+)$, then $hA_n(h)\in L^{\infty}(\RR_+)\cap L^1_{\rho}(\RR_+)$.\\
5. It also follows from points 1 to 4 that $J_{3,n}(h)$ has the desired regularity.
\end{proof}

\subsection{A2}

\begin{lemma}
\label{representation of Deltavarphi}
Let $\varphi\in C ^{1.1}([0,\infty))$. Then, for all $(x_1,x_2,x_3)\in[0,\infty)^3$ such that $x_1+x_2\geq x_3$:
\begin{align*}
&\Delta\varphi(x_1,x_2,x_3)
=(x_1-x_3)(x_2-x_3)\times  \nonumber \\
&\qquad \times \int_0^1\int_0^1\varphi''\big(x_3+t(x_1-x_3)+s(x_2-x_3)\big)dsdt.
\end{align*}
Moreover, if $\varphi\in C^{1.1}_b([0,\infty))$, then for all $(x_1,x_2,x_3)\in [0,\infty)^3$
\begin{align}
|\Delta\varphi(x_1,x_2,x_3)| \leq\min\left\{A, B, C, D\right\}. \label{S2E2}
\end{align}
\begin{flalign}
\text{where} &&A&=4\|\varphi\|_{\infty},\quad B=2\|\varphi'\|_{\infty}|x_1-x_3|,\quad C=2\|\varphi'\|_{\infty}|x_2-x_3|,\nonumber\\
&&D&=\|\varphi''\|_{\infty}|x_1-x_3||x_2-x_3|.\nonumber 
\end{flalign}
\end{lemma}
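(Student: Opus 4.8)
The plan is to prove the integral representation first and then deduce the four bounds from it, together with two elementary bounds that do not need the representation. Recall that $\Delta\varphi(x_1,x_2,x_3)=\varphi(x_4)+\varphi(x_3)-\varphi(x_2)-\varphi(x_1)$, where $x_4=x_1+x_2-x_3$ (and $x_4\ge 0$ precisely under the hypothesis $x_1+x_2\ge x_3$). The key observation is that $x_4-x_2=x_1-x_3$ and $x_4-x_1=x_2-x_3$, so $\Delta\varphi$ is a genuine second difference: writing $a=x_1-x_3$ and $b=x_2-x_3$, we have $x_1=x_3+a$, $x_2=x_3+b$, $x_4=x_3+a+b$, and $\Delta\varphi=\big(\varphi(x_3+a+b)-\varphi(x_3+b)\big)-\big(\varphi(x_3+a)-\varphi(x_3)\big)$. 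First I would apply the fundamental theorem of calculus to each of the two inner differences in the variable that shifts by $a$: $\varphi(x_3+a+b)-\varphi(x_3+b)=a\int_0^1\varphi'(x_3+b+ta)\dd t$ and $\varphi(x_3+a)-\varphi(x_3)=a\int_0^1\varphi'(x_3+ta)\dd t$. Subtracting and using the fundamental theorem of calculus once more in the variable that shifts by $b$, $\varphi'(x_3+b+ta)-\varphi'(x_3+ta)=b\int_0^1\varphi''(x_3+ta+sb)\dd s$, gives exactly
\[
\Delta\varphi(x_1,x_2,x_3)=ab\int_0^1\!\!\int_0^1\varphi''\big(x_3+t(x_1-x_3)+s(x_2-x_3)\big)\dd s\,\dd t,
\]
which is the claimed formula. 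The use of $\varphi\in C^{1,1}$ is exactly what makes $\varphi''\in L^\infty$ and the iterated integral meaningful (one should read the inner integrals as Lebesgue integrals of the a.e.-defined $\varphi''$; since $\varphi'$ is Lipschitz, $\varphi'(u)-\varphi'(v)=\int_v^u\varphi''$ holds for all $u,v$).

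For the bounds in \eqref{S2E2}, assume now $\varphi\in C^{1,1}_b$. The bound $D=\|\varphi''\|_\infty|x_1-x_3||x_2-x_3|$ is immediate from the representation by taking absolute values inside the double integral and using that the integration domain has measure one. The bound $A=4\|\varphi\|_\infty$ is trivial from the triangle inequality applied directly to $\Delta\varphi=\varphi(x_4)+\varphi(x_3)-\varphi(x_2)-\varphi(x_1)$, valid regardless of the sign of $x_1+x_2-x_3$ (when $x_1+x_2<x_3$ one sets $x_4=0$ by the convention in \eqref{S2E1}, and $|\varphi(0)|\le\|\varphi\|_\infty$ still applies). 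For $B=2\|\varphi'\|_\infty|x_1-x_3|$ I would group $\Delta\varphi=\big(\varphi(x_4)-\varphi(x_2)\big)-\big(\varphi(x_1)-\varphi(x_3)\big)$ and note that both groups are increments of $\varphi$ over intervals of length $|x_4-x_2|=|x_1-x_3|$ and $|x_1-x_3|$ respectively, so the mean value theorem (or the Lipschitz bound $|\varphi'|\le\|\varphi'\|_\infty$) gives $|\Delta\varphi|\le 2\|\varphi'\|_\infty|x_1-x_3|$; by the symmetric grouping $\Delta\varphi=\big(\varphi(x_4)-\varphi(x_1)\big)-\big(\varphi(x_2)-\varphi(x_3)\big)$ one gets $C=2\|\varphi'\|_\infty|x_2-x_3|$. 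Taking the minimum of $A,B,C,D$ yields \eqref{S2E2}.

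I do not expect a serious obstacle here; the only point requiring a little care is the case distinction $x_1+x_2\gtrless x_3$, since the integral representation is only asserted under $x_1+x_2\ge x_3$, whereas \eqref{S2E2} is claimed for all $(x_1,x_2,x_3)\in[0,\infty)^3$. When $x_1+x_2<x_3$ one uses the convention $x_4=(x_1+x_2-x_3)_+=0$ from \eqref{S2E1}; then the bounds $A$, $B$, $C$ still follow from the elementary grouping arguments above (replacing $x_4$ by $0$ and noting $|0-x_2|\le|x_1-x_3|$ is false in general, so for $B$ one uses instead $|\varphi(0)-\varphi(x_2)|+|\varphi(x_1)-\varphi(x_3)|$ and the fact that $x_2\le x_3-x_1\le x_3$ forces $|0-x_2|=x_2\le x_3-x_1\le |x_1-x_3|$ when $x_1\le x_3$, and a symmetric estimate otherwise), so the minimum bound persists; alternatively, and more cleanly, one simply records that $A$ is always available and that $B,C,D$ are only needed in the regime $x_1+x_2\ge x_3$ in the applications (cf. the decomposition of $[0,\infty)^3$ in the proof of Proposition \ref{S1P0}, where the representation is invoked precisely there). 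I would state the lemma's second part with the understanding that in the degenerate regime the $A$-bound suffices, which is all that is used downstream.
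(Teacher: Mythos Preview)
Your argument is correct and follows the paper's proof almost verbatim: the paper too groups $\Delta\varphi=[\varphi(x_4)-\varphi(x_2)]-[\varphi(x_1)-\varphi(x_3)]$, applies the fundamental theorem of calculus twice, and reads off the bounds $A,B,D$ from successive lines of that computation (with $C$ following by the evident $x_1\leftrightarrow x_2$ symmetry you make explicit). Your added caution about the regime $x_1+x_2<x_3$ is well placed: the paper's own derivation of lines three and five---hence of bounds $B,C,D$---is carried out only under $x_1+x_2\ge x_3$, and downstream the estimate is applied exclusively through $\Phi_\varphi=W\Delta\varphi$, which vanishes when $x_4=0$; so your remark that ``$B,C,D$ are only needed in the regime $x_1+x_2\ge x_3$'' is exactly the right resolution, and you need not labor further over that case.
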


\begin{proof}
Let $(x_1,x_2,x_3)\in[0,\infty)^3$ be such that $x_1+x_2\geq x_3$. By the fundamental Theorem of calculus
\begin{align*}
\Delta &\varphi(x_1,x_2,x_3)=\big[\varphi(x_4)-\varphi(x_2)\big]-\big[\varphi(x_1)-\varphi(x_3)\big]\\
&  =\int_0^1\frac{d}{dt}\varphi\big(x_2+t(x_1-x_3)\big)dt-\int_0^1\frac{d}{dt}\varphi\big(x_3+t(x_1-x_3)\big)dt\\
&=(x_1-x_3)\int_0^1\big[\varphi'\big(x_2+t(x_1-x_3)\big)-\varphi'\big(x_3+t(x_1-x_3)\big)\big]dt\\
&=(x_1-x_3)\int_0^1\int_0^1\frac{d}{ds}\varphi'\big(x_3+t(x_1-x_3)+s(x_2-x_3)\big)dsdt\\
&=(x_1-x_3)(x_2-x_3)\int_0^1\int_0^1\varphi''\big(x_3+t(x_1-x_3)+s(x_2-x_3)\big)dsdt.
\end{align*}
Assume now that $\varphi\in C^{1.1}_b([0,\infty))$. Using the first, the third, and the fifth line above, estimate (\ref{S2E2}) follows. 
\end{proof}

We now consider the function $w$ given in (\ref{S1E6'}) and define
\begin{align}
\label{S2E3}
W(x_1,x_2,x_3)=\left\{
\begin{array}{ll}
\frac{w(x_1,x_2,x_3)}{\sqrt{x_1x_2x_3}}&\!\!\text{if}\quad(x_1,x_2,x_3)\in (0,\infty)^3\\
\frac{1}{\sqrt{x_1x_2}}&\!\!\!\!\!\!\text{if}\quad x_3=0,\quad (x_1,x_2)\in(0,\infty)^2\\
\frac{1}{\sqrt{x_ix_3}}&\!\!\!\!\!\!\text{if}\; x_j=0,\, x_i>x_3>0;\,\{i, j\}=\{1,2\} \\
0&\!\!\!\text{otherwise}.
\end{array}
\right.
\end{align}

We then have:
\begin{lemma}
\label{S2L1}
Consider the function $\Phi_{\varphi}=W\Delta\varphi$, where $\Delta\varphi$  and $W$  are defined in (\ref{S2E1}) and (\ref{S2E3}) respectively.
\begin{enumerate}[(i)]
\item
If $\varphi\in C^{1.1}([0,\infty))$ then $\Phi_{\varphi}\in C([0,\infty)^3)$.
\item
If $\varphi\in C^{1.1}_b([0,\infty))$ then $\Phi_{\varphi}\in C_0([0,\infty)^3)$. In particular $\Phi_{\varphi}$ is uniformly continuous on $[0,\infty)^3$.
\end{enumerate}
\end{lemma}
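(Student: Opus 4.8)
The statement is local apart from the decay at infinity in part (ii), so the plan is to combine two elementary ingredients: a pointwise bound on $W$ and the control of $\Delta\varphi$ furnished by Lemma \ref{representation of Deltavarphi}. First, since $w=\min\{\sqrt{x_1},\sqrt{x_2},\sqrt{x_3},\sqrt{x_4}\}\le\sqrt{x_i}$ for each $i\in\{1,2,3\}$, one has on $(0,\infty)^3$
\[
W(x_1,x_2,x_3)^2=\frac{w^2}{x_1x_2x_3}\le\frac{x_i}{x_1x_2x_3}=\frac1{x_jx_k},\qquad\{i,j,k\}=\{1,2,3\},
\]
hence $W\le (m_2m_3)^{-1/2}$, where $m_1\le m_2\le m_3$ is the increasing rearrangement of $(x_1,x_2,x_3)$, and more sharply $W\le\big(x_j\max\{x_i,x_k\}\big)^{-1/2}$; by the definition (\ref{S2E3}) these inequalities persist on $\partial[0,\infty)^3$, and $\Phi_\varphi\equiv 0$ on the closed set $\{x_1+x_2\le x_3\}$ because there $x_4=0$, so $w=W=0$. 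Secondly, on $\{x_1+x_2\ge x_3\}$ one has $x_4-x_2=x_1-x_3$ and $x_4-x_1=x_2-x_3$, so if $\varphi$ is Lipschitz with constant $L$ and $\varphi'$ Lipschitz with constant $M$ on the relevant interval, Lemma \ref{representation of Deltavarphi} gives
\[
|\Delta\varphi(x_1,x_2,x_3)|\le\min\{\,2L|x_1-x_3|,\ 2L|x_2-x_3|,\ M|x_1-x_3|\,|x_2-x_3|\,\},
\]
together with $|\Delta\varphi|\le 4\|\varphi\|_\infty$ when $\varphi$ is bounded; for part (i) these are used only on a compact neighbourhood of the point at issue, where $L,M<\infty$ since $\varphi\in C^{1.1}$.

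For part (i): on $(0,\infty)^3$, $W$ is continuous (ratio of continuous functions with nonvanishing denominator, the numerator being continuous because $(x_1,x_2,x_3)\mapsto x_4$ is) and $\Delta\varphi$ is continuous, so $\Phi_\varphi$ is continuous there; it is also continuous, being $\equiv 0$, on $\{x_1+x_2\le x_3\}$. It remains to treat a point $\bar p\in\partial[0,\infty)^3$ with $\bar x_1+\bar x_2\ge\bar x_3$, which I would do by running through the boundary strata. If exactly one coordinate of $\bar p$ vanishes and $\bar x_1+\bar x_2\ne\bar x_3$, the sign of $x_1+x_2-x_3$ is constant near $\bar p$ and a direct computation with $w$ shows $W$ is continuous at $\bar p$ (possibly with value $0$); hence $\Phi_\varphi=W\,\Delta\varphi$ is a product of continuous functions near $\bar p$. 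At the remaining boundary points — $\bar p=(0,c,0)$ and its mirror $(c,0,0)$; $\bar p=(0,c,c)$ and $(c,0,c)$; the corner $\bar p=0$ — one has $W(\bar p)=0=\Delta\varphi(\bar p)$, so it suffices to show $\Phi_\varphi(p)\to 0$ (from the side $x_1+x_2>x_3$, the other side giving $\Phi_\varphi\equiv 0$), and this comes from pairing the $W$-bound with the appropriate $\Delta\varphi$-estimate: near $(0,c,c)$, $W\le(m_2m_3)^{-1/2}$ stays bounded while $|\Delta\varphi|\le 2L|x_2-x_3|\to 0$; near $(0,c,0)$, $W\le\big(x_2\max\{x_1,x_3\}\big)^{-1/2}$ while $|\Delta\varphi|\le 2L|x_1-x_3|\le 4L\max\{x_1,x_3\}$, giving $|\Phi_\varphi|\le 4L\sqrt{\max\{x_1,x_3\}/x_2}\to 0$; and near $\bar p=0$ one distinguishes whether $x_3$ is the largest coordinate: if so and $x_1+x_2\le x_3$ then $\Phi_\varphi=0$, if so and $x_1+x_2>x_3$ then $m_2\ge m_3/2$ and the quadratic bound gives $|\Phi_\varphi|\le\sqrt2\,M\,m_3\to 0$, while if $x_1$ or $x_2$ is largest then $|\Delta\varphi|\le 2L\,m_2$ and $|\Phi_\varphi|\le 2L\sqrt{m_2/m_3}\le 2L\sqrt{m_3}\to 0$.

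For part (ii): granting part (i), it remains to prove $\Phi_\varphi(p)\to 0$ as $|p|\to\infty$, which gives $\Phi_\varphi\in C_0([0,\infty)^3)$ and in particular uniform continuity. Let $R=\max\{x_1,x_2,x_3\}=m_3\ge 2$. If $m_2\ge 1$, then $W\le(m_2m_3)^{-1/2}\le R^{-1/2}$ and $|\Delta\varphi|\le 4\|\varphi\|_\infty$, so $|\Phi_\varphi|\le 4\|\varphi\|_\infty R^{-1/2}$. If $m_2<1$, two coordinates are $<1$; if $x_3$ is the largest then $x_1+x_2<2\le x_3$, so $x_4=0$ and $\Phi_\varphi=0$; if instead $x_1$ (or $x_2$) is the largest then $x_2,x_3<1$, $x_4>0$, and $|\Delta\varphi|\le 2\|\varphi'\|_\infty|x_2-x_3|\le 2\|\varphi'\|_\infty m_2$, so $|\Phi_\varphi|\le 2\|\varphi'\|_\infty\sqrt{m_2/m_3}<2\|\varphi'\|_\infty R^{-1/2}$; hence $|\Phi_\varphi(p)|\le C(\varphi)R^{-1/2}\to 0$. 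The only delicate point in the whole argument is the bookkeeping near the corner $\bar p=0$, where $W$ genuinely blows up and its growth must be balanced against the vanishing of $\Delta\varphi$; everything else is routine once the two displayed bounds are in hand.
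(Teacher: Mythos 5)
Your strategy --- bound $W$ above by $(m_2 m_3)^{-1/2}$ (with $m_1\le m_2\le m_3$ the increasing rearrangement of $(x_1,x_2,x_3)$), control $\Delta\varphi$ via Lemma~\ref{representation of Deltavarphi}, and examine the boundary strata one by one --- coincides with the paper's, and part~(ii) together with most of the casework in part~(i) is correct. There is, however, one genuine error, precisely at the spot you yourself single out as the delicate one: the corner $\bar p=0$.

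In the sub-case ``$x_1$ or $x_2$ is largest'' you claim $|\Phi_\varphi|\le 2L\sqrt{m_2/m_3}\le 2L\sqrt{m_3}\to 0$. The middle inequality asserts $m_2\le m_3^{2}$, which fails near the origin: take $m_2=m_3/2$ with $m_3\to 0$, so that $\sqrt{m_2/m_3}=1/\sqrt{2}$ while $\sqrt{m_3}\to 0$. Thus the Lipschitz bound $|\Delta\varphi|\le 2L\,m_2$ combined with $W\le(m_2m_3)^{-1/2}$ only yields $|\Phi_\varphi|\le 2L$, which does not force $\Phi_\varphi$ to vanish at the corner. The repair is the same device you already used in the neighbouring sub-case where $x_3$ is largest: invoke the quadratic estimate of Lemma~\ref{representation of Deltavarphi}, $|\Delta\varphi|\le M\,|x_1-x_3|\,|x_2-x_3|\le M\,m_3\,m_2$, which with $W\le(m_2m_3)^{-1/2}$ gives $|\Phi_\varphi|\le M\sqrt{m_2m_3}\le M\,m_3\to 0$. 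This is in fact what the paper's proof does --- it applies the second-order bound $\|\varphi''\|_\infty|x_1-x_3|\,|x_2-x_3|$ uniformly across all three sub-cases at the origin rather than falling back on the first-order one. With that single substitution your argument is sound.
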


\begin{proof}
\textbf{Proof of (i).} By definition $\Phi _\varphi\in C ((0, \infty)^3)$. Therefore it only remains to study the behaviour of $\Phi _\varphi$ in a neighborhood of the boundary $\partial [0, \infty)^3$ of $[0, \infty)^3$. First we show that $\Phi_{\varphi}$ is continuous on $\partial [0, \infty)^3$. \\
Thanks to the symmetry of $\Phi_{\varphi}$ in the $x_1$, $x_2$ variables, we just need to prove:\\
(i)for all $(x_1,x_2)\in (0,\infty)^2$, 
\begin{align}
\label{boundary1}
\Phi_{\varphi}(x_1,x_2,0)=\frac{\Delta\varphi(x_1,x_2,0)}{\sqrt{x_1x_2}}\longrightarrow 0
\end{align}
whenever $x_1\rightarrow 0$ or $x_2\rightarrow 0$ or $(x_1,x_2)\rightarrow (0,0)$, and\\
(ii) for all $x_1>x_3>0$,
\begin{align}
\label{boundary2}
\Phi_{\varphi}(x_1,0,x_3)=\frac{\Delta\varphi(x_1,0,x_3)}{\sqrt{x_1x_3}}\longrightarrow 0
\end{align}
whenever $x_1\rightarrow x_3$ or $x_3\rightarrow 0$ or $(x_1,x_3)\rightarrow (0,0)$.

By (\ref{S2E2}) $|\Delta\varphi(x_1,x_2,0)|\leq \|\varphi''\|_{\infty}x_1x_2$ for all $(x_1,x_2)\in (0,\infty)^2$, which implies (\ref{boundary1}). Also $|\Delta\varphi(x_1,0,x_3)|\leq \|\varphi''\|_{\infty}x_3(x_1-x_3)$ for all $x_1>x_3>0$. Hence
\begin{align*}
\frac{|\Delta\varphi(x_1,0,x_3)|}{\sqrt{x_1x_3}}\leq  \|\varphi''\|_{\infty}\sqrt{\frac{x_3}{x_1}}(x_1-x_3)\leq \|\varphi''\|_{\infty}(x_1-x_3),
\end{align*}
which implies (\ref{boundary2}).

Then we prove that for any $x\in\partial[0,\infty)^3$ and for any 
$(x_n)_{n\in\NN}\subset (0,\infty)^3$ such that $x_n\rightarrow x$, then $\Phi_{\varphi}(x_n)\rightarrow\Phi_{\varphi}(x)$ as $n\to\infty$. Let us denote
$$\Omega=\{(x_1,x_2,x_3)\in(0,\infty)^3:x_1+x_2\leq x_3\}.$$ Since $x_4$ is defined as $x_4=(x_1+x_2-x_3)_+$, then for all $(x_1,x_2,x_3)\in(0,\infty)^3$,
$$(x_1,x_2,x_3)\in\Omega\quad\text{if and only if}\quad x_4=0.$$
It might happen that the sequence $(x_n)_{n\in\NN}$ ``jumps'' from $\Omega$ to $\Omega^c$. If in every neighbourhood of $x$ the sequence has points in both regions, then we may consider two subsequences, each one contained in one region only. For the sequel, the main estimate is the following:
if we denote $x_n=(x_1^n,x_2^n,x_3^n)$ and $w(x_n)=\min\left\{\sqrt{x_1^n},\sqrt{x_2^n},\sqrt{x_3^n},\sqrt{x_4^n}\right\}$, then by (\ref{S2E2}) 
\begin{align}
\label{main estimate to prove continuity}
|\Phi_{\varphi}(x_n)|\leq\|\varphi''\|_{\infty}
\frac{w(x_n)}{\sqrt{x_1^nx_2^nx_3^n}}
\big|x_1^n-x_3^n\big|\big|x_2^n-x_3^n\big|.
\end{align}
We study case by case depending on where $x$ lies.

Case $x=(0,0,0)$. If $(x_n)\subset\Omega$ then $x_4^n=0$, 
$w(x_n)=\sqrt{x_4^n}=0$ and thus $\Phi_{\varphi}(x_n)=0=\Phi_{\varphi}(x)$.\\
If $\{x_n\}\subset\Omega^c$ then $x_4^n>0$ and we study case by case depending on the relative order of $x_1^n$, $x_2^n$, and $x_3^n$. 
Since $\Phi_{\varphi}$
is symmetric in the $x_1$, $x_2$ variables, we may assume without loss of generality that $x_1^n\leq x_2^n$.  
Note by \eqref{main estimate to prove continuity} that we also may assume $x_3^n\neq x_1^n$, $x_3^n\neq x_2^n$; otherwise the result follows directly.

If $x_1^n\leq x_2^n<x_3^n$,
then $w(x_n)=\sqrt{x_4^n}$ and by \eqref{main estimate to prove continuity}
\begin{align*}
|\Phi_{\varphi}(x_n)|&\leq\|\varphi''\|_{\infty}
\frac{\sqrt{x_4^n}}{\sqrt{x_1^nx_2^nx_3^n}}\big(x_3^n-x_1^n\big)\big(x_3^n-x_2^n\big)\\
&\leq\|\varphi''\|_{\infty}\left(\frac{\sqrt{x_4^n}\big(x_3^n\big)^{3/2}}{\sqrt{x_1^nx_2^n}}+\frac{\sqrt{x_4^n x_1^n x_2^n}}{\sqrt{x_3^n}}\right)\\
&\leq\|\varphi''\|_{\infty}\left( \frac{\big(x_3^n\big)^{3/2}}{\sqrt{x_2^n}}+\sqrt{x_1^n x_2^n}\right).
\end{align*}
Since $x_n\to x=0$, then $\sqrt{x_1^n x_2^n}\to 0$. Moreover, since $x_n\in\Omega^c$ and $x_1^n\leq x_2^n$, then 
$x_3^n<2 x_2^n$, and so 
$$ \frac{\big(x_3^n\big)^{3/2}}{\sqrt{x_2^n}}\leq 2^{3/2} x_2^n\longrightarrow 0\qquad\text{as}\quad n\rightarrow\infty.$$

If $x_1^n<x_3^n< x_2^n$, then $w(x_n)=\sqrt{x_1^n}$ and by \eqref{main estimate to prove continuity}
\begin{align*}
|\Phi_{\varphi}(x_n)|&\leq\|\varphi''\|_{\infty}\frac{(x_2^n-x_3^n)(x_3^n -x_1^n)}{\sqrt{x_2^n x_3^n}}\\
&\leq \|\varphi''\|_{\infty}\left(\sqrt{x_2^n x_3^n}+\frac{x_1^n\sqrt{x_3^n}}{\sqrt{x_2^n}}\right)\\
&\leq \|\varphi''\|_{\infty}\left(\sqrt{x_2^n x_3^n}+\sqrt{x_1^n x_3^n}\right)\longrightarrow 0\qquad\text{as}\quad n\rightarrow\infty.
\end{align*}

Lastly, if $x_3^n<x_1^n\leq x_2^n$, then $w(x_n)=\sqrt{x_3^n}$ and by \eqref{main estimate to prove continuity}
\begin{align*}
|\Phi_{\varphi}(x_n)|&\leq\|\varphi''\|_{\infty}\frac{(x_1^n-x_3^n)(x_2^n-x_3^n)}{\sqrt{x_1^n x_2^n}}\\
&\leq \|\varphi''\|_{\infty}\left(\sqrt{x_1^n x_2^n}+\frac{\big(x_3^n\big)^2}{\sqrt{x_1^n x_2^n}}\right)\\
&\leq 2\|\varphi''\|_{\infty}\left(\sqrt{x_1^n x_2^n}+x_1\right)\longrightarrow 0\qquad\text{as}\quad n\rightarrow\infty.
\end{align*}
Hence, in the three cases above $\Phi_{\varphi}(x_n)\rightarrow 0=\Phi_{\varphi}(x).$

Case $x=(x_1,0,0)$ with $x_1>0$. 
Then $w(x_n)=
\min\left\{\sqrt{x_2^n},\sqrt{x_3^n}\right\}$ for $n$ large enough.
On the other hand
\begin{align*}
\big|x_2^n-x_3^n\big|&=\big(\sqrt{x_2^n}+\sqrt{x_3^n}\big)\big|\sqrt{x_2^n}-\sqrt{x_3^n}\big|\\
&\leq 2\max\left\{\sqrt{x_2^n},\sqrt{x_3^n}\right\}\big|\sqrt{x_2^n}-\sqrt{x_3^n}\big|.
\end{align*}
Since $\min\left\{\sqrt{x_2^n},\sqrt{x_3^n}\right\}\max\left\{\sqrt{x_2^n},\sqrt{x_3^n}\right\}=\sqrt{x_2^nx_3^n}$, then by \eqref{main estimate to prove continuity} 
\begin{align*}
|\Phi_{\varphi}(x_n)|\leq2\|\varphi''\|_{\infty}\frac{\big|x_1^n-x_3^n\big|}{\sqrt{x_1^n}}\big|\sqrt{x_2^n}-\sqrt{x_3^n}\big|
\end{align*}
for $n$ large enough. It then follows $\Phi_{\varphi}(x_n)\rightarrow 0=\Phi_{\varphi}(x)$ as $n\to\infty$.

The case $x=(0,x_2,0)$ with $x_2>0$ is analogous to the previous one thanks to the symmetry of $\Phi_{\varphi}$ in the $x_1$, $x_2$ variables.

Case $x=(0,0,x_3)$ with $x_3>0$. Then $x_n\in\Omega$ for $n$ large enough, $x_4^n=0$ and
$w(x_n)=\sqrt{x_4^n}=0$. Thus
$\Phi_{\varphi}(x_n)=0=\Phi_{\varphi}(x)$ for $n$ large enough.

Case $x=(0,x_2,x_3)$ with $x_2>0$ and $x_3>0$. If $x_2>x_3$ then
$w(x_n)=\sqrt{x_1^n}$ for $n$ large enough and 
\begin{align*}
|\Phi_{\varphi}(x_n)-\Phi_{\varphi}(x)|=\left|\frac{1}{\sqrt{x_2^nx_3^n}}\Delta\varphi(x_1^n,x_2^n,x_3^n)
-\frac{1}{\sqrt{x_2x_3}}\Delta\varphi(0,x_2,x_3)\right|,
\end{align*}
which clearly goes to zero as $n\rightarrow\infty$. If $x_2<x_3$ then $x_4^n=0$ for $n$ large enough and 
$w(x_n)=\sqrt{x_4^n}=0$, thus $\Phi_{\varphi}(x_n)=0=\Phi_{\varphi}(x)$.
If $x_2=x_3$ and $(x_n)\subset\Omega$ for $n$ large enough, then $x_4^n=0$, thus
$\Phi_{\varphi}(x_n)=0=\Phi_{\varphi}(x)$.\\ 
If $x_2=x_3$ and $(x_n)\subset\Omega^c$ for $n$ large enough, then
$w(x_n)
=\min\left\{\sqrt{x_1^n},\sqrt{x_4^n}\right\}$, and by \eqref{main estimate to prove continuity} 
\begin{align*}
|\Phi_{\varphi}(x_n)|\leq\|\varphi''\|_{\infty}
\frac{\min\left\{\sqrt{x_1^n},\sqrt{x_4^n}\right\}}{\sqrt{x_1^nx_2^nx_3^n}}
\big|x_1^n-x_3^n\big|\big|x_2^n-x_3^n\big|.
\end{align*}
On the one hand 
\begin{align*}
\big|x_1^n-x_3^n\big|
\leq 2\max\left\{\sqrt{x_1^n},\sqrt{x_3^n}\right\}\big|\sqrt{x_1^n}-\sqrt{x_3^n}\big|.
\end{align*}
On the other hand 
$\min\left\{\sqrt{x_1^n},\sqrt{x_4^n}\right\}\leq\min\left\{\sqrt{x_1^n},\sqrt{x_3^n}\right\}$ for $n$ large enough. 
Since $\min\left\{\sqrt{x_1^n},\sqrt{x_3^n}\right\}\max\left\{\sqrt{x_1^n},\sqrt{x_3^n}\right\}=\sqrt{x_1^n x_3^n}$, then
\begin{align*}
|\Phi_{\varphi}(x_n)|\leq2\|\varphi''\|_{\infty}\frac{\big|x_2^n-x_3^n\big|}{\sqrt{x_2^n}}\big|\sqrt{x_1^n}-\sqrt{x_3^n}\big|,
\end{align*}
which goes to zero as $n\rightarrow\infty$ since $x_2=x_3$. Thus $\Phi_{\varphi}(x_n)\rightarrow 0=\Phi_{\varphi}(x)$.

The case $x=(x_1,0,x_3)$ with $x_1>0$ and $x_3>0$ is analogous to the previous one thanks to the symmetry of $\Phi_{\varphi}$ in the $x_1$, $x_2$ variables.

Case $x=(x_1,x_2,0)$ with $(x_1,x_2)\in (0,\infty)^2$. Then $w(x_n)=\sqrt{x_3^n}$ for $n$ large enough and
\begin{align*}
|\Phi_{\varphi}(x_n)-\Phi_{\varphi}(x)|=\left|\frac{1}{\sqrt{x_1^nx_2^n}}\Delta\varphi(x_1^n,x_2^n,x_3^n)
-\frac{1}{\sqrt{x_1x_2}}\Delta\varphi(x_1,x_2,0)\right|,
\end{align*}
which clearly goes to zero as $n\rightarrow\infty$.

\textbf{Proof of (ii).}
By part (i) $\Phi_{\varphi}\in C([0,\infty)^3)$. Let us show now that for any given $\varepsilon>0$ there exists $R(\varepsilon)>0$ such that 
$|\Phi_{\varphi}(x)|\leq\varepsilon$ for all $x\in[0,\infty)^3\setminus[0,R(\varepsilon)]^3$. 

Given $R>0$ and $\alpha>0$, let $(x_1,x_2,x_3)\in[0,\infty)^3\setminus[0,R]^3$ and denote
$x_i=\min\{x_1,x_2,x_3\}$, $x_k=\max\{x_1,x_2,x_3\}$ and $x_j$ neither $x_i$ nor $x_k$.
Notice that $x_k>R$ and the function $W$ defined in (\ref{S2E3}) satisfies $W(x_1,x_2,x_3)\leq\frac{1}{\sqrt{x_jx_k}}$.
If $x_i>\alpha$ or $x_j>\alpha$ then by (\ref{S2E2}) 
\begin{align*}
|\Phi_{\varphi}(x_1,x_2,x_3)|\leq\frac{|\Delta\varphi(x_1,x_2,x_3)|}{\sqrt{x_j x_k}}\leq\frac{4\|\varphi\|_{\infty}}{\sqrt{\alpha R}}\leq\varepsilon,
\end{align*}
provided $R\geq\frac{16\|\varphi\|^2_{\infty}}{\alpha\varepsilon^2}.$
If $x_i\leq\alpha$ and $x_j\leq\alpha$ we study case by case depending on the relative position of $x_1$, $x_2$, $x_3$. Since $\Phi_{\varphi}$ is symmetric in variables $x_1$ and $x_2$, we may assume without loss of generality that $x_2\leq x_1$.
If $x_k=x_1$, using (\ref{S2E2})
\begin{align*}
|\Phi_{\varphi}(x_1,x_2,x_3)|&\leq\frac{2\|\varphi'\|_{\infty}(x_j-x_i)}{\sqrt{x_1 x_j}}
\leq\frac{2\|\varphi'\|_{\infty}\sqrt{x_j}}{\sqrt{x_1}}
\leq\frac{2\|\varphi'\|_{\infty}\sqrt{\alpha}}{\sqrt{R}}\leq\varepsilon,
\end{align*}
provided $R\geq\frac{4\|\varphi'\|^2_{\infty}\alpha}{\varepsilon^2}.$
If $x_k=x_3$ and $x\in\Omega$ then $x_4=0$ and $\Phi_{\varphi}(x)=0$.
If $x_k=x_3$ and $x\in\Omega^c$, then
$x_1\geq R/2$ and
\begin{align*}
|\Phi_{\varphi}(x_1,x_2,x_3)|&\leq\frac{4\|\varphi\|_{\infty}}{\sqrt{x_1x_3}}\leq\frac{4\sqrt{2}\|\varphi\|_{\infty}}{R}\leq\varepsilon,
\end{align*}
provided $R\geq \frac{4\sqrt{2}\|\varphi\|_{\infty}}{\varepsilon}$.

Finally, if we chose $R\geq\max\left\{\frac{16\|\varphi\|^2_{\infty}}{\alpha\varepsilon^2},\frac{4\|\varphi'\|^2_{\infty}\alpha}{\varepsilon^2},
\frac{4\sqrt{2}\|\varphi\|_{\infty}}{\varepsilon}\right\}$ 
then $\Phi_{\varphi}\in C_0([0,\infty)^3)$ and in particular, $\Phi_{\varphi}$ is uniformly continuous in $[0,\infty)^3$.
\end{proof}

\noindent
\textbf{Acknowledgments.}
The research of the first author is supported by the Basque Government through the BERC 2014-2017 program, by the Spanish Ministry of Economy and Competitiveness MINECO: BCAM Severo Ochoa accreditation SEV-2013-0323, and by MTM2014-52347-C2-1-R of DGES. The research of the second author is supported by grants MTM2014-52347-C2-1-R of DGES and  IT641-13 of the Basque Government. The authors acknowledge the valuable remarks and helpful comments received from A. H. M. Kierkels and Pr. J. J. L. Vel\'azquez, as well as the hospitality of  IAM at the University of Bonn.

\bibliographystyle{plain}
\bibliography{Biblio.bib}
\end{document}